\newtheorem{theo}{Theorem}[section]
\newtheorem{lemm}[theo]{Lemma}
\newtheorem{prop}[theo]{Proposition}
\newtheorem{cor}[theo]{Corollary}
\theoremstyle{definition}
\newtheorem{defi}[theo]{Definition}
\newtheorem{rem}[theo]{Remark}
\newcommand{\p}{\mathbb{P}}
\newcommand{\e}{\mathbb{E}}
\newcommand{\real}{\mathbb{R}}
\newcommand{\n}{\mathbb{N}}
\newcommand{\rd}{\,\mathrm{d}}
\newcommand{\cY}{\mathcal{Y}}
\newcommand{\cZ}{\mathcal{Z}}
\newcommand{\cH}{\mathcal{H}}
\newcommand{\cG}{\mathcal{G}}
\newcommand{\relmiddle}[1]{\mathrel{}\middle#1\mathrel{}}
\newcommand{\1}{\mbox{\rm{1}}\hspace{-0.25em}\mbox{\rm{l}}}
\providecommand{\keywords}[1]{\textbf{Keywords:} #1}
\def\widebar{\accentset{{\cc@style\underline{\mskip10mu}}}}
\numberwithin{equation}{section}
\def\rnum#1{\expandafter{\romannumeral #1}} 
\def\Rnum#1{\uppercase\expandafter{\romannumeral #1}}
\title{
Approximations for adapted M-solutions of Type-\Rnum{2} backward stochastic Volterra integral equations
}
\author{
	Yushi Hamaguchi\thanks{
	Graduate School of Engineering Science,
	Osaka University.
	1-3, Machikaneyama-cho, Toyonaka,
	Osaka,
	560-8531,
	Japan.
	\href{mailto:hmgch2950@gmail.com}{hmgch2950@gmail.com}}
	\quad and \quad
	Dai Taguchi\thanks{
	Research Institute for Interdisciplinary Science,
	Department of mathematics,
	Okayama University.
	3-1-1
	Tsushima-naka,
	Kita-ku
	Okayama,
	700-8530,
	Japan.
	\href{mailto:dai.taguchi.dai@gmail.com}{dai.taguchi.dai@gmail.com}}
}
\begin{document}
\maketitle

\begin{abstract}
In this paper, we study a class of Type-\Rnum{2} backward stochastic Volterra integral equations (BSVIEs). For the adapted M-solutions, we obtain two approximation results, namely, a BSDE approximation and a numerical approximation. The BSDE approximation means that the solution of a finite system of backward stochastic differential equations (BSDEs) converges to the adapted M-solution of the original equation. As a consequence of the BSDE approximation, we obtain an estimate for the $L^2$-time regularity of the adapted M-solutions of Type-\Rnum{2} BSVIEs. For the numerical approximation, we provide a backward Euler--Maruyama scheme, and show that the scheme converges in the strong $L^2$-sense with the convergence speed of order $1/2$. These results hold true without any differentiability conditions for the coefficients.
\end{abstract}
\keywords
Backward stochastic Volterra integral equations;
adapted M-solutions;
BSDE approximations;
Euler--Maruyama scheme;
$L^{2}$-time regularity.
\\
\textbf{2020 Mathematics Subject Classification}: 60H20; 65C30; 60H07.




\section{Introduction}\label{sec_Intro}

Backward stochastic differential equations (BSDEs) and backward stochastic Volterra integral equations (BSVIEs) have been studied extensively and applied to many areas including stochastic control, PDE theory, mathematical finance and economics.
Linear BSDEs were first introduced by Bismut \cite{Bi78} as adjoint equations by means of the Pontryagin maximum principle for stochastic control problems of stochastic differential equations (SDEs).
Later, Pardoux and Peng \cite{PaPe90} developed systematic treatments of general nonlinear BSDEs of the following form:
\begin{equation}\label{BSDE}
Y(t)
=
\Psi
+
\int_{t}^{T}
	G
	(
		s,
		Y(s),
		Z(s)
	)
\rd s
-
\int_{t}^{T}
	Z(s)
\rd W(s),
~t\in[0,T].
\end{equation}
Here, $W(\cdot)$ is a standard Brownian motion on a complete probability space $(\Omega,\mathcal{F},\mathbb{P})$ with the filtlation $\mathbb{F}=(\mathcal{F}_t)_{t\geq0}$ generated by $W(\cdot)$, $\Psi$ is an $\mathcal{F}_T$-measurable random variable called the terminal condition, and $G$ is a progressively measurable function called the driver.
The adapted solution of BSDE~\eqref{BSDE} is the pair $(Y(\cdot),Z(\cdot))$ of adapted processes satisfying \eqref{BSDE}.
Specifically, the second component $Z(\cdot)$ of the adapted solution is called the martingale integrand.
Such an equation has been found useful in applications to, for example, recursive utilities, dynamic risk measures, nonlinear Feynman--Kac formula and path-dependent PDEs.
We refer the readers to the textbook of Zhang \cite{Zh17} and the survey paper of El Karoui, Peng and Quenez \cite{ElKaPeQu97} for the detailed account of theory and applications of BSDEs.

As a natural extension of BSDEs, BSVIEs of the form
\begin{equation}\label{Type-1}
Y(t)
=
\Psi(t)
+
\int_{t}^{T}
	G(t,s,Y(s),Z(t,s))
\rd s
-
\int_{t}^{T}
	Z(t,s)
\rd W(s),
~t\in[0,T],
\end{equation}
were introduced by Lin \cite{Li02} and Yong \cite{Yo06} and further studied in \cite{Yo07,Yo08,WaT12,ShWa12,ShWaYo13,WaTYo19,Po20} among others.
The stochastic process $\Psi(\cdot)$, which is called the free term in the literature of BSVIEs, is a family of $\mathcal{F}_T$-measurable random variables $\Psi(t)$, $t\in[0,T]$ (not necessarily $\mathbb{F}$-adapted), and the driver $G$ is progressively measurable with respect to the time parameter $s$ for each fixed $t$.
The unknown we are looking for is the pair $(Y(\cdot),Z(\cdot,\cdot))$, where $Y(\cdot)$ and $Z(t,\cdot)$ are adapted for each $t\in[0,T]$.
Yong \cite{Yo06,Yo08} also considered the following form of BSVIEs:
\begin{equation}\label{BSVIE_II_1}
Y(t)
=
\Psi(t)
+
\int_{t}^{T}
	G(t,s,Y(s),Z(t,s),Z(s,t))
\rd s
-
\int_{t}^{T}
	Z(t,s)
\rd W(s),
~t\in[0,T].
\end{equation}
In the literature, \eqref{Type-1} and \eqref{BSVIE_II_1} are referred as Type-\Rnum{1} and Type-\Rnum{2} BSVIEs, respectively. Unlike Type-\Rnum{1} BSVIE~\eqref{Type-1}, the solution of Type-\Rnum{2} BSVIE~\eqref{BSVIE_II_1} needs an additional constraint on the term $Z(t,s)$, $0\leq s\leq t\leq T$, for the well-posedness of the equation.
Inspired by the duality principle appearing in stochastic control problems of (forward) stochastic Volterra integral equations (SVIEs), the so-called \emph{adapted M-solution} was introduced and studied in \cite{Yo08} (see Definition \ref{Def_M_sol} of the present paper).
Both Type-\Rnum{1} and Type-\Rnum{2} BSVIEs have become important tools to study some problems in stochastic control and mathematical finance.
For example, Yong \cite{Yo07}, Wang, Sun and Yong \cite{WaSuYo19} and Agram \cite{Ag19} applied BSVIEs to dynamic risk measures for the so-called position processes.
Kromer and Overbeck \cite{KrOv17} investigated dynamic capital allocations via BSVIEs.
Beissner and Rosazza Gianin \cite{BeRG21} applied BSVIEs to arbitrage-free asset pricing via a path of EMMs, called an EMM-string.
Stochastic control problems for systems of SVIEs and BSVIEs were studied by Shi, Wang and Yong~\cite{ShWaYo15} and Wang and Zhang \cite{WaTZh17}.
Also, it is worth to mention that BSVIEs have a strong connection to time-inconsistent stochastic control problems.
For example, time-inconsistent recursive utility processes of general discounting can be modelled by the solutions of BSVIEs (see \cite{WaSuYo19}).
Wang and Yong \cite{WaYo21} and Hamaguchi \cite{Ha21} studied time-inconsistent problems where the cost functionals were defined by the solutions of Type-\Rnum{1} BSVIEs.
In \cite{WaYo21}, generalizing the earlier study of Yong \cite{Yo12}, they derived the so-called equilibrium HJB equation which characterizes the closed-loop equilibrium strategy.
In \cite{Ha21}, the author characterized the open-loop equilibrium controls by variational methods, where the adjoint equations turned out to be Type-\Rnum{1} BSVIEs of an extended form.

For relationships between BSVIEs and PDEs, we note that Wang and Yong \cite{WaTYo19} derived the representation PDEs for both Type-\Rnum{1} and Type-\Rnum{2} BSVIEs.
Further studies on this topic were developed by Wang \cite{Wa20} and Wang, Yong and Zhang \cite{WaYoZh20}.
It is remarkable that the representation PDEs of BSVIEs have the same structure as the equilibrium HJB equations appearing in time-inconsistent stochastic control problems obtained by \cite{Yo12,WaYo21}.
Here let us briefly recall the results of \cite{WaTYo19}.
They considered Type-\Rnum{1} and Type-\Rnum{2} BSVIEs of the following forms:
\begin{equation}\label{Type-1 Markov}
\begin{split}
Y(t)
=
\psi(t,X(t),X(T))
+
\int_{t}^{T}
	g(t,s,X(t),X(s),Y(s),Z(t,s))
\rd s
-
\int_{t}^{T}
	Z(t,s)
\rd W(s),
~t\in[0,T],
\end{split}
\end{equation}
and
\begin{equation}\label{Type-2 Markov}
\begin{split}
Y(t)
=
\psi(t,X(t),X(T))
+
\int_{t}^{T}
	g(t,s,X(t),X(s),Y(s),Z(t,s),Z(s,t))
\rd s
-
\int_{t}^{T}
	Z(t,s)
\rd W(s),
~t\in[0,T],
\end{split}
\end{equation}
respectively, where $\psi$ and $g$ are given deterministic functions, and $X(\cdot)$ is the solution of the SDE
\begin{equation}\label{SDE}
X(t)
=
x
+
\int_{0}^{t}
	b(s,X(s))
\rd s
+
\int_{0}^{t}
	\sigma(s,X(s))
\rd W(s),
~t \in[0,T],
\end{equation}
with given deterministic functions $b,\,\sigma$ and a given initial condition $x$.
First, they showed that Type-\Rnum{1} BSVIE~\eqref{Type-1 Markov} can be approximated by finite systems of BSDEs.
Then, based on this approximation result and the well-known representation PDEs for BSDEs, they derived the representation PDE of Type-\Rnum{1} BSVIE~\eqref{Type-1 Markov}.
Finally, they obtained the representation PDE of Type-\Rnum{2} BSVIE~\eqref{Type-2 Markov} by connecting that of a Type-\Rnum{1} BSVIE and the usual linear PDE induced by the martingale representation theorem.
We remark that, unlike the case of Type-\Rnum{1} BSVIE~\eqref{Type-1 Markov}, the well-posedness of the representation PDE of Type-\Rnum{2} BSVIE~\eqref{Type-2 Markov} was proved only in the framework of the mild solution, not the classical (smooth) solution.
We also remark that, unlike Type-\Rnum{1} BSVIE~\eqref{Type-1 Markov}, there have not been any approximation results for Type-\Rnum{2} BSVIE~\eqref{Type-2 Markov} by systems of BSDEs.
We guess that this is an important problem since the BSDE approximation of BSVIEs would give some probabilistic interpretations to the corresponding (non-standard) PDE systems.

In general, it is difficult to obtain the explicit forms of the solutions of nonlinear BSDEs and nonlinear BSVIEs.
To make matter worse, even in the linear case, the explicit solutions of Type-\Rnum{2} BSVIEs have not been obtained yet to the best of our knowledge.
Hence, in order to calculate the solutions, it is important to consider numerical approximations of BSDEs and BSVIEs.
For BSDEs, there have been many attempts to provide numerical approximations, see for example \cite{DoMaPr96,Na02,BaPa03,BoTo04,Zh04,GoLeWa05,DeMe06,LeGoWa06,BeDe07,GoLa07,GoMa10,BrDeMe11,PeXu11,GoLe17,LiDRSz18} among others.
However, for BSVIEs, the numerical method is quit limited.
Here we mention \cite{BePo13,WaY18}.
On one hand, Bender and Pokalyuk \cite{BePo13} studied a numerical method for the following Type-\Rnum{1} BSVIE:
\begin{equation*}
Y(t)
=
\psi(t,W)
+
\int_{t}^{T}
	g(s,Y(s))
\rd s
-
\int_{t}^{T}
	Z(t,s)
\rd W(s),
~t\in[0,T],
\end{equation*}
which is weakly approximated by a sequence of discrete BSVIEs driven by a binary random walk.
On the other hand, Wang \cite{WaY18} constructed a kind of backward Euler--Maruyama schemes for Type-\Rnum{1} BSVIEs of the form
\begin{equation}\label{Type-1 numeric}
Y(t)
=
\psi(t,X(T))
+
\int_{t}^{T}
	g(t,s,X(s),Y(s),Z(t,s))
\rd s
-
\int_{t}^{T}
	Z(t,s)
\rd W(s),
~t\in[0,T],
\end{equation}
with $X(\cdot)$ being the solution of an SVIE.
He showed that, in the cases of $g(t,s,x,y,z)=g(t,s,x,y)$ or $g(t,s,x,y,z)=g(t,s,x,z)$, the scheme converges in the strong $L^2$-sense to the solution of \eqref{Type-1 numeric} with the convergence speed of order $1/2$.
We note that in \cite{WaY18} the coefficients were assumed to be smooth. Also, to the best of our knowledge, the numerical method for Type-\Rnum{1} BSVIE~\eqref{Type-1 numeric} (or more generally \eqref{Type-1 Markov}) of the general form of $g$ was not obtained in the literature.
Moreover, the problem of numerical approximations for Type-\Rnum{2} BSVIEs has been completely open.

The purpose of this paper is to show two approximation results for Type-\Rnum{2} BSVIEs, that is, a \emph{BSDE approximation} and a \emph{numerical approximation}.
The BSDE approximation means that the solution of a finite system of standard BSDEs converges to the adapted M-solution of the original Type-\Rnum{2} BSVIE.
For the numerical approximation, we provide a backward Euler--Maruyama scheme, and show that the scheme converges in the strong $L^2$-sense with the convergence speed of order $1/2$.
Our main results are the following:
\begin{itemize}
\item
For the general Type-\Rnum{2} BSVIE \eqref{BSVIE_II_1} with stochastic coefficients $\Psi$ and $G$, the term
\begin{equation}\label{error 1}
\sum^{N-1}_{k=0}
\int^{t_{k+1}}_{t_k}
	\mathbb{E}
	\Bigl[
		\big|Y(t)-\mathscr{Y}^\pi(t_k,t)\big|^2
		+
		\int^T_0
			\big|Z(t,s)-\mathscr{Z}^\pi(t_k,s)\big|^2
		\rd s
	\Bigr]\rd t,
\end{equation}
which is defined for each time mesh $\pi=\{t_{0},t_{1},\dots,t_{N}\}$ of $[0,T]$,  tends to zero as the mesh size $|\pi|$ of $\pi$ tends to zero, where $(\mathscr{Y}^{\pi},\mathscr{Z}^{\pi})$ is the solution of a BSDE system corresponding to $\pi$ (see Theorem \ref{Theo_BSDE_0}).

\item
For Type-\Rnum{2} BSVIE \eqref{Type-2 Markov} with $X(\cdot)$ being the solution of SDE \eqref{SDE}, the term
\begin{equation}\label{error 2}
\sum_{k=0}^{N-1}
\e
\Big[
	\int_{t_{k}}^{t_{k+1}}
		\big|
			Y(t)
			-
			Y^{\pi}(t_k,t_k)
		\big|^2
	\rd t
\Big]
+
\sum_{k=0}^{N-1}
\sum_{\ell=0}^{N-1}
\e
\Big[
	\int_{t_{k}}^{t_{k+1}}
		\int_{t_\ell}^{t_{\ell+1}}
			\big|
				Z(t,s)
				-
				Z^{\pi}(t_{k},t_{\ell})
			\big|^{2}
		\rd s
	\rd t
\Big],
\end{equation}
which is defined for each time mesh $\pi=\{t_{0},t_{1},\dots,t_{N}\}$ of $[0,T]$, is estimated by a constant times $|\pi|$, where $(Y^{\pi},Z^{\pi})$ is a backward Euler--Maruyama scheme corresponding to $\pi$ (see Theorem \ref{Thm_2}).
\end{itemize}
We emphasize that the above approximation results hold true without any differentiability or structural conditions of the coefficients as assumed in \cite{WaY18}.

For BSDE \eqref{BSDE}, it is well-known that the so-called \emph{$L^2$-time regularity} of the martingale integrand $Z(\cdot)$ plays a central role in the study of numerical approximations (see the textbook \cite{Zh17}).
In this paper, we first consider the general Type-\Rnum{2} BSVIE~\eqref{BSVIE_II_1} with stochastic coefficients $\Psi$ and $G$, and estimate the error \eqref{error 2} in terms of the modulus of the $L^2$-time regularity of the martingale integrand $\mathscr{Z}^{\pi}$ of the approximation BSDE system (see Proposition \ref{Prop_EM_1}).
This result is important on its own right since we can apply it to equation~\eqref{Type-2 Markov} under more general settings where, for example, $X(\cdot)$ is the solution of an SVIE, as well as where the coefficients $\psi$ and $g$ are ``irregular'' (in the sense of \cite{GoMa10}) in terms of $X(\cdot)$. Such generalizations are, however, beyond the scope of this paper.
In order to investigate the $L^2$-time regularity of $\mathscr{Z}^{\pi}$, we consider Type-\Rnum{2} BSVIE~\eqref{Type-2 Markov} with $X(\cdot)$ being the solution of SDE~\eqref{SDE}.
Under this setting, by using the Malliavin calculus technique, we first represent $\mathscr{Z}^{\pi}$ in terms of the solution of a \emph{variational BSDE system} (see \eqref{Prop_L2_2}), which is new even in the case of Type-\Rnum{1} BSVIEs.
Then we provide some key $L^p$-estimates ($p\geq 2$) for the variational BSDE system.
Finally, we provide an estimate for the modulus of the $L^2$-time regularity of $\mathscr{Z}^{\pi}$, which concludes our second main result mentioned above.

Compared with Type-\Rnum{1} BSVIEs, the treatment of the time regularity of Type-\Rnum{2} BSVIEs is difficult due to the dependency on $Z(s,t)$ of the driver.
On one hand, in \cite{Yo08} the continuity (in the strong $L^2$-sense) of the adapted M-solution of the general Type-\Rnum{2} BSVIE~\eqref{BSVIE_II_1} was proved under technical differentiability assumptions for the coefficients.
On the other hand, in this paper, as a corollary of the BSDE approximation we obtain a quantitative estimate for the modulus of the $L^2$-time regularity of the adapted M-solution of Type-\Rnum{2} BSVIE \eqref{Type-2 Markov} with $X(\cdot)$ being the solution of SDE \eqref{SDE}, without smoothness of the coefficients (see Theorem \ref{Thm_1}).
This kind of regularity estimate for adapted M-solutions appears for the first time in the literature of BSVIEs.
Also, we provide $L^p$-a priori estimate ($p\geq2$) for adapted M-solutions of the general Type-\Rnum{2} BSVIE~\eqref{BSVIE_II_1} with stochastic coefficients $\Psi$ and $G$ (see Theorem~\ref{Lem_Lp} which is proved in \hyperref[sec_Appendix]{Appendix}).
This $L^p$-estimate is also new, and it is found useful for the analysis of the $L^2$-time regularity of $\mathscr{Z}^\pi$.

The paper is organized as follows:
In Section \ref{sec_Preli}, we introduce some notation and prove fundamental inequalities which we use throughout this paper.
In Section \ref{sec_BSDE}, we construct BSDE systems with stochastic coefficients $\Psi$ and $G$, and prove the BSDE approximation.
In Section \ref{sec_Disc_TypeII}, we construct the backward Euler--Maruyama scheme for the general Type-\Rnum{2} BSVIE~\eqref{BSVIE_II_1} with stochastic coefficients $\Psi$ and $G$.
Under this general setting, we estimate the error \eqref{error 2} in terms of the modulus of the $L^2$-time regularity of the martingale integrand $\mathscr{Z}^{\pi}$ of the approximation BSDE system.
In order to estimate the modulus of the $L^{2}$-time regularity of $\mathscr{Z}^{\pi}$, in Section \ref{sec_L2_reg}, we consider the case of Type-\Rnum{2} BSVIE~\eqref{Type-2 Markov} with $X(\cdot)$ being the solution of SDE~\eqref{SDE}.
We give precise statements of our main theorems.
In \hyperref[sec_Appendix]{Appendix}, we provide a proof of $L^p$-a priori estimate for Type-\Rnum{2} BSVIEs.


\section{Preliminaries}\label{sec_Preli}

\subsection{Notation}\label{sec_func}
Let $W(\cdot)$ be a $d$-dimensional standard Brownian motion on a complete probability space $(\Omega,\mathcal{F},\p)$.
$\mathbb{F}=(\mathcal{F}_{t})_{t \geq 0}$ denotes the augmentation of the filtration generated by $W(\cdot)$.
$\1_A$ denotes the indicator function for a given set $A$, $\mathbb{E}[\cdot]$ denotes the expectation, and $\mathbb{E}_t[\cdot]:=\mathbb{E}[\cdot|\mathcal{F}_t]$ denotes the conditional expectation with respect to $\mathcal{F}_t$ for each $t\geq0$.

For each $d_{1},d_{2} \in \n$, we denote the space of $(d_{1} \times d_{2})$-matrices by $\real^{d_{1} \times d_{2}}$, which is endowed with the Frobenius norm denoted by $|\cdot|$.
We define $\real^{d_{1}}:=\real^{d_{1} \times 1}$, that is, each element of $\real^{d_{1}}$ is understood as a column vector.
For each  matrix $A$, $A^{\top}$ denotes the transpose of $A$.

Throughout this paper, we fix $T \in (0,\infty)$.
We define $\Delta [0,T]:=\{(t,s) \in [0,T]^{2}~|~0 \leq t \leq s \leq T\}$ and $\Delta^{\mathrm{c}} [0,T]:=\{(t,s) \in [0,T]^{2}~|~0 \leq s < t \leq T\}$.
$\Pi[0,T]$ denotes the set of all time meshes $\pi=\{t_{0},t_{1},\ldots,t_{N}\}$ of $[0,T]$ with $N \geq 2$ and $0=t_{0}<t_{1}<\cdots<t_{N}=T$.
For each $\pi=\{t_{0},t_{1},\ldots,t_{N}\} \in \Pi[0,T]$, we define $\Delta t_{k}:=t_{k+1}-t_{k}$, $|\pi|:=\max_{k=0,\ldots, N-1} \Delta t_{k}$ and $\Delta W_{k}:=W(t_{k+1})-W(t_{k})$.
We define $\tau:[0,T) \to \{t_{0},\ldots,t_{N-1}\}$ and $\tau^{*}:[0,T) \to \{t_{1},\ldots,t_{N}\}$ by $\tau(t):=t_{k}$ and $\tau^{*}(t):=t_{k+1}$, respectively, for each $t \in [t_{k},t_{k+1})$ with $k=0,\ldots,N-1$.
Also, we define $\Delta \tau(t):=\tau^{*}(t)-\tau(t)$ for $t \in [0,T)$.

Throughout this paper, $C>0$ denotes a generic constant depending only on $L$ and $T$, where $L$ is the constant appearing in the assumptions $(\mathrm{H}_{\Psi,G})$, $(\mathrm{H}_{\Psi,G})'$, $(\mathrm{H}_{\psi,g})$, $(\mathrm{H}_{\psi,g})'$, $(\mathrm{H}_{b,\sigma})$ or $(\mathrm{H}_{b,\sigma})'$ which will be introduced later.
For a given parameter $\mu$, $C_{\mu}>0$ denotes a generic constant depending only on $\mu$, $L$ and $T$.
$C$ and $C_{\mu}$ may change from line to line.

Fix $0\leq T_{0}<T_{1}<\infty$ and $\mathcal{G} \subset \mathcal{F}$, and let $\mathbb{H}$ be a Euclidean space.
We define the following spaces:
\begin{align*}
&L^{2}_{\mathcal{G}}(\Omega;\mathbb{H})
:=
\{
	\varphi:\Omega \to \mathbb{H}
	\,|\,
	\varphi\ \text{is}\ \mathcal{G}\text{-measurable and }
	\mathbb{E}[|\varphi|^{2}]
	<\infty
\},
\\
&L^{2}_{\mathcal{G}}(T_{0},T_{1};\mathbb{H})
:=
\left\{
	\varphi:\Omega \times [T_{0},T_{1}] \to \mathbb{H}
	\relmiddle|
	\varphi(\cdot)\ \text{is}\ \mathcal{G} \otimes \mathcal{B}([T_{0},T_{1}])\text{-measurable and }
	\mathbb{E}\Big[\int_{T_{0}}^{T_{1}}\big|\varphi(t)\big|^{2} \rd t\Big]
	<\infty
\right\},\\
&L^{2}_{\mathbb{F}}(T_{0},T_{1};\mathbb{H})
:=
\left\{
	\varphi:\Omega\times[T_{0},T_{1}]\to\mathbb{H}
	\relmiddle|
	\varphi(\cdot)\ \text{is progressively measurable and }
	\mathbb{E}
	\Bigl[
		\int^{T_{1}}_{T_{0}}
			\big|\varphi(s)\big|^{2}
		\rd s
	\Bigr]
	<\infty
\right\},\\
&L^2_\mathbb{F}(\Omega;C([T_{0},T_{1}];\mathbb{H}))
:=
\left\{
	\varphi:\Omega\times[T_{0},T_{1}]\to\mathbb{H}
	\relmiddle|
	\begin{aligned}
		&\varphi(\cdot)\ \text{is progressively measurable, has continuous paths}\\
		&\text{and satisfies }
		\mathbb{E}
		\Bigl[
			\sup_{s\in[T_{0},T_{1}]}\big|\varphi(s)\big|^2
		\Bigr]
		<\infty
	\end{aligned}
\right\}.
\end{align*}
Also, we define the spaces of stochastic processes with two time parameters:
\begin{align*}
L^{2}(T_{0},T_{1};L_{\mathbb{F}}^{2}(\Omega;C([T_{0},T_{1}];\mathbb{H})))
\mathalpha{:=}
\left\{
	\varphi: \Omega \mathalpha{\times} [T_{0},T_{1}]^{2} \mathalpha{\to} \mathbb{H}
	\relmiddle|
	\begin{aligned}
		&\varphi(\cdot,\cdot)\ \text{is measurable,}\\
		&\varphi(t,\cdot) \in L_{\mathbb{F}}^{2}(\Omega;C([T_{0},T_{1}];\mathbb{H}))~\text{for a.e.\ }t \in [T_{0},T_{1}]\\
		&\text{and satisfies }
		\mathbb{E}
		\Bigl[
			\int_{T_{0}}^{T_{1}}
				\sup_{s\in[T_{0},T_{1}]}
				\big|
					\varphi(t,s)
				\big|^{2}
			\rd t
		\Bigr]
		<\infty
	\end{aligned}
\right\}
\end{align*}
and
\begin{align*}
L^{2}(T_{0},T_{1};L_{\mathbb{F}}^{2}(T_{0},T_{1};\mathbb{H}))
:=
\left\{
	\varphi: \Omega \times [T_{0},T_{1}]^{2} \to \mathbb{H}
	\relmiddle|
	\begin{aligned}
		&\varphi(\cdot,\cdot)\ \text{is measurable,}\\
		&\varphi(t,\cdot) \in L_{\mathbb{F}}^{2}(T_{0},T_{1};\mathbb{H})~\text{for a.e.\ }t \in [T_{0},T_{1}]\\
		&\text{and satisfies }
		\mathbb{E}
		\Bigl[
			\int_{T_{0}}^{T_{1}}
				\int_{T_{0}}^{T_{1}}
					\big|
						\varphi(t,s)
					\big|^{2}
				\rd s
			\rd t
		\Bigr]
		<\infty
	\end{aligned}
\right\}.
\end{align*}

\subsection{A priori estimate for BSDEs}\label{sec_apriori}
For each $0\leq T_{0}<T_{1} \leq T$, consider the following BSDE on $[T_{0},T_{1}]$:
\begin{equation}\label{BSDE_0}
Y(t)
=
\Psi
+
\int_{t}^{T_{1}}
	G(s,Y(s),Z(s))
\rd s
-
\int_{t}^{T_{1}}
	Z(s)
\rd W(s),
~t\in[T_{0},T_{1}].
\end{equation}
We say that a pair $(Y(\cdot),Z(\cdot))$ is an adapted solution of BSDE \eqref{BSDE_0} if $(Y(\cdot),Z(\cdot))\in L^{2}_{\mathbb{F}}(\Omega, C([T_{0},T_{1}];\real^{m})) \times L^{2}_{\mathbb{F}}(T_{0},T_{1};\real^{m \times d})$ and the equality \eqref{BSDE_0} holds a.s.\ for any $t\in[T_{0},T_{1}]$.
The following fact is well-known, see for example \cite{Zh17}.

\begin{lemm}\label{apriori_0}
Fix $p\geq2$.
Let $\Psi:\Omega \to \real^{m}$ and $G:\Omega\times[T_{0},T_{1}] \times \mathbb{R}^{m}\times \mathbb{R}^{m\times d}\to\mathbb{R}^{m}$ be measurable maps such that
\begin{itemize}
\item
$\Psi$ is $\mathcal{F}_{T_{1}}$-measurable, and the process $(G(s,y,z))_{s\in[T_{0},T_{1}]}$ is progressively measurable for each $y\in\mathbb{R}^{m}$ and $z\in\mathbb{R}^{m\times d}$;
\item
$\e[|\Psi|^{p}+(\int_{T_{0}}^{T_{1}}|G(s,0,0)| \rd s)^{p}]<\infty$;
\item
There exists a constant $L>0$ such that, for a.e.\,$s \in [T_{0},T_{1}]$, a.s., it holds that
\begin{equation*}
	|G(s,y_{1},z_{1})-G(s,y_{2},z_{2})|
	\leq
	L\big\{|y_{1}-y_{2}|+|z_{1}-z_{2}|\big\}
\end{equation*}
for any $y_{1},y_{2}\in\mathbb{R}^{m}$ and $z_{1},z_{2}\in\mathbb{R}^{m\times d}$.
\end{itemize}
Then there exists a unique adapted solution $(Y(\cdot),Z(\cdot))$ of BSDE \eqref{BSDE_0}, and the following estimate holds:
\begin{equation*}
\mathbb{E}
\Big[
	\sup_{s\in[T_{0},T_{1}]}
	\big|Y(s)\big|^{p}
	+
	\Big(
		\int_{T_{0}}^{T_{1}}
			\big|Z(s)\big|^{2}
		\rd s
	\Big)^{p/2}
\Big]
\leq
C_{p}
\mathbb{E}
\Big[
	\big|\Psi\big|^{p}
	+
	\Big(
		\int_{T_{0}}^{T_{1}}
			\big|G(s,0,0)\big|
		\rd s
	\Big)^{p}
\Big].
\end{equation*}
For $i=1,2$, let $(\Psi_{i},G_{i})$ satisfy the above conditions and let $(Y_{i}(\cdot),Z_{i}(\cdot))$ be the unique adapted solution of BSDE \eqref{BSDE_0} corresponding to $(\Psi_{i},G_{i})$.
Then it holds that
\begin{equation*}
\begin{split}
&\mathbb{E}
\Big[
	\sup_{s\in[T_{0},T_{1}]}
	\big|
		Y_{1}(s)
		-
		Y_{2}(s)
	\big|^{p}
	+
	\Big(
		\int_{T_{0}}^{T_{1}}
			\big|
				Z_{1}(s)
				-
				Z_{2}(s)
			\big|^{2}
		\rd s
	\Big)^{p/2}
\Big]\\
&\leq
C_{p}\mathbb{E}
\Big[
	\big|
		\Psi_{1}
		-
		\Psi_{2}
	\big|^{p}
	+
	\Big(
		\int_{T_{0}}^{T_{1}}
			\big|
				G_{1}(s,Y_{1}(s),Z_{1}(s))
				-
				G_{2}(s,Y_{1}(s),Z_{1}(s))
			\big|
		\rd s
	\Big)^{p}
\Big].
\end{split}
\end{equation*}
\end{lemm}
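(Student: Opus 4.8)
The plan is to establish the three assertions --- well-posedness, the $L^{p}$-a priori bound, and the $L^{p}$-stability estimate --- in that order, using the standard combination of a contraction argument together with It\^o's formula, the Burkholder--Davis--Gundy (BDG) inequality and Gronwall's lemma.

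For existence and uniqueness, I would work in the Banach space $\mathcal{B}:=L^{2}_{\mathbb{F}}(\Omega;C([T_{0},T_{1}];\real^{m}))\times L^{2}_{\mathbb{F}}(T_{0},T_{1};\real^{m\times d})$ equipped, for a parameter $\beta>0$ to be chosen, with the equivalent norm $\|(Y,Z)\|_{\beta}^{2}:=\e[\int_{T_{0}}^{T_{1}}e^{\beta s}(|Y(s)|^{2}+|Z(s)|^{2})\rd s]$. Given $(y,z)\in\mathcal{B}$, the map $\Phi$ sends $(y,z)$ to the pair $(Y,Z)$ obtained by applying the martingale representation theorem to the square-integrable martingale $M(t):=\e_{t}[\Psi+\int_{T_{0}}^{T_{1}}G(s,y(s),z(s))\rd s]$; the Lipschitz and integrability hypotheses on $G$ guarantee that $M$ is well defined and that $(Y,Z)\in\mathcal{B}$. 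Applying It\^o's formula to $t\mapsto e^{\beta t}|Y_{1}(t)-Y_{2}(t)|^{2}$, where $(Y_{i},Z_{i})=\Phi(y_{i},z_{i})$, and using the Lipschitz bound on $G$, one checks that for $\beta$ large enough (depending only on $L$) the map $\Phi$ is a contraction on $(\mathcal{B},\|\cdot\|_{\beta})$, and Banach's fixed-point theorem yields the unique adapted solution.

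For the a priori estimate, I would apply It\^o's formula to $t\mapsto|Y(t)|^{2}$, giving
\begin{equation*}
|Y(t)|^{2}+\int_{t}^{T_{1}}|Z(s)|^{2}\rd s
=|\Psi|^{2}+2\int_{t}^{T_{1}}Y(s)^{\top}G(s,Y(s),Z(s))\rd s-2\int_{t}^{T_{1}}Y(s)^{\top}Z(s)\rd W(s).
\end{equation*}
Using the linear-growth consequence $|G(s,y,z)|\leq|G(s,0,0)|+L(|y|+|z|)$ of the Lipschitz assumption together with Young's inequality, the drift term is bounded so as to absorb a fraction of $\int_{t}^{T_{1}}|Z|^{2}$ into the left-hand side; then, raising to the power $p/2$, taking suprema over $t$, applying the BDG inequality to control $\e[\sup_{t}|\int_{t}^{T_{1}}Y(s)^{\top}Z(s)\rd W(s)|^{p/2}]$ and once more absorbing the resulting $\e[\sup_{t}|Y(t)|^{p}]$ term via Young's inequality, and finally invoking Gronwall's lemma, delivers the claimed bound (together with the $(\int|Z|^{2})^{p/2}$ contribution, read off from the identity at $t=T_{0}$) for every $p\geq2$.

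Finally, the stability estimate follows by reduction: writing $\delta Y:=Y_{1}-Y_{2}$, $\delta Z:=Z_{1}-Z_{2}$ and $\delta\Psi:=\Psi_{1}-\Psi_{2}$, the pair $(\delta Y,\delta Z)$ solves a BSDE with terminal value $\delta\Psi$ whose driver decomposes as
\begin{equation*}
G_{1}(s,Y_{1},Z_{1})-G_{2}(s,Y_{2},Z_{2})
=\big[G_{1}(s,Y_{1},Z_{1})-G_{2}(s,Y_{1},Z_{1})\big]+\big[G_{2}(s,Y_{1},Z_{1})-G_{2}(s,Y_{2},Z_{2})\big].
\end{equation*}
The second bracket is $L$-Lipschitz in $(\delta Y,\delta Z)$, so it plays the role of a genuine driver for $(\delta Y,\delta Z)$, while the first bracket is a source term bounded by $|G_{1}(s,Y_{1}(s),Z_{1}(s))-G_{2}(s,Y_{1}(s),Z_{1}(s))|$; applying the a priori bound just established to this BSDE gives exactly the asserted inequality. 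I expect the main technical obstacle to be the case $p>2$ of the a priori estimate: It\^o's formula must be applied to the non-smooth map $y\mapsto|y|^{p}$ (handled by regularizing with $(|y|^{2}+\ep)^{p/2}$ and letting $\ep\downarrow 0$), and the interplay between the BDG inequality and Young's inequality used to absorb the supremum term must be arranged with constants that do not degenerate.
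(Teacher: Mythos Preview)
The paper does not actually prove this lemma: it is stated as a well-known fact with a reference to Zhang's textbook \cite{Zh17}, and no proof is given. Your outline is precisely the standard argument one finds in that reference (contraction in a weighted $L^2$-norm for well-posedness, It\^o's formula combined with BDG and Young/Gronwall for the $L^p$ a priori bound, and reduction of the stability estimate to the a priori bound via the decomposition of the driver difference), so there is nothing to compare --- your approach is correct and is exactly what the cited source does.
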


\subsection{Gronwall-like inequalities}\label{Sec_Gron}

In this subsection, we provide Gronwall-like inequalities which are frequently used in this paper.
The idea of the proof is inspired by \cite{ShWaYo15}, where the authors treated a weighted norm of the adapted M-solution of a BSVIE.

Let $(S,\Sigma, \mu)$ be a measure space.
We first provide a continuous version of the inequality.
\begin{lemm}\label{Lem_Gron_cont}
Let $a,b,c:[0,T] \to \real$ be nonnegative integrable functions and let $\zeta:\Delta^{\mathrm{c}}[0,T] \times S \to \real$ be a nonnegative jointly measurable function.
Assume that there exists a constant $K>0$ such that
\begin{align}
\label{Gron_0}
\displaystyle
&a(t)
\leq
K
\Big\{
	b(t)
	+
	\int_{t}^{T}
		a(s)
	\rd s
	+
	\int_{S}
		\Big(
			\int_{t}^{T}
				\zeta(s,t,x)
			\rd s
		\Big)^{2}
	\rd \mu(x)
\Big\},~\text{a.e.}~t \in [0,T],
\\
&
\displaystyle
\label{Gron_00}
\int_{S}
	\int_{0}^{t}
		\zeta(t,s,x)^{2}
	\rd s
\rd \mu(x)
\leq
K
\bigl\{
	a(t)
	+
	c(t)
\bigr\},~
\text{a.e.}~t \in [0,T].
\end{align}
Then for any $\gamma \geq 2K(1+K)$, it holds that
\begin{align*}
\int_{0}^{T}
	e^{\gamma t}
	a(t)
\rd t
\leq
2K
\int_{0}^{T}
	e^{\gamma t}
	b(t)
\rd t
+
\int_{0}^{T}
	e^{\gamma t}
	c(t)
\rd t.
\end{align*}
In particular, we have
\begin{align*}
\int_{0}^{T}
	a(t)
\rd t
\leq
(2K+1)e^{2K(1+K)T}
\int_{0}^{T}
	\big\{
		b(t)
		+
		c(t)
	\big\}
\rd t.
\end{align*}
\end{lemm}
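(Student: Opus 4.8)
The plan is to follow the weighted-norm strategy indicated by the reference to \cite{ShWaYo15}: multiply the pointwise bound \eqref{Gron_0} by the weight $e^{\gamma t}$, integrate over $[0,T]$, and show that for $\gamma$ large both the linear $a$-term and the quadratic $\zeta$-term can be absorbed into the left-hand side. Write $A:=\int_0^T e^{\gamma t}a(t)\rd t$, $B:=\int_0^T e^{\gamma t}b(t)\rd t$ and $D:=\int_0^T e^{\gamma t}c(t)\rd t$; all three are finite because $a,b,c$ are integrable and $e^{\gamma t}\le e^{\gamma T}$ on $[0,T]$, which is what makes the absorption step legitimate. Integrating \eqref{Gron_0} against $e^{\gamma t}$ gives
\begin{align*}
A
\le
KB
+
K\int_0^T e^{\gamma t}\int_t^T a(s)\rd s\,\rd t
+
K\int_0^T e^{\gamma t}\int_S\Big(\int_t^T\zeta(s,t,x)\rd s\Big)^2\rd\mu(x)\,\rd t.
\end{align*}
For the middle integral I would apply Fubini on the triangle $\{0\le t\le s\le T\}$ and use $\int_0^s e^{\gamma t}\rd t=(e^{\gamma s}-1)/\gamma\le e^{\gamma s}/\gamma$, bounding it by $A/\gamma$.

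The crux is the quadratic term, and the key idea is to run Cauchy--Schwarz \emph{with the exponential weight} rather than naively: for each fixed $(t,x)$,
\begin{align*}
\Big(\int_t^T\zeta(s,t,x)\rd s\Big)^2
\le
\Big(\int_t^T e^{-\gamma s}\rd s\Big)\Big(\int_t^T e^{\gamma s}\zeta(s,t,x)^2\rd s\Big)
\le
\frac{e^{-\gamma t}}{\gamma}\int_t^T e^{\gamma s}\zeta(s,t,x)^2\rd s,
\end{align*}
using $\int_t^T e^{-\gamma s}\rd s\le e^{-\gamma t}/\gamma$. The factor $e^{-\gamma t}$ cancels the outer $e^{\gamma t}$, so after Fubini on $\{0\le t\le s\le T\}$ the quadratic term is at most
\begin{align*}
\frac{K}{\gamma}\int_0^T e^{\gamma s}\Big(\int_S\int_0^s\zeta(s,t,x)^2\rd t\,\rd\mu(x)\Big)\rd s
\le
\frac{K^2}{\gamma}\int_0^T e^{\gamma s}\{a(s)+c(s)\}\rd s
=
\frac{K^2}{\gamma}(A+D),
\end{align*}
where the inequality is precisely hypothesis \eqref{Gron_00} applied with outer variable $s$ (and inner variable $t$). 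The important structural point is that the Fubini swap converts the ``$\zeta(s,t,x)$ with $s\ge t$, integrated in $s$'' form appearing in \eqref{Gron_0} into the ``$\zeta(s,t,x)$ with $s\ge t$, inner integral over $[0,s]$'' form required by \eqref{Gron_00}.

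Collecting the three estimates yields $A\le KB+\tfrac{K(1+K)}{\gamma}A+\tfrac{K^2}{\gamma}D$. For $\gamma\ge 2K(1+K)$ the coefficient of $A$ on the right is at most $1/2$, and since $A<\infty$ I can absorb it to obtain $\tfrac12 A\le KB+\tfrac{K^2}{\gamma}D$; using $K^2/\gamma\le 1/2$ then gives $A\le 2KB+D$, which is the first claimed inequality. The ``in particular'' statement follows by taking $\gamma=2K(1+K)$ and using $1\le e^{\gamma t}\le e^{\gamma T}$ on $[0,T]$, so that $\int_0^T a\le A\le 2Ke^{\gamma T}\int_0^T b+e^{\gamma T}\int_0^T c\le(2K+1)e^{2K(1+K)T}\int_0^T\{b+c\}$. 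I expect the only genuine obstacle to be getting the weighting in Cauchy--Schwarz right: a plain Cauchy--Schwarz introduces a factor $T-t$ that cannot be absorbed by enlarging $\gamma$, whereas inserting $e^{\mp\gamma s/2}$ both cancels the outer weight and produces the $1/\gamma$ gain that drives the absorption; the rest is bookkeeping with Fubini on $\{0\le t\le s\le T\}$ and two elementary exponential integral bounds.
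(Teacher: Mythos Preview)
Your proof is correct and follows essentially the same approach as the paper: multiply \eqref{Gron_0} by $e^{\gamma t}$, integrate, control the linear term by Fubini and $\int_0^s e^{\gamma t}\rd t\le e^{\gamma s}/\gamma$, control the quadratic term by the weighted Cauchy--Schwarz $e^{\mp\gamma s/2}$ trick followed by Fubini and \eqref{Gron_00}, then absorb the resulting $\tfrac{K(1+K)}{\gamma}A$ for $\gamma\ge 2K(1+K)$. The paper's argument is line-for-line the same, including the final constant bookkeeping.
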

\begin{proof}
Let $\gamma >0$ be fixed.
By the inequality \eqref{Gron_0}, we have
\begin{align*}
\int_{0}^{T}
	e^{\gamma t}
	a(t)
\rd t
\leq
K
\int_{0}^{T}
	e^{\gamma t}
	b(t)
\rd t
+
K
\int_{0}^{T}
	e^{\gamma t}
	\int_{t}^{T}
		a(s)
	\rd s
\rd t
+
K
\int_{0}^{T}
	e^{\gamma t}
	\int_{S}
		\Big(
			\int_{t}^{T}
				\zeta(s,t,x)
			\rd s
		\Big)^{2}
	\rd \mu(x)
	\rd t.
\end{align*}
By Fubini's theorem, we have
\begin{align*}
\int_{0}^{T}
	e^{\gamma t}
	\int_{t}^{T}
		a(s)
	\rd s
\rd t
=
\int_{0}^{T}
	a(t)
	\int_{0}^{t}
		e^{\gamma s}
	\rd s
\rd t
\leq
\frac{1}{\gamma}
\int_{0}^{T}
	e^{\gamma t}
	a(t)
\rd t.
\end{align*}
Furthermore, by using Fubini's theorem, H\"older's inequality and the inequality \eqref{Gron_00}, we have
\begin{align*}
\int_{0}^{T}
	e^{\gamma t}
	\int_{S}
		\Big(
			\int_{t}^{T}
				\zeta(s,t,x)
			\rd s
		\Big)^{2}
	\rd \mu(x)
\rd t
&=
\int_{S}
	\int_{0}^{T}
		e^{\gamma t}
		\Big(
			\int_{t}^{T}
				e^{-\frac{\gamma}{2}s}
				e^{\frac{\gamma}{2}s}
				\zeta(s,t,x)
			\rd s
		\Big)^{2}
	\rd t
\rd \mu(x)
\\&\leq
\int_{S}
	\int_{0}^{T}
		e^{\gamma t}
		\Big(
			\int_{t}^{T}
				e^{-\gamma s}
			\rd s
		\Big)
		\Big(
			\int_{t}^{T}
				e^{\gamma s}
					\zeta(s,t,x)^{2}
			\rd s
		\Big)
	\rd t
\rd \mu(x)
\\&\leq
\frac{1}{\gamma}
\int_{S}
	\int_{0}^{T}
		\int_{t}^{T}
			e^{\gamma s}
				\zeta(s,t,x)^{2}
		\rd s
	\rd t
\rd \mu(x)
\\&=
\frac{1}{\gamma}
\int_{0}^{T}
	e^{\gamma t}
	\int_{S}
		\int_{0}^{t}
				\zeta(t,s,x)^{2}
		\rd s
	\rd \mu(x)
\rd t
\\&\leq
\frac{K}{\gamma}
\int_{0}^{T}
	e^{\gamma t}
	\big\{
		a(t)
		+
		c(t)
	\big\}
\rd t.
\end{align*}
Hence we obtain
\begin{align*}
\int_{0}^{T}
	e^{\gamma t}
	a(t)
\rd t
\leq
\frac{K(1+K)}{\gamma}
\int_{0}^{T}
	e^{\gamma t}
	a(t)
\rd t
+
K
\int_{0}^{T}
	e^{\gamma t}
	b(t)
\rd t
+
\frac{K^{2}}{\gamma}
\int_{0}^{T}
	e^{\gamma t}
	c(t)
\rd t.
\end{align*}
Therefore, by choosing $\gamma \geq 2K(1+K)$, we get the assertion.
\end{proof}

The following is the discrete version of the above inequality.
The proof is similar to that of the above lemma, but we prove it for the sake of self-containedness.

\begin{lemm}\label{Lem_Gron_disc}
Let $\pi=\{t_{0},t_{1},\ldots,t_{N}\} \in \Pi[0,T]$ be fixed.
Let $\{a_{k}\}_{k=0}^{N-1}$, $\{b_{k}\}_{k=0}^{N-1}$, $\{c_{k}\}_{k=0}^{N-1}$ be nonnegative real-valued sequences, and for each $k=1,\ldots,N-1$ and $\ell=0,\ldots,k-1$, let $\zeta_{k,\ell}:S \to \real$ be a nonnegative measurable function.
Assume that there exists a constant $K>0$ such that
\begin{align}
\label{Gron_000}
&\displaystyle
a_{k}
\leq
K
\Big\{
	b_{k}
	+
	\sum_{\ell=k+1}^{N-1}
		\Delta t_{\ell}
		a_{\ell}
	+
	\int_{S}
		\Big(
			\sum_{\ell=k+1}^{N-1}
				\Delta t_{\ell}
				\zeta_{\ell,k}(x)
		\Big)^{2}
	\rd \mu(x)
\Big\},~k=0,\ldots,N-1,
\\
&
\displaystyle
\label{Gron_0000}
\int_{S}
	\sum_{\ell=0}^{k-1}
		\Delta t_{\ell}
			\zeta_{k,\ell}(x)^{2}
\rd \mu(x)
\leq
K
\big\{
	a_{k}
	+
	c_{k}
\big\},~
k=1,\ldots,N-1.
\end{align}
Then for any $\gamma \geq 2K(1+K)$, it holds that
\begin{align*}
\sum_{k=0}^{N-1}
\Gamma_{k}
a_{k}
\leq
2K
\sum_{k=0}^{N-1}
\Gamma_{k}
b_{k}
+
\sum_{k=0}^{N-1}
\Gamma_{k}
c_{k},
\end{align*}
where $\Gamma_{k}:=\int_{t_{k}}^{t_{k+1}} e^{\gamma t} \rd t$, $k=0,\ldots,N-1$.
In particular, we have
\begin{align*}
\sum_{k=0}^{N-1}
	\Delta t_{k}
	a_{k}
\leq
(2K+1)e^{2K(1+K)T}
\sum_{k=0}^{N-1}
	\Delta t_{k}
	\big\{
		b_{k}
		+
		c_{k}
	\big\}.
\end{align*}
\end{lemm}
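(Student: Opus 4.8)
The plan is to run for the discrete inequality exactly the weighting argument used for Lemma~\ref{Lem_Gron_cont}, with the continuous weight $e^{\gamma t}\rd t$ replaced by the discrete weights $\Gamma_k=\int_{t_k}^{t_{k+1}}e^{\gamma t}\rd t$. Concretely, I would multiply \eqref{Gron_000} by $\Gamma_k$ and sum over $k=0,\dots,N-1$, so that the right-hand side splits into three pieces: the term $K\sum_k\Gamma_k b_k$, the single-sum term $K\sum_k\Gamma_k\sum_{\ell=k+1}^{N-1}\Delta t_\ell a_\ell$, and the double-sum term $K\sum_k\Gamma_k\int_S\big(\sum_{\ell=k+1}^{N-1}\Delta t_\ell\zeta_{\ell,k}(x)\big)^2\rd\mu(x)$. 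The goal is to bound the last two pieces by $\frac{K(1+K)}{\gamma}\sum_\ell\Gamma_\ell a_\ell$ and $\frac{K^2}{\gamma}\sum_\ell\Gamma_\ell c_\ell$ (after combining the two $a$-contributions), so that choosing $\gamma\geq 2K(1+K)$ makes the coefficient of $\sum_\ell\Gamma_\ell a_\ell$ at most $\tfrac12$ and lets it be absorbed into the left-hand side, while the $c$-coefficient $\frac{2K^2}{\gamma}\leq 1$ produces the stated constant.

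The single-sum term is routine and is the exact discrete analogue of the step $\int_0^T e^{\gamma t}\int_t^T a(s)\rd s\,\rd t\leq\frac1\gamma\int_0^T e^{\gamma t}a(t)\rd t$ in Lemma~\ref{Lem_Gron_cont}. By discrete Fubini, $\sum_k\Gamma_k\sum_{\ell>k}\Delta t_\ell a_\ell=\sum_\ell\Delta t_\ell a_\ell\sum_{k<\ell}\Gamma_k$, and since $\sum_{k<\ell}\Gamma_k=\int_0^{t_\ell}e^{\gamma t}\rd t\leq \frac1\gamma e^{\gamma t_\ell}$ together with the elementary comparison $\Delta t_\ell e^{\gamma t_\ell}\leq\Gamma_\ell$, this is at most $\frac1\gamma\sum_\ell\Gamma_\ell a_\ell$.

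The double-sum term is the main obstacle, and it mirrors the Cauchy--Schwarz/Fubini computation for the $\zeta$-term in Lemma~\ref{Lem_Gron_cont}. After a discrete Cauchy--Schwarz splitting $\Delta t_\ell\zeta_{\ell,k}=(\sqrt{\Delta t_\ell}\,e^{-\gamma t_\ell/2})(\sqrt{\Delta t_\ell}\,e^{\gamma t_\ell/2}\zeta_{\ell,k})$, interchanging the two summation indices by Fubini, relabelling so that the larger index plays the role of the first argument of $\zeta$, and finally invoking \eqref{Gron_0000}, one expects the bound $\frac{K^2}{\gamma}\sum_\ell\Gamma_\ell(a_\ell+c_\ell)$. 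The delicate point is that this has to come out with the \emph{clean} weight $\Gamma_\ell$ and \emph{no} mesh-dependent loss: a naive discretisation that replaces $\int_{t_\ell}^{t_{\ell+1}}e^{\gamma t}\rd t$ by an endpoint value $e^{\gamma t_\ell}\Delta t_\ell$ or $e^{\gamma t_{\ell+1}}\Delta t_\ell$ introduces a spurious factor $e^{\gamma|\pi|}$ (on a non-uniform mesh the two endpoint rectangles differ by exactly this factor), and such a factor would destroy the constant $2K$. To avoid this I would keep every exponential weight as a genuine integral over a mesh interval throughout the Cauchy--Schwarz and Fubini steps. The cleanest way to organise this is to introduce the piecewise-constant interpolants $\hat a,\hat b,\hat c$, equal to $a_k,b_k,c_k$ on each $[t_k,t_{k+1})$, and $\hat\zeta$ defined by $\hat\zeta(t,s,x):=\zeta_{\ell,k}(x)$ for $t\in[t_\ell,t_{\ell+1})$, $s\in[t_k,t_{k+1})$ with $k<\ell$ (and $\hat\zeta:=0$ when $k=\ell$); one then checks that the discrete hypotheses \eqref{Gron_000}--\eqref{Gron_0000} imply the continuous hypotheses \eqref{Gron_0}--\eqref{Gron_00} for these interpolants with the \emph{same} constant $K$. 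Here nonnegativity is used to enlarge the integration interval $[t_{k+1},T]$ to $[t,T]$ in \eqref{Gron_0}, and the interval integrals $\int_{t_k}^{t_{k+1}}\zeta_{\ell,k}^2\,\rd s=\Delta t_k\zeta_{\ell,k}^2$ and $\int_{t_\ell}^{t_{\ell+1}}e^{\gamma t}\rd t=\Gamma_\ell$ reproduce the discrete weights. Applying Lemma~\ref{Lem_Gron_cont} to $\hat a,\hat b,\hat c,\hat\zeta$ and using $\int_0^T e^{\gamma t}\hat a(t)\rd t=\sum_k\Gamma_k a_k$ (and likewise for $\hat b,\hat c$) then yields the weighted inequality verbatim.

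Finally, the ``in particular'' statement follows by dropping the weights: from $\Delta t_k\leq\Gamma_k\leq e^{\gamma T}\Delta t_k$ (valid since $0\leq t_k<t_{k+1}\leq T$) and the choice $\gamma=2K(1+K)$, the weighted inequality gives $\sum_k\Delta t_k a_k\leq\sum_k\Gamma_k a_k\leq(2K+1)e^{2K(1+K)T}\sum_k\Delta t_k(b_k+c_k)$, exactly as claimed (equivalently, one may apply the ``in particular'' part of Lemma~\ref{Lem_Gron_cont} directly to the interpolants). I expect the only real work to be the index bookkeeping in the double-sum term, and the single genuine pitfall to be the endpoint-versus-integral issue flagged above, which is precisely why the weights $\Gamma_k$ (rather than point values of $e^{\gamma t}$) are the right objects to carry through the argument.
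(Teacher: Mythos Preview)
Your proposal is correct. The interpolation argument you outline---defining $\hat a,\hat b,\hat c,\hat\zeta$ as piecewise-constant extensions and checking that \eqref{Gron_000}--\eqref{Gron_0000} translate into \eqref{Gron_0}--\eqref{Gron_00} with the same constant $K$---works exactly as you describe. The key observations are that setting $\hat\zeta=0$ on the diagonal block makes $\int_t^T\hat\zeta(s,t,x)\rd s=\sum_{\ell>k}\Delta t_\ell\zeta_{\ell,k}(x)$ an equality, while nonnegativity of $a_k$ gives $\int_t^T\hat a(s)\rd s\geq\sum_{\ell>k}\Delta t_\ell a_\ell$; both verifications are routine, and the identification $\int_0^T e^{\gamma t}\hat a(t)\rd t=\sum_k\Gamma_k a_k$ then transfers the conclusion of Lemma~\ref{Lem_Gron_cont} verbatim.

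The paper takes a different route: it reproves the lemma from scratch in the discrete setting rather than reducing to the continuous case. The single-sum term is handled exactly as you do. For the double-sum term, the paper writes $\Delta t_\ell=\int_{t_\ell}^{t_{\ell+1}}e^{-\gamma s/2}e^{\gamma s/2}\rd s$ and applies Cauchy--Schwarz \emph{with the integrals kept intact}, obtaining the factor $\sum_{\ell>k}\int_{t_\ell}^{t_{\ell+1}}e^{-\gamma s}\rd s\leq\frac{1}{\gamma}e^{-\gamma t_{k+1}}$ times $\sum_{\ell>k}\Gamma_\ell\zeta_{\ell,k}^2$; the resulting product $\Gamma_k e^{-\gamma t_{k+1}}\leq\Delta t_k$ is precisely where the spurious $e^{\gamma|\pi|}$ factor is avoided. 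This is exactly the pitfall you identified, and the paper resolves it by the same mechanism you anticipated (keeping exponential weights as genuine integrals), just executed directly rather than packaged through Lemma~\ref{Lem_Gron_cont}. Your reduction is shorter and makes the dependence on the continuous lemma transparent; the paper's direct argument is self-contained and displays the discrete weight manipulations explicitly, which is convenient later when variants of the same weighting trick recur (e.g.\ in the proof of Lemma~\ref{Prop_EM_0}).
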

\begin{proof}
Let $\gamma >0$ be fixed.
By the inequality \eqref{Gron_000}, we have
\begin{align*}
\sum_{k=0}^{N-1}
\Gamma_{k}
a_{k}
\leq
K
\sum_{k=0}^{N-1}
\Gamma_{k}
b_{k}
+
K
\sum_{k=0}^{N-2}
\Gamma_{k}
\sum_{\ell=k+1}^{N-1}
\Delta t_{\ell}
a_{\ell}
+
K
\sum_{k=0}^{N-2}
\Gamma_{k}
\int_{S}
	\Big(
		\sum_{\ell=k+1}^{N-1}
			\Delta t_{\ell}
			\zeta_{\ell,k}(x)
	\Big)^{2}
\rd \mu(x).
\end{align*}
Noting that $\Delta t_{k} e^{\gamma t_{k}} \leq \Gamma_{k} \leq \Delta t_{k} e^{\gamma t_{k+1}}$ for each $k=0,\ldots,N-1$, we have
\begin{align*}
\sum_{k=0}^{N-2}
\Gamma_{k}
\sum_{\ell=k+1}^{N-1}
\Delta t_{\ell}
a_{\ell}
=
\sum_{k=1}^{N-1}
\Delta t_{k}
a_{k}
\int_{0}^{t_{k}}
	e^{\gamma s}
\rd s
\leq
\frac{1}{\gamma}
\sum_{k=1}^{N-1}
\Delta t_{k} e^{\gamma t_{k}}
a_{k}
\leq
\frac{1}{\gamma}
\sum_{k=0}^{N-1}
\Gamma_{k}
a_{k}.
\end{align*}
Furthermore, by using H\"older's inequality and the inequality \eqref{Gron_0000}, we have
\begin{align*}
\sum_{k=0}^{N-2}
\Gamma_{k}
\int_{S}
	\Big(
		\sum_{\ell=k+1}^{N-1}
			\Delta t_{\ell}
			\zeta_{\ell,k}(x)
	\Big)^{2}
\rd \mu(x)
&=
\int_{S}
	\sum_{k=0}^{N-2}
	\Gamma_{k}
	\Big(
		\sum_{\ell=k+1}^{N-1}
			\zeta_{\ell,k}(x)
			\int_{t_{\ell}}^{t_{\ell+1}}
				e^{-\frac{\gamma}{2}s}
				e^{\frac{\gamma}{2}s}
			\rd s
	\Big)^{2}
\rd \mu(x)
\\&\leq
\int_{S}
	\sum_{k=0}^{N-2}
	\Gamma_{k}
	\Big(
		\sum_{\ell=k+1}^{N-1}
		\int_{t_{\ell}}^{t_{\ell+1}}
			e^{-\gamma s}
		\rd s
	\Big)
	\Big(
		\sum_{\ell=k+1}^{N-1}
				\zeta_{\ell,k}(x)^{2}
			\int_{t_{\ell}}^{t_{\ell+1}}
				e^{\gamma s}
			\rd s
	\Big)
\rd \mu(x)
\\&\leq
\frac{1}{\gamma}
\int_{S}
	\sum_{k=0}^{N-2}
	\Gamma_{k}
	e^{-\gamma t_{k+1}}
	\sum_{\ell=k+1}^{N-1}
	\Gamma_{\ell}
		\zeta_{\ell,k}(x)^{2}
\rd \mu(x)
\\&\leq
\frac{1}{\gamma}
\int_{S}
	\sum_{k=0}^{N-2}
		\Delta t_{k}
		\sum_{\ell=k+1}^{N-1}
		\Gamma_{\ell}
			\zeta_{\ell,k}(x)^{2}
\rd \mu(x)
\\&=
\frac{1}{\gamma}
\sum_{k=1}^{N-1}
	\Gamma_{k}
	\int_{S}
		\sum_{\ell=0}^{k-1}
		\Delta t_{\ell}
			\zeta_{k,\ell}(x)^{2}
	\rd \mu(x)
\\&\leq
\frac{K}{\gamma}
\sum_{k=1}^{N-1}
\Gamma_{k}
\big\{
	a_{k}
	+
	c_{k}
\big\}.
\end{align*}
Hence we obtain
\begin{align*}
\sum_{k=0}^{N-1}
\Gamma_{k}
a_{k}
\leq
\frac{K(1+K)}{\gamma}
\sum_{k=0}^{N-1}
\Gamma_{k}
a_{k}
+
K
\sum_{k=0}^{N-1}
\Gamma_{k}
b_{k}
+
\frac{K^{2}}{\gamma}
\sum_{k=0}^{N-1}
\Gamma_{k}
c_{k}.
\end{align*}
Therefore, by choosing $\gamma \geq 2K(1+K)$, we get the assertion.
\end{proof}

\section{BSDE approximations for  general Type-\Rnum{2} BSVIEs}\label{sec_BSDE}

In this and the next sections, we consider the general Type-\Rnum{2} BSVIE \eqref{BSVIE_II_1} with the stochastic coefficients $(\Psi,G)$.
We first give basic assumptions for the free term $\Psi$ and the driver $G$.

\begin{itemize}
\item[$(\mathrm{H}_{\Psi,G})$]
\begin{itemize}
\item[(i)]
$\Psi:\Omega \times [0,T] \to \real^{m}$ is measurable, and $\Psi(t)$ is $\mathcal{F}_{T}$-measurable for any $t \in [0,T]$;
\item[(ii)]
$G: \Omega \times \Delta[0,T] \times \real^{m} \times \real^{m \times d} \times \real^{m \times d} \to \real^{m}$ is measurable, and the process $(G(t,s,y,z_{1},z_{2}))_{s \in [t,T]}$ is progressively measurable for any $(t,y,z_{1},z_{2}) \in [0,T] \times \real^{m} \times \real^{m \times d} \times \real^{m \times d}$;
\item[(iii)]
There exists a constant $L>0$ such that, for any $(t,s) \in \Delta [0,T]$ and $(y,z_{1},z_{2}), (y',z_{1}',z_{2}') \in \real^{m} \times \real^{m \times d} \times \real^{m \times d}$,
\begin{align*}
|G(t,s,y,z_{1},z_{2})-G(t,s,y',z_{1}',z_{2}')|
\leq
L
\big\{
	|y-y'|
	+
	|z_{1}-z_{1}'|
	+
	|z_{2}-z_{2}'|
\big\};
\end{align*}
\item[(iv)]
There exists a constant $M \geq 1$ such that
\begin{align*}
&
\e\Big[
	\int_{0}^{T}
		\big|\Psi(t)\big|^{2}
	\rd t
	+
	\int_{0}^{T}
		\int_{t}^{T}
			\big|G(t,s,0,0,0)\big|^{2}
		\rd s
	\rd t
\Big]^{1/2}
\leq
M;
\end{align*}

\item[(v)]
There exists an increasing and continuous function $\rho_{\Psi,G}:[0,\infty) \to [0,\infty)$ with $\rho_{\Psi,G}(0)=0$ such that, for any $0 \leq t,t' \leq s \leq T$ and $(Y,Z_{1},Z_{2}) \in L^{2}_{\mathcal{F}}(\Omega;\real^{m}) \times (L^{2}_{\mathcal{F}}(\Omega;\real^{m \times d}))^{2}$,
\begin{align*}
&\e
\Big[
	\big|\Psi(t)-\Psi(t')\big|^{2}
	+
	\big|G(t,s,Y,Z_{1},Z_{2})-G(t',s,Y,Z_{1},Z_{2})\big|^{2}
\Big]^{1/2}
\\&
\leq
\rho_{\Psi,G}(|t-t'|)
\Big\{
	M
	+
	\e\big[
		\big|Y\big|^{2}
	\big]^{1/2}
	+
	\e\big[
		\big|Z_{1}\big|^{2}
	\big]^{1/2}
	+
	\e\big[
		\big|Z_{2}\big|^{2}
	\big]^{1/2}
\Big\}.
\end{align*}

\end{itemize}
\end{itemize}

\begin{defi}\label{Def_M_sol}
A pair $(Y(\cdot),Z(\cdot,\cdot))$ is  called an adapted M-solution of BSVIE \eqref{BSVIE_II_1} if it satisfies the following conditions:
\begin{itemize}
\item[(i)]
$Y(\cdot) \in L^{2}_{\mathbb{F}}(0,T;\real^{m})$ and $Z(\cdot,\cdot) \in L^{2}(0,T;L^{2}_{\mathbb{F}}(0,T;\real^{m \times d}))$;
\item[(ii)]
It holds that
\begin{align*}
Y(t)
=
\e[Y(t)]
+
\int_{0}^{t}
	Z(t,s)
\rd W(s),~\text{a.e.}~t \in [0,T],~\text{a.s.};
\end{align*}
\item[(iii)]
$(Y(\cdot),Z(\cdot,\cdot))$ satisfies \eqref{BSVIE_II_1} for a.e.\ $t \in [0,T]$, a.s.
\end{itemize}
Here, the M-solution is named after the martingale representation theorem which determines the values of $Z(t,s)$ for $(t,s) \in \Delta^{\mathrm{c}}[0,T]$ by (ii) (see \cite{Yo08}).
\end{defi}

The following lemma shows the well-posedness of Type-\Rnum{2} BSVIE \eqref{BSVIE_II_1} (cf.\ Theorem 3.7 in \cite{Yo08}).

\begin{lemm}\label{Prop_BSVIE_0}
Under the conditions (i), (ii), (iii) and (iv) in $(\mathrm{H}_{\Psi,G})$, BSVIE \eqref{BSVIE_II_1} has a unique adapted M-solution.
Furthermore, it holds that
\begin{align}
\label{Prop_BSVIE_0_1}
\e\Big[
	\int_{0}^{T}
		\big|
			Y(t)
		\big|^{2}
	\rd t
	+
	\int_{0}^{T}
		\int_{0}^{T}
			\big|
				Z(t,s)
			\big|^{2}
		\rd s
	\rd t
\Big]
\leq
CM^{2}.
\end{align}
\end{lemm}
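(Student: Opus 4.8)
The plan is to realize the adapted M-solution as the unique fixed point of a solution map built from standard BSDEs, and then to push all estimates through the Gronwall-type Lemma \ref{Lem_Gron_cont}. I would work on the Hilbert space $\mathcal{M}[0,T]$ of pairs $(y(\cdot),z(\cdot,\cdot)) \in L^{2}_{\mathbb{F}}(0,T;\real^{m}) \times L^{2}(0,T;L^{2}_{\mathbb{F}}(0,T;\real^{m \times d}))$ satisfying the martingale constraint (ii) of Definition \ref{Def_M_sol}, equipped with the weighted norm $\|(y,z)\|_{\gamma}^{2} := \int_{0}^{T} e^{\gamma t}\,\e\big[\,|y(t)|^{2} + \int_{0}^{T} |z(t,s)|^{2}\,\rd s\,\big]\,\rd t$. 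Given an input $(y,z) \in \mathcal{M}[0,T]$, for each fixed $t \in [0,T]$ I solve the standard BSDE on $[t,T]$ (in the variable $s$) with terminal value $\Psi(t)$ and driver $s \mapsto G(t,s,y(s),\,\cdot\,,z(s,t))$, in which only the $Z(t,s)$-slot is the genuine unknown; by Lemma \ref{apriori_0} this BSDE is well-posed. Setting $Y(t)$ equal to its value at $s=t$ and $Z(t,s)$ equal to its martingale integrand for $s \in [t,T]$, and then invoking the martingale representation theorem to define $Z(t,s)$ for $s \in [0,t)$ so that (ii) holds, yields the image $\Theta(y,z):=(Y,Z) \in \mathcal{M}[0,T]$. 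A fixed point of $\Theta$ is precisely an adapted M-solution of \eqref{BSVIE_II_1}.

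For well-posedness I would show $\Theta$ is a contraction for $\gamma$ large. Comparing the images of $(y^{1},z^{1})$ and $(y^{2},z^{2})$, the stability estimate of Lemma \ref{apriori_0} (with $p=2$) together with the Lipschitz condition (iii) gives, for each $t$,
\[
\e\Big[\,|Y^{1}(t)-Y^{2}(t)|^{2} + \int_{t}^{T} |Z^{1}(t,s)-Z^{2}(t,s)|^{2}\,\rd s\,\Big] \le C\,\e\Big[\Big(\int_{t}^{T} \big(|y^{1}(s)-y^{2}(s)| + |z^{1}(s,t)-z^{2}(s,t)|\big)\,\rd s\Big)^{2}\Big],
\]
while It\^o's isometry applied to constraint (ii) yields $\e[\int_{0}^{t} |Z^{1}(t,s)-Z^{2}(t,s)|^{2}\,\rd s] \le \e[|Y^{1}(t)-Y^{2}(t)|^{2}]$. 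Integrating the first inequality against $e^{\gamma t}$ and estimating the right-hand side exactly as in the proof of Lemma \ref{Lem_Gron_cont} (Cauchy--Schwarz in $s$ followed by Fubini, which converts the double $z$-integral into $\frac{1}{\gamma}\int_{0}^{T} e^{\gamma t}\,\e[\int_{0}^{t}|z^{1}(t,s)-z^{2}(t,s)|^{2}\,\rd s]\,\rd t$) bounds $\|\Theta(y^{1},z^{1})-\Theta(y^{2},z^{2})\|_{\gamma}^{2}$ by $\frac{C}{\gamma}\|(y^{1},z^{1})-(y^{2},z^{2})\|_{\gamma}^{2}$. Choosing $\gamma$ large makes $\Theta$ a contraction, and the Banach fixed point theorem delivers existence and uniqueness.

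For the a priori bound \eqref{Prop_BSVIE_0_1}, I would apply Lemma \ref{Lem_Gron_cont} with the measure space $(S,\mu)=(\Omega,\p)$ and $\zeta(s,t,\omega):=|Z(s,t)(\omega)|$, setting $a(t):=\e[|Y(t)|^{2}]$, letting $b(t)$ collect the contributions of $\e[|\Psi(t)|^{2}]$ and $\e[(\int_{t}^{T}|G(t,s,0,0,0)|\,\rd s)^{2}]$, and taking $c(t):=0$. Applying Lemma \ref{apriori_0} to the row-$t$ equation \eqref{BSVIE_II_1} (treating $Y(s)$ and $Z(s,t)$ as data and using condition (iii)) produces hypothesis \eqref{Gron_0}, the $\int_{t}^{T}|Y(s)|\,\rd s$ term being absorbed into $\int_{t}^{T} a(s)\,\rd s$ via Cauchy--Schwarz; the martingale constraint (ii) and It\^o's isometry give $\e[\int_{0}^{t}|Z(t,s)|^{2}\,\rd s]=\e[|Y(t)-\e[Y(t)]|^{2}]\le a(t)$, which provides hypothesis \eqref{Gron_00}. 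The lemma then bounds $\int_{0}^{T} a(t)\,\rd t$, and feeding this back into the BSDE estimate controls the $\Delta[0,T]$-part of $Z$, while (ii) controls the $\Delta^{\mathrm{c}}[0,T]$-part; assumption (iv) finally packages the data into $CM^{2}$.

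The main obstacle is the circular dependence created by the Type-\Rnum{2} term $Z(s,t)$: the driver of row $t$ involves the martingale integrand of row $s>t$, which is itself part of the unknown, determined through $Y(s)$ by constraint (ii). This is exactly what obstructs a row-by-row decoupling and what Lemma \ref{Lem_Gron_cont} is engineered to absorb — the bound \eqref{Gron_00} coming from It\^o's isometry is the quantitative link that allows the term $\int_{S}(\int_{t}^{T}\zeta(s,t,x)\,\rd s)^{2}\,\rd\mu(x)$ appearing in \eqref{Gron_0} to be reabsorbed after integration in $t$, thereby closing the loop.
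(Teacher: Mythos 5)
Your proposal is correct, but note that the paper does not actually prove this lemma: it is quoted as known, with a pointer to Theorem~3.7 of \cite{Yo08}. What you have written is essentially a reconstruction of that standard proof --- a contraction on the space of pairs satisfying the M-constraint, under a $\gamma$-weighted norm --- recast so that the absorption step is carried out by the paper's Lemma~\ref{Lem_Gron_cont} (which the authors themselves say is inspired by the weighted-norm treatment of \cite{ShWaYo15}). The two load-bearing points are exactly the ones you identify: the stability estimate of Lemma~\ref{apriori_0} applied row by row in $t$, and the It\^o-isometry bound $\e[\int_0^t|Z(t,s)|^2\rd s]\le\e[|Y(t)|^2]$ coming from constraint (ii), which converts the sub-diagonal dependence $Z(s,t)$ of the driver into hypothesis \eqref{Gron_00} and closes the loop; your bookkeeping for the contraction constant ($O(1/\gamma)$ after Cauchy--Schwarz and Fubini) and for the a~priori bound \eqref{Prop_BSVIE_0_1} is sound. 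Two small points you pass over silently: (a) to invoke Lemma~\ref{apriori_0} for a.e.\ $t$ and to land back in $L^2(0,T;L^2_{\mathbb F}(0,T;\real^{m\times d}))$ one must check joint measurability of the two-parameter family $(t,s,\omega)\mapsto(Y(t,s),Z(t,s))$ produced by the row-wise BSDEs --- standard but not free, and it is handled carefully in \cite{Yo08}; (b) condition (iv) controls $\int_0^T\int_t^T|G(t,s,0,0,0)|^2\rd s\rd t$, whereas Lemma~\ref{apriori_0} asks for the $L^1$-in-$s$ norm of the driver, so a Cauchy--Schwarz step $(\int_t^T|G|\rd s)^2\le T\int_t^T|G|^2\rd s$ is needed before $b(t)$ integrates to $CM^2$.
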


Fix $\pi=\{t_{0},t_{1},\ldots,t_{N}\} \in \Pi[0,T]$ and let $\mathbb{H}$ be a Euclidean space.
For each $\theta \in [0,T]$ and $\zeta(\cdot) \in L^{2}_{\mathbb{F}}(0,T;\mathbb{H})$, define 
\begin{align*}
\mathcal{I}^{\pi,\theta}[\zeta(\cdot)](t_{k})
:=
\frac{1}{\Delta t_{k}}
\int_{t_{k} \vee \theta}^{t_{k+1} \vee \theta}
	\zeta(s)
\rd s,~
k=0,\ldots, N-1.
\end{align*}
Note that $\mathcal{I}^{\pi,\theta}[\zeta(\cdot)](t_{k})$ is $\mathcal{F}_{t_{k+1}}$-measurable.
Denote $\mathcal{I}^{\pi,0}[\zeta(\cdot)](t_{k})$ by $\mathcal{I}^{\pi}[\zeta(\cdot)](t_{k})$, that is,
\begin{align*}
\mathcal{I}^{\pi}[\zeta(\cdot)](t_{k})
=
\frac{1}{\Delta t_{k}}
\int_{t_{k}}^{t_{k+1}}
	\zeta(s)
\rd s,~
k=0,\ldots, N-1.
\end{align*}

We introduce the following BSDE system with parameter $\theta \in [0,T]$:
\begin{align}\label{BSDE_sys_0}
\begin{cases}
\mathscr{Y}^{\pi,\theta}(t_{k},s)
=
\displaystyle
\mathscr{Y}^{\pi,\theta}(t_{k},t_{\ell+1})
+
\int_{s}^{t_{\ell+1}}
	G
	(
		t_{k},
		r,
		\mathscr{Y}^{\pi,\theta}(t_{\ell},r),
		\mathscr{Z}^{\pi,\theta}(t_{k},r),
		\mathcal{I}^{\pi,\theta}[\mathscr{Z}^{\pi,\theta}(t_{\ell},\cdot)](t_{k})
	)
	\1_{\{k<\ell\}}
\rd r
\\\displaystyle \quad\quad\quad\quad\quad\quad
-
\int_{s}^{t_{\ell+1}}
	\mathscr{Z}^{\pi,\theta}(t_{k},r)
\rd W(r),~
s \in [t_{\ell},t_{\ell+1}],~k,\ell=0,\ldots, N-1,
\\
\mathscr{Y}^{\pi,\theta}(t_{k},t_{N})
=
\Psi(t_{k}),~
k=0,\ldots,N-1.
\end{cases}
\end{align}
The above BSDE system is equivalent to the following:
\begin{align}\label{BSDE_sys_1}
\begin{split}
\mathscr{Y}^{\pi,\theta}(t_{k},s)
&=
\Psi(t_{k})
+
\int_{s}^{T}
	G
	(
		t_{k},
		r,
		\mathscr{Y}^{\pi,\theta}(\tau(r),r),
		\mathscr{Z}^{\pi,\theta}(t_{k},r),
		\mathcal{I}^{\pi,\theta}[\mathscr{Z}^{\pi,\theta}(\tau(r),\cdot)](t_{k})
	)
	\1_{[t_{k+1},T)}(r)
\rd r
\\&\quad
-
\int_{s}^{T}
	\mathscr{Z}^{\pi,\theta}(t_{k},r)
\rd W(r),~s \in [0,T],~k=0,\ldots,N-1.
\end{split}
\end{align}
We note that, under $(\mathrm{H}_{\Psi,G})$, $\Psi(t_{k})$ and $G(t_{k},s,y,z_{1},z_{2})$ are well-defined for each $k=0,\ldots,N-1$ and $(s,y,z_{1},z_{2}) \in [t_{k},T] \times \real^{m} \times \real^{m \times d} \times \real^{m \times d} $, since $\Psi$ and $G$ are continuous with respect to $t \in [0,T]$ in the sense of (v).
Furthermore, it holds that
\begin{align*}
&\sum_{k=0}^{N-1}
\Delta t_{k}
\e\Big[
	\big|\Psi(t_{k})\big|^{2}
	+
	\int_{t_{k+1}}^{T}
		\big|G(t_{k},s,0,0,0)\big|^{2}
	\rd s
\Big]
\leq
CM^{2}(1+\rho_{\Psi,G}(|\pi|)^{2}).
\end{align*}

First, we show fundamental properties of BSDE system \eqref{BSDE_sys_0}.
\begin{lemm}\label{Lem_BSDE_sys_0}
Under $(\mathrm{H}_{\Psi,G})$, for any $\pi=\{t_{0},t_{1},\ldots,t_{N}\} \in \Pi[0,T]$ and $\theta \in [0,T]$, there exists a unique solution
$
\{
	(\mathscr{Y}^{\pi,\theta}(t_{k},\cdot),
	\mathscr{Z}^{\pi,\theta}(t_{k},\cdot))
\}_{k=0}^{N-1}
\in
(
	L^{2}_{\mathbb{F}}(\Omega, C([0,T];\real^{m}))
	\times
	L^{2}_{\mathbb{F}}(0,T;\real^{m \times d})
)^{N}
$
of BSDE system \eqref{BSDE_sys_0}.
Furthermore, the following estimate holds:
\begin{align}\label{Lem_3_1_2}
\begin{split}
&\sum_{k=0}^{N-1}
\Delta t_{k}
\e\Big[
	\sup_{s \in [0,T]}
	\big|
		\mathscr{Y}^{\pi,\theta}(t_{k},s)
	\big|^{2}
	+
	\int_{0}^{T}
		\big|
			\mathscr{Z}^{\pi,\theta}(t_{k},s)
		\big|^{2}
	\rd s
\Big]
\\&\leq
C\sum_{k=0}^{N-1}
\Delta t_{k}
\e\Big[
	\big|\Psi(t_{k})\big|^{2}
	+
	\Big(
		\int_{t_{k+1}}^{T}
			\big|G(t_{k},s,0,0,0)\big|
		\rd s
	\Big)^{2}
\Big].
\end{split}
\end{align}
For $i=1,2$, let $(\Psi_{i},G_{i})$ satisfy $(\mathrm{H}_{\Psi,G})$ for $(\Psi,G)=(\Psi_{i},G_{i})$ with the same constants $L$ and $M$, and let
$
\{
	(\mathscr{Y}_{i}^{\pi,\theta}(t_{k},\cdot),
	\mathscr{Z}_{i}^{\pi,\theta}(t_{k},\cdot))
\}_{k=0}^{N-1}
\in
(
	L^{2}_{\mathbb{F}}(\Omega, C([0,T];\real^{m}))
	\times
	L^{2}_{\mathbb{F}}(0,T;\real^{m \times d})
)^{N}
$
be the unique solution of BSDE system \eqref{BSDE_sys_0} corresponding to $(\Psi_{i},G_{i})$.
Then it holds that
\begin{align}\label{Lem_3_1_1}
\begin{split}
&\sum_{k=0}^{N-1}
\Delta t_{k}
\e\Big[
	\sup_{s \in [0,T]}
	\big|
		\mathscr{Y}_{1}^{\pi,\theta}(t_{k},s)
		-
		\mathscr{Y}_{2}^{\pi,\theta}(t_{k},s)
	\big|^{2}
	+
	\int_{0}^{T}
		\big|
			\mathscr{Z}_{1}^{\pi,\theta}(t_{k},s)
			-
			\mathscr{Z}_{2}^{\pi,\theta}(t_{k},s)
		\big|^{2}
	\rd s
\Big]
\\&
\leq
C
\sum_{k=0}^{N-1}
\Delta t_{k}
\e\Big[
	\big|
		\Psi_{1}(t_{k})
		-
		\Psi_{2}(t_{k})
	\big|^{2}
	\\&\quad\quad\quad\quad\quad\quad\quad
	+
	\Big(
		\int_{t_{k+1}}^{T}
			\big|
				G_{1}
				(
					t_{k},
					s,
					\mathscr{Y}_{1}^{\pi,\theta}(\tau(s),s),
					\mathscr{Z}_{1}^{\pi,\theta}(t_{k},s),
					\mathcal{I}^{\pi,\theta}[\mathscr{Z}_{1}^{\pi,\theta}(\tau(s),\cdot)](t_{k})
				)
				\\&\quad\quad\quad\quad\quad\quad\quad\quad\quad\quad\quad\quad
				-
				G_{2}
				(
					t_{k},
					s,
					\mathscr{Y}_{1}^{\pi,\theta}(\tau(s),s),
					\mathscr{Z}_{1}^{\pi,\theta}(t_{k},s),
					\mathcal{I}^{\pi,\theta}[\mathscr{Z}_{1}^{\pi,\theta}(\tau(s),\cdot)](t_{k})
				)
			\big|
		\rd s
	\Big)^{2}
\Big].
\end{split}
\end{align}
\end{lemm}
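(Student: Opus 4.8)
The plan is to exploit the triangular (cascade) structure of system~\eqref{BSDE_sys_1}. Because of the factor $\1_{[t_{k+1},T)}(r)$, the $k$-th equation couples to the components $\mathscr{Y}^{\pi,\theta}(t_{\ell},\cdot)$ and $\mathscr{Z}^{\pi,\theta}(t_{\ell},\cdot)$ only for $\ell>k$ (those with $t_{\ell}=\tau(r)\geq t_{k+1}$). I would therefore construct the solution by backward induction on $k$. For $k=N-1$ the indicator vanishes and the equation is a pure martingale representation problem for $\Psi(t_{N-1})$. Assuming the components of index $\ell>k$ are already built in $L^{2}_{\mathbb{F}}(\Omega,C([0,T];\real^{m}))\times L^{2}_{\mathbb{F}}(0,T;\real^{m\times d})$, the $k$-th equation becomes a \emph{standard} BSDE on $[0,T]$ with terminal value $\Psi(t_{k})$ and driver
\[
\widetilde{G}_{k}(r,z):=G\big(t_{k},r,\mathscr{Y}^{\pi,\theta}(\tau(r),r),z,\mathcal{I}^{\pi,\theta}[\mathscr{Z}^{\pi,\theta}(\tau(r),\cdot)](t_{k})\big)\1_{[t_{k+1},T)}(r),
\]
which is independent of the $y$-variable, Lipschitz in $z$ with constant $L$, and progressively measurable (for $r\geq t_{k+1}$ the average is $\mathcal{F}_{t_{k+1}}\subseteq\mathcal{F}_{r}$-measurable); moreover $\e[(\int_{0}^{T}|\widetilde{G}_{k}(r,0)|\rd r)^{2}]<\infty$ by the induction hypothesis and $(\mathrm{H}_{\Psi,G})$. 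Lemma~\ref{apriori_0} with $p=2$ then gives existence, uniqueness and the stated regularity, closing the induction.

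For the a priori bound~\eqref{Lem_3_1_2} the main issue is to keep the constant independent of $N$; a naive iteration of the per-equation estimate would blow up. The remedy is the discrete Gronwall inequality, Lemma~\ref{Lem_Gron_disc}, which I would apply with $S=\Omega$, $\mu=\p$,
\[
a_{k}:=\e\Big[\sup_{s\in[0,T]}\big|\mathscr{Y}^{\pi,\theta}(t_{k},s)\big|^{2}+\int_{0}^{T}\big|\mathscr{Z}^{\pi,\theta}(t_{k},s)\big|^{2}\rd s\Big],\quad
\zeta_{\ell,k}(\omega):=\big|\mathcal{I}^{\pi,\theta}[\mathscr{Z}^{\pi,\theta}(t_{\ell},\cdot)](t_{k})\big|(\omega).
\]
Applying Lemma~\ref{apriori_0} to the $k$-th BSDE, bounding $|\widetilde{G}_{k}(r,0)|\leq\big(|G(t_{k},r,0,0,0)|+L|\mathscr{Y}^{\pi,\theta}(\tau(r),r)|+L|\mathcal{I}^{\pi,\theta}[\mathscr{Z}^{\pi,\theta}(\tau(r),\cdot)](t_{k})|\big)\1_{[t_{k+1},T)}(r)$, and using the Cauchy--Schwarz inequality $\big(\int_{t_{k+1}}^{T}|\mathscr{Y}^{\pi,\theta}(\tau(r),r)|\rd r\big)^{2}\leq T\sum_{\ell=k+1}^{N-1}\Delta t_{\ell}\sup_{s}|\mathscr{Y}^{\pi,\theta}(t_{\ell},s)|^{2}$, I would verify hypothesis~\eqref{Gron_000} with $b_{k}=\e[|\Psi(t_{k})|^{2}+(\int_{t_{k+1}}^{T}|G(t_{k},r,0,0,0)|\rd r)^{2}]$.

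The coupling hypothesis~\eqref{Gron_0000} is the step where the averaging operator must be reabsorbed into the genuine $L^{2}$-norm of $\mathscr{Z}^{\pi,\theta}(t_{k},\cdot)$, and I expect it to be the main technical point. Since the interval $[t_{\ell}\vee\theta,t_{\ell+1}\vee\theta]$ has length at most $\Delta t_{\ell}$, Jensen's (Cauchy--Schwarz) inequality gives $\Delta t_{\ell}\big|\mathcal{I}^{\pi,\theta}[\mathscr{Z}^{\pi,\theta}(t_{k},\cdot)](t_{\ell})\big|^{2}\leq\int_{t_{\ell}\vee\theta}^{t_{\ell+1}\vee\theta}|\mathscr{Z}^{\pi,\theta}(t_{k},s)|^{2}\rd s$, and summing the disjoint subintervals over $\ell=0,\dots,k-1$ yields $\sum_{\ell=0}^{k-1}\Delta t_{\ell}\,\e[|\mathcal{I}^{\pi,\theta}[\mathscr{Z}^{\pi,\theta}(t_{k},\cdot)](t_{\ell})|^{2}]\leq a_{k}$, i.e.\ \eqref{Gron_0000} holds with $c_{k}=0$. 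The conclusion of Lemma~\ref{Lem_Gron_disc} then delivers $\sum_{k}\Delta t_{k}a_{k}\leq C\sum_{k}\Delta t_{k}b_{k}$, which is precisely~\eqref{Lem_3_1_2}.

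Finally, I would obtain the stability estimate~\eqref{Lem_3_1_1} by running the same scheme on the differences $\delta\mathscr{Y}:=\mathscr{Y}_{1}^{\pi,\theta}-\mathscr{Y}_{2}^{\pi,\theta}$ and $\delta\mathscr{Z}:=\mathscr{Z}_{1}^{\pi,\theta}-\mathscr{Z}_{2}^{\pi,\theta}$, now invoking the stability part of Lemma~\ref{apriori_0}. Adding and subtracting $G_{2}$ evaluated at the frozen values of the first system splits the driver difference into the term $G_{1}-G_{2}$ evaluated at the first solution (which plays the role of $b_{k}$ and reproduces the right-hand side of~\eqref{Lem_3_1_1}) plus a Lipschitz remainder bounded by $L\big(|\delta\mathscr{Y}(\tau(r),r)|+|\mathcal{I}^{\pi,\theta}[\delta\mathscr{Z}(\tau(r),\cdot)](t_{k})|\big)$. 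Applying Lemma~\ref{Lem_Gron_disc} with $\zeta_{\ell,k}=|\mathcal{I}^{\pi,\theta}[\delta\mathscr{Z}(t_{\ell},\cdot)](t_{k})|$ and $c_{k}=0$, exactly as above, then yields~\eqref{Lem_3_1_1}.
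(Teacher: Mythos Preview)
Your proposal is correct and follows essentially the same approach as the paper: backward induction on $k$ for existence/uniqueness, then the per-equation stability estimate from Lemma~\ref{apriori_0} combined with the discrete Gronwall-like inequality (Lemma~\ref{Lem_Gron_disc}) with exactly the choices of $a_k$, $b_k$, $c_k=0$, and $\zeta_{k,\ell}$ you describe. Two minor organizational differences are worth noting: the paper proves the stability estimate~\eqref{Lem_3_1_1} first and then obtains~\eqref{Lem_3_1_2} as the special case $(\Psi_1,G_1)=(0,0)$, $(\Psi_2,G_2)=(\Psi,G)$, whereas you run the two arguments in parallel; and in checking~\eqref{Gron_0000} the paper goes one step further than your Jensen bound, using the martingale-representation identity $\mathscr{Y}_i^{\pi,\theta}(t_k,t_k)=\e[\mathscr{Y}_i^{\pi,\theta}(t_k,t_k)]+\int_0^{t_k}\mathscr{Z}_i^{\pi,\theta}(t_k,s)\rd W(s)$ (valid because the driver vanishes on $[0,t_{k+1})$) to bound $\e[\int_0^{t_k}|\delta\mathscr{Z}(t_k,s)|^2\rd s]$ by $\e[|\delta\mathscr{Y}(t_k,t_k)|^2]$ rather than by $\e[\int_0^T|\delta\mathscr{Z}(t_k,s)|^2\rd s]$. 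Both bounds are dominated by $a_k$, so either route closes the argument.
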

\begin{proof}
Fix $\pi=\{t_{0},t_{1},\ldots,t_{N}\} \in \Pi[0,T]$ and $\theta \in [0,T]$.
We show that there exists a unique solution 
$
\{
	(\mathscr{Y}^{\pi,\theta}(t_{k},\cdot),
	\mathscr{Z}^{\pi,\theta}(t_{k},\cdot))
\}_{k=0}^{N-1}
\in
(
	L^{2}_{\mathbb{F}}(\Omega, C([0,T];\real^{m}))
	\times
	L^{2}_{\mathbb{F}}(0,T;\real^{m \times d})
)^{N}
$
of BSDE system \eqref{BSDE_sys_0} by a backward induction with respect to $k=0,\ldots,N-1$.

For $k=N-1$, the equation \eqref{BSDE_sys_1} for $(\mathscr{Y}^{\pi,\theta}(t_{N-1},\cdot),\mathscr{Z}^{\pi,\theta}(t_{N-1},\cdot))$ becomes
\begin{align*}
\mathscr{Y}^{\pi,\theta}(t_{N-1},s)
=
\Psi(t_{N-1})
-
\int_{s}^{T}
	\mathscr{Z}^{\pi,\theta}(t_{N-1},r)
\rd W(r),~s \in [0,T],
\end{align*}
which admits a unique adapted solution in $L^{2}_{\mathbb{F}}(\Omega, C([0,T];\real^{m})) \times L^{2}_{\mathbb{F}}(0,T;\real^{m \times d})$.

Let $k' \in \{0,\ldots, N-2\}$ be fixed and assume that, for each $k=k'+1,\ldots,N-1$, the equation \eqref{BSDE_sys_1} admits a unique solution
\begin{align*}
(
	\mathscr{Y}^{\pi,\theta}(t_{k},\cdot),
	\mathscr{Z}^{\pi,\theta}(t_{k},\cdot)
)
\in
L^{2}_{\mathbb{F}}(\Omega, C([0,T];\real^{m}))
\times
L^{2}_{\mathbb{F}}(0,T;\real^{m \times d}).
\end{align*}
Then we know that
\begin{align*}
&(\mathscr{Y}^{\pi,\theta}(\tau(r),r))_{r \in [t_{k'+1},T)} \in L^{2}_{\mathbb{F}}(t_{k'+1},T;\real^{m})
\shortintertext{and}
&(\mathcal{I}^{\pi,\theta}[\mathscr{Z}^{\pi,\theta}(\tau(r),\cdot)](t_{k'}))_{r \in [t_{k'+1},T)} \in L^{2}_{\mathcal{F}_{t_{k'+1}}}(t_{k'+1},T;\real^{m \times d}).
\end{align*}
Hence the BSDE
\begin{align*}
\mathscr{Y}^{\pi,\theta}(t_{k'},s)
&=
\Psi(t_{k'})
+
\int_{s}^{T}
	G
	(
		t_{k'},
		r,
		\mathscr{Y}^{\pi,\theta}(\tau(r),r),
		\mathscr{Z}^{\pi,\theta}(t_{k'},r),
		\mathcal{I}^{\pi,\theta}[\mathscr{Z}^{\pi,\theta}(\tau(r),\cdot)](t_{k'})
	)
	\1_{[t_{k'+1},T)}(r)
\rd r
\\&\quad
-
\int_{s}^{T}
	\mathscr{Z}^{\pi,\theta}(t_{k'},r)
\rd W(r),
~s \in [0,T],
\end{align*}
admits a unique adapted solution $(\mathscr{Y}^{\pi,\theta}(t_{k'},\cdot),\mathscr{Z}^{\pi,\theta}(t_{k'},\cdot)) \in L^{2}_{\mathbb{F}}(\Omega, C([0,T];\real^{m})) \times L^{2}_{\mathbb{F}}(0,T;\real^{m \times d})$.
Thus we complete the proof of the first assertion by the backward induction.

The estimate \eqref{Lem_3_1_2} follows from \eqref{Lem_3_1_1} by choosing $(\Psi_{1},G_{1})=(0,0)$ and $(\Psi_{2},G_{2})=(\Psi,G)$.
We prove \eqref{Lem_3_1_1}.
For each fixed $k=0,\ldots,N-1$ and $i=1,2$, $(\mathscr{Y}_{i}^{\pi,\theta}(t_{k},\cdot),\mathscr{Z}_{i}^{\pi,\theta}(t_{k},\cdot))$ solves the BSDE
\begin{align*}
\begin{split}
\mathscr{Y}_{i}^{\pi,\theta}(t_{k},s)
&=
\Psi_{i}(t_{k})
+
\int_{s}^{T}
	G_{i}
	(
		t_{k},
		r,
		\mathscr{Y}_{i}^{\pi,\theta}(\tau(r),r),
		\mathscr{Z}_{i}^{\pi,\theta}(t_{k},r),
		\mathcal{I}^{\pi,\theta}[\mathscr{Z}_{i}^{\pi,\theta}(\tau(r),\cdot)](t_{k})
	)
	\1_{[t_{k+1},T)}(r)
\rd r
\\&\quad
-
\int_{s}^{T}
	\mathscr{Z}_{i}^{\pi,\theta}(t_{k},r)
\rd W(r),~s \in [0,T].
\end{split}
\end{align*}
By the stability estimate for BSDEs (see  Lemma \ref{apriori_0}), we have
\begin{align*}
&
\e\Big[
	\sup_{s \in [0,T]}
	\big|
		\mathscr{Y}_{1}^{\pi,\theta}(t_{k},s)
		-
		\mathscr{Y}_{2}^{\pi,\theta}(t_{k},s)
	\big|^{2}
	+
	\int_{0}^{T}
		\big|
			\mathscr{Z}_{1}^{\pi, \theta}(t_{k},s)
			-
			\mathscr{Z}_{2}^{\pi, \theta}(t_{k},s)
		\big|^{2}
	\rd s
\Big]
\\*&\leq
C
\e\Big[
	\big|
		\Psi_{1}(t_{k})
		-
		\Psi_{2}(t_{k})
	\big|^{2}
	\\*&\quad\quad\quad
	+
	\Big(
		\int_{t_{k+1}}^{T}
			\big|
				G_{1}
				(
					t_{k},
					s,
					\mathscr{Y}_{1}^{\pi, \theta}(\tau(s),s),
					\mathscr{Z}_{1}^{\pi, \theta}(t_{k},s),
					\mathcal{I}^{\pi,\theta}[\mathscr{Z}_{1}^{\pi,\theta}(\tau(s),\cdot)](t_{k})
				)
				\\*&\quad\quad\quad\quad\quad\quad\quad\quad
				-
				G_{2}
				(
					t_{k},
					s,
					\mathscr{Y}_{2}^{\pi, \theta}(\tau(s),s),
					\mathscr{Z}_{1}^{\pi, \theta}(t_{k},s),
					\mathcal{I}^{\pi,\theta}[\mathscr{Z}_{2}^{\pi,\theta}(\tau(s),\cdot)](t_{k})
				)
			\big|
		\rd s
	\Big)^{2}
\Big]
\\&\leq
C
\e\Big[
	\big|
		\Psi_{1}(t_{k})
		-
		\Psi_{2}(t_{k})
	\big|^{2}
	\\&\quad\quad\quad
	+
	\Big(
		\int_{t_{k+1}}^{T}
			\big|
				G_{1}
				(
					t_{k},
					s,
					\mathscr{Y}_{1}^{\pi, \theta}(\tau(s),s),
					\mathscr{Z}_{1}^{\pi, \theta}(t_{k},s),
					\mathcal{I}^{\pi,\theta}[\mathscr{Z}_{1}^{\pi,\theta}(\tau(s),\cdot)](t_{k})
				)
				\\&\quad\quad\quad\quad\quad\quad\quad\quad
				-
				G_{2}
				(
					t_{k},
					s,
					\mathscr{Y}_{1}^{\pi, \theta}(\tau(s),s),
					\mathscr{Z}_{1}^{\pi, \theta}(t_{k},s),
					\mathcal{I}^{\pi,\theta}[\mathscr{Z}_{1}^{\pi,\theta}(\tau(s),\cdot)](t_{k})
				)
			\big|
		\rd s
	\Big)^{2}
\Big]
\\&\quad
+
C
\sum_{\ell=k+1}^{N-1}
\Delta t_{\ell}
\e\Big[
	\sup_{s \in [0,T]}
	\big|
		\mathscr{Y}_{1}^{\pi, \theta}(t_{\ell},s)
		-
		\mathscr{Y}_{2}^{\pi, \theta}(t_{\ell},s)
	\big|^{2}
\Big]
\\&\quad
+
C\e\Big[
	\Big(
		\sum_{\ell=k+1}^{N-1}
		\Delta t_{\ell}
		\big|
			\mathcal{I}^{\pi,\theta}[\mathscr{Z}_{1}^{\pi,\theta}(t_{\ell},\cdot)](t_{k})
			-
			\mathcal{I}^{\pi,\theta}[\mathscr{Z}_{2}^{\pi,\theta}(t_{\ell},\cdot)](t_{k})
		\big|
	\Big)^{2}
\Big].
\end{align*}
Moreover, we have for each $k=1,\ldots,N-1$,
\begin{align*}
\mathscr{Y}_{i}^{\pi, \theta}(t_{k},t_{k})
=
\e
\Big[
	\mathscr{Y}_{i}^{\pi, \theta}(t_{k},t_{k})
\Big]
+
\int_{0}^{t_{k}}
	\mathscr{Z}_{i}^{\pi, \theta}(t_{k},s)
\rd W(s),
~\text{a.s.},
~i=1,2,
\end{align*}
and thus
\begin{align*}
&
\sum_{\ell=0}^{k-1}
\Delta t_{\ell}
\e\Big[
	\big|
		\mathcal{I}^{\pi,\theta}[\mathscr{Z}_{1}^{\pi,\theta}(t_{k},\cdot)](t_{\ell})
		-
		\mathcal{I}^{\pi,\theta}[\mathscr{Z}_{2}^{\pi,\theta}(t_{k},\cdot)](t_{\ell})
	\big|^{2}
\Big]
\\&=
\sum_{\ell=0}^{k-1}
\Delta t_{\ell}
\e\Big[
	\Big|
		\frac{1}{\Delta t_{\ell}}
		\int_{t_{\ell}\vee \theta}^{t_{\ell+1} \vee \theta}
			\big\{
				\mathscr{Z}_{1}^{\pi, \theta}(t_{k},s)
				-
				\mathscr{Z}_{2}^{\pi, \theta}(t_{k},s)
			\big\}
		\rd s
	\Big|^{2}
\Big]
\\&
\leq
\sum_{\ell=0}^{k-1}
\e\Big[
	\int_{t_{\ell}}^{t_{\ell+1}}
		|
			\mathscr{Z}_{1}^{\pi, \theta}(t_{k},s)
			-
			\mathscr{Z}_{2}^{\pi, \theta}(t_{k},s)
		|^{2}
	\rd s
\Big]
=
\e\Big[
	\int_{0}^{t_{k}}
		|
			\mathscr{Z}_{1}^{\pi, \theta}(t_{k},s)
			-
			\mathscr{Z}_{2}^{\pi, \theta}(t_{k},s)
		|^{2}
	\rd s
\Big]
\\&=
\e\Big[
	\Big|
		\int_{0}^{t_{k}}
			\big\{
				\mathscr{Z}_{1}^{\pi, \theta}(t_{k},s)
				-
				\mathscr{Z}_{2}^{\pi, \theta}(t_{k},s)
			\big\}
		\rd W(s)
	\Big|^{2}
\Big]
\\&=
\e\Big[
	\Big|
		\mathscr{Y}_{1}^{\pi, \theta}(t_{k},t_{k})
		-
		\mathscr{Y}_{2}^{\pi, \theta}(t_{k},t_{k})
		-
		\e
		\Big[
			\mathscr{Y}_{1}^{\pi, \theta}(t_{k},t_{k})
			-
			\mathscr{Y}_{2}^{\pi, \theta}(t_{k},t_{k})
		\Big]
	\Big|^{2}
\Big]
\\&\leq
\e\Big[
	\big|
		\mathscr{Y}_{1}^{\pi, \theta}(t_{k},t_{k})
		-
		\mathscr{Y}_{2}^{\pi, \theta}(t_{k},t_{k})
	\big|^{2}
\Big].
\end{align*}
Hence, by using the discrete Gronwall-like inequality (see Lemma \ref{Lem_Gron_disc}) with
\begin{align*}
a_{k}
&=
\e\Big[
	\sup_{s \in [0,T]}
	\big|
		\mathscr{Y}_{1}^{\pi, \theta}(t_{k},s)
		-
		\mathscr{Y}_{2}^{\pi, \theta}(t_{k},s)
	\big|^{2}
	+
	\int_{0}^{T}
		\big|
			\mathscr{Z}_{1}^{\pi, \theta}(t_{k},s)
			-
			\mathscr{Z}_{2}^{\pi, \theta}(t_{k},s)
		\big|^{2}
	\rd s
\Big],
\\
b_{k}
&=
\e\Big[
	\big|
		\Psi_{1}(t_{k})
		-
		\Psi_{2}(t_{k})
	\big|^{2}
	\\&\quad\quad\quad
	+
	\Big(
		\int_{t_{k+1}}^{T}
			\big|
				G_{1}
				(
					t_{k},
					s,
					\mathscr{Y}_{1}^{\pi, \theta}(\tau(s),s),
					\mathscr{Z}_{1}^{\pi, \theta}(t_{k},s),
					\mathcal{I}^{\pi,\theta}[\mathscr{Z}_{1}^{\pi,\theta}(\tau(s),\cdot)](t_{k})
				)
				\\*&\quad\quad\quad\quad\quad\quad\quad\quad
				-
				G_{2}
				(
					t_{k},
					s,
					\mathscr{Y}_{1}^{\pi, \theta}(\tau(s),s),
					\mathscr{Z}_{1}^{\pi, \theta}(t_{k},s),
					\mathcal{I}^{\pi,\theta}[\mathscr{Z}_{1}^{\pi,\theta}(\tau(s),\cdot)](t_{k})
				)
			\big|
		\rd s
	\Big)^{2}
\Big],~
\\
c_{k}
&=
0,
~
\zeta_{k,\ell}(\omega)
=
\big|
	\mathcal{I}^{\pi,\theta}[\mathscr{Z}_{1}^{\pi,\theta}(t_{k},\cdot)](t_{\ell})
	-
	\mathcal{I}^{\pi,\theta}[\mathscr{Z}_{2}^{\pi,\theta}(t_{k},\cdot)](t_{\ell})
\big|
~\text{and}~
(S,\Sigma,\mu)
=
(\Omega,\mathcal{F},\p),
\end{align*}
we get the estimate \eqref{Lem_3_1_1}.
\end{proof}

Next, we provide $L^p$-a priori estimates for solutions of Type-\Rnum{2} BSVIE~\eqref{BSVIE_II_1} and BSDE system~\eqref{BSDE_sys_0} for $p\geq2$.
The $L^p$-estimates for $p>2$ are needed for the analysis in Section~\ref{sec_L2_reg}.

\begin{theo}\phantomsection\label{Lem_Lp}
\begin{itemize}
\item[(i)]
Suppose that $(\Psi,G)$ satisfies the conditions (i), (ii) and (iii) in $(\mathrm{H}_{\Psi,G})$, and
\begin{align*}
\e\Big[
	\Big(
		\int_{0}^{T}
			\big|
				\Psi(t)
			\big|^{2}
		\rd t
	\Big)^{p/2}
	+
	\Big(
		\int_{0}^{T}
			\int_{t}^{T}
				\big|
					G(t,s,0,0,0)
				\big|^{2}
			\rd s
		\rd t
	\Big)^{p/2}
\Big]
<\infty,
\end{align*}
for some $p \geq 2$.
Let $(Y(\cdot), Z(\cdot,\cdot)) \in L^{2}_{\mathbb{F}}(0,T;\real^{m}) \times L^{2}(0,T;L^{2}_{\mathbb{F}}(0,T;\real^{m \times d}))$ be the adapted M-solution of BSVIE \eqref{BSVIE_II_1}.
For a.e.\ $t \in [0,T]$, define
\begin{align*}
Y(t,s)
:=
\e_{s}
\Big[
	\Psi(t)
	+
	\int_{s}^{T}
		G(t,r,Y(r),Z(t,r),Z(r,t))
		\1_{[t,T]}(r)
	\rd r
\Big],~
s \in [0,T].
\end{align*}
Then it holds that
\begin{align*}
&\e\Big[
	\sup_{s \in [0,T]}
	\Big(
		\int_{0}^{T}
			\big|
				Y(t,s)
			\big|^{2}
		\rd t
	\Big)^{p/2}
	+
	\Big(
		\int_{0}^{T}
			\big|
				Y(t)
			\big|^{2}
		\rd t
	\Big)^{p/2}
	+
	\Big(
		\int_{0}^{T}
			\int_{0}^{T}
				\big|
					Z(t,s)
				\big|^{2}
			\rd s
		\rd t
	\Big)^{p/2}
\Big]
\\
&\leq
C_{p}
\e\Big[
	\Big(
		\int_{0}^{T}
			\big|
				\Psi(t)
			\big|^{2}
		\rd t
	\Big)^{p/2}
	+
	\Big(
		\int_{0}^{T}
			\int_{t}^{T}
				\big|
					G(t,s,0,0,0)
				\big|^{2}
			\rd s
		\rd t
	\Big)^{p/2}
\Big].
\end{align*}

\item[(ii)]
Let $\pi=\{t_{0},t_{1},\ldots,t_{N}\} \in \Pi[0,T]$ and $\theta \in [0,T]$ be fixed.
Suppose that $(\Psi,G)$ satisfies $(\mathrm{H}_{\Psi,G})$ and
\begin{align*}
\e\Big[
	\Big(
		\sum_{k=0}^{N-1}
		\Delta t_{k}
		\big|
			\Psi(t_{k})
		\big|^{2}
	\Big)^{p/2}
	+
	\Big(
		\sum_{k=0}^{N-1}
		\Delta t_{k}
		\int_{t_{k+1}}^{T}
			\big|
				G(t_{k},s,0,0,0)
			\big|^{2}
		\rd s
	\Big)^{p/2}
\Big]
<\infty,
\end{align*}
for some $p \geq 2$.
Let $
\{
	(\mathscr{Y}^{\pi,\theta}(t_{k},\cdot),
	\mathscr{Z}^{\pi,\theta}(t_{k},\cdot))
\}_{k=0}^{N-1}
\in
(
	L^{2}_{\mathbb{F}}(\Omega, C([0,T];\real^{m}))
	\times
	L^{2}_{\mathbb{F}}(0,T;\real^{m \times d})
)^{N}
$
be the solution of BSDE system \eqref{BSDE_sys_0}.
Then it holds that
\begin{align*}
&\e\Big[
	\sup_{s \in [0,T]}
	\Big(
		\sum_{k=0}^{N-1}
		\Delta t_{k}
		\big|
			\mathscr{Y}^{\pi,\theta}(t_{k},s)
		\big|^{2}
	\Big)^{p/2}
	+
	\Big(
		\sum_{k=0}^{N-1}
		\int_{t_{k}}^{t_{k+1}}
			\big|
				\mathscr{Y}^{\pi,\theta}(t_{k},t)
			\big|^{2}
		\rd t
	\Big)^{p/2}
	\\&\quad\quad\quad
	+
	\Big(
		\sum_{k=0}^{N-1}
		\Delta t_{k}
		\int_{0}^{T}
			\big|
				\mathscr{Z}^{\pi,\theta}(t_{k},s)
			\big|^{2}
		\rd s
	\Big)^{p/2}
\Big]
\\
&\leq
C_{p}
\e\Big[
	\Big(
		\sum_{k=0}^{N-1}
		\Delta t_{k}
		\big|
			\Psi(t_{k})
		\big|^{2}
	\Big)^{p/2}
	+
	\Big(
		\sum_{k=0}^{N-1}
		\Delta t_{k}
		\int_{t_{k+1}}^{T}
			\big|
				G(t_{k},s,0,0,0)
			\big|^{2}
		\rd s
	\Big)^{p/2}
\Big].
\end{align*}
\end{itemize}
\end{theo}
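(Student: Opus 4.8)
The plan is to read both parts slice-by-slice as standard BSDEs and to keep the outer $p/2$-power \emph{outside} the time integral by applying It\^o's formula to $p/2$-powers of time-integrated energies, rather than integrating a pointwise-in-$t$ estimate. I describe part~(i); part~(ii) is the discrete counterpart.

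\emph{Slice-wise structure.} For a.e.\ $t$, the pair $(Y(t,\cdot),Z(t,\cdot))$ is the adapted solution on $[0,T]$ of the BSDE with terminal value $\Psi(t)$ and generator $G(t,s,Y(s),z,Z(s,t))\1_{[t,T]}(s)$ (here $Y(s)=Y(s,s)$ and $Z(s,t)$ are frozen), and $Y(t,t)=Y(t)$. On $[0,t]$ this BSDE carries no generator, so $Y(t,\cdot)$ is a martingale there and $Z(t,\cdot)|_{[0,t]}$ is its integrand --- this is exactly the M-constraint~(ii) of Definition~\ref{Def_M_sol}, which ties the sub-diagonal values of $Z$ to $Y$. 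I would first note that applying Lemma~\ref{apriori_0} for each $t$ and integrating in $t$ is \emph{too lossy}: it produces data terms such as $\int_0^T\e[|\Psi(t)|^p]\,\rd t$, which are not controlled by the right-hand side $\e[(\int_0^T|\Psi(t)|^2\,\rd t)^{p/2}]$.

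\emph{Getting the three left-hand terms.} Set $R(s):=\int_0^T|Y(t,s)|^2\,\rd t$ and apply It\^o to $R(s)^{p/2}$; the weight $R(s)^{p/2-1}$ is adapted, so there is no anticipativity, and Burkholder--Davis--Gundy plus Young give the $\sup_s$-term (first summand). The energy identity from the same computation, with $|\int_0^T Y(t,s)^\top Z(t,s)\,\rd t|^2\le R(s)\int_0^T|Z(t,s)|^2\,\rd t$, BDG and absorption, yields the $Z$-term (third summand). For the diagonal term $\int_0^T|Y(t)|^2\,\rd t=\int_0^T|Y(t,t)|^2\,\rd t$ (second summand) I would apply It\^o to $\widetilde R(s)^{p/2}$ with $\widetilde R(s):=\int_0^s|Y(t,s)|^2\,\rd t+\int_s^T|Y(t)|^2\,\rd t$: the moving-boundary terms cancel and, because the sub-diagonal slices are martingales, $\widetilde R$ carries \emph{no} generator, so $\widetilde R(0)=\int_0^T|Y(t)|^2\,\rd t$ is controlled by $\widetilde R(T)=\int_0^T|\Psi(t)|^2\,\rd t$ and the sub-diagonal $Z$-energy alone.

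\emph{Closing the coupling --- the main obstacle.} The generator's dependence on $Z(s,t)$ is the crux. In the drift of $R(s)^{p/2}$ it produces, after Cauchy--Schwarz and Young, a \emph{weighted sub-diagonal} $Z$-energy of the form $\e[\int_0^T R(r)^{p/2-1}(\int_0^r|Z(r,t)|^2\,\rd t)\,\rd r]$, i.e.\ a weighted martingale bracket that cannot be absorbed into the vertical energy produced by It\^o and cannot be replaced pathwise by $|Y(r)|^2$, because the weight $R(r)^{p/2-1}$ is $\mathcal F_r$-measurable, hence anticipating for the bracket. I would route this through the Gronwall-like Lemma~\ref{Lem_Gron_cont} with $(S,\mu)=(\Omega,\p)$ and $\zeta(a,b,\omega):=R(a,\omega)^{(p/2-1)/2}|Z(a,b)(\omega)|$: hypothesis~\eqref{Gron_00} is supplied by the martingale representation on $[0,a]$ (the M-constraint), hypothesis~\eqref{Gron_0} encodes the $Z(s,t)$-feedback, and the Lipschitz dependence on $Y$ furnishes the $\int_t^T a(s)\,\rd s$ term; the lemma then returns a bound by the data. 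Part~(ii) follows the same scheme with sums for integrals, the discrete Gronwall Lemma~\ref{Lem_Gron_disc} in place of Lemma~\ref{Lem_Gron_cont}, and the identity $\mathscr Y^{\pi,\theta}(t_k,t_k)=\e[\mathscr Y^{\pi,\theta}(t_k,t_k)]+\int_0^{t_k}\mathscr Z^{\pi,\theta}(t_k,s)\,\rd W(s)$, already established in Lemma~\ref{Lem_BSDE_sys_0}, in the role of the M-constraint.
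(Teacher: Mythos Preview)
Your high-level strategy---apply It\^o to the $p/2$-power of the $t$-integrated energy $R(s)=\int_0^T|Y(t,s)|^2\rd t$---matches the paper, and you correctly identify the main obstacle: the generator's $Z(s,t)$-dependence produces a \emph{weighted} sub-diagonal energy $\e\bigl[\int_0^T R(r)^{p/2-1}\int_0^r|Z(r,t)|^2\rd t\,\rd r\bigr]$ whose weight $R(r)^{p/2-1}$ is $\mathcal F_r$-measurable and hence correlated with the martingale $\int_0^r Y(r,t)Z(r,t)\rd W(t)$. However, your proposed resolution via Lemma~\ref{Lem_Gron_cont} has a gap. To verify \eqref{Gron_00} with your choice $\zeta(t,s,\omega)=R(t)^{(p/2-1)/2}|Z(t,s)|$ you would need
\[
\e\Bigl[R(t)^{p/2-1}\int_0^t|Z(t,s)|^2\rd s\Bigr]\le K\bigl(a(t)+c(t)\bigr),
\]
but the M-constraint only gives $\int_0^t|Z(t,s)|^2\rd s\le |Y(t,t)|^2-\int_0^t2Y(t,s)Z(t,s)\rd W(s)$ pathwise; after multiplying by $R(t)^{p/2-1}$ and taking expectation, the stochastic-integral term does \emph{not} vanish---precisely the anticipativity you flagged one line earlier. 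Moreover, Lemma~\ref{Lem_Gron_cont} outputs a bound on $\int_0^T a(t)\rd t$, which is the wrong shape: you need $\e\bigl[(\int_0^T|Y(t)|^2\rd t)^{p/2}\bigr]$, not $\int_0^T\e[\cdots]\rd t$. (Secondary point: in your $\widetilde R(s)=\int_0^s|Y(t,s)|^2\rd t+\int_s^T|Y(t)|^2\rd t$, the slices $Y(t,\cdot)$ for $t\le s$ are in the region $s\ge t$ where the generator \emph{is} active, so $\widetilde R$ is not generator-free.)

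The paper's resolution is a two-step scheme with exponential weights $e^{\lambda(t\vee s)}$ (and a preliminary truncation so that all quantities are finite). \emph{Claim~1} works at the \emph{linear} level: one integrates the It\^o formula for $e^{\lambda s}|Y(t,s)|^2$ over $t$ to obtain a pathwise identity for $\int_0^T e^{\lambda t}|Y(t,t)|^2\rd t$; because there is no $p/2-1$ weight yet, the M-constraint converts the sub-diagonal $Z$-energy into an honest stochastic integral via stochastic Fubini. Only \emph{then} does one raise to the $p/2$-power and apply BDG, yielding $\e\bigl[(\int e^{\lambda t}|Y(t,t)|^2\rd t)^{p/2}+(\iint e^{\lambda(t\vee s)}|Z|^2)^{p/2}\bigr]$ bounded by the data plus $\e[\sup_s(\int e^{\lambda(t\vee s)}|Y(t,s)|^2\rd t)^{p/2}]$. \emph{Claim~2} applies It\^o directly to $(\int e^{\lambda(t\vee s)}|Y(t,s)|^2\rd t+\varepsilon)^{p/2}$; the weighted sub-diagonal $Z$-term now does appear, but one simply bounds it by $\sup_s(\cdots)^{p/2-1}\cdot\frac{1}{\mu}\iint e^{\lambda(t\vee s)}|Z|^2$ and uses Young's inequality, producing $\frac{1}{\mu^{p/2}}\e[(\iint e^{\lambda(t\vee s)}|Z|^2)^{p/2}]$, which is already controlled by Claim~1. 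Choosing $\mu$ (and hence $\lambda$) large closes the loop. The missing idea in your sketch is this separation: derive a \emph{linear} pathwise identity first so the M-constraint can be used without a weight, and only afterwards take the $p/2$-power.
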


\begin{proof}
See \hyperref[sec_Appendix]{Appendix}.
\end{proof}

\begin{rem}
\begin{itemize}
\item[(\rnum{1})]
We remark that $L^p$-estimates for the solution of Type-\Rnum{1} BSVIE~\eqref{Type-1} for $p>1$ have been investigated in many papers (see for example \cite{Yo08,WaT12,Wa20,Ha21,Po20}).
Also, for Type-\Rnum{2} BSVIE~\eqref{BSVIE_II_1}, Yong~\cite{Yo08} showed the well-posedness and a priori estimate for the adapted M-solution in the $L^2$-space, and Wang~\cite{WaT12} showed the well-posedness in the $L^p$-space with $p\in(1,2)$.
However, $L^p$-estimates for the adapted M-solution of Type-\Rnum{2} BSVIE~\eqref{BSVIE_II_1} for $p>2$ have not been studied in the literature.
Indeed, Popier \cite{Po20} mentioned that the $L^{p}$-estimates for Type-\Rnum{2} BSVIEs has been an open problem.
Theorem~\ref{Lem_Lp} gives an answer to the open problem mentioned in the literature, and thus we guess that this theorem itself is an important theoretical result.
The main idea of the proof is to consider the dynamics of the ``integrated process'' $s \mapsto \int_{0}^{T}|Y(t,s)|^{2} \rd t$, which turns out to be an It\^o process.
For more detailed discussions, see \hyperref[sec_Appendix]{Appendix}.

\item[(\rnum{2})]
Theorem~\ref{Lem_Lp} does not give any estimates for the term $\mathbb{E}[\int^T_0|Y(t)|^p\rd t]$ with $p>2$, which is finite in the case of Type-\Rnum{1} BSVIE~\eqref{Type-1} under appropriate assumptions for $\Psi$ and $G$ (see \cite{Ha21,Po20}).
We note that the martingale integrand $Z(\cdot,\cdot)$, which stems from the martingale representation theorem, is just locally square integrable with respect to the second time parameter, and the integral $\int^T_0|\int^T_tZ(s,t)\rd s|^p\rd t$ is not finite in general.
For this reason, for Type-\Rnum{2} BSVIE~\eqref{BSVIE_II_1} whose driver $G$ depends on $Z(s,t)$, the term $\mathbb{E}[\int^T_0|Y(t)|^p\rd t]$ is difficult to estimate, and we can guess that it is not finite in general.
\end{itemize}
\end{rem}

Denote the solution
$(\mathscr{Y}^{\pi,0}, \mathscr{Z}^{\pi,0})$ of BSDE system \eqref{BSDE_sys_0} with $\theta =0 $ by $(\mathscr{Y}^{\pi}, \mathscr{Z}^{\pi})$.
The following theorem is the main result of this section, which we call a {\it BSDE approximation}.

\begin{theo}\label{Theo_BSDE_0}
Suppose $(\mathrm{H}_{\Psi,G})$ holds.
Then it holds that
\begin{align*}
&
\lim_{|\pi| \to 0}
\e
\Big[
	\int_{0}^{T}
		\big|
			Y(t)
			-
			\mathscr{Y}^{\pi}(\tau(t),t)
		\big|^{2}
	\rd t
	+
	\int_{0}^{T}
		\int_{0}^{T}
			\big|
				Z(t,s)
				-
				\mathscr{Z}^{\pi}(\tau(t),s)
			\big|^{2}
		\rd s
	\rd t
\Big]
= 0.
\end{align*}
\end{theo}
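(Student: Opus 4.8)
The plan is to compare the discrete system $(\mathscr{Y}^\pi,\mathscr{Z}^\pi)=(\mathscr{Y}^{\pi,0},\mathscr{Z}^{\pi,0})$ against the ``martingale extension'' $Y(t,\cdot)$ of the adapted M-solution, defined exactly as in Theorem \ref{Lem_Lp}(i). For each grid point $t_k$, the process $Y(t_k,\cdot)$ is a continuous martingale with integrand $Z(t_k,\cdot)$, and by the martingale representation theorem it solves the BSDE
\[
Y(t_k,s)=\Psi(t_k)+\int_s^T G(t_k,r,Y(r),Z(t_k,r),Z(r,t_k))\1_{[t_k,T]}(r)\rd r-\int_s^T Z(t_k,r)\rd W(r),\quad s\in[0,T].
\]
Subtracting this from \eqref{BSDE_sys_1} (with $\theta=0$), the differences $\delta Y(t_k,\cdot):=\mathscr{Y}^\pi(t_k,\cdot)-Y(t_k,\cdot)$ and $\delta Z(t_k,\cdot):=\mathscr{Z}^\pi(t_k,\cdot)-Z(t_k,\cdot)$ solve a BSDE with zero terminal value whose driver is the difference of the two $G$-terms. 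First I would bound $\delta Y,\delta Z$ by the $L^2$ a priori estimate (Lemma \ref{apriori_0}), then feed the result into the discrete Gronwall inequality (Lemma \ref{Lem_Gron_disc}) with $(S,\Sigma,\mu)=(\Omega,\mathcal{F},\p)$ and $\zeta_{k,\ell}=|\mathcal{I}^\pi[\delta Z(t_k,\cdot)](t_\ell)|$, exactly as in the proof of Lemma \ref{Lem_BSDE_sys_0}. Since both $\mathscr{Y}^\pi(t_k,\cdot)$ and $Y(t_k,\cdot)$ satisfy the martingale-representation relation on $[0,t_k]$, so does $\delta Y(t_k,\cdot)$; this supplies the hypothesis \eqref{Gron_0000} and lets Gronwall absorb all ``self-referential'' occurrences of $\delta Y$, $\delta Z$ and the average $\mathcal{I}^\pi[\delta Z]$.

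After this absorption the quantity in the statement, call it $E(\pi)$, is bounded by a constant times the genuinely external discretization errors produced by the Lipschitz decomposition of the driver difference: (E3) the indicator mismatch $\1_{[t_k,T]}-\1_{[t_{k+1},T)}=\1_{[t_k,t_{k+1})}$, contributing $\sum_k\Delta t_k\,\e[(\int_{t_k}^{t_{k+1}}|G(t_k,r,Y(r),Z(t_k,r),Z(r,t_k))|\rd r)^2]=O(|\pi|)$ by Cauchy--Schwarz and the a priori estimates of Lemma \ref{Prop_BSVIE_0} and Theorem \ref{Lem_Lp}; (E1) the first-argument error $\e[\int_0^T|Y(\tau(r),r)-Y(r)|^2\rd r]$; and (E2) the averaging error $\sum_k\Delta t_k\,\e[\int_{t_{k+1}}^T|\mathcal{I}^\pi[Z(\tau(r),\cdot)](t_k)-Z(r,t_k)|^2\rd r]$. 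Both (E1) and (E2) are controlled by the $L^2$-time continuity of $(Y,Z)$ in the two time parameters (with the Lebesgue differentiation theorem handling the averaging in (E2)), and tend to $0$ as $|\pi|\to0$ whenever such continuity is available.

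The main obstacle is that this $L^2$-time continuity is not known a priori under $(\mathrm{H}_{\Psi,G})$ alone; it is essentially the regularity the paper later extracts as a corollary, so a direct argument would be circular. I would break the circularity by a density argument: mollifying $\Psi$ and $G$ in $\omega$ and in the spatial variables, I construct $(\Psi^n,G^n)$ satisfying $(\mathrm{H}_{\Psi,G})$ with constants independent of $n$, converging to $(\Psi,G)$ in the norms of condition (iv), and regular enough that the continuity results of Yong \cite{Yo08} apply to their M-solutions $(Y^n,Z^n)$. For each fixed $n$, the two preceding paragraphs then give $E^n(\pi)\to0$ as $|\pi|\to0$.

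Finally I would pass to the limit by an $\varepsilon/3$ argument based on $E(\pi)^{1/2}\le I_n^{1/2}+E^n(\pi)^{1/2}+II_n(\pi)^{1/2}$, where $I_n$ measures the diagonal discrepancy of $(Y,Z)$ against $(Y^n,Z^n)$ and $II_n(\pi)$ that of $(\mathscr{Y}^{\pi,n},\mathscr{Z}^{\pi,n})$ against $(\mathscr{Y}^\pi,\mathscr{Z}^\pi)$. Here $I_n\to0$ by stability of the adapted M-solution (the difference version of Lemma \ref{Prop_BSVIE_0}), and, crucially, $\sup_\pi II_n(\pi)\to0$ as $n\to\infty$ by the stability estimate \eqref{Lem_3_1_1}, whose constant depends only on $L$ and $T$ and is therefore uniform in $\pi$, together with the uniform a priori bounds \eqref{Lem_3_1_2} (and Theorem \ref{Lem_Lp} for uniform integrability when passing the mollification through $G$ evaluated along the discrete solution). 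Choosing $n$ large to make $I_n+\sup_\pi II_n(\pi)$ small and then $|\pi|$ small to make $E^n(\pi)$ small gives $\lim_{|\pi|\to0}E(\pi)=0$. The delicate technical points are the construction of regularized coefficients preserving $(\mathrm{H}_{\Psi,G})$ uniformly and the uniform-in-$\pi$ control of $II_n(\pi)$, while the conceptual crux is the circularity resolved by the density reduction.
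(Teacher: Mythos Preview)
Your approach works at grid points $t_k$, which forces you to split $Y(r)-\mathscr{Y}^\pi(\tau(r),r)$ into $\bigl(Y(r)-Y(\tau(r),r)\bigr)-\delta Y(\tau(r),r)$ and likewise for $Z$; the first pieces are your (E1), (E2), and they genuinely require time-regularity of the M-solution in its \emph{first} parameter. You correctly diagnose the circularity, but the proposed cure is shaky: mollification ``in $\omega$'' is not a standard operation preserving adaptedness/progressive measurability, and Yong's continuity results in \cite{Yo08} need Malliavin-type differentiability of $\Psi,G$, which simple mollification in $(y,z_1,z_2)$ does not provide. The uniform-in-$\pi$ control of $II_n(\pi)$ via \eqref{Lem_3_1_1} would also require that $G_n-G$, evaluated along $(\mathscr{Y}^\pi_n,\mathscr{Z}^\pi_n,\mathcal{I}^\pi[\mathscr{Z}^\pi_n])$, vanishes uniformly in $\pi$---delicate, since those arguments depend on $\pi$.

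The paper sidesteps the circularity entirely by comparing for \emph{continuous} $t$: it sets $a(t)=\e[|Y(t)-\mathscr{Y}^\pi(\tau(t),t)|^2+\int_0^T|Z(t,s)-\mathscr{Z}^\pi(\tau(t),s)|^2\rd s]$ directly, applies Lemma~\ref{apriori_0} to the pair $(Y(t,\cdot),Z(t,\cdot))$ versus $(\mathscr{Y}^\pi(\tau(t),\cdot),\mathscr{Z}^\pi(\tau(t),\cdot))$, and uses the \emph{continuous} Gronwall-like inequality (Lemma~\ref{Lem_Gron_cont}). The shift $t\mapsto\tau(t)$ in $\Psi,G$ is absorbed by condition (v) of $(\mathrm{H}_{\Psi,G})$, contributing $\rho_{\Psi,G}(|\pi|)^2$; the term $Y(s)-\mathscr{Y}^\pi(\tau(s),s)$ is \emph{already} the quantity $a(s)$, so no (E1) appears. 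For the $Z(s,t)$ part, the paper splits (after Fubini) as
\[
Z(t,s)-\mathcal{I}^\pi[\mathscr{Z}^\pi(\tau(t),\cdot)](\tau(s))
=\bigl(Z(t,s)-\mathcal{I}^\pi[Z(t,\cdot)](\tau(s))\bigr)+\mathcal{I}^\pi[Z(t,\cdot)-\mathscr{Z}^\pi(\tau(t),\cdot)](\tau(s)),
\]
keeping the \emph{same} $t$ in the average. The second piece is bounded by $\int_0^t|Z(t,s)-\mathscr{Z}^\pi(\tau(t),s)|^2\rd s\le\e[|Y(t)-\mathscr{Y}^\pi(\tau(t),t)|^2]$ via martingale representation, hence self-referential; the first piece is just the $L^2$ step-function approximation of $s\mapsto Z(\cdot,\cdot,s)$ in the Hilbert space $L^2(\Omega\times[0,T])$, which tends to $0$ by square-integrability alone. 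No regularity of $(Y,Z)$ beyond membership in $L^2$ is used, and no density argument is needed. The conceptual point you missed is that the circularity is an artifact of discretizing the comparison; working at continuous $t$ and choosing the right intermediate term $\mathcal{I}^\pi[Z(t,\cdot)]$ dissolves it.
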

\begin{proof}
For a.e.\ $t\in[0,T]$, define $Y(t,\cdot)=(Y(t,s))_{s\in[0,T]}$ by
\begin{equation*}
	Y(t,s):=\e_s\Bigl[\Psi(t)+\int^T_sG(t,r,Y(r),Z(t,r),Z(r,t))\1_{[t,T]}(r)\rd r\Bigr],\ s\in[0,T].
\end{equation*}
Then $(Y(t,\cdot),Z(t,\cdot))\in L^2_\mathbb{F}(\Omega;C([0,T];\real^m))\times L^2_\mathbb{F}(0,T;\real^{m\times d})$ solves the following BSDE parametrized by $t$:
\begin{equation*}
	Y(t,s)=\Psi(t)+\int^T_sG(t,r,Y(r),Z(t,r),Z(r,t))\1_{[t,T]}(r)\rd r-\int^T_sZ(t,r)\rd W(r),\ s\in[0,T].
\end{equation*}
Indeed, for each $s\in[t,T]$, by taking the conditional expectation $\e_s[\cdot]$ on both sides of \eqref{BSVIE_II_1}, we have
\begin{align*}
	Y(t)&=\e_s\Bigl[\Psi(t)+\int^T_tG(t,r,Y(r),Z(t,r),Z(r,t))\rd r\Bigr]-\int^s_tZ(t,r)\rd W(r)\\
	&=Y(t,s)+\int^s_tG(t,r,Y(r),Z(t,r),Z(r,t))\rd r-\int^s_tZ(t,r)\rd W(r),
\end{align*}
and hence
\begin{equation*}
	Y(t,s)=\Psi(t)+\int^T_sG(t,r,Y(r),Z(t,r),Z(r,t))\rd r-\int^T_sZ(t,r)\rd W(r).
\end{equation*}
Furthermore, for each $s\in[0,t]$, by the definition of the adapted M-solution, it holds that
\begin{align*}
	Y(t,s)&=\e_s\Bigl[\Psi(t)+\int^T_tG(t,r,Y(r),Z(t,r),Z(r,t))\rd r\Bigr]=\e_s\bigl[Y(t)\bigr]=Y(t)-\int^t_sZ(t,r)\rd W(r)\\
	&=\Psi(t)+\int^T_tG(t,r,Y(r),Z(t,r),Z(r,t))\rd r-\int^T_sZ(t,r)\rd W(r).
\end{align*}
We also note that $Y(t,t)=Y(t)$, a.e.\ $t\in[0,T]$, a.s.
On the other hand, for each fixed $t \in [0,T)$, $(\mathscr{Y}^{\pi}(\tau(t),\cdot), \mathscr{Z}^{\pi}(\tau(t),\cdot))$ solves the BSDE
\begin{align*}
\mathscr{Y}^{\pi}(\tau(t),s)
&=
\Psi(\tau(t))
+
\int_{s}^{T}
	G
	(
		\tau(t),
		r,
		\mathscr{Y}^{\pi}(\tau(r),r),
		\mathscr{Z}^{\pi}(\tau(t),r),
		\mathcal{I}^{\pi}[\mathscr{Z}^{\pi}(\tau(r),\cdot)](\tau(t))
	)
	\1_{[\tau^{*}(t),T)}(r)
\rd r
\\*&\quad
-
\int_{s}^{T}
	\mathscr{Z}^{\pi}(\tau(t),r)
\rd W(r),~s \in [0,T].
\end{align*}
Thus by  the stability estimate for BSDEs (see Lemma \ref{apriori_0}), we have, for a.e.\ $t \in [0,T]$,
\begin{align*}
&
\e
\Big[
	\big|
		Y(t)
		-
		\mathscr{Y}^{\pi}(\tau(t),t)
	\big|^{2}
	+
	\int_{0}^{T}
		\big|
			Z(t,s)
			-
			\mathscr{Z}^{\pi}(\tau(t),s)
		\big|^{2}
	\rd s
\Big]
\\*&\leq
\e\Big[
	\sup_{s \in [0,T]}
	\big|
		Y(t,s)
		-
		\mathscr{Y}^{\pi}(\tau(t),s)
	\big|^{2}
	+
	\int_{0}^{T}
		\big|
			Z(t,s)
			-
			\mathscr{Z}^{\pi}(\tau(t),s)
		\big|^{2}
	\rd s
\Big]
\\&\leq
C
\e
\Big[
	\big|
		\Psi(t)
		-
		\Psi(\tau(t))
	\big|^{2}
	+
	\Big(
		\int_{0}^{T}
			\big|
				G
				(
					t,
					s,
					Y(s),
					Z(t,s),
					Z(s,t)
				)
				\1_{[t,T)}(s)
				\\&\quad\quad\quad\quad\quad\quad\quad\quad\quad\quad\quad\quad\quad\quad\quad
				-
				G
				(
					\tau(t),
					s,
					\mathscr{Y}^{\pi}(\tau(s),s),
					Z(t,s),
					\mathcal{I}^{\pi}[\mathscr{Z}^{\pi}(\tau(s),\cdot)](\tau(t))
				)
				\1_{[\tau^{*}(t),T)}(s)
			\big|
		\rd s
	\Big)^{2}
\Big]
\\&\leq
C
\Big\{
	\rho_{\Psi,G}(|\pi|)^{2}
	\Big(
		M^{2}
		+
		\e\Big[
			\int_{\tau^{*}(t)}^{T}
				\big|
					Y(s)
				\big|^{2}
				+
				\big|
					Z(t,s)
				\big|^{2}
				+
				\big|
					Z(s,t)
				\big|^{2}
			\rd s
		\Big]
	\Big)
	\\&\quad\quad\quad
	+
	|\pi|
	\e\Big[
		\int_{t}^{\tau^{*}(t)}
			\Big\{
				\big|
					G
					(
						t,
						s,
						0,
						0,
						0
					)
				\big|^{2}
				+
				\big|
					Y(s)
				\big|^{2}
				+
				\big|
					Z(t,s)
				\big|^{2}
				+
				\big|
					Z(s,t)
				\big|^{2}
			\Big\}
		\rd s
	\Big]
	\\&\quad\quad\quad
	+
	\int_{\tau^{*}(t)}^{T}
		\e
		\Big[
			\big|
				Y(s)
				-
				\mathscr{Y}^{\pi}(\tau(s),s)
			\big|^{2}
		\Big]
	\rd s
	+
	\e
	\Big[
		\Big(
			\int_{\tau^{*}(t)}^{T}
				\big|
					Z(s,t)
					-
					\mathcal{I}^{\pi}[\mathscr{Z}^{\pi}(\tau(s),\cdot)](\tau(t))
				\big|
			\rd s
		\Big)^{2}
	\Big]
\Big\},
\end{align*}
and thus,
\begin{align}\label{eq_3.6_0}
\begin{split}
&
\e
\Big[
	\big|
		Y(t)
		-
		\mathscr{Y}^{\pi}(\tau(t),t)
	\big|^{2}
	+
	\int_{0}^{T}
		\big|
			Z(t,s)
			-
			\mathscr{Z}^{\pi}(\tau(t),s)
		\big|^{2}
	\rd s
\Big]
\\&\leq
C
\Big\{
	\big(
		|\pi|
		+
		\rho_{\Psi,G}(|\pi|)^{2}
	\big)
	\Big(
		M^{2}
		+
		\e\Big[
			\int_{t}^{T}
				\big\{
					\big|G(t,s,0,0,0)\big|^{2}
					+
					\big|Y(s)\big|^{2}
					+
					\big|Z(t,s)\big|^{2}
					+
					\big|Z(s,t)\big|^{2}
				\big\}
			\rd s
		\Big]
	\Big)
	\\&\quad\quad\quad
	+
	\int_{t}^{T}
		\e
		\Big[
			\big|
				Y(s)
				-
				\mathscr{Y}^{\pi}(\tau(s),s)
			\big|^{2}
		\Big]
	\rd s
	+
	\e
	\Big[
		\Big(
			\int_{t}^{T}
				\big|
					Z(s,t)
					-
					\mathcal{I}^{\pi}[\mathscr{Z}^{\pi}(\tau(s),\cdot)](\tau(t))
				\big|
			\1_{[\tau^{*}(t),T)}(s)
			\rd s
		\Big)^{2}
	\Big]
\Big\}.
\end{split}
\end{align}
Observe that, for a.e.\ $t \in [0,T)$,
\begin{align*}
&\e
\Big[
	\int_{0}^{t}
		\big|
			Z(t,s)
			-
			\mathcal{I}^{\pi}[\mathscr{Z}^{\pi}(\tau(t),\cdot)](\tau(s))
		\big|^{2}
		\1_{[\tau^{*}(s),T)}(t)
	\rd s
\Big]
=
\e
\Big[
	\int_{0}^{\tau(t)}
		\big|
			Z(t,s)
			-
			\mathcal{I}^{\pi}[\mathscr{Z}^{\pi}(\tau(t),\cdot)](\tau(s))
		\big|^{2}
	\rd s
\Big]
\\&\leq
2
\Big\{
\e
\Big[
	\int_{0}^{\tau(t)}
		\big|
			Z(t,s)
			-
			\mathcal{I}^{\pi}[Z(t,\cdot)](\tau(s))
		\big|^{2}
	\rd s
\Big]
+
\e
\Big[
	\int_{0}^{\tau(t)}
		\big|
			\mathcal{I}^{\pi}[Z(t,\cdot)-\mathscr{Z}^{\pi}(\tau(t),\cdot)](\tau(s))
		\big|^{2}
	\rd s
\Big]
\Big\}.
\end{align*}
The second term in the right-hand side can be estimated as follows:
\begin{align*}
\e
\Big[
	\int_{0}^{\tau(t)}
		\big|
			\mathcal{I}^{\pi}[Z(t,\cdot)-\mathscr{Z}^{\pi}(\tau(t),\cdot)](\tau(s))
		\big|^{2}
	\rd s
\Big]
&\leq
\e
\Big[
	\int_{0}^{t}
		\big|
			Z(t,s)
			-
			\mathscr{Z}^{\pi}(\tau(t),s)
		\big|^{2}
	\rd s
\Big]
\\&=
\e
\Big[
	\Big|
		\int_{0}^{t}
			\big\{
				Z(t,s)
				-
				\mathscr{Z}^{\pi}(\tau(t),s)
			\big\}
		\rd W(s)
	\Big|^{2}
\Big]
\\&\leq
\e
\Big[
	\big|
		Y(t)
		-
		\mathscr{Y}^{\pi}(\tau(t),t)
	\big|^{2}
\Big].
\end{align*}
Thus we obtain
\begin{align*}
&\e
\Big[
	\int_{0}^{t}
		\big|
			Z(t,s)
			-
			\mathcal{I}^{\pi}[\mathscr{Z}^{\pi}(\tau(t),\cdot)](\tau(s))
		\big|^{2}
		\1_{[\tau^{*}(s),T)}(t)
	\rd s
\Big]
\\&\leq
2
\Big\{
\e
\Big[
	\int_{0}^{\tau(t)}
		\big|
			Z(t,s)
			-
			\mathcal{I}^{\pi}[Z(t,\cdot)](\tau(s))
		\big|^{2}
	\rd s
\Big]
+
\e
\Big[
	\big|
		Y(t)
		-
		\mathscr{Y}^{\pi}(\tau(t),t)
	\big|^{2}
\Big]
\Big\}.
\end{align*}
From this inequality and \eqref{eq_3.6_0}, by using the Gronwall-like inequality (see Lemma \ref{Lem_Gron_cont}) with
\begin{align*}
a(t)
&=
\e
\Big[
	\big|
		Y(t)
		-
		\mathscr{Y}^{\pi}(\tau(t),t)
	\big|^{2}
	+
	\int_{0}^{T}
		\big|
			Z(t,s)
			-
			\mathscr{Z}^{\pi}(\tau(t),s)
		\big|^{2}
	\rd s
\Big],
\\
b(t)
&=
\big(
	|\pi|
	+
	\rho_{\Psi,G}(|\pi|)^{2}
\big)
\Big(
	M^{2}
	+
	\e\Big[
		\int_{t}^{T}
			\big\{
				\big|G(t,s,0,0,0)\big|^{2}
				+
				\big|Y(s)\big|^{2}
				+
				\big|Z(t,s)\big|^{2}
				+
				\big|Z(s,t)\big|^{2}
			\big\}
		\rd s
	\Big]
\Big),
\\
c(t)
&=
\e
\Big[
	\int_{0}^{\tau(t)}
		\big|
			Z(t,s)
			-
			\mathcal{I}^{\pi}[Z(t,\cdot)](\tau(s))
		\big|^{2}
	\rd s
\Big],
\\
\zeta(t,s,\omega)
&=
\big|
	Z(t,s)
	-
	\mathcal{I}^{\pi}[\mathscr{Z}^{\pi}(\tau(t),\cdot)](\tau(s))
\big|
\1_{[\tau^{*}(s),T)}(t)
~\text{and}~
(S,\Sigma,\mu)
=
(\Omega,\mathcal{F},\p),
\end{align*}
we see that
\begin{align*}
&\int_{0}^{T}
	\e
	\Big[
		\big|
			Y(t)
			-
			\mathscr{Y}^{\pi}(\tau(t),t)
		\big|^{2}
		+
		\int_{0}^{T}
			\big|
				Z(t,s)
				-
				\mathscr{Z}^{\pi}(\tau(t),s)
			\big|^{2}
		\rd s
	\Big]
\rd t
\\&\leq
C
\Big\{
	\big(
		|\pi|
		+
		\rho_{\Psi,G}(|\pi|)^{2}
	\big)
	\Big(
		M^{2}
		+
		\int_{0}^{T}
			\e\Big[
				\int_{t}^{T}
					\big\{
						\big|G(t,s,0,0,0)\big|^{2}
						+
						\big|Y(s)\big|^{2}
						+
						\big|Z(t,s)\big|^{2}
						+
						\big|Z(s,t)\big|^{2}
					\big\}
				\rd s
			\Big]
		\rd t
	\Big)
	\\&\quad\quad\quad+
	\int_{0}^{T}
		\e
		\Big[
			\int_{0}^{\tau(t)}
				\big|
					Z(t,s)
					-
					\mathcal{I}^{\pi}[Z(t,\cdot)](\tau(s))
				\big|^{2}
			\rd s
		\Big]
	\rd t
\Big\}
\\&\leq
C
\Big\{
	M^{2}
	\big(
		|\pi|
		+
		\rho_{\Psi,G}(|\pi|)^{2}
	\big)
	+
	\int_{0}^{T}
		\e\Big[
			\int_{0}^{T}
				\big|
					Z(t,s)
					-
					\mathcal{I}^{\pi}[Z(t,\cdot)](\tau(s))
				\big|^{2}
			\rd t
		\Big]
	\rd s
\Big\},
\end{align*}
where in the last inequality, we used the assumption (iv) in $(\mathrm{H}_{\Psi,G})$ and the estimate \eqref{Prop_BSVIE_0_1}.
Since $s \mapsto Z(\cdot,\cdot,s)$ is a square-integrable function with values in the Hilbert space $L^{2}(\Omega \times [0,T];\real^{m \times d})$, we see that the second term in the right-hand side above converges to zero as $|\pi| \downarrow 0$.
This completes the proof.
\end{proof}

\begin{rem}
In the case of Type-\Rnum{1} BSVIEs, a similar convergence result was shown by Wang \cite{WaY18} and Wang and Yong \cite{WaTYo19}.
Theorem \ref{Theo_BSDE_0} extends their result to Type-\Rnum{2} BSVIEs.
We emphasize that our result holds true for general (stochastic) coefficients $\Psi$ and $G$.
\end{rem}

For further analysis, we need a quantitative estimate for the $L^{2}$-error between the solution $(\mathscr{Y}^{\pi}, \mathscr{Z}^{\pi})$ of BSDE system \eqref{BSDE_sys_0}  with $\theta =0$ and the adapted M-solution $(Y(\cdot),Z(\cdot,\cdot))$ of BSVIE \eqref{BSVIE_II_1}.
For this purpose, we define
\begin{align*}
\overline{\mathscr{Z}}^{\pi}(t_{k},t_{\ell})
:=
\frac{1}{\Delta t_{\ell}}
\e_{t_{\ell}}
\Big[
	\int_{t_{\ell}}^{t_{\ell+1}}
		\mathscr{Z}^{\pi}(t_{k},s)
	\rd s
\Big]
=
\e_{t_{\ell}}
\Big[
	\mathcal{I}^{\pi}[\mathscr{Z}^{\pi}(t_{k},\cdot)](t_{\ell})
\Big],~
k,\ell=0,\ldots,N-1.
\end{align*}

\begin{prop}\label{Prop_BSDE_0}
Suppose $(\mathrm{H}_{\Psi,G})$ holds.
Then for any $\pi=\{t_{0},t_{1},\ldots,t_{N}\} \in \Pi[0,T]$, it holds that
\begin{align*}
&
\e
\Big[
	\int_{0}^{T}
		\big|
			Y(t)
			-
			\mathscr{Y}^{\pi}(\tau(t),t)
		\big|^{2}
	\rd t
	+
	\int_{0}^{T}
		\int_{0}^{T}
			\big|
				Z(t,s)
				-
				\mathscr{Z}^{\pi}(\tau(t),s)
			\big|^{2}
		\rd s
	\rd t
\Big]
\\&\leq
C
\Big\{
	M^{2}
	\big(
		|\pi|
		+
		\rho_{\Psi,G}(|\pi|)^{2}
	\big)
	+
	\sum_{k=1}^{N-1}
		\Delta t_{k}
		\sum_{\ell=0}^{k-1}
			\e\Big[
				\int_{t_{\ell}}^{t_{\ell+1}}
					\big|
						\mathscr{Z}^{\pi}(t_{k},s)
						-
						\overline{\mathscr{Z}}^{\pi}(t_{k},t_{\ell})
					\big|^{2}
				\rd s
			\Big]
\Big\}.
\end{align*}
\end{prop}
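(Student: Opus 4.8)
The plan is to re-run the Gronwall argument from the proof of Theorem~\ref{Theo_BSDE_0}, but to estimate the ``crossing term'' directly against $\mathscr{Z}^{\pi}$ and $\overline{\mathscr{Z}}^{\pi}$ rather than against the regularity of $Z$ itself, thereby keeping the error explicit. Concretely, I would take the pointwise-in-$t$ estimate \eqref{eq_3.6_0} as the starting point and apply the continuous Gronwall-like inequality (Lemma~\ref{Lem_Gron_cont}) with the same $a(t)$, $b(t)$ and $\zeta(t,s,\omega)=|Z(t,s)-\mathcal{I}^{\pi}[\mathscr{Z}^{\pi}(\tau(t),\cdot)](\tau(s))|\1_{[\tau^{*}(s),T)}(t)$ as in that proof and $(S,\Sigma,\mu)=(\Omega,\mathcal{F},\p)$, but now with the refined choice
\begin{align*}
c(t):=\sum_{\ell=0}^{k-1}\int_{t_{\ell}}^{t_{\ell+1}}\e\big[\big|\mathscr{Z}^{\pi}(t_{k},s)-\overline{\mathscr{Z}}^{\pi}(t_{k},t_{\ell})\big|^{2}\big]\rd s,\quad t\in[t_{k},t_{k+1}).
\end{align*}
Since $\int_{t}^{T}\e[|Y(s)-\mathscr{Y}^{\pi}(\tau(s),s)|^{2}]\rd s\le\int_{t}^{T}a(s)\rd s$, the estimate \eqref{eq_3.6_0} readily implies the hypothesis \eqref{Gron_0}, so it remains to verify \eqref{Gron_00} and to evaluate $\int_{0}^{T}b(t)\rd t$ and $\int_{0}^{T}c(t)\rd t$.

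To verify \eqref{Gron_00}, note that the indicator in $\zeta$ restricts the integration to $s\in[0,\tau(t))$, and I would split
\begin{align*}
Z(t,s)-\mathcal{I}^{\pi}[\mathscr{Z}^{\pi}(\tau(t),\cdot)](\tau(s))=\big(Z(t,s)-\mathscr{Z}^{\pi}(\tau(t),s)\big)+\big(\mathscr{Z}^{\pi}(\tau(t),s)-\mathcal{I}^{\pi}[\mathscr{Z}^{\pi}(\tau(t),\cdot)](\tau(s))\big).
\end{align*}
The first summand integrates to at most $\e[\int_{0}^{T}|Z(t,s)-\mathscr{Z}^{\pi}(\tau(t),s)|^{2}\rd s]\le a(t)$; this is the crucial simplification, since---unlike in Theorem~\ref{Theo_BSDE_0}---the diagonal error is absorbed directly into $a(t)$, and no comparison with the regularity of $Z$ is needed.

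The main step, which I expect to be the crux, is the treatment of the second summand. On each interval $[t_{\ell},t_{\ell+1})$ with $\ell<k$ I would write $\mathscr{Z}^{\pi}(t_{k},s)-\mathcal{I}^{\pi}[\mathscr{Z}^{\pi}(t_{k},\cdot)](t_{\ell})=(\mathscr{Z}^{\pi}(t_{k},s)-\overline{\mathscr{Z}}^{\pi}(t_{k},t_{\ell}))-(\mathcal{I}^{\pi}[\mathscr{Z}^{\pi}(t_{k},\cdot)](t_{\ell})-\overline{\mathscr{Z}}^{\pi}(t_{k},t_{\ell}))$. Since $\overline{\mathscr{Z}}^{\pi}(t_{k},t_{\ell})$ is $\mathcal{F}_{t_{\ell}}$-measurable and constant in $s$ over the interval, the last difference equals $\frac{1}{\Delta t_{\ell}}\int_{t_{\ell}}^{t_{\ell+1}}(\mathscr{Z}^{\pi}(t_{k},u)-\overline{\mathscr{Z}}^{\pi}(t_{k},t_{\ell}))\rd u$, so Jensen's inequality gives $\Delta t_{\ell}\,\e[|\mathcal{I}^{\pi}[\mathscr{Z}^{\pi}(t_{k},\cdot)](t_{\ell})-\overline{\mathscr{Z}}^{\pi}(t_{k},t_{\ell})|^{2}]\le\int_{t_{\ell}}^{t_{\ell+1}}\e[|\mathscr{Z}^{\pi}(t_{k},u)-\overline{\mathscr{Z}}^{\pi}(t_{k},t_{\ell})|^{2}]\rd u$. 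Summing over $\ell$ bounds the contribution of the second summand by $4c(t)$, so that $\e[\int_{0}^{t}\zeta(t,s)^{2}\rd s]\le 2a(t)+8c(t)$, which is \eqref{Gron_00}. The essential point is that the conditional-expectation structure of $\overline{\mathscr{Z}}^{\pi}$ makes it the correct $\mathcal{F}_{t_{\ell}}$-measurable comparison, and its deviation from the raw average $\mathcal{I}^{\pi}[\mathscr{Z}^{\pi}]$ is governed by the very regularity quantity defining $c(t)$.

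Finally, Lemma~\ref{Lem_Gron_cont} yields $\int_{0}^{T}a(t)\rd t\le C\int_{0}^{T}(b(t)+c(t))\rd t$. By construction $\int_{0}^{T}c(t)\rd t=\sum_{k=1}^{N-1}\Delta t_{k}\sum_{\ell=0}^{k-1}\e[\int_{t_{\ell}}^{t_{\ell+1}}|\mathscr{Z}^{\pi}(t_{k},s)-\overline{\mathscr{Z}}^{\pi}(t_{k},t_{\ell})|^{2}\rd s]$ is exactly the double-sum term in the assertion, while assumption (iv) in $(\mathrm{H}_{\Psi,G})$, the a priori estimate \eqref{Prop_BSVIE_0_1}, and Fubini's theorem (for the $\int_{t}^{T}|Z(s,t)|^{2}\rd s$ contribution) give $\int_{0}^{T}b(t)\rd t\le CM^{2}(|\pi|+\rho_{\Psi,G}(|\pi|)^{2})$. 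Since $\int_{0}^{T}a(t)\rd t$ is the left-hand side of the claimed estimate, combining these completes the proof.
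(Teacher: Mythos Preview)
Your proposal is correct and follows essentially the same route as the paper's proof: both start from \eqref{eq_3.6_0}, apply Lemma~\ref{Lem_Gron_cont} with the same $a$, $b$, $\zeta$, and take $c(t)=\e[\int_0^{\tau(t)}|\mathscr{Z}^{\pi}(\tau(t),s)-\overline{\mathscr{Z}}^{\pi}(\tau(t),\tau(s))|^{2}\rd s]$, verifying \eqref{Gron_00} via the same splitting through $\overline{\mathscr{Z}}^{\pi}$ and Jensen's inequality. The only cosmetic difference is that the paper bounds the $Z-\mathscr{Z}^{\pi}$ piece by $\e[|Y(t)-\mathscr{Y}^{\pi}(\tau(t),t)|^{2}]$ via the martingale representation (using that both $Y(t,\cdot)$ and $\mathscr{Y}^{\pi}(\tau(t),\cdot)$ are martingales on $[0,t]$), whereas you bound it more directly by the $Z$-part of $a(t)$; both work.
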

\begin{proof}
As before, we see that the estimate \eqref{eq_3.6_0} holds.
Noting that 
\begin{align*}
&\e
\Big[
	\int_{0}^{t}
		\big|
			Z(t,s)
			-
			\mathcal{I}^{\pi}[\mathscr{Z}^{\pi}(\tau(t),\cdot)](\tau(s))
		\big|^{2}
		\1_{[\tau^{*}(s),T)}(t)
	\rd s
\Big]
=
\e
\Big[
	\int_{0}^{\tau(t)}
		\big|
			Z(t,s)
			-
			\mathcal{I}^{\pi}[\mathscr{Z}^{\pi}(\tau(t),\cdot)](\tau(s))
		\big|^{2}
	\rd s
\Big]
\\&\leq
3
\Big\{
\e
\Big[
	\int_{0}^{t}
		\big|
			Z(t,s)
			-
			\mathscr{Z}^{\pi}(\tau(t),s)
		\big|^{2}
	\rd s
\Big]
+
\e
\Big[
	\int_{0}^{\tau(t)}
		\big|
			\mathscr{Z}^{\pi}(\tau(t),s)
			-
			\overline{\mathscr{Z}}^{\pi}(\tau(t),\tau(s))
		\big|^{2}
	\rd s
\Big]
\\&\quad\quad\quad
+
\e
\Big[
	\int_{0}^{\tau(t)}
		\big|
			\mathcal{I}^{\pi}[\mathscr{Z}^{\pi}(\tau(t),\cdot)](\tau(s))
			-
			\overline{\mathscr{Z}}^{\pi}(\tau(t),\tau(s))
		\big|^{2}
	\rd s
\Big]
\Big\}
\\&\leq
3
\Big\{
\e
\Big[
	\big|
		Y(t)
		-
		\mathscr{Y}^{\pi}(\tau(t),t)
	\big|^{2}
\Big]
+
\e
\Big[
	\int_{0}^{\tau(t)}
		\big|
			\mathscr{Z}^{\pi}(\tau(t),s)
			-
			\overline{\mathscr{Z}}^{\pi}(\tau(t),\tau(s))
		\big|^{2}
	\rd s
\Big]
\\&\quad\quad\quad
+
\e
\Big[
	\int_{0}^{\tau(t)}
		\big|
			\mathcal{I}^{\pi}[\mathscr{Z}^{\pi}(\tau(t),\cdot)](\tau(s))
			-
			\overline{\mathscr{Z}}^{\pi}(\tau(t),\tau(s))
		\big|^{2}
	\rd s
\Big]
\Big\}
\\&\leq
3
\Big\{
\e
\Big[
	\big|
		Y(t)
		-
		\mathscr{Y}^{\pi}(\tau(t),t)
	\big|^{2}
\Big]
+
2
\e
\Big[
	\int_{0}^{\tau(t)}
		\big|
			\mathscr{Z}^{\pi}(\tau(t),s)
			-
			\overline{\mathscr{Z}}^{\pi}(\tau(t),\tau(s))
		\big|^{2}
	\rd s
\Big]
\Big\},
\end{align*}
for a.e.\ $t \in [0,T)$, by the Gronwall-like inequality (see Lemma \ref{Lem_Gron_cont}) with
\begin{align*}
a(t)
&=
\e
\Big[
	\big|
		Y(t)
		-
		\mathscr{Y}^{\pi}(\tau(t),t)
	\big|^{2}
	+
	\int_{0}^{T}
		\big|
			Z(t,s)
			-
			\mathscr{Z}^{\pi}(\tau(t),s)
		\big|^{2}
	\rd s
\Big],
\\
b(t)
&=
\big(
	|\pi|
	+
	\rho_{\Psi,G}(|\pi|)^{2}
\big)
\Big(
	M^{2}
	+
	\e\Big[
		\int_{t}^{T}
			\big\{
				\big|G(t,s,0,0,0)\big|^{2}
				+
				\big|Y(s)\big|^{2}
				+
				\big|Z(t,s)\big|^{2}
				+
				\big|Z(s,t)\big|^{2}
			\big\}
		\rd s
	\Big]
\Big),
\\
c(t)
&=
\e
\Big[
	\int_{0}^{\tau(t)}
		\big|
			\mathscr{Z}^{\pi}(\tau(t),s)
			-
			\overline{\mathscr{Z}}^{\pi}(\tau(t),\tau(s))
		\big|^{2}
	\rd s
\Big],
\\
\zeta(t,s,\omega)
&=
\big|
	Z(t,s)
	-
	\mathcal{I}^{\pi}[\mathscr{Z}^{\pi}(\tau(t),\cdot)](\tau(s))
\big|
\1_{[\tau^{*}(s),T)}(t)
~\text{and}~
(S,\Sigma,\mu)
=
(\Omega,\mathcal{F},\p),
\end{align*}
we see that
\begin{align*}
&\int_{0}^{T}
	\e
	\Big[
		\big|
			Y(t)
			-
			\mathscr{Y}^{\pi}(\tau(t),t)
		\big|^{2}
		+
		\int_{0}^{T}
			\big|
				Z(t,s)
				-
				\mathscr{Z}^{\pi}(\tau(t),s)
			\big|^{2}
		\rd s
	\Big]
\rd t
\\&\leq
C
\Big\{
	\big(
		|\pi|
		+
		\rho_{\Psi,G}(|\pi|)^{2}
	\big)
	\Big(
		M^{2}
		+
		\int_{0}^{T}
			\e\Big[
				\int_{t}^{T}
					\big\{
						\big|G(t,s,0,0,0)\big|^{2}
						+
						\big|Y(s)\big|^{2}
						+
						\big|Z(t,s)\big|^{2}
						+
						\big|Z(s,t)\big|^{2}
					\big\}
				\rd s
			\Big]
		\rd t
	\Big)
	\\&\quad\quad\quad+
	\int_{0}^{T}
		\e
		\Big[
			\int_{0}^{\tau(t)}
				\big|
					\mathscr{Z}^{\pi}(\tau(t),s)
					-
					\overline{\mathscr{Z}}^{\pi}(\tau(t),\tau(s))
				\big|^{2}
			\rd s
		\Big]
	\rd t
\Big\}
\\&\leq
C
\Big\{
	M^{2}
	\big(
		|\pi|
		+
		\rho_{\Psi,G}(|\pi|)^{2}
	\big)
	+
	\sum_{k=1}^{N-1}
		\Delta t_{k}
		\sum_{\ell=0}^{k-1}
			\e\Big[
				\int_{t_{\ell}}^{t_{\ell+1}}
					\big|
						\mathscr{Z}^{\pi}(t_{k},s)
						-
						\overline{\mathscr{Z}}^{\pi}(t_{k},t_{\ell})
					\big|^{2}
				\rd s
			\Big]
\Big\},
\end{align*}
where in the last inequality, we used the assumption (iv) in $(\mathrm{H}_{\Psi,G})$ and the estimate \eqref{Prop_BSVIE_0_1}.
This completes the proof.
\end{proof}

As a corollary of the above proposition, we provide an estimate for the $L^{2}$-time regularity of $(Y(\cdot),Z(\cdot,\cdot))$.
For this purpose, for each $\pi=\{t_{0},t_{1},\ldots,t_{N}\} \in \Pi[0,T]$, we introduce
\begin{align}\label{Reg_YZ}
\begin{split}
&
\mathcal{E}(Y;\pi)
:=
\sum^{N-1}_{k=0}
\e \Big[
	\int^{t_{k+1}}_{t_k}
		\big|
			Y(t)
			-
			\overline{Y}^{\pi}(t_k)
		\big|^{2}
	\rd t
\Big],
\\
&
\mathcal{E}(Z;\pi)
:=
\sum^{N-1}_{k=0}
\sum^{N-1}_{\ell=0}
\e\Big[
	\int^{t_{k+1}}_{t_k}
	\int^{t_{\ell+1}}_{t_\ell}
		\big|
			Z(t,s)
			-
			\overline{Z}^{\pi}(t_k,t_\ell)
		\big|^{2}
	\rd s
	\rd t
\Big],
\end{split}
\end{align}
where
\begin{align*}
\begin{split}
&\overline{Y}^{\pi}(t_k)
:=
\frac{1}{\Delta t_{k}}
\e_{t_{k}}
\Big[
	\int^{t_{k+1}}_{t_k}
		Y(t)
	\rd t
\Big],~
k=0,\dots,N-1,\\
&\overline{Z}^{\pi}(t_k,t_\ell)
:=
\frac{1}{\Delta t_{k}}
\frac{1}{\Delta t_{\ell}}
\e_{t_{\ell}}
\Big[
	\int^{t_{k+1}}_{t_k}
		\int^{t_{\ell+1}}_{t_\ell}
			Z(t,s)
		\rd s
	\rd t
\Big],~
k,\ell=0,\dots,N-1.
\end{split}
\end{align*}
Although depending on $\pi$, $\overline{Y}^{\pi}(t_k)$ and $\overline{Z}^{\pi}(t_k,t_{\ell})$ are defined through the true solution $(Y(\cdot),Z(\cdot,\cdot))$, not through some approximations.
Therefore, $\mathcal{E}(Y;\pi)$ and $\mathcal{E}(Z;\pi)$ measure a kind of time regularity.
We call  $\mathcal{E}(Y;\pi)$ and $\mathcal{E}(Z;\pi)$ the {\emph{modulus of the $L^{2}$-time regularity} of $Y(\cdot)$ and $Z(\cdot,\cdot)$, respectively.

\begin{cor}\label{Cor_0}
Under $(\mathrm{H}_{\Psi,G})$, for any $\pi=\{t_{0},t_{1},\ldots,t_{N}\} \in \Pi[0,T]$, it holds that
\begin{align*}
\mathcal{E}(Y;\pi)
+
\mathcal{E}(Z;\pi)
&\leq
C
\Big\{
	M^{2}
	\big(
		|\pi|
		+
		\rho_{\Psi,G}(|\pi|)^{2}
	\big)
	+
	\sum_{k=0}^{N-1}
		\Delta t_{k}
		\e
		\Big[
			\int_{t_{k}}^{t_{k+1}}
				\big|
					\mathscr{Z}^{\pi}(t_{k},s)
				\big|^{2}
			\rd s
		\Big]
	\\&\quad+
	\sum_{k=0}^{N-1}
		\Delta t_{k}
		\sum_{\ell=0}^{N-1}
			\e\Big[
				\int_{t_{\ell}}^{t_{\ell+1}}
					\big|
						\mathscr{Z}^{\pi}(t_{k},s)
						-
						\overline{\mathscr{Z}}^{\pi}(t_{k},t_{\ell})
					\big|^{2}
				\rd s
			\Big]
\Big\}.
\end{align*}
\end{cor}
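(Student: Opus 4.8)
The plan is to estimate $\mathcal{E}(Y;\pi)$ and $\mathcal{E}(Z;\pi)$ separately, the engine being the orthogonal projection (best-approximation) property of the averaged quantities $\overline{Y}^{\pi}$ and $\overline{Z}^{\pi}$ combined with Proposition \ref{Prop_BSDE_0}. First I would observe that $\overline{Y}^{\pi}(t_k)$ is the $L^2$-orthogonal projection of $Y(\cdot)$ onto the closed subspace of $\mathcal{F}_{t_k}$-measurable random variables, regarded as functions constant in $t$ on $[t_k,t_{k+1}]$: for any $\mathcal{F}_{t_k}$-measurable $\xi\in L^2$, the cross term $\e[\int_{t_k}^{t_{k+1}}\langle Y(t)-\overline{Y}^{\pi}(t_k),\xi\rangle\rd t]$ vanishes because $\e_{t_k}[\int_{t_k}^{t_{k+1}}Y(t)\rd t]=\Delta t_k\,\overline{Y}^{\pi}(t_k)$. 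An identical computation shows that $\overline{Z}^{\pi}(t_k,t_\ell)$ is the projection of $Z(\cdot,\cdot)$ onto the $\mathcal{F}_{t_\ell}$-measurable constants on $[t_k,t_{k+1}]\times[t_\ell,t_{\ell+1}]$. Since $\mathscr{Y}^{\pi}(t_k,t_k)$ is $\mathcal{F}_{t_k}$-measurable and $\overline{\mathscr{Z}}^{\pi}(t_k,t_\ell)$ is $\mathcal{F}_{t_\ell}$-measurable, I may replace the true averages by these surrogates at the cost of an inequality, yielding $\mathcal{E}(Y;\pi)\le\sum_k\e[\int_{t_k}^{t_{k+1}}|Y(t)-\mathscr{Y}^{\pi}(t_k,t_k)|^2\rd t]$ and $\mathcal{E}(Z;\pi)\le\sum_{k,\ell}\e[\int_{t_k}^{t_{k+1}}\int_{t_\ell}^{t_{\ell+1}}|Z(t,s)-\overline{\mathscr{Z}}^{\pi}(t_k,t_\ell)|^2\rd s\rd t]$.

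Next I would insert the approximation processes $\mathscr{Y}^{\pi}(\tau(t),t)=\mathscr{Y}^{\pi}(t_k,t)$ and $\mathscr{Z}^{\pi}(\tau(t),s)=\mathscr{Z}^{\pi}(t_k,s)$ by the triangle inequality, splitting each bound into a global approximation error, already controlled by Proposition \ref{Prop_BSDE_0}, and a local within-interval regularity term. For $\mathcal{E}(Z;\pi)$ the local term is exactly $\sum_k\Delta t_k\sum_\ell\e[\int_{t_\ell}^{t_{\ell+1}}|\mathscr{Z}^{\pi}(t_k,s)-\overline{\mathscr{Z}}^{\pi}(t_k,t_\ell)|^2\rd s]$, the $t$-integral contributing the factor $\Delta t_k$; this is precisely the last term on the right-hand side of the claim.

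For the local term of $\mathcal{E}(Y;\pi)$ I would exploit the structural fact that, on $[t_k,t_{k+1}]$, the BSDE system \eqref{BSDE_sys_0}--\eqref{BSDE_sys_1} carries no driver (the factor $\1_{\{k<\ell\}}$, equivalently $\1_{[t_{k+1},T)}$, vanishes there), so that $\mathscr{Y}^{\pi}(t_k,\cdot)$ is a martingale on this subinterval with $\mathscr{Y}^{\pi}(t_k,t)-\mathscr{Y}^{\pi}(t_k,t_k)=\int_{t_k}^{t}\mathscr{Z}^{\pi}(t_k,r)\rd W(r)$. The It\^o isometry then gives $\e[\int_{t_k}^{t_{k+1}}|\mathscr{Y}^{\pi}(t_k,t)-\mathscr{Y}^{\pi}(t_k,t_k)|^2\rd t]\le\Delta t_k\,\e[\int_{t_k}^{t_{k+1}}|\mathscr{Z}^{\pi}(t_k,r)|^2\rd r]$, which produces the term $\sum_k\Delta t_k\,\e[\int_{t_k}^{t_{k+1}}|\mathscr{Z}^{\pi}(t_k,s)|^2\rd s]$.

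Finally I would collect the pieces: the global errors $\e[\int_0^T|Y(t)-\mathscr{Y}^{\pi}(\tau(t),t)|^2\rd t]$ and $\e[\int_0^T\int_0^T|Z(t,s)-\mathscr{Z}^{\pi}(\tau(t),s)|^2\rd s\rd t]$ are bounded by the right-hand side of Proposition \ref{Prop_BSDE_0}, whose extra sum $\sum_{k\ge1}\Delta t_k\sum_{\ell<k}\e[\int|\mathscr{Z}^{\pi}-\overline{\mathscr{Z}}^{\pi}|^2]$ is dominated by the full double sum over $\ell$ in the claim, while the $M^2(|\pi|+\rho_{\Psi,G}(|\pi|)^2)$ part is retained verbatim. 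Adding the two local terms completes the estimate. The only genuinely non-mechanical points are recognizing the projection property and the driver-free martingale structure of $\mathscr{Y}^{\pi}(t_k,\cdot)$ on each mesh interval; once these are in hand the remainder is triangle inequalities and bookkeeping, so I anticipate no serious obstacle.
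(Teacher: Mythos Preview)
Your proposal is correct and follows essentially the same approach as the paper's proof: best-approximation property of $\overline{Y}^{\pi}(t_k)$ and $\overline{Z}^{\pi}(t_k,t_\ell)$, replacement by $\mathscr{Y}^{\pi}(t_k,t_k)$ and $\overline{\mathscr{Z}}^{\pi}(t_k,t_\ell)$, triangle inequality to split into the global error (handled by Proposition~\ref{Prop_BSDE_0}) plus local terms, and the martingale structure of $\mathscr{Y}^{\pi}(t_k,\cdot)$ on $[t_k,t_{k+1}]$ via the It\^o isometry. The only cosmetic difference is that the paper writes the martingale estimate as $\sup_{t\in[t_k,t_{k+1}]}\e[|\mathscr{Y}^{\pi}(t_k,t)-\mathscr{Y}^{\pi}(t_k,t_k)|^2]=\e[\int_{t_k}^{t_{k+1}}|\mathscr{Z}^{\pi}(t_k,s)|^2\rd s]$ before integrating in $t$, but this is equivalent to your version.
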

\begin{proof}
Note that $\overline{Y}^{\pi}(t_{k})$ is the best approximation of $Y(\cdot)$ on $[t_{k},t_{k+1}]$ in the following sense:
\begin{align*}
\e\Big[
	\int_{t_{k}}^{t_{k+1}}
		\big|
			Y(t)
			-
			\overline{Y}^{\pi}(t_{k})
		\big|^{2}
	\rd t
\Big]
\leq
\e\Big[
	\int_{t_{k}}^{t_{k+1}}
			\big|
				Y(t)
				-
				\eta
			\big|^{2}
		\rd t
\Big]
\end{align*}
for any $\eta \in L^{2}_{\mathcal{F}_{t_{k}}}(\Omega;\real^{m})$ (see Remark 5.2.5 in \cite{Zh17}).
Similarly, it holds that
\begin{align*}
\e\Big[
	\int_{t_{k}}^{t_{k+1}}
		\int_{t_{\ell}}^{t_{\ell+1}}
			\big|
				Z(t,s)
				-
				\overline{Z}^{\pi}(t_{k},t_{\ell})
			\big|^{2}
		\rd s
	\rd t
\Big]
\leq
\e\Big[
	\int_{t_{k}}^{t_{k+1}}
		\int_{t_{\ell}}^{t_{\ell+1}}
			\big|
				Z(t,s)
				-
				\zeta
			\big|^{2}
		\rd s
	\rd t
\Big]
\end{align*}
for any $\zeta \in L^{2}_{\mathcal{F}_{t_{\ell}}}(\Omega;\real^{m \times d})$.
Therefore, we have
\begin{align*}
&
\mathcal{E}(Y;\pi)
+
\mathcal{E}(Z;\pi)
\\&\leq
\sum_{k=0}^{N-1}
\e\Big[
	\int_{t_{k}}^{t_{k+1}}
		\big|
			Y(t)
			-
			\mathscr{Y}^{\pi}(t_{k},t_{k})
		\big|^{2}
	\rd t
\Big]
+
\sum_{k=0}^{N-1}
\sum_{\ell=0}^{N-1}
\e\Big[
	\int_{t_{k}}^{t_{k+1}}
		\int_{t_{\ell}}^{t_{\ell+1}}
			\big|
				Z(t,s)
				-
				\overline{\mathscr{Z}}^{\pi}(t_{k},t_{\ell})
			\big|^{2}
		\rd s
	\rd t
\Big]
\\&\leq
2\Big\{
	\e
	\Big[
		\int_{0}^{T}
			\big|
				Y(t)
				-
				\mathscr{Y}^{\pi}(\tau(t),t)
			\big|^{2}
		\rd t
		+
		\int_{0}^{T}
			\int_{0}^{T}
				\big|
					Z(t,s)
					-
					\mathscr{Z}^{\pi}(\tau(t),s)
				\big|^{2}
			\rd s
		\rd t
	\Big]
	\\&\quad\quad\quad
	+
	\sum_{k=0}^{N-1}
	\e
	\Big[
		\int_{t_{k}}^{t_{k+1}}
			\big|
				\mathscr{Y}^{\pi}(t_{k},t)
				-
				\mathscr{Y}^{\pi}(t_{k},t_{k})
			\big|^{2}
		\rd t
	\Big]
	+
	\sum_{k=0}^{N-1}
	\Delta t_{k}
	\sum_{\ell=0}^{N-1}
	\e
	\Big[
		\int_{t_{\ell}}^{t_{\ell+1}}
			\big|
				\mathscr{Z}^{\pi}(t_{k},s)
				-
				\overline{\mathscr{Z}}^{\pi}(t_{k},t_{\ell})
			\big|^{2}
		\rd s
	\Big]
\Big\}.
\end{align*}
Noting that
\begin{align*}
\sup_{t \in [t_{k}, t_{k+1}]}
\e
\Big[
	\big|
		\mathscr{Y}^{\pi}(t_{k},t)
		-
		\mathscr{Y}^{\pi}(t_{k},t_{k})
	\big|^{2}
\Big]
=
\sup_{t \in [t_{k}, t_{k+1}]}
\e
\Big[
	\Big|
		\int_{t_{k}}^{t}
			\mathscr{Z}^{\pi}(t_{k},s)
		\rd W(s)
	\Big|^{2}
\Big]
=
\e
\Big[
	\int_{t_{k}}^{t_{k+1}}
		\big|
			\mathscr{Z}^{\pi}(t_{k},s)
		\big|^{2}
	\rd s
\Big],
\end{align*}
by Proposition \ref{Prop_BSDE_0}, we get the assertion.
\end{proof}

\begin{rem}
The terms 
\begin{align*}
\sum_{k=0}^{N-1}\Delta t_{k}
\e\Big[
	\int_{t_{k}}^{t_{k+1}}
		\big|\mathscr{Z}^{\pi}(t_{k},s)\big|^{2}
	\rd s
\Big]
~\text{and}~
\sum_{k=0}^{N-1}\Delta t_{k} \sum_{\ell=0}^{N-1}
\e\Big[
	\int_{t_{\ell}}^{t_{\ell+1}}
		\big|\mathscr{Z}^{\pi}(t_{k},s)-\overline{\mathscr{Z}}^{\pi}(t_{k},t_{\ell})\big|^{2}
	\rd s
\Big]
\end{align*}
measure the $L^{2}$-time regularity of the martingale integrand $\mathscr{Z}^{\pi}$ of BSDE system \ref{BSDE_sys_0} with $\theta =0$.
Proposition \ref{Prop_BSDE_0} and Corollary \ref{Cor_0} show that, for the general Type-\Rnum{2} BSVIE \eqref{BSVIE_II_1} with the stochastic coefficients $(\Psi,G)$, the corresponding error terms can be estimated in terms of the $L^{2}$-time regularity of $\mathscr{Z}^{\pi}$.
In Section \ref{sec_L2_reg}, we investigate the $L^{2}$-time regularity of $\mathscr{Z}^{\pi}$ in the case of Type-\Rnum{2} BSVIE \eqref{Type-2 Markov} with $X(\cdot)$ begin the solution of SDE \eqref{SDE}.
\end{rem}


\section{Numerical approximations for general Type-\Rnum{2} BSVIEs}\label{sec_Disc_TypeII}

In this section, we construct a backward Euler--Maruyama scheme for the adapted M-solution of the general Type-\Rnum{2} BSVIE \eqref{BSVIE_II_1}, and estimate its $L^{2}$-error in terms of the $L^{2}$-time regularity of $\mathscr{Z}^{\pi}$.

We impose the following assumption on $(\Psi,G)$, which is slightly stronger than the one in the previous section.

\begin{itemize}
\item[$(\mathrm{H}_{\Psi,G})'$]
We suppose that $(\Psi,G)$ satisfies the conditions (i), (ii), (iii) and (iv) in $(\mathrm{H}_{\Psi,G})$ with the constants $L$ and $M$.
Furthermore, we assume that, for any $(t,s), (t',s') \in \Delta [0,T]$ and $(Y,Z_{1},Z_{2}) \in L^{2}_{\mathcal{F}}(\Omega;\real^{m}) \times (L^{2}_{\mathcal{F}}(\Omega;\real^{m \times d}))^{2}$,
\begin{align*}
&
\e
\Big[
	\big|
		\Psi(t)
		-
		\Psi(t')
	\big|^{2}
	+
	\big|
		G
		(
			t,
			s,
			Y,
			Z_{1},
			Z_{2}
		)
		-
		G
		(
			t'
			,s',
			Y,
			Z_{1},
			Z_{2}
		)
	\big|^{2}
\Big]^{1/2}
\\&
\leq
L
\Big\{
	|t-t'|^{1/2}
	+
	|s-s'|^{1/2}
\Big\}
\Big\{
	M
	+
	\e\big[
		\big|Y\big|^{2}
	\big]^{1/2}
	+
	\e\big[
		\big|Z_{1}\big|^{2}
	\big]^{1/2}
	+
	\e\big[
		\big|Z_{2}\big|^{2}
	\big]^{1/2}
\Big\}.
\end{align*}
In addition, suppose that, for each $\pi=\{t_{0},t_{1},\ldots,t_{N}\} \in \Pi[0,T]$, we are given $(\Psi^{\pi},G^{\pi})$ such that
\begin{itemize}
\item
$\Psi^{\pi}(t_{k}) : \Omega \to \real^{m}$ is $\mathcal{F}_{T}$-measurable for each $k=0,\ldots, N-1$;
\item
$G^{\pi}(t_{k},t_{\ell},\cdot,\cdot,\cdot): \Omega \times \real^{m} \times \real^{m \times d} \times \real^{m \times d}\to \real^{m}$ is $\mathcal{F}_{t_{\ell}} \otimes \mathcal{B}(\real^{m}) \otimes \mathcal{B}(\real^{m \times d}) \otimes \mathcal{B}(\real^{m \times d})$-measurable for each $k=0,\ldots,N-1$ and $\ell=k,\ldots,N-1$;
\item
For each $k=0,\ldots,N-1$, $\ell=k,\ldots,N-1$ and $(y,z_{1},z_{2}), (y',z_{1}',z_{2}') \in \real^{m} \times \real^{m \times d} \times \real^{m \times d}$,
\begin{align*}
|G^{\pi}(t_{k},t_{\ell},y,z_{1},z_{2})-G^{\pi}(t_{k},t_{\ell},y',z_{1}',z_{2}')|
\leq
L
\big\{
	|y-y'|
	+
	|z_{1}-z_{1}'|
	+
	|z_{2}-z_{2}'|
\big\};
\end{align*}
\item
For each $k=0,\ldots,N-1$ and $\ell=k,\ldots,N-1$,
\begin{align*}
\e\Big[
	\big|
		\Psi^{\pi}(t_{k})
	\big|^{2}
	+
	\big|
		G^{\pi}
		(
			t_{k},
			t_{\ell},
			0,
			0,
			0
		)
	\big|^{2}
\Big]
<\infty;
\end{align*}
\item
For each $k=0,\ldots,N-1$, $\ell=k,\ldots,N-1$ and $(Y,Z_{1},Z_{2}) \in L^{2}_{\mathcal{F}}(\Omega;\real^{m}) \times (L^{2}_{\mathcal{F}}(\Omega;\real^{m \times d}))^{2}$,
\begin{align*}
&\e
\Big[
	\big|
		\Psi(t_{k})
		-
		\Psi^{\pi}(t_{k})
	\big|^{2}
	+
	\big|
		G
		(
			t_{k},
			t_{\ell},
			Y,
			Z_{1},
			Z_{2}
		)
		-
		G^{\pi}
		(
			t_{k},
			t_{\ell},
			Y,
			Z_{1},
			Z_{2}
		)
	\big|^{2}
\Big]^{1/2}
\\&\leq
L
|\pi|^{1/2}
\Big\{
	M
	+
	\e\big[
		\big|Y\big|^{2}
	\big]^{1/2}
	+
	\e\big[
		\big|Z_{1}\big|^{2}
	\big]^{1/2}
	+
	\e\big[
		\big|Z_{2}\big|^{2}
	\big]^{1/2}
\Big\}.
\end{align*}

\end{itemize}
\end{itemize}

\begin{rem}\label{Rem_EM_0}
Compared to $(\mathrm{H}_{\Psi,G})$, the above assumption imposes the continuity with respect to the time parameter $s$ on $G$.
Under $(\mathrm{H}_{\Psi,G})'$, Proposition \ref{Prop_BSDE_0} and Corollary \ref{Cor_0} hold for $\rho_{\Psi,G}(|\pi|)$ replaced by $L|\pi|^{1/2}$.
In Section \ref{sec_L2_reg}, we will choose
\begin{align*}
\Psi^{\pi}(t_{k})
=
\psi(t_{k},X^{\pi}(t_{k}),X^{\pi}(t_{N}))
~\text{and}~
G^{\pi}(t_{k},t_{\ell},y,z_{1},z_{2})
=
g(t_{k},t_{\ell},X^{\pi}(t_{k}),X^{\pi}(t_{\ell}), y, z_{1}, z_{2})
\end{align*}
with suitable deterministic functions $\psi$ and $g$, where $\{X^{\pi}(t_{k})\}_{k=0}^{N}$ is the Euler--Maruyama scheme for an SDE.
\end{rem}

Now we define an approximation scheme for Type-\Rnum{2} BSVIE \eqref{BSVIE_II_1} based on a backward Euler--Maruyama scheme.
For each $\pi=\{t_{0},t_{1},\ldots,t_{N}\} \in \Pi[0,T]$, define $\{(Y^{\pi}(t_{k},t_{\ell}),Z^{\pi}(t_{k},t_{\ell}))\}_{k,\ell=0}^{N-1}$ by
\begin{align}\label{EM_0}
\begin{cases}
Y^{\pi}(t_{k},t_{\ell})
\displaystyle
=
\e_{t_{\ell}}
\Big[
	Y^{\pi}(t_{k},t_{\ell+1})
\Big]
\\\displaystyle \quad\quad\quad\quad\quad\quad
+
\Delta t_{\ell}
G^{\pi}
(
	t_{k},
	t_{\ell},
	Y^{\pi}(t_{\ell},t_{\ell}),
	Z^{\pi}(t_{k},t_{\ell}),
	Z^{\pi}(t_{\ell},t_{k})
)
\1_{\{k<\ell\}},
~k,\ell=0,\ldots,N-1,
\\
Z^{\pi}(t_{k},t_{\ell})
\displaystyle
=
\frac{1}{\Delta t_{\ell}}
\e_{t_{\ell}}
\Big[
	Y^{\pi}(t_{k},t_{\ell+1})
	\Delta W_{\ell}^{\top}
\Big],
~k,\ell=0,\ldots,N-1,
\end{cases}
\end{align}
with $Y^{\pi}(t_{k},t_{N}):=\Psi^{\pi}(t_{k})$, $k=0,\ldots,N-1$.

The above system is an explicit scheme.
More precisely, $\{(Y^{\pi}(t_{k},t_{\ell}),Z^{\pi}(t_{k},t_{\ell}))\}_{k,\ell=0}^{N-1}$ can be constructed backward inductively as follows.
For $k=N-1$,
$
(Y^{\pi}(t_{N-1},t_{\ell}),Z^{\pi}(t_{N-1},t_{\ell}))
\in
L^{2}_{\mathcal{F}_{t_{\ell}}}(\Omega;\real^{m})
\times
L^{2}_{\mathcal{F}_{t_{\ell}}}(\Omega;\real^{m \times d})
$, $\ell=0,\ldots,N-1$, are defined by the backward induction with respect to $\ell$;
\begin{align*}
\begin{cases}
Y^{\pi}(t_{N-1},t_{\ell})
=
\e_{t_{\ell}}
\Big[
	Y^{\pi}(t_{N-1},t_{\ell+1})
\Big],
\\
Z^{\pi}(t_{N-1},t_{\ell})
=
\displaystyle
\frac{1}{\Delta t_{\ell}}
\e_{t_{\ell}}
\Big[
	Y^{\pi}(t_{N-1},t_{\ell+1})
	\Delta W_{\ell}^{\top}
\Big],
\end{cases}
\end{align*}
for $\ell=0,\ldots,N-1$, with the terminal condition $Y^{\pi}(t_{N-1},t_{N})=\Psi^{\pi}(t_{N-1})$.
Next, fix $k' \in \{0,\ldots,N-2\}$ and assume that we have already constructed
\begin{align*}
(Y^{\pi}(t_{k},t_{\ell}),Z^{\pi}(t_{k},t_{\ell}))
\in
L^{2}_{\mathcal{F}_{t_{\ell}}}(\Omega;\real^{m})
\times
L^{2}_{\mathcal{F}_{t_{\ell}}}(\Omega;\real^{m \times d}),
~\ell=0,\ldots,N-1,
\end{align*}
for $k=k'+1,\ldots,N-1$.
Specifically, we are given $Y^{\pi}(t_{\ell},t_{\ell})$ and  $Z^{\pi}(t_{\ell},t_{k'})$ for $\ell=k'+1,\ldots,N-1$.
Then $(Y^{\pi}(t_{k'},t_{\ell}),Z^{\pi}(t_{k'},t_{\ell})) \in L^{2}_{\mathcal{F}_{t_{\ell}}}(\Omega;\real^{m}) \times L^{2}_{\mathcal{F}_{t_{\ell}}}(\Omega;\real^{m \times d})$, $\ell=0,\ldots,N-1$, are defined by the backward induction with respect to $\ell$;
\begin{align*}
\begin{cases}
Y^{\pi}(t_{k'},t_{\ell})
\displaystyle
=
\e_{t_{\ell}}
\Big[
	Y^{\pi}(t_{k'},t_{\ell+1})
\Big]
+
\Delta t_{\ell}
G^{\pi}
(
	t_{k'},
	t_{\ell},
	Y^{\pi}(t_{\ell},t_{\ell}),
	Z^{\pi}(t_{k'},t_{\ell}),
	Z^{\pi}(t_{\ell},t_{k'})
),
\\
Z^{\pi}(t_{k'},t_{\ell})
\displaystyle
=
\frac{1}{\Delta t_{\ell}}
\e_{t_{\ell}}
\Big[
	Y^{\pi}(t_{k'},t_{\ell+1})
	\Delta W_{\ell}^{\top}
\Big],
\end{cases}
\end{align*}
for $\ell=k'+1,\ldots,N-1$, and
\begin{align*}
\begin{cases}
Y^{\pi}(t_{k'},t_{\ell})
\displaystyle
=
\e_{t_{\ell}}
\Big[
	Y^{\pi}(t_{k'},t_{\ell+1})
\Big],
\\Z^{\pi}(t_{k'},t_{\ell})
\displaystyle
=
\frac{1}{\Delta t_{\ell}}
\e_{t_{\ell}}
\Big[
	Y^{\pi}(t_{k'},t_{\ell+1})
	\Delta W_{\ell}^{\top}
\Big],
\end{cases}
\end{align*}
for $\ell=0,\ldots,k'$, with the terminal condition $Y^{\pi}(t_{k'},t_{N})=\Psi^{\pi}(t_{k'})$.
By the backward induction for $k$, we can construct the backward Euler--Maruyama scheme $\{(Y^{\pi}(t_{k},t_{\ell}),Z^{\pi}(t_{k},t_{\ell}))\}_{k,\ell=0}^{N-1}$.

\begin{rem}
Even in the case of Type-\Rnum{1} BSVIEs (where $G$ and $G^{\pi}$ do not depend on $z_{2}$), the construction of the backward Euler--Maruyama scheme \eqref{EM_0} is slightly different from that of Wang \cite{WaY18}.
He considered a scheme for Type-\Rnum{1} BSVIEs of the following form:
\begin{align}\label{EM_Wang}
\begin{cases}
\displaystyle
Y^{\pi}(t_{k},t_{\ell})
=
\e_{t_{\ell}}\Big[
	Y^{\pi}(t_{k},t_{\ell+1})
	+
	\Delta t_{\ell}
	G^{\pi}(t_{k},t_{\ell},Y^{\pi}(t_{\ell},t_{\ell+1}),Z^{\pi}(t_{k},t_{\ell}))
\Big],
\\
\displaystyle
Z^{\pi}(t_{k},t_{\ell})
=
\frac{1}{\Delta t_{\ell}}
\e_{t_{\ell}}
\Big[
	\big\{
		Y^{\pi}(t_{k},t_{\ell+1})
		+
		\Delta t_{\ell}
		G^{\pi}(t_{k},t_{\ell},Y^{\pi}(t_{\ell},t_{\ell+1}),Z^{\pi}(t_{k},t_{\ell}))
	\big\}
	\Delta W_{\ell}^{\top}
\Big],
\end{cases}
0\leq k\leq \ell \leq N-1,
\end{align}
with $Y^{\pi}(t_{k},t_{N})=\Psi^{\pi}(t_{k})$, $k=0,\ldots,N-1$.
Compared with this scheme, in our definition of the backward Euler--Maruyama scheme \eqref{EM_0}, $Y^{\pi}(t_{\ell},t_{\ell+1})$ in $G^{\pi}$ is replaced by $Y^{\pi}(t_{\ell},t_{\ell})$, and $G^{\pi}$ vanishes on the ``diagonal region'' $k=\ell$.
Furthermore, his construction of $Z^{\pi}(t_{k},t_{\ell})$ is indeed an implicit form, and thus, in order to calculate \eqref{EM_Wang}, we have to solve the implicit equations for $Z^{\pi}(t_{k},t_{\ell})$.
On the other hand, our construction is an explicit scheme.
For the convergence of the scheme \eqref{EM_Wang}, as mentioned in \cite{WaY18}, a kind of uniformity condition for the partition $\pi \in \Pi[0,T]$ is needed.
However, it turns out that our scheme \eqref{EM_0} converges without any structural conditions for $\pi$.
For this reason, we guess that our construction of the backward Euler--Maruyama scheme \eqref{EM_0} is more natural than that of \cite{WaY18}.

\end{rem}

Consider the scheme \eqref{EM_0}.
By using the martingale representation theorem, for each $k=0,\ldots,N-1$, we define a stochastic process $\widehat{Z}^{\pi}(t_{k},\cdot) \in L^{2}_{\mathbb{F}}(0,T;\real^{m \times d})$ by
\begin{align*}
Y^{\pi}(t_{k},t_{\ell+1})
=
\e_{t_{\ell}}
\Big[
	Y^{\pi}(t_{k},t_{\ell+1})
\Big]
+
\int_{t_{\ell}}^{t_{\ell+1}}
	\widehat{Z}^{\pi}(t_{k},s)
\rd W(s),
~\ell=0,\ldots,N-1.
\end{align*}
Then we have
\begin{align*}
Z^{\pi}(t_{k},t_{\ell})
=
\frac{1}{\Delta t_{\ell}}
\e_{t_{\ell}}
\Big[
	\int_{t_{\ell}}^{t_{\ell+1}}
		\widehat{Z}^{\pi}(t_{k},s)
	\rd s
\Big],
~k,\ell=0,\ldots,N-1,
\end{align*}
and
\begin{align*}
Y^{\pi}(t_{k},t_{k})
=
\e
\Big[
	Y^{\pi}(t_{k},t_{k})
\Big]
+
\int_{0}^{t_{k}}
	\widehat{Z}^{\pi}(t_{k},s)
\rd W(s),
~k=0,\ldots,N-1.
\end{align*}

Now we provide an estimate of the term
\begin{align*}
\sum_{k=0}^{N-1}
\Delta t_{k}
\e
\Big[
	\big|
		\mathscr{Y}^{\pi}(t_{k},t_{k})
		-
		Y^{\pi}(t_{k},t_{k})
	\big|^{2}
\Big]
+
\sum_{k=0}^{N-1}
\Delta t_{k}
\sum_{\ell=0}^{N-1}
\Delta t_{\ell}
\e
\Big[
	\big|
		\overline{\mathscr{Z}}^{\pi}(t_{k},t_{\ell})
		-
		Z^{\pi}(t_{k},t_{\ell})
	\big|^{2}
\Big].
\end{align*}
For this purpose, let us introduce the following notations:
\begin{align*}
y_{k,\ell}
&:=
\e
\Big[
	\big|
		\mathscr{Y}^{\pi}(t_{k},t_{\ell})
		-
		Y^{\pi}(t_{k},t_{\ell})
	\big|^{2}
\Big],~
z_{k,\ell}
:=
\Delta t_{\ell}
\e
\Big[
	\big|
		\overline{\mathscr{Z}}^{\pi}(t_{k},t_{\ell})
		-
		Z^{\pi}(t_{k},t_{\ell})
	\big|^{2}
\Big],
\\
I_{k,\ell}
&:=
\e\Big[
	\int_{t_{\ell}}^{t_{\ell+1}}
		\big\{
			\big|
				\mathscr{Y}^{\pi}(t_{\ell},s)
			\big|^{2}
			+
			\big|
				\mathscr{Z}^{\pi}(t_{k},s)
			\big|^{2}
			+
			\big|
				\mathcal{I}^{\pi}[\mathscr{Z}^{\pi}(t_{\ell},\cdot)](t_{k})
			\big|^{2}
		\big\}
	\rd s
\Big],
\\
\varepsilon_{k,\ell}
&:=
\e\Big[
	\int_{t_{\ell}}^{t_{\ell+1}}
		\big\{
			\big|
				\mathscr{Y}^{\pi}(t_{\ell},s)
				\mathalpha{-}
				\mathscr{Y}^{\pi}(t_{\ell},t_{\ell})
			\big|^{2}
			+
			\big|
				\mathscr{Z}^{\pi}(t_{k},s)
				\mathalpha{-}
				\overline{\mathscr{Z}}^{\pi}(t_{k},t_{\ell})
			\big|^{2}
			+
			\big|
				\mathcal{I}^{\pi}[\mathscr{Z}^{\pi}(t_{\ell},\cdot)](t_{k})
				\mathalpha{-}
				\overline{\mathscr{Z}}^{\pi}(t_{\ell},t_{k})
			\big|^{2}
		\big\}
	\rd s
\Big],
\end{align*}
for $k,\ell = 0,\ldots,N-1$, and
\begin{align*}
\Delta G_{k,\ell}(s)
&:=
G
(
	t_{k},
	s,
	\mathscr{Y}^{\pi}(t_{\ell},s),
	\mathscr{Z}^{\pi}(t_{k},s),
	\mathcal{I}^{\pi}[\mathscr{Z}^{\pi}(t_{\ell},\cdot)](t_{k})
)
-
G^{\pi}
(
	t_{k},
	t_{\ell},
	Y^{\pi}(t_{\ell},t_{\ell}),
	Z^{\pi}(t_{k},t_{\ell}),
	Z^{\pi}(t_{\ell},t_{k})
),
\end{align*}
for $s \in [t_{\ell},t_{\ell+1})$, $\ell=k+1,\ldots,N-1$ and $k=0,\ldots,N-2$.

\begin{lemm}\label{Lem_EM_0}
Suppose that $(\mathrm{H}_{\Psi,G})'$ holds and let $\pi=\{t_{0},t_{1},\ldots,t_{N}\} \in \Pi[0,T]$ be fixed.
\begin{itemize}
\item[(i)]
It holds that
\begin{align}\label{Lem_EM_0_0}
\sum_{\ell=0}^{k-1}
z_{k,\ell}
\leq
y_{k,k},
~k=1,\ldots,N-1.
\end{align}
\item[(ii)]
There exists a constant $\delta>0$ depending only on $L$ such that, for any $\pi=\{t_{0},t_{1},\ldots,t_{N}\} \in \Pi[0,T]$ with $|\pi| \leq \delta$,
\begin{align}\label{Lem_EM_0_1}
\begin{split}
y_{k,\ell}
+
\frac{1}{2}
z_{k,\ell}
&\leq
(1+C \Delta t_{\ell})
y_{k,\ell+1}
+
\frac{1}{2}
\Big\{
	\Delta t_{\ell}
	y_{\ell,\ell}
	+
	\frac{\Delta t_{\ell}}{\Delta t_{k}}
	z_{\ell,k}
	+
	|\pi|
	\Big(
		\Delta t_{\ell}
		M^{2}
		+
		I_{k,\ell}
	\Big)
	+
	\varepsilon_{k,\ell}
\Big\},
\\&\quad\quad
\ell=k+1,\ldots,N-1,~k=0,\ldots,N-2.
\end{split}
\end{align}
\end{itemize}
\end{lemm}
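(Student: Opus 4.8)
The plan is to treat the two parts separately, in both cases exploiting the one-step relation between the BSDE system \eqref{BSDE_sys_0} (with $\theta=0$) and the explicit scheme \eqref{EM_0} on each subinterval $[t_\ell,t_{\ell+1}]$. For part (i) I would use the martingale structure at the diagonal: on $[0,t_k]$ the indicator $\1_{\{k<\ell\}}$ kills the driver in \eqref{BSDE_sys_0}, so $\mathscr{Y}^{\pi}(t_k,\cdot)$ is a martingale there and $\mathscr{Y}^{\pi}(t_k,t_k)=\e[\mathscr{Y}^{\pi}(t_k,t_k)]+\int_0^{t_k}\mathscr{Z}^{\pi}(t_k,s)\rd W(s)$, while the scheme gives $Y^{\pi}(t_k,t_k)=\e[Y^{\pi}(t_k,t_k)]+\int_0^{t_k}\widehat{Z}^{\pi}(t_k,s)\rd W(s)$. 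Since $\overline{\mathscr{Z}}^{\pi}(t_k,t_\ell)-Z^{\pi}(t_k,t_\ell)=\frac{1}{\Delta t_\ell}\e_{t_\ell}[\int_{t_\ell}^{t_{\ell+1}}(\mathscr{Z}^{\pi}(t_k,s)-\widehat{Z}^{\pi}(t_k,s))\rd s]$, conditional Jensen and Cauchy--Schwarz give $z_{k,\ell}\le\e[\int_{t_\ell}^{t_{\ell+1}}|\mathscr{Z}^{\pi}(t_k,s)-\widehat{Z}^{\pi}(t_k,s)|^2\rd s]$; summing over $\ell=0,\dots,k-1$ and applying the It\^o isometry bounds $\sum_{\ell=0}^{k-1}z_{k,\ell}$ by the second moment $\e[|\mathscr{Y}^{\pi}(t_k,t_k)-Y^{\pi}(t_k,t_k)|^2]=y_{k,k}$ (removing the mean only helps). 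This part is short.

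For part (ii) I would fix $k$ and $\ell\ge k+1$ and subtract the two recursions. Taking $\e_{t_\ell}$ of \eqref{BSDE_sys_0} and using that $G^{\pi}(t_k,t_\ell,\cdots)$ is $\mathcal{F}_{t_\ell}$-measurable gives $\mathscr{Y}^{\pi}(t_k,t_\ell)-Y^{\pi}(t_k,t_\ell)=\e_{t_\ell}[\delta Y_{\ell+1}]+\int_{t_\ell}^{t_{\ell+1}}\e_{t_\ell}[\Delta G_{k,\ell}(r)]\rd r$, where $\delta Y_{\ell+1}:=\mathscr{Y}^{\pi}(t_k,t_{\ell+1})-Y^{\pi}(t_k,t_{\ell+1})$. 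A Young inequality $|a+b|^2\le(1+c\Delta t_\ell)|a|^2+(1+\tfrac{1}{c\Delta t_\ell})|b|^2$ together with $\e[|\e_{t_\ell}\delta Y_{\ell+1}|^2]\le y_{k,\ell+1}$ and Cauchy--Schwarz in time yields the $y$-part $y_{k,\ell}\le(1+c\Delta t_\ell)y_{k,\ell+1}+(\tfrac1c+\Delta t_\ell)\e[\int_{t_\ell}^{t_{\ell+1}}|\Delta G_{k,\ell}|^2\rd r]$. For the $z$-part I would isolate the martingale increment: the fluctuation $\delta Y_{\ell+1}-\e_{t_\ell}[\delta Y_{\ell+1}]$ equals $\int_{t_\ell}^{t_{\ell+1}}(\mathscr{Z}^{\pi}(t_k,r)-\widehat{Z}^{\pi}(t_k,r))\rd W(r)$ minus the drift fluctuation $\int_{t_\ell}^{t_{\ell+1}}(G_r-\e_{t_\ell}[G_r])\rd r$ of the continuous driver, so the It\^o isometry bounds $z_{k,\ell}\le\e[\int_{t_\ell}^{t_{\ell+1}}|\mathscr{Z}^{\pi}-\widehat{Z}^{\pi}|^2\rd r]$ by $y_{k,\ell+1}-\e[|\e_{t_\ell}\delta Y_{\ell+1}|^2]$ plus a drift remainder of order $\Delta t_\ell$.

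The decomposition of $\Delta G_{k,\ell}$ is the heart of the matter and is where the Type-\Rnum{2} structure appears. I would telescope through $G(t_k,r,\mathscr{Y}^{\pi}(t_\ell,t_\ell),\overline{\mathscr{Z}}^{\pi}(t_k,t_\ell),\overline{\mathscr{Z}}^{\pi}(t_\ell,t_k))$, then $G(t_k,t_\ell,\cdots)$, then $G^{\pi}(t_k,t_\ell,\cdots)$: the Lipschitz bound of $(\mathrm{H}_{\Psi,G})'$ applied to $G$ produces the regularity term $\varepsilon_{k,\ell}$; the H\"older-$1/2$ continuity in $s$ and the scheme error $L|\pi|^{1/2}$ produce $|\pi|(\Delta t_\ell M^2+I_{k,\ell})$ after using $\Delta t_\ell^2\le|\pi|\Delta t_\ell$ and conditional Jensen to dominate the grid values by $I_{k,\ell}$; and the Lipschitz bound for $G^{\pi}$ produces exactly $CL^2(\Delta t_\ell y_{\ell,\ell}+z_{k,\ell}+\tfrac{\Delta t_\ell}{\Delta t_k}z_{\ell,k})$. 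The crucial point is that the $z_2$-slot $\mathcal{I}^{\pi}[\mathscr{Z}^{\pi}(t_\ell,\cdot)](t_k)$ versus $Z^{\pi}(t_\ell,t_k)$ carries the \emph{index-swapped} error $\e[|\overline{\mathscr{Z}}^{\pi}(t_\ell,t_k)-Z^{\pi}(t_\ell,t_k)|^2]=z_{\ell,k}/\Delta t_k$, and multiplying by the step $\Delta t_\ell$ is precisely where the factor $\tfrac{\Delta t_\ell}{\Delta t_k}$ is born.

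Finally I would add the $y$- and $z$-estimates and collect the diagonal term: the coefficient of $z_{k,\ell}$ on the right is of the form $(\tfrac1c+\Delta t_\ell)C_1L^2$. The hard part will be to choose the Young constant $c$ large (depending only on $L$) and then $\delta>0$ small so that this coefficient does not exceed $1/2$, which lets the diagonal $z_{k,\ell}$ be absorbed into the left-hand side and leaves precisely $\tfrac12 z_{k,\ell}$, while the \emph{off-diagonal} errors $y_{\ell,\ell}$ and $z_{\ell,k}$ survive on the right to be summed later via the discrete Gronwall inequality (Lemma \ref{Lem_Gron_disc}). The drift remainders from the continuous driver, being $O(\Delta t_\ell)$ relative, fold into $|\pi|(\Delta t_\ell M^2+I_{k,\ell})$ using condition (iv) of $(\mathrm{H}_{\Psi,G})$ and the Lipschitz bound on $G$. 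Keeping the two index families $z_{k,\ell}$ and $z_{\ell,k}$ rigorously separated during this absorption is the only genuinely delicate bookkeeping step.
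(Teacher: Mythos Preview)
Your plan for part~(i) is exactly the paper's argument. For part~(ii) you have all the right ingredients---Young's inequality, the telescoping of $\Delta G_{k,\ell}$, absorption of the diagonal $z_{k,\ell}$, and the correct identification of the swapped-index factor $\Delta t_\ell/\Delta t_k$---but you organize the one-step estimate differently, and the paper's organization is cleaner.

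The paper does \emph{not} split the $y$- and $z$-parts. It uses that $\mathscr{Y}^{\pi}(t_{k},t_{\ell})-Y^{\pi}(t_{k},t_{\ell})$ is $\mathcal{F}_{t_\ell}$-measurable and hence orthogonal to $\int_{t_\ell}^{t_{\ell+1}}(\mathscr{Z}^{\pi}-\widehat{Z}^{\pi})\rd W$, so that in expectation
\[
y_{k,\ell}+\e\Bigl[\int_{t_\ell}^{t_{\ell+1}}\bigl|\mathscr{Z}^{\pi}(t_k,s)-\widehat{Z}^{\pi}(t_k,s)\bigr|^{2}\rd s\Bigr]
=\e\Bigl[\Bigl|\delta Y_{\ell+1}+\int_{t_\ell}^{t_{\ell+1}}\Delta G_{k,\ell}(s)\rd s\Bigr|^{2}\Bigr],
\]
a single \emph{equality} obtained by substituting the BSDE recursion and the martingale representation of $Y^{\pi}(t_k,t_{\ell+1})$. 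One application of Young plus Cauchy--Schwarz in time then gives $(1+\gamma\Delta t_\ell)y_{k,\ell+1}+(|\pi|+\tfrac{1}{\gamma})\,\e[\int|\Delta G_{k,\ell}|^{2}]$, after which the $\Delta G$ decomposition proceeds as you describe (the paper telescopes in two steps, $G\to G^{\pi}$ at the same arguments and then $G^{\pi}$ at shifted arguments; your three-step route through $\overline{\mathscr{Z}}^{\pi}$ is equivalent, since the paper also inserts $\mathscr{Y}^{\pi}(t_\ell,t_\ell)$, $\overline{\mathscr{Z}}^{\pi}(t_k,t_\ell)$, $\overline{\mathscr{Z}}^{\pi}(t_\ell,t_k)$ when expanding the Lipschitz bound).

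What your separated route costs is the ``drift remainder'' bookkeeping: the fluctuation $\int(G_r-\e_{t_\ell}G_r)\rd r$ produces a cross term with $\delta Y_{\ell+1}-\e_{t_\ell}\delta Y_{\ell+1}$ that needs its own Young inequality. It does work---since $G^{\pi}$ is $\mathcal{F}_{t_\ell}$-measurable, $G_r-\e_{t_\ell}G_r=\Delta G_{k,\ell}(r)-\e_{t_\ell}\Delta G_{k,\ell}(r)$, so the remainder is controlled by the same $\e[\int|\Delta G_{k,\ell}|^{2}]$---but the paper's combined identity bypasses this entirely. Both routes finish by choosing $\gamma=4C$ and $\delta=1/(4C)$ (with $C$ depending only on $L$) so that $C(|\pi|+\tfrac{1}{\gamma})\le\tfrac12$ absorbs the diagonal $z_{k,\ell}$.
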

\begin{proof}
\emph{Proof of (i).}
For each $k=1,\ldots,N-1$, we have
\begin{align*}
\sum_{\ell=0}^{k-1}
z_{k,\ell}
&=
\sum_{\ell=0}^{k-1}
\Delta t_{\ell}
\e
\Big[
	\Big|
		\frac{1}{\Delta t_{\ell}}
		\e_{t_{\ell}}
		\Big[
			\int_{t_{\ell}}^{t_{\ell+1}}
				\big\{
					\mathscr{Z}^{\pi}(t_{k},s)
					-
					\widehat{Z}^{\pi}(t_{k},s)
				\big\}
			\rd s
		\Big]
	\Big|^{2}
\Big]
\\&\leq
\e\Big[
	\int_{0}^{t_{k}}
		\big|
			\mathscr{Z}^{\pi}(t_{k},s)
			-
			\widehat{Z}^{\pi}(t_{k},s)
		\big|^{2}
	\rd s
\Big]
\\&=
\e
\Big[
	\Big|
		\int_{0}^{t_{k}}
			\big\{
				\mathscr{Z}^{\pi}(t_{k},s)
				-
				\widehat{Z}^{\pi}(t_{k},s)
			\big\}
		\rd W(s)
	\Big|^{2}
\Big]
\\&=
\e
\Big[
	\big|
		\mathscr{Y}^{\pi}(t_{k},t_{k})
		-
		Y^{\pi}(t_{k},t_{k})
		-
		\e
		\big[
			\mathscr{Y}^{\pi}(t_{k},t_{k})
					-
					Y^{\pi}(t_{k},t_{k})
		\big]
	\big|^{2}
\Big]
\\&\leq
\e
\Big[
	\big|
		\mathscr{Y}^{\pi}(t_{k},t_{k})
		-
		Y^{\pi}(t_{k},t_{k})
	\big|^{2}
\Big]
=
y_{k,k}.
\end{align*}
This proves \eqref{Lem_EM_0_0}.

\emph{Proof of (ii).}
Let $\gamma>0$ be an arbitrary constant.
By using Young's inequality, for each $k=0,\ldots,N-2$ and $\ell=k+1,\ldots,N-1$, we have
\begin{align*}
&y_{k,\ell}
+
z_{k,\ell}
\\&
\leq
\e
\Big[
	\big|
		\mathscr{Y}^{\pi}(t_{k},t_{\ell})
		-
		Y^{\pi}(t_{k},t_{\ell})
	\big|^{2}
	+
	\int_{t_{\ell}}^{t_{\ell+1}}
		\big|
			\mathscr{Z}^{\pi}(t_{k},s)
			-
			\widehat{Z}^{\pi}(t_{k},s)
		\big|^{2}
	\rd s
\Big]
\\&=
\e
\Big[
	\big|
		\mathscr{Y}^{\pi}(t_{k},t_{\ell})
		-
		Y^{\pi}(t_{k},t_{\ell})
	+
	\int_{t_{\ell}}^{t_{\ell+1}}
		\big\{
			\mathscr{Z}^{\pi}(t_{k},s)
			-
			\widehat{Z}^{\pi}(t_{k},s)
		\big\}
	\rd W(s)
	\Big|^{2}
\Big]
\\&=
\e
\Big[
	\big|
		\mathscr{Y}^{\pi}(t_{k},t_{\ell+1})
		-
		Y^{\pi}(t_{k},t_{\ell+1})
		+
		\int_{t_{\ell}}^{t_{\ell+1}}
			\Delta G_{k,\ell}(s)
		\rd s
	\big|^{2}
\Big]
\\&\leq
(1+\gamma \Delta t_{\ell})
y_{k,\ell+1}
+
\Big(
	1
	+
	\frac{1}{\gamma \Delta t_{\ell}}
\Big)
\e
\Big[
	\Big|
		\int_{t_{\ell}}^{t_{\ell+1}}
			\Delta G_{k,\ell}(s)
		\rd s
	\Big|^{2}
\Big]
\\&\leq
(1+\gamma \Delta t_{\ell})
y_{k,\ell+1}
+
\Big(
	|\pi|
	+
	\frac{1}{\gamma}
\Big)
\e
\Big[
	\int_{t_{\ell}}^{t_{\ell+1}}
		\big|
			\Delta G_{k,\ell}(s)
		\big|^{2}
	\rd s
\Big]
\\&\leq
(1+\gamma \Delta t_{\ell})
y_{k,\ell+1}
\\*&\quad+
2\Big(
	|\pi|
	+
	\frac{1}{\gamma}
\Big)
\e
\Big[
	\int_{t_{\ell}}^{t_{\ell+1}}
		\big|
			G
			(
				t_{k},
				s,
				\mathscr{Y}^{\pi}(t_{\ell},s),
				\mathscr{Z}^{\pi}(t_{k},s),
				\mathcal{I}^{\pi}[\mathscr{Z}^{\pi}(t_{\ell},\cdot)](t_{k})
			)
			\\*&\quad\quad\quad\quad\quad\quad\quad\quad\quad\quad\quad
			-
			G^{\pi}
			(
				t_{k},
				t_{\ell},
				\mathscr{Y}^{\pi}(t_{\ell},s),
				\mathscr{Z}^{\pi}(t_{k},s),
				\mathcal{I}^{\pi}[\mathscr{Z}^{\pi}(t_{\ell},\cdot)](t_{k})
			)
		\big|^{2}
	\rd s
\Big]
\\&\quad+
2\Big(
	|\pi|
	+
	\frac{1}{\gamma}
\Big)
\e
\Big[
	\int_{t_{\ell}}^{t_{\ell+1}}
		\big|
			G^{\pi}
			(
				t_{k},
				t_{\ell},
				\mathscr{Y}^{\pi}(t_{\ell},s),
				\mathscr{Z}^{\pi}(t_{k},s),
				\mathcal{I}^{\pi}[\mathscr{Z}^{\pi}(t_{\ell},\cdot)](t_{k})
			)
			\\&\quad\quad\quad\quad\quad\quad\quad\quad\quad\quad\quad
			-
			G^{\pi}
			(
				t_{k},
				t_{\ell},
				Y^{\pi}(t_{\ell},t_{\ell}),
				Z^{\pi}(t_{k},t_{\ell}),
				Z^{\pi}(t_{\ell},t_{k})
			)
		\big|^{2}
	\rd s
\Big]
\\&\leq
(1+\gamma \Delta t_{\ell})
y_{k,\ell+1}
+
C
\Big(
	|\pi|
	+
	\frac{1}{\gamma}
\Big)
|\pi|
\Big(
	\Delta t_{\ell}M^{2}
	+
	I_{k,\ell}
\Big)
\\&\quad
+
C
\Big(
	|\pi|
	+
	\frac{1}{\gamma}
\Big)
\int_{t_{\ell}}^{t_{\ell+1}}
	\e
	\Big[
		\big|
			\mathscr{Y}^{\pi}(t_{\ell},s)
			-
			\mathscr{Y}^{\pi}(t_{\ell},t_{\ell})
		\big|^{2}
		+
		\big|
			\mathscr{Z}^{\pi}(t_{k},s)
			-
			\overline{\mathscr{Z}}^{\pi}(t_{k},t_{\ell})
		\big|^{2}
		\\&\quad\quad\quad\quad\quad\quad\quad\quad\quad\quad\quad\quad
		+
		\big|
			\mathcal{I}^{\pi}[\mathscr{Z}^{\pi}(t_{\ell},\cdot)](t_{k})
			-
			\overline{\mathscr{Z}}^{\pi}(t_{\ell},t_{k})
		\big|^{2}
		+
		\big|
			\mathscr{Y}^{\pi}(t_{\ell},t_{\ell})
			-
			Y^{\pi}(t_{\ell},t_{\ell})
		\big|^{2}
		\\&\quad\quad\quad\quad\quad\quad\quad\quad\quad\quad\quad\quad
		+
		\big|
			\overline{\mathscr{Z}}^{\pi}(t_{k},t_{\ell})
			-
			Z^{\pi}(t_{k},t_{\ell})
		\big|^{2}
		+
		\big|
			\overline{\mathscr{Z}}^{\pi}(t_{\ell},t_{k})
			-
			Z^{\pi}(t_{\ell},t_{k})
		\big|^{2}
	\Big]
\rd s
\\&\leq
(1+\gamma \Delta t_{\ell})
y_{k,\ell+1}
+
C
\Big(
	|\pi|
	+
	\frac{1}{\gamma}
\Big)
\Big\{
	|\pi|
	\Big(
		\Delta t_{\ell}M^{2}
		+
		I_{k,\ell}
	\Big)
	+
	\varepsilon_{k,\ell}
	+
	\Delta t_{\ell}
	y_{\ell,\ell}
	+
	z_{k,\ell}
	+
	\frac{\Delta t_{\ell}}{\Delta t_{k}} z_{\ell,k}
\Big\}.
\end{align*}
In the last line of the above inequalities, the constant $C>0$ depends only on $L$.
By letting $\gamma=4C$ and $\delta=\frac{1}{4C}$, for $\pi=\{t_{0},t_{1},\ldots,t_{N}\} \in \Pi[0,T]$ with $|\pi| \leq \delta$, we have $C(|\pi|+\frac{1}{\gamma}) \leq \frac{1}{2}$, and thus \eqref{Lem_EM_0_1} holds.
\end{proof}

In the following, $\delta>0$ denotes the constant appearing in Lemma \ref{Lem_EM_0}.
\begin{lemm}\label{Prop_EM_0}
Under $(\mathrm{H}_{\Psi,G})'$, for any $\pi=\{t_{0},t_{1},\ldots,t_{N}\} \in \Pi[0,T]$ with $|\pi| \leq \delta$, it holds that
\begin{align*}
&
\sum_{k=0}^{N-1}
\Delta t_{k}
\e
\Big[
	\big|
		\mathscr{Y}^{\pi}(t_{k},t_{k})
		-
		Y^{\pi}(t_{k},t_{k})
	\big|^{2}
\Big]
+
\sum_{k=0}^{N-1}
\Delta t_{k}
\sum_{\ell=0}^{N-1}
\Delta t_{\ell}
\e
\Big[
	\big|
		\overline{\mathscr{Z}}^{\pi}(t_{k},t_{\ell})
		-
		Z^{\pi}(t_{k},t_{\ell})
	\big|^{2}
\Big]
\\&\leq
C
\Big\{
	M^{2}|\pi|
	+
	\sum_{k=0}^{N-1}
	\Delta t_{k}
	\e
	\Big[
		\int_{t_{k}}^{t_{k+1}}
			\big|
				\mathscr{Z}^{\pi}(t_{k},s)
			\big|^{2}
		\rd s
	\Big]
	+
	\sum_{k=0}^{N-1}
	\Delta t_{k}
	\sum_{\ell=0}^{N-1}
	\e
	\Big[
		\int_{t_{\ell}}^{t_{\ell+1}}
			\big|
				\mathscr{Z}^{\pi}(t_{k},s)
				-
				\overline{\mathscr{Z}}^{\pi}(t_{k},t_{\ell})
			\big|^{2}
		\rd s
	\Big]
\Big\}.
\end{align*}
\end{lemm}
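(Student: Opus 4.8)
The goal is to bound the aggregate error between the BSDE-system solution $(\mathscr{Y}^\pi,\overline{\mathscr{Z}}^\pi)$ and the Euler–Maruyama scheme $(Y^\pi,Z^\pi)$, summed over the diagonal indices $k$, in terms of the $L^2$-time regularity of $\mathscr{Z}^\pi$. The whole point of Lemma \ref{Lem_EM_0} is to have already reduced the problem to a system of scalar recursions in the quantities $y_{k,\ell}$ and $z_{k,\ell}$. So the strategy is to feed these recursions into the discrete Gronwall-like inequality (Lemma \ref{Lem_Gron_disc}), with the measure space $(S,\Sigma,\mu)$ taken to be the counting measure on a suitable index set so that the "$\int_S(\cdots)^2\,\rd\mu$" term captures the off-diagonal $z$-sums. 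Let me see how to set this up.

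Let me think carefully about the verification plan.

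**Strategy.**

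The approach is to apply the discrete Gronwall-like inequality (Lemma \ref{Lem_Gron_disc}) to the recursion \eqref{Lem_EM_0_1}, converting the one-step estimate into a summed estimate. First I would iterate \eqref{Lem_EM_0_1} over $\ell = k+1,\ldots,N-1$ for each fixed $k$: since $y_{k,\ell}+\tfrac12 z_{k,\ell}\le (1+C\Delta t_\ell)y_{k,\ell+1}+\tfrac12\{\cdots\}$, a standard discrete-Gronwall argument (absorbing the $(1+C\Delta t_\ell)$ factors into an $e^{CT}$ constant and using $y_{k,N}=0$ since $\mathscr{Y}^\pi(t_k,t_N)=\Psi(t_k)=\Psi^\pi(t_k)+(\Psi(t_k)-\Psi^\pi(t_k))$ — actually the terminal difference contributes an $M^2|\pi|$ term via $(\mathrm{H}_{\Psi,G})'$) yields a bound for $y_{k,k}+\tfrac12\sum_{\ell=k+1}^{N-1}z_{k,\ell}$ in terms of the accumulated right-hand-side data $\sum_\ell\{\Delta t_\ell y_{\ell,\ell}+\tfrac{\Delta t_\ell}{\Delta t_k}z_{\ell,k}+|\pi|(\Delta t_\ell M^2+I_{k,\ell})+\varepsilon_{k,\ell}\}$.

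**The key coupling.**

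The crucial structural feature is that the right-hand side of \eqref{Lem_EM_0_1} contains the "transposed" term $\tfrac{\Delta t_\ell}{\Delta t_k}z_{\ell,k}$, which couples the recursion for index $k$ to the off-diagonal $z$'s of \emph{other} indices $\ell>k$. This is exactly the Type-\Rnum{2} difficulty (the driver's dependence on $Z(s,t)$), and it is where Lemma \ref{Lem_Gron_disc} with a nontrivial measure space earns its keep. I would multiply the $k$-th iterated estimate by $\Delta t_k$ and sum over $k$, setting
\[
a_k = y_{k,k},\quad c_k = |\pi|\,\Delta t_k M^2 + |\pi|\textstyle\sum_{\ell=k+1}^{N-1}I_{k,\ell} + \sum_{\ell=k+1}^{N-1}\varepsilon_{k,\ell},
\]
and taking $(S,\Sigma,\mu)$ to be $\{0,\ldots,N-1\}$ with counting measure, with $\zeta_{k,\ell}$ chosen so that $\int_S\sum_{\ell=0}^{k-1}\Delta t_\ell\zeta_{k,\ell}(x)^2\,\rd\mu(x)$ reproduces $\sum_{\ell=0}^{k-1}z_{k,\ell}$. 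Then part (i) of Lemma \ref{Lem_EM_0}, namely $\sum_{\ell=0}^{k-1}z_{k,\ell}\le y_{k,k}=a_k$, is precisely the hypothesis \eqref{Gron_0000} with $c_k=0$ on the $\zeta$-side. The term $\tfrac{\Delta t_\ell}{\Delta t_k}z_{\ell,k}$ reorganizes under summation against $\Delta t_k$ into $\Delta t_\ell z_{\ell,k}$, matching the $\int_S(\sum_{\ell>k}\Delta t_\ell\zeta_{\ell,k})^2$ structure of \eqref{Gron_000}; this is the step I expect to require the most care, since one must verify that the weights line up so that the hypotheses of Lemma \ref{Lem_Gron_disc} hold with a constant $K$ depending only on $L,T$.

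**Finishing.**

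Once Lemma \ref{Lem_Gron_disc} applies, its conclusion gives $\sum_k\Delta t_k y_{k,k}\le C\sum_k\Delta t_k\{b_k+c_k\}$, and combining with the iterated $z$-bound controls the full left-hand side $\sum_k\Delta t_k y_{k,k}+\sum_k\Delta t_k\sum_\ell z_{k,\ell}$. It then remains to estimate the accumulated data $\sum_k\Delta t_k c_k$. The $|\pi|\Delta t_k M^2$ pieces sum to $CM^2|\pi|$. The $I_{k,\ell}$ terms, carrying the factor $|\pi|$, sum via the $L^2$-a priori estimate \eqref{Lem_3_1_2} (or Theorem \ref{Lem_Lp}(ii)) to a bound of the form $CM^2|\pi|$ plus the regularity terms; in particular the $\mathscr{Z}^\pi$-contribution to $I_{k,\ell}$ produces the diagonal term $\sum_k\Delta t_k\e[\int_{t_k}^{t_{k+1}}|\mathscr{Z}^\pi(t_k,s)|^2\rd s]$ appearing on the right-hand side of the assertion. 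Finally the $\varepsilon_{k,\ell}$ terms are exactly the moduli of $L^2$-time regularity of $\mathscr{Z}^\pi$ (together with the time-regularity of $\mathscr{Y}^\pi$, which is itself controlled by $\int|\mathscr{Z}^\pi|^2$ via the martingale-integral identity used at the end of Corollary \ref{Cor_0}); summing them reproduces the term $\sum_k\Delta t_k\sum_\ell\e[\int_{t_\ell}^{t_{\ell+1}}|\mathscr{Z}^\pi(t_k,s)-\overline{\mathscr{Z}}^\pi(t_k,t_\ell)|^2\rd s]$. Collecting all contributions yields the stated inequality.
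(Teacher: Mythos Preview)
Your plan has a genuine gap at the ``key coupling'' step. After iterating \eqref{Lem_EM_0_1} in $\ell$ you obtain a bound of the form
\[
y_{k,k}+\tfrac12\sum_{\ell>k}z_{k,\ell}\ \le\ C\Bigl\{y_{k,N}+\sum_{\ell>k}\Delta t_\ell\,y_{\ell,\ell}+\sum_{\ell>k}\tfrac{\Delta t_\ell}{\Delta t_k}\,z_{\ell,k}+\text{data}_k\Bigr\},
\]
in which the transposed contribution is a \emph{linear} sum $\sum_{\ell>k}\tfrac{\Delta t_\ell}{\Delta t_k}z_{\ell,k}$, not a squared one. There is no choice of $\zeta_{k,\ell}$ on a counting-measure space that simultaneously makes $\int_S\bigl(\sum_{\ell>k}\Delta t_\ell\zeta_{\ell,k}\bigr)^2\rd\mu$ dominate this linear sum \emph{and} satisfies \eqref{Gron_0000} with a $\pi$-independent constant: any $\zeta$ reproducing the linear sum forces $\int_S\sum_{\ell<k}\Delta t_\ell\zeta_{k,\ell}^2\rd\mu=\tfrac{1}{\Delta t_k}\sum_{\ell<k}z_{k,\ell}$, which is only $\le y_{k,k}/\Delta t_k$, not $\le K a_k$. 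Equivalently, if you bypass Lemma~\ref{Lem_Gron_disc} and just weight by $\Gamma_k$ directly, the transposed term contributes $\sum_\ell\Delta t_\ell\sum_{k<\ell}\tfrac{\Gamma_k}{\Delta t_k}z_{\ell,k}\le\sum_\ell\Gamma_\ell y_{\ell,\ell}$ with \emph{no} $1/\gamma$ factor, so the inequality does not close.

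The paper avoids this by a two-stage argument. First it proves a \emph{global} estimate \eqref{Lem_EM_0_2} on $y_{k,k}+\sum_{\ell}z_{k,\ell}$ (via It\^o isometry on the whole interval $[t_k,T]$), whose right-hand side carries genuine \emph{squared} sums $\e\bigl[(\sum_{\ell>k}\Delta t_\ell|\overline{\mathscr{Z}}^\pi(t_\ell,t_k)-Z^\pi(t_\ell,t_k)|)^2\bigr]$ and $\e\bigl[(\sum_{\ell>k}\Delta t_\ell|\overline{\mathscr{Z}}^\pi(t_k,t_\ell)-Z^\pi(t_k,t_\ell)|)^2\bigr]$. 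Lemma~\ref{Lem_Gron_disc}, applied with $(S,\Sigma,\mu)=(\Omega,\mathcal{F},\p)$ and $\zeta_{k,\ell}(\omega)=|\overline{\mathscr{Z}}^\pi(t_k,t_\ell)-Z^\pi(t_k,t_\ell)|$, absorbs the \emph{transposed} squared sum (this is where the $e^{\gamma s/2}e^{-\gamma s/2}$ splitting yields the crucial $1/\gamma$), producing \eqref{Prop_EM_0_1}. The residual \emph{direct} squared sum is then handled separately in \eqref{Lem_EM_1_1}: here the one-step recursion \eqref{Lem_EM_0_1} is finally used, but with the weights $\Gamma_\ell/\Delta t_\ell$, to control $\sum_{\ell>k}\tfrac{\Gamma_\ell}{\Delta t_\ell}z_{k,\ell}$ and again extract a $1/\gamma$ in front of $\sum_k\Gamma_k y_{k,k}$. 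Only after both stages can $\gamma$ be chosen large to close the estimate. Your treatment of the $I_{k,\ell}$ and $\varepsilon_{k,\ell}$ sums at the end is correct and matches the paper.
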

\begin{proof}
First, we show that, for each $k=0,\ldots,N-1$,
\begin{align}\label{Lem_EM_0_2}
\begin{split}
y_{k,k}
+
\sum_{\ell=0}^{N-1}
z_{k,\ell}
&\leq
C
\Big\{
	|\pi|
	\Big(
		M^{2}
		+
		\sum_{\ell=k+1}^{N-1}
		I_{k,\ell}
	\Big)
	+
	\sum_{\ell=k+1}^{N-1}
	\varepsilon_{k,\ell}
	+
	\e
	\Big[
		\Big(
			\sum_{\ell=k+1}^{N-1}
			\Delta t_{\ell}
			\big|
				\overline{\mathscr{Z}}^{\pi}(t_{k},t_{\ell})
				-
				Z^{\pi}(t_{k},t_{\ell})
			\big|
		\Big)^{2}
	\Big]
	\\&\quad\quad\quad
	+
	\sum_{\ell=k+1}^{N-1}
	\Delta t_{\ell}
	y_{\ell,\ell}
	+
	\e
	\Big[
		\Big(
			\sum_{\ell=k+1}^{N-1}
			\Delta t_{\ell}
			\big|
				\overline{\mathscr{Z}}^{\pi}(t_{\ell},t_{k})
				-
				Z^{\pi}(t_{\ell},t_{k})
			\big|
		\Big)^{2}
	\Big]
\Big\}.
\end{split}
\end{align}
Indeed, we have
\begin{align*}
&y_{k,k}
+
\sum_{\ell=k}^{N-1}
z_{k,\ell}
\\&
\leq
\e
\Big[
	\big|
		\mathscr{Y}^{\pi}(t_{k},t_{k})
		-
		Y^{\pi}(t_{k},t_{k})
	\big|^{2}
	+
	\int_{t_{k}}^{t_{N}}
		\big|
			\mathscr{Z}^{\pi}(t_{k},s)
			-
			\widehat{Z}^{\pi}(t_{k},s)
		\big|^{2}
	\rd s
\Big]
\\&=
\e
\Big[
	\big|
		\mathscr{Y}^{\pi}(t_{k},t_{k})
		-
		Y^{\pi}(t_{k},t_{k})
		+
		\int_{t_{k}}^{t_{N}}
			\big\{
				\mathscr{Z}^{\pi}(t_{k},s)
				-
				\widehat{Z}^{\pi}(t_{k},s)
			\big\}
		\rd W(s)
	\big|^{2}
\Big]
\\&=
\e
\Big[
	\Big|
		\Psi(t_{k})
		-
		\Psi^{\pi}(t_{k})
	+
	\sum_{\ell=k+1}^{N-1}
	\int_{t_{\ell}}^{t_{\ell+1}}
		\Delta G_{k,\ell}(s)
	\rd s
	\Big|^{2}
\Big]
\\&
\leq
C\e\Big[
	\big|\Psi(t_{k})-\Psi^{\pi}(t_{k})\big|^{2}
	\\&\quad\quad\quad
	+
	\sum_{\ell=k+1}^{N-1}
	\int_{t_{\ell}}^{t_{\ell+1}}
		\big|
			G
			(
				t_{k},
				s,
				\mathscr{Y}^{\pi}(t_{\ell},s),
				\mathscr{Z}^{\pi}(t_{k},s),
				\mathcal{I}^{\pi}[\mathscr{Z}^{\pi}(t_{\ell},\cdot)](t_{k})
			)
			\\*&\quad\quad\quad\quad\quad\quad\quad\quad\quad\quad\quad
			-
			G^{\pi}
			(
				t_{k},
				t_{\ell},
				\mathscr{Y}^{\pi}(t_{\ell},s),
				\mathscr{Z}^{\pi}(t_{k},s),
				\mathcal{I}^{\pi}[\mathscr{Z}^{\pi}(t_{\ell},\cdot)](t_{k})
			)
		\big|^{2}
	\rd s
\Big]
\\&\quad
+
C\e\Big[
	\Big(
		\sum_{\ell=k+1}^{N-1}
		\int_{t_{\ell}}^{t_{\ell+1}}
			\big|
				G^{\pi}
				(
					t_{k},
					t_{\ell},
					\mathscr{Y}^{\pi}(t_{\ell},s),
					\mathscr{Z}^{\pi}(t_{k},s),
					\mathcal{I}^{\pi}[\mathscr{Z}^{\pi}(t_{\ell},\cdot)](t_{k})
				)
				\\&\quad\quad\quad\quad\quad\quad\quad\quad\quad\quad\quad
				-
				G^{\pi}
				(
					t_{k},
					t_{\ell},
					Y^{\pi}(t_{\ell},t_{\ell}),
					Z^{\pi}(t_{k},t_{\ell}),
					Z^{\pi}(t_{\ell},t_{k})
				)
			\big|
		\rd s
	\Big)^{2}
\Big]
\\&
\leq
C
|\pi|
\Big(
	M^{2}
	+
	\sum_{\ell=k+1}^{N-1}
	I_{k,\ell}
\Big)
\\&\quad
+
C
\e\Big[
	\Big(
		\sum_{\ell=k+1}^{N-1}
		\int_{t_{\ell}}^{t_{\ell+1}}
			\big\{
				\big|
					\mathscr{Y}^{\pi}(t_{\ell},s)
					-
					\mathscr{Y}^{\pi}(t_{\ell},t_{\ell})
				\big|
				+
				\big|
					\mathscr{Z}^{\pi}(t_{k},s)
					-
					\overline{\mathscr{Z}}^{\pi}(t_{k},t_{\ell})
				\big|
				\\&\quad\quad\quad\quad\quad\quad\quad\quad\quad\quad\quad
				+
				\big|
					\mathcal{I}^{\pi}[\mathscr{Z}^{\pi}(t_{\ell},\cdot)](t_{k})
					-
					\overline{\mathscr{Z}}^{\pi}(t_{\ell},t_{k})
				\big|
			\big\}
		\rd s
	\Big)^{2}
\Big]
\\&\quad
+
C
\e\Big[
	\Big(
		\sum_{\ell=k+1}^{N-1}
		\Delta t_{\ell}
		\big\{
			\big|
				\mathscr{Y}^{\pi}(t_{\ell},t_{\ell})
				-
				Y^{\pi}(t_{\ell},t_{\ell})
			\big|
			+
			\big|
				\overline{\mathscr{Z}}^{\pi}(t_{k},t_{\ell})
				-
				Z^{\pi}(t_{k},t_{\ell})
			\big|
			+
			\big|
				\overline{\mathscr{Z}}^{\pi}(t_{\ell},t_{k})
				-
				Z^{\pi}(t_{\ell},t_{k})
			\big|
		\big\}
	\Big)^{2}
\Big]
\\&\leq
C
\Big\{
	|\pi|
	\Big(
		M^{2}
		+
		\sum_{\ell=k+1}^{N-1}
		I_{k,\ell}
	\Big)
	+
	\sum_{\ell=k+1}^{N-1}
	\varepsilon_{k,\ell}
	+
	\sum_{\ell=k+1}^{N-1}
	\Delta t_{\ell}
	y_{\ell,\ell}
	+
	\e\Big[
		\Big(
			\sum_{\ell=k+1}^{N-1}
			\Delta t_{\ell}
			\big|
				\overline{\mathscr{Z}}^{\pi}(t_{k},t_{\ell})
				-
				Z^{\pi}(t_{k},t_{\ell})
			\big|
		\Big)^{2}
	\Big]
	\\&\quad\quad\quad
	+
	\e\Big[
		\Big(
			\sum_{\ell=k+1}^{N-1}
			\Delta t_{\ell}
			\big|
				\overline{\mathscr{Z}}^{\pi}(t_{\ell},t_{k})
				-
				Z^{\pi}(t_{\ell},t_{k})
			\big|
		\Big)^{2}
	\Big]
\Big\}.
\end{align*}
This estimate, together with \eqref{Lem_EM_0_0}, prove \eqref{Lem_EM_0_2}.

From \eqref{Lem_EM_0_2} and \eqref{Lem_EM_0_0}, by using the discrete Gronwall-like inequality (see Lemma \ref{Lem_Gron_disc}) with
\begin{align*}
a_{k}
&=
y_{k,k}
+
\sum_{\ell=0}^{N-1}
z_{k,\ell},
~
b_{k}
=
|\pi|
\Big(
	M^{2}
	+
	\sum_{\ell=k+1}^{N-1}
	I_{k,\ell}
\Big)
+
\sum_{\ell=k+1}^{N-1}
\varepsilon_{k,\ell}
+
\e
\Big[
	\Big(
		\sum_{\ell=k+1}^{N-1}
		\Delta t_{\ell}
		\big|
			\overline{\mathscr{Z}}^{\pi}(t_{k},t_{\ell})
			-
			Z^{\pi}(t_{k},t_{\ell})
		\big|
	\Big)^{2}
\Big],
\\
c_{k}
&=0,~
\zeta_{k,\ell}(\omega)
:=
\big|
	\overline{\mathscr{Z}}^{\pi}(t_{k},t_{\ell})
	-
	Z^{\pi}(t_{k},t_{\ell})
\big|
~\text{and}~
(S,\Sigma,\mu)
=
(\Omega,\mathcal{F},\p),
\end{align*}
we have, for any sufficiently large $\gamma>0$ depending only on $L$ and $T$,
\begin{align*}
&\sum_{k=0}^{N-1}
\Gamma_{k}
\Big\{
	y_{k,k}
	+
	\sum_{\ell=0}^{N-1}
	z_{k,\ell}
\Big\}
\\&\leq
C
\sum_{k=0}^{N-1}
\Gamma_{k}
\Big\{
	|\pi|
	\Big(
		M^{2}
		+
		\sum_{\ell=k+1}^{N-1}
		I_{k,\ell}
	\Big)
	+
	\sum_{\ell=k+1}^{N-1}
	\varepsilon_{k,\ell}
	+
	\e
	\Big[
		\Big(
			\sum_{\ell=k+1}^{N-1}
			\Delta t_{\ell}
			\big|
				\overline{\mathscr{Z}}^{\pi}(t_{k},t_{\ell})
				-
				Z^{\pi}(t_{k},t_{\ell})
			\big|
		\Big)^{2}
	\Big]
\Big\},
\end{align*}
and hence
\begin{align}\label{Prop_EM_0_1}
\begin{split}
\sum_{k=0}^{N-1}
\Gamma_{k}
\Big\{
	y_{k,k}
	+
	\sum_{\ell=0}^{N-1}
	z_{k,\ell}
\Big\}
&\leq
C_{\gamma}
\Big\{
	|\pi|
	\Big(
		M^{2}
		+
		\sum_{k=0}^{N-2}
		\Delta t_{k}
		\sum_{\ell=k+1}^{N-1}
		I_{k,\ell}
	\Big)
	+
	\sum_{k=0}^{N-2}
	\Delta t_{k}
	\sum_{\ell=k+1}^{N-1}
	\varepsilon_{k,\ell}
\Big\}
\\&\quad
+
C
\sum_{k=0}^{N-2}
\Gamma_{k}
\e
\Big[
	\Big(
		\sum_{\ell=k+1}^{N-1}
		\Delta t_{\ell}
		\big|
			\overline{\mathscr{Z}}^{\pi}(t_{k},t_{\ell})
			-
			Z^{\pi}(t_{k},t_{\ell})
		\big|
	\Big)^{2}
\Big],
\end{split}
\end{align}
where $\Gamma_{k}:=\int_{t_{k}}^{t_{k+1}} e^{\gamma t} \rd t$, $k=0,\ldots,N-1$.

Let $\gamma>0$ be fixed.
We show that the last term in the right-hand side of \eqref{Prop_EM_0_1} is estimated as follows:
\begin{align}\label{Lem_EM_1_1}
\begin{split}
&
\sum_{k=0}^{N-2}
\Gamma_{k}
\e
\Big[
	\Big(
		\sum_{\ell=k+1}^{N-1}
		\Delta t_{\ell}
		\big|
			\overline{\mathscr{Z}}^{\pi}(t_{k},t_{\ell})
			-
			Z^{\pi}(t_{k},t_{\ell})
		\big|
	\Big)^{2}
\Big]
\\&\leq
\frac{C}{\gamma}
\sum_{k=0}^{N-1}
\Gamma_{k} y_{k,k}
+
C_{\gamma}
\Big\{
	|\pi|
	\Big(
		M^{2}
		+
		\sum_{k=0}^{N-2}
		\Delta t_{k}
		\sum_{\ell=k+1}^{N-1}
		I_{k,\ell}
	\Big)
	+
	\sum_{k=0}^{N-2}
	\Delta t_{k}
	\sum_{\ell=k+1}^{N-1}
	\varepsilon_{k,\ell}
\Big\}.
\end{split}
\end{align}
Note that $\Delta t_{k} e^{\gamma t_{k}} \leq \Gamma_{k} \leq \Delta t_{k} e^{\gamma t_{k+1}}$ for $k=0,\ldots,N-1$.
Also, by defining $\Gamma_{N}:=\Gamma_{N-1}$ and $\Delta t_{N}:=\Delta t_{N-1}$, we have $\frac{\Gamma_{k}}{\Delta t_{k}} \leq \frac{\Gamma_{k+1}}{\Delta t_{k+1}}$ for $k=0,\ldots,N-1$.
We observe that
\begin{align*}
&\sum_{k=0}^{N-2}
\Gamma_{k}
\e
\Big[
	\Big(
		\sum_{\ell=k+1}^{N-1}
		\Delta t_{\ell}
		\big|
			\overline{\mathscr{Z}}^{\pi}(t_{k},t_{\ell})
			-
			Z^{\pi}(t_{k},t_{\ell})
		\big|
	\Big)^{2}
\Big]
\notag\\&
=
\sum_{k=0}^{N-2}
\Gamma_{k}
\e
\Big[
	\Big(
		\sum_{\ell=k+1}^{N-1}
		\Big(
			\int_{t_{\ell}}^{t_{\ell+1}}
				e^{-\frac{\gamma}{2}t}
				e^{\frac{\gamma}{2}t}
			\rd t
		\Big)
		\big|
			\overline{\mathscr{Z}}^{\pi}(t_{k},t_{\ell})
			-
			Z^{\pi}(t_{k},t_{\ell})
		\big|
	\Big)^{2}
\Big]
\notag\\&
\leq
\sum_{k=0}^{N-2}
\Gamma_{k}
\Big(
	\int_{t_{k+1}}^{t_{N}}
		e^{-\gamma t}
	\rd t
\Big)
\sum_{\ell=k+1}^{N-1}
\Gamma_{\ell}
\e
\Big[
	\big|
		\overline{\mathscr{Z}}^{\pi}(t_{k},t_{\ell})
		-
		Z^{\pi}(t_{k},t_{\ell})
	\big|^{2}
\Big]
\notag\\&
\leq
\frac{1}{\gamma}
\sum_{k=0}^{N-2}
\Gamma_{k}
e^{-\gamma t_{k+1}}
\sum_{\ell=k+1}^{N-1}
\Gamma_{\ell}
\e
\Big[
	\big|
		\overline{\mathscr{Z}}^{\pi}(t_{k},t_{\ell})
		-
		Z^{\pi}(t_{k},t_{\ell})
	\big|^{2}
\Big]
\notag\\&\leq
\frac{1}{\gamma}
\sum_{k=0}^{N-2}
\Delta t_{k}
\sum_{\ell=k+1}^{N-1}
\Gamma_{\ell}
\e
\Big[
	\big|
		\overline{\mathscr{Z}}^{\pi}(t_{k},t_{\ell})
		-
		Z^{\pi}(t_{k},t_{\ell})
	\big|^{2}
\Big],
\end{align*}
and hence
\begin{align}\label{Lem_EM_1_2}
\begin{split}
\sum_{k=0}^{N-2}
\Gamma_{k}
\e
\Big[
	\Big(
		\sum_{\ell=k+1}^{N-1}
		\Delta t_{\ell}
		\big|
			\overline{\mathscr{Z}}^{\pi}(t_{k},t_{\ell})
			-
			Z^{\pi}(t_{k},t_{\ell})
		\big|
	\Big)^{2}
\Big]
\leq
\frac{1}{\gamma}
\sum_{k=0}^{N-2}
\Delta t_{k}
\sum_{\ell=k+1}^{N-1}
\frac{\Gamma_{\ell}}{\Delta t_{\ell}}
z_{k,\ell}.
\end{split}
\end{align}
Let $k=0,\ldots,N-2$ be fixed.
We estimate $\sum_{\ell=k+1}^{N-1}\frac{\Gamma_{\ell}}{\Delta t_{\ell}}z_{k,\ell}$.
By \eqref{Lem_EM_0_1}, we have, for each $\ell=k+1,\ldots,N-1$,
\begin{align}
\frac{\Gamma_{\ell}}{\Delta t_{\ell}}
y_{k,\ell}
+
\frac{1}{2}
\frac{\Gamma_{\ell}}{\Delta t_{\ell}}
z_{k,\ell}
&\leq
(1+C \Delta t_{\ell})
\frac{\Gamma_{\ell}}{\Delta t_{\ell}}
y_{k,\ell+1}
+
\frac{1}{2}
\Big\{
	\Gamma_{\ell}
	y_{\ell,\ell}
	+
	\frac{\Gamma_{\ell}}{\Delta t_{k}}
	z_{\ell,k}
	+
	|\pi|
	\Big(
		\Gamma_{\ell}
		M^{2}
		+
		\frac{\Gamma_{\ell}}{\Delta t_{\ell}}
		I_{k,\ell}
	\Big)
	+
	\frac{\Gamma_{\ell}}{\Delta t_{\ell}}
	\varepsilon_{k,\ell}
\Big\}
\notag\\&
\leq
(1+C \Delta t_{\ell})
\frac{\Gamma_{\ell+1}}{\Delta t_{\ell+1}}
y_{k,\ell+1}
+
\frac{1}{2}
\Big\{
	\Gamma_{\ell}
	y_{\ell,\ell}
	+
	\frac{\Gamma_{\ell}}{\Delta t_{k}}
	z_{\ell,k}
	+
	|\pi|
	\Big(
		\Gamma_{\ell}
		M^{2}
		+
		\frac{\Gamma_{\ell}}{\Delta t_{\ell}}
		I_{k,\ell}
	\Big)
	+
	\frac{\Gamma_{\ell}}{\Delta t_{\ell}}
	\varepsilon_{k,\ell}
\Big\}.
\label{Lem_EM_1_3}
\end{align}
Hence we have
\begin{align*}
&
\sum_{\ell=k+1}^{N-1}
\frac{\Gamma_{\ell}}{\Delta t_{\ell}}
y_{k,\ell}
+
\frac{1}{2}
\sum_{\ell=k+1}^{N-1}
\frac{\Gamma_{\ell}}{\Delta t_{\ell}}
z_{k,\ell}
\notag\\&
\leq
(1+C \Delta t_{N-1})
\frac{\Gamma_{N}}{\Delta t_{N}}
y_{k,N}
+
\sum_{\ell=k+2}^{N-1}
\frac{\Gamma_{\ell}}{\Delta t_{\ell}}
y_{k,\ell}
+
C
\sum_{\ell=k+2}^{N-1}
\Delta t_{\ell-1}
\frac{\Gamma_{\ell}}{\Delta t_{\ell}}
y_{k,\ell}
\notag\\&\quad+
\frac{1}{2}
\Big\{
	\sum_{\ell=k+1}^{N-1}
	\Gamma_{\ell}
	y_{\ell,\ell}
	+
	\frac{1}{\Delta  t_{k}}
	\sum_{\ell=k+1}^{N-1}
	\Gamma_{\ell}
	z_{\ell,k}
	+
	\sum_{\ell=k+1}^{N-1}
	|\pi|
	\Big(
		\Gamma_{\ell} M^{2}
		+
		\frac{\Gamma_{\ell}}{\Delta t_{\ell}}
		I_{k,\ell}
	\Big)
	+
	\sum_{\ell=k+1}^{N-1}
	\frac{\Gamma_{\ell}}{\Delta t_{\ell}}
	\varepsilon_{k,\ell}
\Big\}
\\&
\leq
\sum_{\ell=k+1}^{N-1}
\frac{\Gamma_{\ell}}{\Delta t_{\ell}}
y_{k,\ell}
+
C
\max_{\ell=k+1,\ldots, N-1}
\frac{\Gamma_{\ell}}{\Delta t_{\ell}}
y_{k,\ell}
+
C_{\gamma}
\Big\{
	|\pi|
	\Big(
		M^{2}
		+
		\sum_{\ell=k+1}^{N-1}
		I_{k,\ell}
	\Big)
	+
	\sum_{\ell=k+1}^{N-1}
	\varepsilon_{k,\ell}
\Big\}
\\&\quad
+
\frac{1}{2}
\Big\{
	\sum_{\ell=k+1}^{N-1}
	\Gamma_{\ell}
	y_{\ell,\ell}
	+
	\frac{1}{\Delta  t_{k}}
	\sum_{\ell=k+1}^{N-1}
	\Gamma_{\ell}
	z_{\ell,k}
\Big\}.
\end{align*}
This implies that
\begin{align*}
\begin{split}
\sum_{\ell=k+1}^{N-1}
\frac{\Gamma_{\ell}}{\Delta t_{\ell}}
z_{k,\ell}
&\leq
C
\max_{\ell=k+1,\ldots, N-1}
\frac{\Gamma_{\ell}}{\Delta t_{\ell}}
y_{k,\ell}
+
C_{\gamma}
\Big\{
	|\pi|
	\Big(
		M^{2}
		+
		\sum_{\ell=k+1}^{N-1}
		I_{k,\ell}
	\Big)
	+
	\sum_{\ell=k+1}^{N-1}
	\varepsilon_{k,\ell}
\Big\}
\\&\quad
+
\sum_{\ell=k+1}^{N-1}
\Gamma_{\ell}
y_{\ell,\ell}
+
\frac{1}{\Delta t_{k}}
\sum_{\ell=k+1}^{N-1}
\Gamma_{\ell}
z_{\ell,k}.
\end{split}
\end{align*}
Furthermore, by \eqref{Lem_EM_1_3} and the discrete Gronwall inequality (cf.\ Lemma 5.4 in \cite{Zh04}), we have
\begin{align*}
&\max_{\ell=k+1,\ldots, N-1}
\frac{\Gamma_{\ell}}{\Delta t_{\ell}}
y_{k,\ell}
\notag\\&\leq
C\frac{\Gamma_{N}}{\Delta t_{N}}
y_{k,N}
+
C
\sum_{\ell=k+1}^{N-1}
\Big\{
	\Gamma_{\ell}
	y_{\ell,\ell}
	+
	\frac{\Gamma_{\ell}}{\Delta  t_{k}}
	z_{\ell,k}
	+
	|\pi|
	\Big(
		\Gamma_{\ell} M^{2}
		+
		\frac{\Gamma_{\ell}}{\Delta t_{\ell}}
		I_{k,\ell}
	\Big)
	+
	\frac{\Gamma_{\ell}}{\Delta t_{\ell}}
	\varepsilon_{k,\ell}
\Big\}
\notag\\&
\leq
C_{\gamma}
\Big\{
	|\pi|
	\Big(
		M^{2}
		+
		\sum_{\ell=k+1}^{N-1}
		I_{k,\ell}
	\Big)
	+
	\sum_{\ell=k+1}^{N-1}
	\varepsilon_{k,\ell}
\Big\}
+
C
\Big\{
	\sum_{\ell=k+1}^{N-1}
	\Gamma_{\ell} y_{\ell,\ell}
	+
	\frac{1}{\Delta t_{k}}
	\sum_{\ell=k+1}^{N-1}
	\Gamma_{\ell} z_{\ell,k}
\Big\}.
\end{align*}
Therefore, we get
\begin{align}
\label{Lem_EM_1_6}
\sum_{\ell=k+1}^{N-1}
\frac{\Gamma_{\ell}}{\Delta t_{\ell}}
z_{k,\ell}
\leq
C_{\gamma}
\Big\{
	|\pi|
	\Big(
		M^{2}
		+
		\sum_{\ell=k+1}^{N-1}
		I_{k,\ell}
	\Big)
	+
	\sum_{\ell=k+1}^{N-1}
	\varepsilon_{k,\ell}
\Big\}
+
C
\Big\{
	\sum_{\ell=k+1}^{N-1}
	\Gamma_{\ell} y_{\ell,\ell}
	+
	\frac{1}{\Delta t_{k}}
	\sum_{\ell=k+1}^{N-1}
	\Gamma_{\ell} z_{\ell,k}
\Big\}
\end{align}
for $k=0,\ldots,N-2$.
Thus, by \eqref{Lem_EM_1_2} and \eqref{Lem_EM_1_6}, we have
\begin{align*}
&
\sum_{k=0}^{N-2}
\Gamma_{k}
\e
\Big[
\Big(
	\sum_{\ell=k+1}^{N-1}
	\Delta t_{\ell}
	\big|
		\overline{\mathscr{Z}}^{\pi}(t_{k},t_{\ell})
		-
		Z^{\pi}(t_{k},t_{\ell})
	\big|
\Big)^{2}
\Big]
\\&\leq
C_{\gamma}
\Big\{
	|\pi|
	\Big(
		M^{2}
		+
		\sum_{k=0}^{N-2}
		\Delta t_{k}
		\sum_{\ell=k+1}^{N-1}
		I_{k,\ell}
	\Big)
	+
	\sum_{k=0}^{N-2}
	\Delta t_{k}
	\sum_{\ell=k+1}^{N-1}
	\varepsilon_{k,\ell}
\Big\}
+
\frac{C}{\gamma}
\Big\{
	\sum_{k=0}^{N-2}
	\Delta t_{k}
	\sum_{\ell=k+1}^{N-1}
	\Gamma_{\ell}
	y_{\ell,\ell}
	+
	\sum_{k=0}^{N-2}
	\sum_{\ell=k+1}^{N-1}
	\Gamma_{\ell} z_{\ell,k}
\Big\}.
\end{align*}
Noting that
\begin{align*}
\sum_{k=0}^{N-2}
\Delta t_{k}
\sum_{\ell=k+1}^{N-1}
\Gamma_{\ell}
y_{\ell,\ell}
\leq
T
\sum_{k=0}^{N-1}
\Gamma_{k}
y_{k,k}
\end{align*}
and
\begin{align*}
\sum_{k=0}^{N-2}
\sum_{\ell=k+1}^{N-1}
\Gamma_{\ell} z_{\ell,k}
=
\sum_{k=1}^{N-1}
\Gamma_{k}
\sum_{\ell=0}^{k-1}
z_{k,\ell}
\leq
\sum_{k=0}^{N-1}
\Gamma_{k}
y_{k,k},
\end{align*}
we obtain the desired estimate \eqref{Lem_EM_1_1}.

By \eqref{Prop_EM_0_1} and \eqref{Lem_EM_1_1}, we have
\begin{align*}
\sum_{k=0}^{N-1}
\Gamma_{k}
\Big\{
	y_{k,k}
	+
	\sum_{\ell=0}^{N-1}
	z_{k,\ell}
\Big\}
&\leq
C_{\gamma}
\Big\{
	|\pi|
	\Big(
		M^{2}
		+
		\sum_{k=0}^{N-2}
		\Delta t_{k}
		\sum_{\ell=k+1}^{N-1}
		I_{k,\ell}
	\Big)
	+
	\sum_{k=0}^{N-2}
	\Delta t_{k}
	\sum_{\ell=k+1}^{N-1}
	\varepsilon_{k,\ell}
\Big\}
+
\frac{C}{\gamma}
\sum_{k=0}^{N-1}
\Gamma_{k} y_{k,k}.
\end{align*}
Therefore, by choosing $\gamma>0$ large enough (depending only on $L$ and $T$), we obtain
\begin{align*}
&\sum_{k=0}^{N-1}
\Delta t_{k}
\Big\{
	y_{k,k}
	+
	\sum_{\ell=0}^{N-1}
	z_{k,\ell}
\Big\}
\leq
C
\Big\{
	|\pi|
	\Big(
		M^{2}
		+
		\sum_{k=0}^{N-2}
		\Delta t_{k}
		\sum_{\ell=k+1}^{N-1}
		I_{k,\ell}
	\Big)
	+
	\sum_{k=0}^{N-2}
	\Delta t_{k}
	\sum_{\ell=k+1}^{N-1}
	\varepsilon_{k,\ell}
\Big\}.
\end{align*}
By using the estimate \eqref{Lem_3_1_2}, we have
\begin{align*}
&\sum_{k=0}^{N-2}
\Delta t_{k}
\sum_{\ell=k+1}^{N-1}
I_{k,\ell}
\\&=
\sum_{k=0}^{N-2}
\Delta t_{k}
\sum_{\ell=k+1}^{N-1}
\e\Big[
	\int_{t_{\ell}}^{t_{\ell+1}}
		\big\{
			\big|
				\mathscr{Y}^{\pi}(t_{\ell},s)
			\big|^{2}
			+
			\big|
				\mathscr{Z}^{\pi}(t_{k},s)
			\big|^{2}
			+
			\big|
				\mathcal{I}^{\pi}[\mathscr{Z}^{\pi}(t_{\ell},\cdot)](t_{k})
			\big|^{2}
		\big\}
	\rd s
\Big]
\\&\leq
T
\sum_{k=0}^{N-1}
\Delta t_{k}
\e\Big[
	\sup_{s \in [0,T]}
	\big|
		\mathscr{Y}^{\pi}(t_{k},s)
	\big|^{2}
\Big]
+
\sum_{k=0}^{N-2}
\Delta t_{k}
\e\Big[
	\int_{t_{k+1}}^{T}
		\big|
			\mathscr{Z}^{\pi}(t_{k},s)
		\big|^{2}
	\rd s
\Big]
+
\sum_{k=1}^{N-1}
\Delta t_{k}
\e\Big[
	\int_{0}^{t_{k}}
		\big|
			\mathscr{Z}^{\pi}(t_{k},s)
		\big|^{2}
	\rd s
\Big]
\\&\leq
C
\sum_{k=0}^{N-1}
\Delta t_{k}
\e\Big[
	\big|\Psi(t_{k})\big|^{2}
	+
	\int_{t_{k+1}}^{T}
		\big|G(t_{k},s,0,0,0)\big|^{2}
	\rd s
\Big]
\leq
CM^{2}.
\end{align*}
Furthermore, noting that
\begin{align*}
\sup_{s \in [t_{\ell},t_{\ell+1}]}
\e
\Big[
	\big|
		\mathscr{Y}^{\pi}(t_{\ell},s)
		-
		\mathscr{Y}^{\pi}(t_{\ell},t_{\ell})
	\big|^{2}
\Big]
=
\sup_{s \in [t_{\ell},t_{\ell+1}]}
\e
\Big[
	\Big|
		\int_{t_{\ell}}^{s}
			\mathscr{Z}^{\pi}(t_{\ell},r)
		\rd W(r)
	\Big|^{2}
\Big]
=
\e
\Big[
	\int_{t_{\ell}}^{t_{\ell+1}}
		\big|
			\mathscr{Z}^{\pi}(t_{\ell},s)
		\big|^{2}
	\rd s
\Big]
\end{align*}
and
\begin{align*}
\e
\Big[
	\big|
		\mathcal{I}^{\pi}[\mathscr{Z}^{\pi}(t_{\ell},\cdot)](t_{k})
		-
		\overline{\mathscr{Z}}^{\pi}(t_{\ell},t_{k})
	\big|^{2}
\Big]
&=
\e
\Big[
	\Big|
		\frac{1}{\Delta t_{k}}
		\int_{t_{k}}^{t_{k+1}}
			\big\{
				\mathscr{Z}^{\pi}(t_{\ell},s)
				-
				\overline{\mathscr{Z}}^{\pi}(t_{\ell},t_{k})
			\big\}
		\rd s
	\Big|^{2}
\Big]
\\*&\leq
\frac{1}{\Delta t_{k}}
\e\Big[
	\int_{t_{k}}^{t_{k+1}}
		\big|
			\mathscr{Z}^{\pi}(t_{\ell},s)
			-
			\overline{\mathscr{Z}}^{\pi}(t_{\ell},t_{k})
		\big|^{2}
	\rd s
\Big],
\end{align*}
it holds that
\begin{align*}
&\sum_{k=0}^{N-2}
\Delta t_{k}
\sum_{\ell=k+1}^{N-1}
\varepsilon_{k,\ell}
\\&=
\sum_{k=0}^{N-2}
\Delta t_{k}
\sum_{\ell=k+1}^{N-1}
\e\Big[
	\int_{t_{\ell}}^{t_{\ell+1}}
		\big\{
			\big|
				\mathscr{Y}^{\pi}(t_{\ell},s)
				-
				\mathscr{Y}^{\pi}(t_{\ell},t_{\ell})
			\big|^{2}
			+
			\big|
				\mathscr{Z}^{\pi}(t_{k},s)
				-
				\overline{\mathscr{Z}}^{\pi}(t_{k},t_{\ell})
			\big|^{2}
			\\&\quad\quad\quad\quad\quad\quad\quad\quad\quad\quad\quad\quad\quad
			+
			\big|
				\mathcal{I}^{\pi}[\mathscr{Z}^{\pi}(t_{\ell},\cdot)](t_{k})
				-
				\overline{\mathscr{Z}}^{\pi}(t_{\ell},t_{k})
			\big|^{2}
		\big\}
	\rd s
\Big]
\\&\leq
C
\Big\{
	\sum_{k=0}^{N-1}
	\Delta t_{k}
	\e
	\Big[
		\int_{t_{k}}^{t_{k+1}}
			\big|
				\mathscr{Z}^{\pi}(t_{k},s)
			\big|^{2}
		\rd s
	\Big]
	+
	\sum_{k=0}^{N-2}
	\Delta t_{k}
	\sum_{\ell=k+1}^{N-1}
	\e
	\Big[
		\int_{t_{\ell}}^{t_{\ell+1}}
			\big|
				\mathscr{Z}^{\pi}(t_{k},s)
				-
				\overline{\mathscr{Z}}^{\pi}(t_{k},t_{\ell})
			\big|^{2}
		\rd s
	\Big]
	\\&\quad\quad\quad
	+
	\sum_{k=0}^{N-2}
	\sum_{\ell=k+1}^{N-1}
	\Delta t_{\ell}
	\e\Big[
		\int_{t_{k}}^{t_{k+1}}
			\big|
				\mathscr{Z}^{\pi}(t_{\ell},s)
				-
				\overline{\mathscr{Z}}^{\pi}(t_{\ell},t_{k})
			\big|^{2}
		\rd s
	\Big]
\Big\}
\\&\leq
C
\Big\{
	\sum_{k=0}^{N-1}
	\Delta t_{k}
	\e
	\Big[
		\int_{t_{k}}^{t_{k+1}}
			\big|
				\mathscr{Z}^{\pi}(t_{k},s)
			\big|^{2}
		\rd s
	\Big]
	+
	\sum_{k=0}^{N-1}
	\Delta t_{k}
	\sum_{\ell=0}^{N-1}
	\e
	\Big[
		\int_{t_{\ell}}^{t_{\ell+1}}
			\big|
				\mathscr{Z}^{\pi}(t_{k},s)
				-
				\overline{\mathscr{Z}}^{\pi}(t_{k},t_{\ell})
			\big|^{2}
		\rd s
	\Big]
\Big\}.
\end{align*}
Therefore, we get the assertion.
\end{proof}

Combining Proposition \ref{Prop_BSDE_0} and Lemma \ref{Prop_EM_0}, we obtain an estimate for the $L^{2}$-error between the backward Euler--Maruyama scheme $\{(Y^{\pi}(t_{k},t_{\ell}),Z^{\pi}(t_{k},t_{\ell}))\}_{k,\ell=0}^{N-1}$ and the adapted M-solution $(Y(\cdot),Z(\cdot,\cdot))$ of BSVIE \eqref{BSVIE_II_1} in terms of the $L^{2}$-time regularity of $\mathscr{Z}^{\pi}$.

\begin{prop}\label{Prop_EM_1}
Under $(\mathrm{H}_{\Psi,G})'$, for any $\pi=\{t_{0},t_{1},\ldots,t_{N}\} \in \Pi[0,T]$ with $|\pi| \leq \delta$, it holds that
\begin{align*}
&
\sum_{k=0}^{N-1}
\e\Big[
	\int_{t_{k}}^{t_{k+1}}
		\big|
			Y(t)
			-
			Y^{\pi}(t_{k},t_{k})
		\big|^{2}
	\rd t
\Big]
+
\sum_{k=0}^{N-1}
\sum_{\ell=0}^{N-1}
\e\Big[
	\int_{t_{k}}^{t_{k+1}}
		\int_{t_{\ell}}^{t_{\ell+1}}
			\big|
				Z(t,s)
				-
				Z^{\pi}(t_{k},t_{\ell})
			\big|^{2}
		\rd s
	\rd t
\Big]
\\&\leq
C
\Big\{
	M^{2}
	|\pi|
	+
	\sum_{k=0}^{N-1}
	\Delta t_{k}
	\e
	\Big[
		\int_{t_{k}}^{t_{k+1}}
			\big|
				\mathscr{Z}^{\pi}(t_{k},s)
			\big|^{2}
		\rd s
	\Big]
	+
	\sum_{k=0}^{N-1}
		\Delta t_{k}
		\sum_{\ell=0}^{N-1}
			\e\Big[
				\int_{t_{\ell}}^{t_{\ell+1}}
					\big|
						\mathscr{Z}^{\pi}(t_{k},s)
						-
						\overline{\mathscr{Z}}^{\pi}(t_{k},t_{\ell})
					\big|^{2}
				\rd s
			\Big]
\Big\}.
\end{align*}
\end{prop}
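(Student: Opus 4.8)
The plan is to insert the intermediate pair $(\mathscr{Y}^{\pi},\mathscr{Z}^{\pi})$ (the solution of BSDE system \eqref{BSDE_sys_0} with $\theta=0$) together with its time-averaged martingale integrand $\overline{\mathscr{Z}}^{\pi}$ between the true adapted M-solution $(Y,Z)$ and the discrete scheme $(Y^{\pi},Z^{\pi})$, and then invoke the two estimates already established: Proposition \ref{Prop_BSDE_0} (true solution versus BSDE system) and Lemma \ref{Prop_EM_0} (BSDE system versus scheme). First I would record that, under $(\mathrm{H}_{\Psi,G})'$, Proposition \ref{Prop_BSDE_0} holds with $\rho_{\Psi,G}(|\pi|)$ replaced by $L|\pi|^{1/2}$ (Remark \ref{Rem_EM_0}), so its right-hand side is bounded by $C\{M^{2}|\pi|+\sum_{k=1}^{N-1}\Delta t_{k}\sum_{\ell=0}^{k-1}\e[\int_{t_{\ell}}^{t_{\ell+1}}|\mathscr{Z}^{\pi}(t_{k},s)-\overline{\mathscr{Z}}^{\pi}(t_{k},t_{\ell})|^{2}\rd s]\}$, which is in turn dominated by the target right-hand side since the partial sum over $\ell<k$ is dominated by the full sum over $\ell=0,\ldots,N-1$.

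For the $Y$-part, for each $k$ and $t\in[t_{k},t_{k+1})$ I would use $\tau(t)=t_{k}$ to write
\[
Y(t)-Y^{\pi}(t_{k},t_{k})
=
\{Y(t)-\mathscr{Y}^{\pi}(t_{k},t)\}
+
\{\mathscr{Y}^{\pi}(t_{k},t)-\mathscr{Y}^{\pi}(t_{k},t_{k})\}
+
\{\mathscr{Y}^{\pi}(t_{k},t_{k})-Y^{\pi}(t_{k},t_{k})\},
\]
then apply $|a+b+c|^{2}\leq 3(|a|^{2}+|b|^{2}+|c|^{2})$, integrate over $t\in[t_{k},t_{k+1}]$, and sum over $k$. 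The first piece reassembles to $\e[\int_{0}^{T}|Y(t)-\mathscr{Y}^{\pi}(\tau(t),t)|^{2}\rd t]$, controlled by Proposition \ref{Prop_BSDE_0}; the middle piece equals $\sum_{k}\Delta t_{k}\e[\int_{t_{k}}^{t_{k+1}}|\mathscr{Z}^{\pi}(t_{k},s)|^{2}\rd s]$ via the It\^o-isometry identity $\sup_{t\in[t_{k},t_{k+1}]}\e[|\mathscr{Y}^{\pi}(t_{k},t)-\mathscr{Y}^{\pi}(t_{k},t_{k})|^{2}]=\e[\int_{t_{k}}^{t_{k+1}}|\mathscr{Z}^{\pi}(t_{k},s)|^{2}\rd s]$ already used in the proof of Corollary \ref{Cor_0} (the driver of $\mathscr{Y}^{\pi}(t_{k},\cdot)$ vanishes on the diagonal block $[t_{k},t_{k+1}]$); and the last piece equals $\sum_{k}\Delta t_{k}\,y_{k,k}$, controlled by Lemma \ref{Prop_EM_0}.

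For the $Z$-part, for $(t,s)\in[t_{k},t_{k+1})\times[t_{\ell},t_{\ell+1})$ I would split analogously
\[
Z(t,s)-Z^{\pi}(t_{k},t_{\ell})
=
\{Z(t,s)-\mathscr{Z}^{\pi}(t_{k},s)\}
+
\{\mathscr{Z}^{\pi}(t_{k},s)-\overline{\mathscr{Z}}^{\pi}(t_{k},t_{\ell})\}
+
\{\overline{\mathscr{Z}}^{\pi}(t_{k},t_{\ell})-Z^{\pi}(t_{k},t_{\ell})\},
\]
square, integrate over $(t,s)$, and double-sum over $k,\ell$. The first piece reassembles to $\e[\int_{0}^{T}\int_{0}^{T}|Z(t,s)-\mathscr{Z}^{\pi}(\tau(t),s)|^{2}\rd s\rd t]$ (Proposition \ref{Prop_BSDE_0}); the middle piece, whose integrand is independent of $t$, equals exactly the off-diagonal regularity term $\sum_{k}\Delta t_{k}\sum_{\ell}\e[\int_{t_{\ell}}^{t_{\ell+1}}|\mathscr{Z}^{\pi}(t_{k},s)-\overline{\mathscr{Z}}^{\pi}(t_{k},t_{\ell})|^{2}\rd s]$ appearing on the right-hand side; and the last piece, with integrand constant in $(t,s)$, equals $\sum_{k}\Delta t_{k}\sum_{\ell}\Delta t_{\ell}\e[|\overline{\mathscr{Z}}^{\pi}(t_{k},t_{\ell})-Z^{\pi}(t_{k},t_{\ell})|^{2}]=\sum_{k}\Delta t_{k}\sum_{\ell}z_{k,\ell}$, controlled by Lemma \ref{Prop_EM_0}.

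Collecting the six pieces, each is bounded by $C$ times the target right-hand side, and the assertion follows. Since both analytic inputs are already proven, there is no genuine obstacle here; the only care required is the bookkeeping that matches the partial sum $\sum_{\ell<k}$ from Proposition \ref{Prop_BSDE_0} against the full sum $\sum_{\ell=0}^{N-1}$ on the target right-hand side, and the verbatim identification of the middle $Z$-piece with the stated $L^{2}$-time regularity term of $\mathscr{Z}^{\pi}$.
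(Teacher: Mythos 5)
Your proposal is correct and follows essentially the same route as the paper: the same three-term decompositions for the $Y$- and $Z$-errors, the same It\^o-isometry identity for the middle $Y$-piece (valid because the driver of $\mathscr{Y}^{\pi}(t_{k},\cdot)$ vanishes on $[t_{k},t_{k+1}]$), and the same combination of Proposition \ref{Prop_BSDE_0} (via Remark \ref{Rem_EM_0}) with Lemma \ref{Prop_EM_0}. No gaps.
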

\begin{proof}
We observe that
\begin{align*}
&
\sum_{k=0}^{N-1}
\e\Big[
	\int_{t_{k}}^{t_{k+1}}
		\big|
			Y(t)
			-
			Y^{\pi}(t_{k},t_{k})
		\big|^{2}
	\rd t
\Big]
+
\sum_{k=0}^{N-1}
\sum_{\ell=0}^{N-1}
\e\Big[
	\int_{t_{k}}^{t_{k+1}}
		\int_{t_{\ell}}^{t_{\ell+1}}
			\big|
				Z(t,s)
				-
				Z^{\pi}(t_{k},t_{\ell})
			\big|^{2}
		\rd s
	\rd t
\Big]
\\&\leq
3\Big\{
	\e
	\Big[
		\int_{0}^{T}
			\big|
				Y(t)
				-
				\mathscr{Y}^{\pi}(\tau(t),t)
			\big|^{2}
		\rd t
		+
		\int_{0}^{T}
			\int_{0}^{T}
				\big|
					Z(t,s)
					-
					\mathscr{Z}^{\pi}(\tau(t),s)
				\big|^{2}
			\rd s
		\rd t
	\Big]
	\\&\quad\quad\quad
	+
	\sum_{k=0}^{N-1}
	\e
	\Big[
		\int_{t_{k}}^{t_{k+1}}
			\big|
				\mathscr{Y}^{\pi}(t_{k},t)
				-
				\mathscr{Y}^{\pi}(t_{k},t_{k})
			\big|^{2}
		\rd t
	\Big]
	+
	\sum_{k=0}^{N-1}
	\Delta t_{k}
	\sum_{\ell=0}^{N-1}
	\e
	\Big[
		\int_{t_{\ell}}^{t_{\ell+1}}
			\big|
				\mathscr{Z}^{\pi}(t_{k},s)
				-
				\overline{\mathscr{Z}}^{\pi}(t_{k},t_{\ell})
			\big|^{2}
		\rd s
	\Big]
	\\&\quad\quad\quad
	+
	\sum_{k=0}^{N-1}
	\Delta t_{k}
	\e
	\Big[
		\big|
			\mathscr{Y}^{\pi}(t_{k},t_{k})
			-
			Y^{\pi}(t_{k},t_{k})
		\big|^{2}
	\Big]
	+
	\sum_{k=0}^{N-1}
	\Delta t_{k}
	\sum_{\ell=0}^{N-1}
	\Delta t_{\ell}
	\e
	\Big[
		\big|
			\overline{\mathscr{Z}}^{\pi}(t_{k},t_{\ell})
			-
			Z^{\pi}(t_{k},t_{\ell})
		\big|^{2}
	\Big]
\Big\}.
\end{align*}
Noting that
\begin{align*}
\sup_{t \in [t_{k}, t_{k+1}]}
\e
\Big[
	\big|
		\mathscr{Y}^{\pi}(t_{k},t)
		-
		\mathscr{Y}^{\pi}(t_{k},t_{k})
	\big|^{2}
\Big]
=
\sup_{t \in [t_{k}, t_{k+1}]}
\e
\Big[
	\Big|
		\int_{t_{k}}^{t}
			\mathscr{Z}^{\pi}(t_{k},s)
		\rd W(s)
	\Big|^{2}
\Big]
=
\e
\Big[
	\int_{t_{k}}^{t_{k+1}}
		\big|
			\mathscr{Z}^{\pi}(t_{k},s)
		\big|^{2}
	\rd s
\Big],
\end{align*}
by Proposition \ref{Prop_BSDE_0} (see also Remark \ref{Rem_EM_0}) and Lemma \ref{Prop_EM_0}, we get the assertion.
\end{proof}


\section{Main results}\label{sec_L2_reg}

\subsection{Statements of our main results}
In this section, we consider the system of Type-\Rnum{2} BSVIE \eqref{Type-2 Markov} and SDE \eqref{SDE} which we rewrite for readers' convenience:
\begin{align}\label{BSVIE_II_0}
\begin{split}
Y(t)
=
\psi(t,X(t),X(T))
+
\int^{T}_{t}
	g
	(
		t,
		s,
		X(t),
		X(s),
		Y(s),
		Z(t,s),
		Z(s,t)
	)
\rd s
-
\int^{T}_{t}
	Z(t,s)
\rd W(s),
~t\in [0,T],
\end{split}
\end{align}
and
\begin{align}\label{SDE_0}
X(t)
&=
x
+
\int_{0}^{t}
	b(s,X(s))
\rd s
+
\int_{0}^{t}
	\sigma(s,X(s))
\rd W(s),
~t \in [0,T].
\end{align}

We impose the following assumptions on the coefficients $(\psi,g)$ and $(b,\sigma)$.

\begin{itemize}
\item[$(\mathrm{H}_{\psi,g})$]
The maps $\psi:[0,T] \times \real^{n} \times \real^{n} \to \real^{m}$ and $g:\Delta[0,T] \times \real^{n} \times \real^{n} \times \real^{m} \times \real^{m \times d} \times \real^{m \times d} \to \real^{m}$ are measurable, and there exists a constant $L>0$ such that
\begin{align*}
\int_{0}^{T}
	\bigl|\psi(t,0,0)\bigr|^{2}
\rd t
+
\int_{0}^{T}
	\int_{t}^{T}
		\bigl|
			g(t,s,0,0,0,0,0)
		\bigr|^{2}
	\rd s
\rd t
\leq
L
\end{align*}
and
\begin{align*}
&
\big|
	\psi(t,x_{1},x_{2})
	-
	\psi(t,x_{1}',x_{2}')
\big|
+
\big|
	g(t,s,x_{1},x_{2},y,z_{1},z_{2})
	-
	g(t,s,x_{1}',x_{2}',y',z_{1}',z_{2}')
\big|
\\&\leq
L
\big\{
	|x_{1}-x_{1}'|
	+
	|x_{2}-x_{2}'|
	+
	|y-y'|
	+
	|z_{1}-z_{1}'|
	+
	|z_{2}-z_{2}'|
\big\},
\end{align*}
for any $(t,s) \in \Delta [0,T]$ and $(x_{1},x_{2},y,z_{1},z_{2}),(x_{1}',x_{2}',y',z_{1}',z_{2}') \in \real^{n} \times\real^{n} \times \real^{m} \times \real^{m \times d} \times \real^{m \times d}$.
Furthermore, there exists a continuous and increasing function $\rho_{\psi,g}:[0,\infty) \to [0,\infty)$ with $\rho_{\psi,g}(0)=0$ such that, for any $0 \leq t,t' \leq s\leq T$ and $(x_{1},x_{2},y,z_{1},z_{2}) \in \real^{n} \times\real^{n} \times \real^{m} \times \real^{m \times d} \times \real^{m \times d}$,
\begin{align*}
&\big|
	\psi(t,x_{1},x_{2})
	-
	\psi(t',x_{1},x_{2})
\big|
+
\big|
	g(t,s,x_{1},x_{2},y,z_{1},z_{2})
	-
	g(t',s,x_{1},x_{2},y,z_{1},z_{2})
\big|
\\&\leq
\rho_{\psi,g}(|t-t'|)
\big\{
	1
	+
	|x_{1}|
	+
	|x_{2}|
	+
	|y|
	+
	|z_{1}|
	+
	|z_{2}|
\big\}.
\end{align*}
\end{itemize}

\begin{itemize}
\item[$(\mathrm{H}_{b,\sigma})$]
The maps $b:[0,T] \times \real^{n} \to \real^{n}$ and $\sigma:[0,T] \times \real^{n} \to \real^{n \times d}$ are measurable, and there exists a constant $L>0$ such that, for any $s \in [0,T]$ and $x,x' \in \real^{n}$,
\begin{align*}
\big|
	b(s,0)
\big|
+
\big|
	\sigma(s,0)
\big|
&\leq
L,~
\big|
	b(s,x)
	-
	b(s,x')
\big|
+
\big|
	\sigma(s,x)
	-
	\sigma(s,x')
\big|
\leq
L|x-x'|.
\end{align*}
Furthermore, there exists a continuous and increasing function $\rho_{\sigma}:[0,\infty) \to [0,\infty)$ with $\rho_{\sigma}(0)=0$ such that, for any $s,s' \in [0,T]$ and $x \in \real^{n}$,
\begin{align*}
\big|
	\sigma(s,x)
	-
	\sigma(s',x)
\big|
&\leq
\rho_{\sigma}(|s-s'|)
(1+|x|).
\end{align*}
\end{itemize}

We also consider the following assumptions of $(\psi,g)$ and $(b,\sigma)$ which are slightly stronger than the above ones.

\begin{itemize}
\item[$(\mathrm{H}_{\psi,g})'$]
$(\psi,g)$ satisfies $(\mathrm{H}_{\psi,g})$ with the constant $L$.
Furthermore,
\begin{align*}
&\big|
	\psi(t,x_{1},x_{2})
	-
	\psi(t',x_{1},x_{2})
\big|
+
\big|
	g(t,s,x_{1},x_{2},y,z_{1},z_{2})
	-
	g(t',s',x_{1},x_{2},y,z_{1},z_{2})
\big|
\\&
\leq
L
\big\{
	|t-t'|^{1/2}
	+
	|s-s'|^{1/2}
\big\}
\big\{
	1
	+
	|x_{1}|
	+
	|x_{2}|
	+
	|y|
	+
	|z_{1}|
	+
	|z_{2}|
\big\}
\end{align*}
for any $(t,s), (t',s') \in \Delta[0,T]$ and $(x_{1},x_{2},y,z_{1},z_{2}) \in \real^{n} \times\real^{n} \times \real^{m} \times \real^{m \times d} \times \real^{m \times d}$.
\end{itemize}

\begin{itemize}
\item[$(\mathrm{H}_{b,\sigma})'$]
$(b,\sigma)$ satisfies $(\mathrm{H}_{b,\sigma})$ with the constant $L$.
Furthermore,
\begin{align*}
\big|
	b(s,x)
	-
	b(s',x)
\big|
+
\big|
	\sigma(s,x)
	-
	\sigma(s',x)
\big|
\leq
L|s-s'|^{1/2}
(1+|x|)
\end{align*}
for any $s,s'\in [0,T]$ and $x \in \real^{n}$.
\end{itemize}

For a numerical approximation of SDE \eqref{SDE_0}, we consider the Euler--Maruyama scheme $\{X^{\pi}(t_{k})\}_{k=0}^{N}$, which is defined for each $\pi=\{t_{0},t_{1},\ldots,t_{N}\} \in\Pi[0,T]$ by
\begin{align*}
X^{\pi}(t_{k+1})
=
X^{\pi}(t_{k})
+
b(t_{k},X^{\pi}(t_{k})) \Delta t_{k}
+
\sigma(t_{k},X^{\pi}(t_{k})) \Delta W_{k},~
k=0,\ldots,N-1,
\end{align*}
with the initial condition $X^{\pi}(0)=x$.

The following lemma is well-known, see for example \cite{KP95,Zh17}.

\begin{lemm}\phantomsection\label{Lem_SDE_1}
\begin{itemize}
\item[(i)]
Under $(\mathrm{H}_{b,\sigma})$, there exists a unique strong solution $X(\cdot) \in L^{2}_{\mathbb{F}}(\Omega;C([0,T];\real^{n}))$ of SDE \eqref{SDE_0}.
Furthermore, for any $p \geq 2$, it holds that
\begin{align}
\label{Lem_SDE_1_0}
\e
\Big[
	\sup_{s \in [0,T]}
	\big|
		X(s)
	\big|^{p}
\Big]
&\leq
C_{p}
(1+|x|^{p}),
\\
\label{Lem_SDE_1_1}
\e
\Big[
	\sup_{s'\in [s,t]}
	\big|
		X(s')
		-
		X(s)
	\big|^{p}
\Big]
&\leq
C_{p}
(1+|x|^{p})
|t-s|^{p/2},~\text{for any}~0 \leq s \leq t \leq T.
\end{align}

\item[(ii)]
Under $(\mathrm{H}_{b,\sigma})'$, for any $\pi=\{t_{0},t_{1},\ldots,t_{N}\} \in\Pi[0,T]$ and $p \geq 2$, it holds that
\begin{align*}
\max_{k=0,\ldots, N}
\e
\Big[
	\big|
		X(t_{k})
		-
		X^{\pi}(t_{k})
	\big|^{p}
\Big]
\leq
C_{p}(1+|x|^{p})
|\pi|^{p/2}.
\end{align*}
\end{itemize}
\end{lemm}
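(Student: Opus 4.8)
The plan is to prove the two parts by the classical fixed-point and discrete-error arguments for It\^o SDEs, using the Lipschitz and linear-growth structure encoded in $(\mathrm{H}_{b,\sigma})$ and $(\mathrm{H}_{b,\sigma})'$. For the existence and uniqueness in part (i), I would run a Picard iteration in the Banach space $L^{2}_{\mathbb{F}}(\Omega;C([0,T];\real^{n}))$: setting $X^{(0)}(\cdot)\equiv x$ and defining $X^{(j+1)}$ by substituting $X^{(j)}$ into the right-hand side of \eqref{SDE_0}, the Lipschitz continuity of $b,\sigma$ in $x$ together with the Burkholder--Davis--Gundy (BDG) inequality yields $\e[\sup_{r\in[0,t]}|X^{(j+1)}(r)-X^{(j)}(r)|^{2}]\leq C\int_{0}^{t}\e[\sup_{u\in[0,s]}|X^{(j)}(u)-X^{(j-1)}(u)|^{2}]\rd s$, and iterating this bound gives a Cauchy sequence whose limit is the unique strong solution. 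For the moment bound \eqref{Lem_SDE_1_0}, I would raise the equation to the $p$-th power, apply BDG to the stochastic integral and H\"older to the drift integral, and use the linear-growth estimate $|b(s,x)|+|\sigma(s,x)|\leq C(1+|x|)$ (a consequence of the Lipschitz property and $|b(s,0)|+|\sigma(s,0)|\leq L$) to reach $\e[\sup_{r\in[0,t]}|X(r)|^{p}]\leq C_{p}(1+|x|^{p})+C_{p}\int_{0}^{t}\e[\sup_{u\in[0,s]}|X(u)|^{p}]\rd s$; Gronwall's lemma then closes the estimate.

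The time-regularity bound \eqref{Lem_SDE_1_1} is a direct computation: for $0\leq s\leq t\leq T$ and $s'\in[s,t]$ one writes $X(s')-X(s)=\int_{s}^{s'}b(r,X(r))\rd r+\int_{s}^{s'}\sigma(r,X(r))\rd W(r)$, takes the supremum over $s'\in[s,t]$ and then the $p$-th moment, and estimates the drift part by H\"older (yielding a factor $|t-s|^{p}$) and the diffusion part by BDG (yielding $|t-s|^{p/2}$). The already-established bound \eqref{Lem_SDE_1_0} controls $\e[\sup_{r}(1+|X(r)|)^{p}]\leq C_{p}(1+|x|^{p})$, and since $|t-s|\leq T$ the dominant factor is $|t-s|^{p/2}$, which gives the claim.

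For part (ii), I would introduce the continuous-time interpolation $\overline{X}^{\pi}(\cdot)$ of the Euler--Maruyama scheme, defined for $t\in[t_{k},t_{k+1}]$ by $\overline{X}^{\pi}(t)=X^{\pi}(t_{k})+b(t_{k},X^{\pi}(t_{k}))(t-t_{k})+\sigma(t_{k},X^{\pi}(t_{k}))(W(t)-W(t_{k}))$, so that $\overline{X}^{\pi}(t_{k})=X^{\pi}(t_{k})$ and $\overline{X}^{\pi}$ solves an SDE with frozen coefficients $b(\tau(t),X^{\pi}(\tau(t)))$ and $\sigma(\tau(t),X^{\pi}(\tau(t)))$. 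Subtracting this from \eqref{SDE_0} and splitting each coefficient difference into a ``state'' part $b(t,X(t))-b(t,\overline{X}^{\pi}(t))$ and a ``freezing'' part $b(t,\overline{X}^{\pi}(t))-b(\tau(t),X^{\pi}(\tau(t)))$, I would bound the state part, via the spatial Lipschitz constant, by the running error, and the freezing part by the Lipschitz bound on $|\overline{X}^{\pi}(t)-\overline{X}^{\pi}(\tau(t))|$ plus the temporal increment $|t-\tau(t)|^{1/2}\leq|\pi|^{1/2}$ allowed by $(\mathrm{H}_{b,\sigma})'$. The one-step estimate $\e[\sup_{t\in[t_{k},t_{k+1}]}|\overline{X}^{\pi}(t)-X^{\pi}(t_{k})|^{p}]\leq C_{p}(1+|x|^{p})|\pi|^{p/2}$ is obtained exactly as in \eqref{Lem_SDE_1_1}, so after applying BDG and H\"older the freezing error contributes $C_{p}(1+|x|^{p})|\pi|^{p/2}$; a Gronwall argument in $t$ then yields $\e[\sup_{t\in[0,T]}|X(t)-\overline{X}^{\pi}(t)|^{p}]\leq C_{p}(1+|x|^{p})|\pi|^{p/2}$, which in particular gives the stated bound at the grid points $t_{k}$.

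The main obstacle is part (ii): one must carefully separate the propagated error from the local (one-step) discretization error, and the temporal-regularity hypothesis in $(\mathrm{H}_{b,\sigma})'$ is precisely what is needed to obtain the order-$1/2$ rate for the freezing of the coefficients in time. The moment and time-regularity estimates of part (i), applied to both the true solution $X$ and the interpolated scheme $\overline{X}^{\pi}$, are the quantitative inputs that make the final Gronwall step produce the sharp $|\pi|^{p/2}$ bound.
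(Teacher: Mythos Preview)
Your proposal is correct and follows the classical route (Picard iteration, BDG plus Gronwall for the moment and increment bounds, and the continuous-time interpolation of the Euler scheme combined with a Lipschitz/freezing splitting and Gronwall for part (ii)). The paper itself does not prove this lemma: it simply declares the result well-known and refers to Kloeden--Platen and Zhang's textbook, with a brief remark explaining why the non-uniform (in $x$) temporal H\"older condition in $(\mathrm{H}_{b,\sigma})'$ still suffices, namely because $\e[\sup_{s'\in[s,t]}|\varphi(s',X(s'))-\varphi(s,X(s))|^{p}]\leq C_{p}(1+|x|^{p})|t-s|^{p/2}$ for $\varphi=b,\sigma$ by combining the spatial Lipschitz bound with the increment estimate \eqref{Lem_SDE_1_1}. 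Your argument implicitly handles this point through the moment control of $\overline{X}^{\pi}$, so there is no gap; the two approaches are the same standard one.
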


\begin{rem}
\begin{itemize}
\item[(i)]
We remark that, in $(\mathrm{H}_{\psi,g})$ and $(\mathrm{H}_{b,\sigma})$, the map $s \mapsto \sigma(s,x)$ is assumed to be continuous, while the maps $s \mapsto g(t,s,x,y,z_{1},z_{2})$ and $s \mapsto b(s,x)$ are not.
The continuity condition of $\sigma(s,x)$ with respect to $s$ will be used in the proof of Theorem \ref{Thm_L2_0} below.
Alternatively, in $(\mathrm{H}_{\psi,g})'$ and $(\mathrm{H}_{b,\sigma})'$, we impose the $1/2$-H\"older continuity with respect to both $t$ and  $s$ on $\psi$, $g$, $b$ and $\sigma$, which is used to estimate the error for the Euler--Maruyama scheme.

\item[(ii)]
In $(\mathrm{H}_{b,\sigma})'$, the H\"older continuity with respect to the time parameter $s$ of $b$ and $\sigma$ is not uniform in $x$ unlike previous studies.
However, for $\varphi=b,\sigma$, noting that
\begin{align*}
&\e\Big[
	\sup_{s'\in [s,t]}
	\big|
		\varphi(s',X(s'))
		-
		\varphi(s,X(s))
	\big|^{p}
\Big ]
\\&\leq
C_{p}\Big\{
	|t-s|^{p/2}
	\e\Big[
		1
		+
		\sup_{s \in [0,T]}
		\big|
			X(s)
		\big|^{p}
	\Big]
	+
	\e\Big[
		\sup_{s'\in [s,t]}
		\big|
			X(s')
			-
			X(s)
		\big|^{p}
	\Big]
\Big\}
\\&
\leq
C_{p}(1+|x|^{p})|t-s|^{p/2},
\end{align*}
we can show Lemma \ref{Lem_SDE_1} (ii) by the same way as in the proof of Theorem 5.3.1 in  \cite{Zh17}.

\end{itemize}
\end{rem}

Under $(\mathrm{H}_{\psi,g})$ and $(\mathrm{H}_{b,\sigma})$, the coefficients
\begin{align*}
&
\Psi(t)
=
\psi(t,X(t),X(T)),~t \in [0,T],\\
&
G
(t,s,y,z_{1},z_{2})
=
g
(
	t,
	s,
	X(t),
	X(s),
	y,
	z_{1},
	z_{2}
),~
(t,s,y,z_{1},z_{2}) \in \Delta[0,T] \times \real^{m} \times \real^{m \times d} \times \real^{m \times d},
\end{align*}
satisfy $(\mathrm{H}_{\Psi,G})$ with the constant $M$ replaced by $C\sqrt{1+|x|^{2}}$ and $\rho_{\Psi,G}(t)$ replaced by $\sqrt{t}+\rho_{\psi,g}(t)$.
In this case, BSDE system \eqref{BSDE_sys_0} with $\theta=0$ becomes
\begin{align}\label{BSDE_sys_2}
\begin{cases}
\mathscr{Y}^{\pi}(t_{k},s)
=
\displaystyle
\mathscr{Y}^{\pi}(t_{k},t_{\ell+1})
+
\int_{s}^{t_{\ell+1}}
	g
	(
		t_{k},
		r,
		X(t_{k}),
		X(r),
		\mathscr{Y}^{\pi}(t_{\ell},r),
		\mathscr{Z}^{\pi}(t_{k},r),
		\mathcal{I}^{\pi}[\mathscr{Z}^{\pi}(t_{\ell},\cdot)](t_{k})
	)
	\1_{\{k<\ell\}}
\rd r
\\
\displaystyle \quad\quad\quad\quad\quad\quad
-
\int_{s}^{t_{\ell+1}}
	\mathscr{Z}^{\pi}(t_{k},r)
\rd W(r),~
s \in [t_{\ell},t_{\ell+1}],~k,\ell=0,\ldots, N-1,
\\
\mathscr{Y}^{\pi}(t_{k},t_{N})
=
\psi(t_{k},X(t_{k}),X(t_{N})),~
k=0,\ldots,N-1.
\end{cases}
\end{align}
Furthermore, under $(\mathrm{H}_{\psi,g})'$ and $(\mathrm{H}_{b,\sigma})'$, by defining
\begin{align*}
&\Psi^{\pi}(t_{k})
=
\psi(t_{k},X^{\pi}(t_{k}),X^{\pi}(t_{N})),~
k=0,\ldots,N-1,
\\
&G^{\pi}(t_{k},t_{\ell},y,z_{1},z_{2})
=
g(t_{k},t_{\ell},X^{\pi}(t_{k}),X^{\pi}(t_{\ell}), y, z_{1}, z_{2}),\\
&
\hspace{2cm}
(y,z_{1},z_{2}) \in \real^{m} \times \real^{m \times d} \times \real^{m \times d},~
k=0,\ldots,N-1,~\ell=k,\ldots,N-1,
\end{align*}
for each $\pi=\{t_{0},t_{1},\ldots,t_{N}\} \in \Pi[0,T]$, we see that $(\mathrm{H}_{\Psi,G})'$ holds with $M$ replaced by $C\sqrt{1+|x|^{2}}$.
In this case, the backward Euler--Maruyama scheme \eqref{EM_0} becomes
\begin{align}\label{EM_2}
\begin{cases}
Y^{\pi}(t_{k},t_{\ell})
\displaystyle
=
\e_{t_{\ell}}
\Big[
	Y^{\pi}(t_{k},t_{\ell+1})
\Big]
\\
\displaystyle \quad\quad\quad\quad\quad\quad
+
\Delta t_{\ell}
g
(
	t_{k},
	t_{\ell},
	X^{\pi}(t_{k}),
	X^{\pi}(t_{\ell}),
	Y^{\pi}(t_{\ell},t_{\ell}),
	Z^{\pi}(t_{k},t_{\ell}),
	Z^{\pi}(t_{\ell},t_{k})
)
\1_{\{k<\ell\}},
~k,\ell=0,\ldots,N-1,
\\
Z^{\pi}(t_{k},t_{\ell})
\displaystyle
=
\frac{1}{\Delta t_{\ell}}
\e_{t_{\ell}}
\Big[
	Y^{\pi}(t_{k},t_{\ell+1})
	\Delta W_{\ell}^{\top}
\Big],
~k,\ell=0,\ldots,N-1,
\end{cases}
\end{align}
with $Y^{\pi}(t_{k},t_{N}):=\psi(t_{k},X^{\pi}(t_{k}),X^{\pi}(t_{N}))$, $k=0,\ldots,N-1$.

Now we state our main results.
The first main result is in regard to the estimate of the modulus of the $L^{2}$-time regularity of $(Y(\cdot),Z(\cdot,\cdot))$ (see \eqref{Reg_YZ}).

\begin{theo}\label{Thm_1}
Under $(\mathrm{H}_{\psi,g})$ and $(\mathrm{H}_{b,\sigma})$, for any $\pi=\{t_{0},t_{1},\ldots,t_{N}\} \in \Pi[0,T]$, it holds that
\begin{align}\label{Reg_Y_0}
\mathcal{E}(Y;\pi)
+
\mathcal{E}(Z;\pi)
&\leq
C(1+|x|^{2})
\big\{
	|\pi|
	+
	\rho_{\psi,g}(|\pi|)^{2}
	+
	\rho_{\sigma}(|\pi|)^{2}
\big\}.
\end{align}
\end{theo}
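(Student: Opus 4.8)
The plan is to obtain Theorem~\ref{Thm_1} as a consequence of Corollary~\ref{Cor_0} together with a separate estimate for the $L^2$-time regularity of the martingale integrand $\mathscr{Z}^\pi$ of BSDE system~\eqref{BSDE_sys_2}. In the present Markovian framework the coefficients $\Psi(t)=\psi(t,X(t),X(T))$ and $G(t,s,y,z_1,z_2)=g(t,s,X(t),X(s),y,z_1,z_2)$ satisfy $(\mathrm{H}_{\Psi,G})$ with $M$ replaced by $C\sqrt{1+|x|^2}$ and $\rho_{\Psi,G}(|\pi|)$ replaced by $\sqrt{|\pi|}+\rho_{\psi,g}(|\pi|)$. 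Inserting this into Corollary~\ref{Cor_0}, the leading term $M^2(|\pi|+\rho_{\Psi,G}(|\pi|)^2)$ is already bounded by $C(1+|x|^2)\{|\pi|+\rho_{\psi,g}(|\pi|)^2\}$, so it remains to control the two quantities
\begin{align*}
\mathrm{(I)}&:=\sum_{k=0}^{N-1}\Delta t_k\,\e\Big[\int_{t_k}^{t_{k+1}}\big|\mathscr{Z}^\pi(t_k,s)\big|^2\rd s\Big],\\
\mathrm{(II)}&:=\sum_{k=0}^{N-1}\Delta t_k\sum_{\ell=0}^{N-1}\e\Big[\int_{t_\ell}^{t_{\ell+1}}\big|\mathscr{Z}^\pi(t_k,s)-\overline{\mathscr{Z}}^\pi(t_k,t_\ell)\big|^2\rd s\Big],
\end{align*}
each by $C(1+|x|^2)\{|\pi|+\rho_{\psi,g}(|\pi|)^2+\rho_\sigma(|\pi|)^2\}$; this is exactly the $L^2$-time regularity estimate for $\mathscr{Z}^\pi$ encapsulated in Theorem~\ref{Thm_L2_0} below.

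For the regularity of $\mathscr{Z}^\pi$ I would first assume that $\psi,g,b,\sigma$ are smooth with bounded derivatives, so that $X(\cdot)$ and the solution $\{(\mathscr{Y}^\pi,\mathscr{Z}^\pi)\}$ of \eqref{BSDE_sys_2} are Malliavin differentiable. Differentiating \eqref{BSDE_sys_2} and the SDE~\eqref{SDE_0} in the Malliavin sense yields that $(D_u\mathscr{Y}^\pi(t_k,\cdot),D_u\mathscr{Z}^\pi(t_k,\cdot))$ solves a linear variational BSDE system (the system referenced as \eqref{Prop_L2_2} in the Introduction), while the diagonal identity $\mathscr{Z}^\pi(t_k,s)=D_s\mathscr{Y}^\pi(t_k,s)$ gives a pointwise-in-$s$ representation of $\mathscr{Z}^\pi$. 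Applying the $L^p$-a priori estimate of Theorem~\ref{Lem_Lp}(ii) to this variational system, together with the moment and increment bounds for $X(\cdot)$ and its Euler--Maruyama scheme from Lemma~\ref{Lem_SDE_1}, produces $L^p$-bounds on $D\mathscr{Y}^\pi$ and $D\mathscr{Z}^\pi$ for $p\geq2$ that are uniform in the mesh.

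These bounds drive both estimates. For $\mathrm{(I)}$, on the diagonal block $\ell=k$ the driver of \eqref{BSDE_sys_2} vanishes, so $\mathscr{Y}^\pi(t_k,\cdot)$ is a martingale on $[t_k,t_{k+1}]$ and $\int_{t_k}^{t_{k+1}}|\mathscr{Z}^\pi(t_k,s)|^2\rd s\leq\Delta t_k\sup_{s}\e[|\mathscr{Z}^\pi(t_k,s)|^2]$; the uniform pointwise bound $\sup_{k,s}\e[|\mathscr{Z}^\pi(t_k,s)|^2]\leq C(1+|x|^2)$ coming from the variational system then gives $\mathrm{(I)}\leq C(1+|x|^2)|\pi|$. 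For $\mathrm{(II)}$, since $\overline{\mathscr{Z}}^\pi(t_k,t_\ell)$ is the best $\mathcal{F}_{t_\ell}$-measurable $L^2$-approximation of $\mathscr{Z}^\pi(t_k,\cdot)$ on $[t_\ell,t_{\ell+1}]$ (exactly as in the proof of Corollary~\ref{Cor_0}), it suffices to estimate the genuine increments $\e[|\mathscr{Z}^\pi(t_k,s)-\mathscr{Z}^\pi(t_k,t_\ell)|^2]$; rewriting these through the variational BSDE and invoking the continuity moduli of $\psi,g$ (the $\rho_{\psi,g}$ contribution) and of $\sigma$ (the $\rho_\sigma$ contribution, via the continuity of $s\mapsto\sigma(s,x)$), while the piecewise-constant discretization of the coefficients in $s$ supplies the $|\pi|$ term, produces the desired bound.

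Finally, to remove the smoothness hypotheses I would mollify $\psi,g,b,\sigma$, apply the above to the smooth approximations, and pass to the limit: since every constant above depends only on $L$, $T$ and the moduli $\rho_{\psi,g},\rho_\sigma$ and not on the mollification parameter, the stability estimates \eqref{Lem_3_1_1}--\eqref{Lem_3_1_2} for the BSDE system let the bounds on $\mathrm{(I)}$ and $\mathrm{(II)}$ survive the limit. The main obstacle I anticipate is precisely this Malliavin-calculus step for the \emph{discretized} system: deriving the variational BSDE and the representation $\mathscr{Z}^\pi=D\mathscr{Y}^\pi$ rigorously and extracting a time-regularity estimate for $s\mapsto\mathscr{Z}^\pi(t_k,s)$ that is \emph{uniform in the partition} is delicate, because $\mathscr{Z}^\pi(t_k,\cdot)$ is only locally square-integrable in its second argument (cf.\ the Remark following Theorem~\ref{Lem_Lp}), so the diagonal block $\ell=k$ where the driver degenerates must be handled directly rather than through the a priori estimates alone.
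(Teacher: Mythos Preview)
Your plan is essentially the paper's: apply Corollary~\ref{Cor_0} with $M=C\sqrt{1+|x|^2}$ and $\rho_{\Psi,G}=\sqrt{\cdot}+\rho_{\psi,g}$, then feed in the $L^2$-regularity bounds for $\mathscr{Z}^\pi$ (Theorem~\ref{Thm_L2_0}), which are proved via mollification, the Malliavin representation $\mathscr{Z}^\pi(t_k,s)=\nabla_s\mathscr{Y}^\pi(t_k,s)(\nabla X(s))^{-1}\sigma(s,X(s))$ through the variational system~\eqref{Prop_L2_2}, and the $L^p$-estimates of Theorem~\ref{Lem_Lp}. Two small corrections: the Euler--Maruyama scheme plays no role here (it enters only in Theorem~\ref{Thm_2}), and the bound on $\mathrm{(II)}$ in the paper is $C(1+|x|^2)\{|\pi|+\rho_\sigma(|\pi|)^2\}$ with no $\rho_{\psi,g}$ term---that modulus appears only through the first term of Corollary~\ref{Cor_0}.

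Where your sketch is thin is precisely the point you flag as the main obstacle, and the paper's resolution is worth knowing. The representation gives $\mathscr{Z}^\pi(t_k,s)=\nabla_s\mathscr{Y}^\pi(t_k,s)(\nabla X(s))^{-1}\sigma(s,X(s))$, so for $\mathrm{(II)}$ one replaces $\overline{\mathscr{Z}}^\pi(t_k,t_\ell)$ (via best approximation) by $\nabla_{t_\ell}\mathscr{Y}^\pi(t_k,t_\ell)(\nabla X(t_\ell))^{-1}\sigma(t_\ell,X(t_\ell))$ and splits the increment into three pieces $I^{(1)},I^{(2)},I^{(3)}$ (moving $\sigma$, $(\nabla X)^{-1}$, and $\nabla_\cdot\mathscr{Y}^\pi$ one at a time). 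The delicate part is $I^{(3)}$ when $k<\ell$: there $s$ and hence the parameter $\theta$ in $\nabla_\theta$ varies over $[t_\ell,t_{\ell+1}]$. The paper handles this by (a) the structural Lemma~\ref{Lem_Lp_0} that $\nabla_\theta=\nabla_0$ whenever $\theta\le t_k$, and (b) the stability estimate~\eqref{Lem_Lp_1_3} showing $\nabla_\theta\approx\nabla_{t_{k+1}}$ for $\theta\in[t_\ell,t_{\ell+1}]$ up to $O(|\pi|^{1/2})$, after which one is left with increments of the \emph{fixed} process $s\mapsto\nabla_{t_{k+1}}\mathscr{Y}^\pi(t_k,s)$ controlled by the BSDE dynamics. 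This two-step reduction is what delivers uniformity in the mesh; your sketch does not yet isolate it. The diagonal case $k=\ell$ is in fact routine once the representation is in hand---your concern there is misplaced.
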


\begin{rem}
Unlike the case of Type-\Rnum{1} BSVIEs, the time regularity of adapted M-solutions of Type-\Rnum{2} BSVIEs is a difficult problem due to the appearance of the term $Z(s,t)$ in the driver.
On one hand, Yong~\cite{Yo08} showed the continuity (in the strong $L^2$-sense) for general Type-\Rnum{2} BSVIE~\eqref{BSVIE_II_1} under technical differentiability assumptions for the coefficients.
On the other hand, \eqref{Reg_Y_0} provides a quantitative estimate for the modulus of the $L^2$-time regularity of the adapted M-solution of Type-\Rnum{2} BSVIE \eqref{BSVIE_II_0} with $X(\cdot)$ being the solution of SDE \eqref{SDE_0}, without smoothness of the coefficients.
This kind of regularity estimate for adapted M-solutions appears for the first time in the literature of BSVIEs.
\end{rem}

The second is in regard to the numerical approximation based on the backward Euler--Maruyama scheme.

\begin{theo}\label{Thm_2}
Under $(\mathrm{H}_{\psi,g})'$ and $(\mathrm{H}_{b,\sigma})'$, there exists a constant $\delta >0$ depending only on $L$ such that, for any $\pi=\{t_{0},t_{1},\ldots,t_{N}\} \in \Pi[0,T]$ with $|\pi| \leq\delta$, it holds that
\begin{align}\label{eq_thm2}
\begin{split}
&
\sum_{k=0}^{N-1}
\e\Big[
	\int_{t_{k}}^{t_{k+1}}
		\big|
			Y(t)
			-
			Y^{\pi}(t_{k},t_{k})
		\big|^{2}
	\rd t
\Big]
+
\sum_{k=0}^{N-1}
\sum_{\ell=0}^{N-1}
\e\Big[
	\int_{t_{k}}^{t_{k+1}}
		\int_{t_{\ell}}^{t_{\ell+1}}
			\big|
				Z(t,s)
				-
				Z^{\pi}(t_{k},t_{\ell})
			\big|^{2}
		\rd s
	\rd t
\Big]
\\&\leq
C(1+|x|^{2})|\pi|.
\end{split}
\end{align}
\end{theo}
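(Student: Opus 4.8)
The plan is to combine the abstract error estimate of Proposition~\ref{Prop_EM_1} with a bound of order $|\pi|$ for the modulus of the $L^{2}$-time regularity of the martingale integrand $\mathscr{Z}^{\pi}$ of BSDE system~\eqref{BSDE_sys_2}. As observed just before Theorem~\ref{Thm_1}, under $(\mathrm{H}_{\psi,g})'$ and $(\mathrm{H}_{b,\sigma})'$ the data $(\Psi,G)$ and $(\Psi^{\pi},G^{\pi})$ induced by $(\psi,g)$ and the Euler--Maruyama scheme satisfy $(\mathrm{H}_{\Psi,G})'$ with $M$ replaced by $C\sqrt{1+|x|^{2}}$ (see Remark~\ref{Rem_EM_0}). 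Hence, for $|\pi|\le\delta$, Proposition~\ref{Prop_EM_1} bounds the left-hand side of \eqref{eq_thm2} by $C\{(1+|x|^{2})|\pi|+\mathcal{R}(\pi)\}$, where $\mathcal{R}(\pi)$ collects the two time-regularity terms
\begin{equation*}
\begin{aligned}
\mathcal{R}(\pi):={}&\sum_{k=0}^{N-1}\Delta t_{k}\e\Big[\int_{t_{k}}^{t_{k+1}}\big|\mathscr{Z}^{\pi}(t_{k},s)\big|^{2}\rd s\Big]\\
&+\sum_{k=0}^{N-1}\Delta t_{k}\sum_{\ell=0}^{N-1}\e\Big[\int_{t_{\ell}}^{t_{\ell+1}}\big|\mathscr{Z}^{\pi}(t_{k},s)-\overline{\mathscr{Z}}^{\pi}(t_{k},t_{\ell})\big|^{2}\rd s\Big].
\end{aligned}
\end{equation*}
The whole task thus reduces to showing $\mathcal{R}(\pi)\le C(1+|x|^{2})|\pi|$.

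For the first (diagonal) sum I would use that the indicator $\1_{\{k<\ell\}}$ in \eqref{BSDE_sys_2} makes the driver vanish on $[t_{k},t_{k+1}]$, so that $(\mathscr{Y}^{\pi}(t_{k},s))_{s\in[t_{k},t_{k+1}]}$ is a martingale and $\e[\int_{t_{k}}^{t_{k+1}}|\mathscr{Z}^{\pi}(t_{k},s)|^{2}\rd s]=\e[|\mathscr{Y}^{\pi}(t_{k},t_{k+1})-\mathscr{Y}^{\pi}(t_{k},t_{k})|^{2}]$. Via the Clark--Ocone representation $\mathscr{Z}^{\pi}(t_{k},s)=\e_{s}[D_{s}\mathscr{Y}^{\pi}(t_{k},t_{k+1})]$ and a uniform bound $\sup_{s}\e[|D_{s}\mathscr{Y}^{\pi}(t_{k},s)|^{2}]\le C(1+|x|^{2})$ on the Malliavin derivative, this is $\le C(1+|x|^{2})\Delta t_{k}$, and summing over $k$ yields the desired order $|\pi|$.

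The core of the proof is the second (path-regularity) sum. Here I would establish Malliavin differentiability of the whole family $\{(\mathscr{Y}^{\pi}(t_{k},\cdot),\mathscr{Z}^{\pi}(t_{k},\cdot))\}_{k}$ and write the coupled \emph{variational BSDE system} obtained by differentiating \eqref{BSDE_sys_2}; in the Markovian setting this gives a representation in which $\mathscr{Z}^{\pi}(t_{k},s)$ factorizes through the variational solution evaluated at $(t_{k},s)$ and the matrix $\sigma(s,X(s))$, so that the $L^{2}$-time regularity of $s\mapsto\mathscr{Z}^{\pi}(t_{k},s)$ is inherited from that of $X(\cdot)$ and $\sigma(\cdot,X(\cdot))$, controlled by the $1/2$-H\"older estimates of Lemma~\ref{Lem_SDE_1} and $(\mathrm{H}_{b,\sigma})'$, together with the regularity of the variational solution itself. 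Since Malliavin calculus is unavailable for merely Lipschitz data, I would carry this out for smooth approximations of $(\psi,g,b,\sigma)$ sharing the constant $L$ and pass to the limit through the stability estimate \eqref{Lem_3_1_1} of Lemma~\ref{Lem_BSDE_sys_0}, ensuring that all constants depend only on $L$ and $T$. The uniform (in $\pi$, in the perturbation time, and across the smoothing) $L^{p}$-bounds with $p>2$ that this requires, forced by the need to split, via H\"older's inequality, products of a variational factor with $\sigma(s,X(s))$, are exactly those supplied by the a priori estimate Theorem~\ref{Lem_Lp}; the Volterra-type coupling across the indices $k,\ell$ is then closed by the discrete Gronwall-like inequality Lemma~\ref{Lem_Gron_disc}.

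The step I expect to be the main obstacle is the cross-index coupling peculiar to Type-\Rnum{2} equations: through the slot $\mathcal{I}^{\pi}[\mathscr{Z}^{\pi}(\tau(s),\cdot)](t_{k})$ the driver of the $t_{k}$-equation depends on the integrand $\mathscr{Z}^{\pi}(t_{\ell},\cdot)$ of \emph{other} equations of the system, so differentiating this ``second $Z$'' term entangles the variational processes of different indices. The delicate point is to organize the variational system so that these contributions enter linearly with $L$-controlled coefficients, and to propagate the $L^{p}$-bounds through the backward induction in $k$ without sacrificing the order-$|\pi|$ gain; the weighting device underlying Lemma~\ref{Lem_Gron_disc} is precisely what keeps this propagation uniform in the partition.
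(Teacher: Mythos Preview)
Your proposal is correct and follows essentially the paper's route: the paper's proof of Theorem~\ref{Thm_2} is exactly ``apply Proposition~\ref{Prop_EM_1} with $M=C\sqrt{1+|x|^2}$, then bound $\mathcal{R}(\pi)$ by Theorem~\ref{Thm_L2_0}'', and Theorem~\ref{Thm_L2_0} is proved via mollification, the representation $\mathscr{Z}^{\pi}(t_{k},s)=\nabla_{s}\mathscr{Y}^{\pi}(t_{k},s)(\nabla X(s))^{-1}\sigma(s,X(s))$ from the variational BSDE system (Proposition~\ref{Prop_L2_0}), and $L^{p}$-bounds with $p>2$ coming from Theorem~\ref{Lem_Lp}, split via H\"older against the moments of $X$, $\nabla X$ and $(\nabla X)^{-1}$.

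One point where the paper is more specific than your sketch concerns the cross-index coupling you flag as the main obstacle. The paper does \emph{not} close it with Lemma~\ref{Lem_Gron_disc}; instead it exploits the structural identity of Lemma~\ref{Lem_Lp_0}, namely $(\nabla_{\theta}\mathscr{Y}^{\pi}(t_{k},\cdot),\nabla_{\theta}\mathscr{Z}^{\pi}(t_{k},\cdot))=(\nabla_{0}\mathscr{Y}^{\pi}(t_{k},\cdot),\nabla_{0}\mathscr{Z}^{\pi}(t_{k},\cdot))$ for $\theta\le t_{k}$. This collapses the continuum of parameters $\theta$ on the ``lower triangle'' to a single reference system, and for $\theta>t_{k}$ the operator $\mathcal{I}^{\pi,\theta}[\cdot](t_{k})$ vanishes, so the $t_{k}$-equation decouples. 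With this decoupling in hand, the paper obtains the pointwise-in-$k$ bound \eqref{Lem_Lp_1_2} and, crucially, the increment estimate \eqref{Lem_Lp_1_3} of order $|\pi|^{p/2}$ for $\nabla_{\theta}\mathscr{Y}^{\pi}(t_{k},\cdot)-\nabla_{t_{k+1}}\mathscr{Y}^{\pi}(t_{k},\cdot)$, which is what actually produces the extra factor $|\pi|$ in the off-diagonal sum $k<\ell$. Your plan to ``propagate $L^{p}$-bounds through backward induction via Lemma~\ref{Lem_Gron_disc}'' would work for the gross a~priori bounds, but it does not by itself reveal where the order-$|\pi|$ gain comes from; Lemma~\ref{Lem_Lp_0} together with \eqref{Lem_Lp_1_3} is the missing mechanism. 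Relatedly, for the diagonal sum the paper uses the \emph{integrated} bound \eqref{Lem_Lp_1_1} on $\int_{0}^{T}|\nabla_{t}\mathscr{Y}^{\pi}(\tau(t),t)|^{2}\rd t$ rather than a pointwise bound $\sup_{s}\e[|D_{s}\mathscr{Y}^{\pi}(t_{k},s)|^{2}]\le C(1+|x|^{2})$; the latter is not established here (and would require more than the available integral $L^{p}$ estimates), so you should replace that step by the integral form.
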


\begin{rem}
\begin{itemize}
\item[(i)]
We note that, in \cite{Zh04,WaY18}, a kind of uniformity condition named as the $K$-uniform condition was imposed on the partition $\pi \in \Pi[0,T]$.
On the other hand, Gobet and Makhlouf \cite{GoMa10} considered a numerical scheme for BSDEs with irregular terminal function in terms of a special form of partitions which does not satisfy the $K$-uniform condition.
Compared with the above papers, our results hold true for general partitions.

\item[(ii)]
In the literature of numerical approximations for BSDEs based on the backward Euler--Maruyama scheme, the $L^2$-error for the first component $Y(\cdot)$ of the adapted solution is often considered in the forms $\max_{k=0,\ldots,N-1}\e[|Y(t_k)-y^\pi(t_k)|^2]$ or $\max_{k=0,\ldots,N-1}\e[\sup_{t\in[t_k,t_{k+1}]}|Y(t)-y^\pi(t_k)|^2]$, where $y^\pi$ is the corresponding scheme (see \cite{Zh04,Zh17}).
Also, Wang~\cite{WaY18} considered the same kind of error terms for a Type-\Rnum{1} BSVIE. These are the cases because of the time regularity of $Y(\cdot)$ in the strong $L^2$-sense (see \cite{Ha21} for the time regularity in the strong $L^2$-sense of $Y(\cdot)$ for Type-\Rnum{1} BSVIEs).
However, in the case of Type-\Rnum{2} BSVIEs, the time regularity of the first component $Y(\cdot)$ of the adapted M-solution is a delicate problem, and $Y(t)$ is defined only for a.e.\ $t\in[0,T]$ in general.
For this reason, we guess that the error term of the ``integral form'' as in \eqref{eq_thm2} is reasonable in the case of Type-\Rnum{2} BSVIEs.
\end{itemize}
\end{rem}

In order to prove the above results, we have to estimate the modulus of the $L^{2}$-time regularity of $\mathscr{Z}^{\pi}$, which we study in the next subsection.

\subsection{$L^{2}$-time regularity of $\mathscr{Z}^{\pi}$}\label{sec_L2}

We investigate the $L^2$-time regularity of the martingale integrand $\mathscr{Z}^\pi$ of BSDE system~\eqref{BSDE_sys_2}.
For this purpose, we introduce additional notations which we use throughout this subsection.
For each $d_1\in\n$, $I_{d_1}$ denotes the identity matrix in $\real^{d_1\times d_1}$.
For each $\xi\in\real^{d_1\times d_2}$ with $d_1,d_2\in\n$, $\xi^{(\alpha)}\in\real^{d_1}$ denotes the $\alpha$-th column vector of $\xi$ for each $\alpha=1,\ldots,d_2$, that is, $\xi=(\xi^{(1)},\ldots,\xi^{(d_2)})$.
For each $d_1,d_2,d_3\in\n$, we denote by $\real^{(d_1\times d_2)\times d_3}$ the space of all elements $\xi=(\xi^{(1)},\ldots,\xi^{(d_3)})$ where $\xi^{(\alpha)}\in\real^{d_1\times d_2}$ for each $\alpha=1,\ldots,d_3$, which is endowed with the norm $|\xi|:=(\sum^{d_3}_{\alpha=1}|\xi^{(\alpha)}|^2)^{1/2}$.
For each $C^1$ function $\varphi:\real^{d_1}\to\real^{d_2}$ with $d_1,d_2\in\n$, $\partial_\xi\varphi$ denotes the derivative of $\varphi$ with respect to $\xi\in\real^{d_1}$, which takes values in $\real^{d_2\times d_1}$.

In the following, we use the Malliavin calculus technique for SDEs and BSDEs. For notations and fundamental results, we refer the readers to \cite{Nu06,PaPe92}.
Specifically, the operator $D$ denotes the Malliavin derivative, and $\mathbb{D}^{1,2}$ denotes the domain of $D$ in $L^2_{\mathcal{F}_T}(\Omega;\real)$.
For each $\xi\in\mathbb{D}^{1,2}$, $D\xi=(D_\theta\xi)_{\theta\in[0,T]}$ can be seen as an element of $L^2_{\mathcal{F}_T}(0,T;\real^d)$.
Also, for each $\xi\in\mathbb{D}^{1,2}$ and $j=1,\ldots,d$, $D^j\xi=(D^j_\theta\xi)_{\theta\in[0,T]}\in L^2_{\mathcal{F}_T}(0,T;\real)$ denotes the $j$-th component of $D\xi$. For each $d_1,d_2\in\n$, we denote by $\mathbb{D}^{1,2}(\real^{d_1\times d_2})$ the space of all $\xi\in L^2_{\mathcal{F}_T}(\Omega;\real^{d_1\times d_2})$ such that each component of $\xi$ is in $\mathbb{D}^{1,2}$, and $D\xi$ and $D^j\xi$, $j=1,\ldots,d$, are understood by the component-wise manner.

Since the following lemma is standard and can be found in \cite{Nu06,PaPe92}, we omit the proof.

\begin{lemm}\label{Lem_SDE_0}
Suppose that $(\mathrm{H}_{b,\sigma})$ holds, and assume that $b$ and $\sigma$ are $C^{1}$ in $x \in \real^{n}$.
Then $X(s)$ is in $\mathbb{D}^{1,2}(\real^{n})$ for any $s \in [0,T]$.
Furthermore, for each $j=1,\ldots,d$, there exists a version of $\{D_{\theta}^{j}X(s)\,|\,(\theta,s) \in [0,T]^{2}\}$ such that
\begin{align}\label{Lem_SDE_0_1}
D_{\theta}^{j}X(s)
=
\nabla X(s)
(\nabla X(\theta))^{-1}
\sigma^{(j)}(\theta,X(\theta))
\1_{[\theta,T]}(s),~
(\theta,s) \in [0,T]^{2},
\end{align}
where $\nabla X(\cdot) \in L^{2}_{\mathbb{F}}(\Omega;C([0,T];\real^{n \times n}))$ is the solution of the following variational equation:
\begin{align*}
\nabla X(t)
=
I_{n}
+
\int^{t}_{0}
	\partial_{x}b(s,X(s))
	\nabla X(s)
\rd s
+
\sum_{\alpha=1}^{d}
\int^{t}_{0}
	\partial_{x}\sigma^{(\alpha)}(s,X(s))
	\nabla X(s)
\rd W^{\alpha}(s),
~t\in[0,T].
\end{align*}
Moreover, for any $p \geq 2$, the following estimates hold:
\begin{align}
\label{Lem_SDE_0_3}
&
\e\Big[
	\sup_{s \in [0,T]}
	\big|
		\nabla X(s)
	\big|^{p}
\Big]
+
\e\Big[
	\sup_{s \in [0,T]}
	\big|
		(\nabla X(s))^{-1}
	\big|^{p}
\Big]
\leq
C_{p},\\
\label{Lem_SDE_0_4}
&
\e\Big[
	\sup_{s'\in [s,t]}
	\big|
		(\nabla X(s'))^{-1}
		-
		(\nabla X(s))^{-1}
	\big|^{p}
\Big]
\leq
C_{p}
|t-s|^{p/2},~\text{for any}~0 \leq s \leq t \leq T.
\end{align}
\end{lemm}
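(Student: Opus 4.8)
The plan is to follow the classical route of Nualart \cite{Nu06} and Pardoux--Peng \cite{PaPe92}, carried out in three stages: first establish Malliavin differentiability of $X(s)$ together with the linear SDE satisfied by its derivative; then identify this derivative with the variational process $\nabla X$ and its inverse; and finally derive the moment estimates \eqref{Lem_SDE_0_3}--\eqref{Lem_SDE_0_4} from the $L^{p}$-theory for linear SDEs with bounded coefficients.

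For the first stage I would use the Picard approximation $X_{0}(\cdot)\equiv x$ and $X_{n+1}(t)=x+\int_{0}^{t}b(s,X_{n}(s))\rd s+\int_{0}^{t}\sigma(s,X_{n}(s))\rd W(s)$. Since $b$ and $\sigma$ are $C^{1}$ in $x$ with derivatives bounded by $L$ (by the Lipschitz condition in $(\mathrm{H}_{b,\sigma})$), an induction using the chain rule for the Malliavin derivative shows $X_{n}(s)\in\mathbb{D}^{1,2}(\real^{n})$ and that $D^{j}_{\theta}X_{n}(s)$ solves a linear SDE driven by $\partial_{x}b$ and $\partial_{x}\sigma^{(\alpha)}$ evaluated along $X_{n-1}$. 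A Gronwall argument yields $\sup_{n}\e[\sup_{s\in[0,T]}\int_{0}^{T}|D_{\theta}X_{n}(s)|^{2}\rd\theta]<\infty$, and since $X_{n}(s)\to X(s)$ in $L^{2}$, the closedness of $D$ gives $X(s)\in\mathbb{D}^{1,2}(\real^{n})$ with
\begin{align*}
D^{j}_{\theta}X(s)
=
\sigma^{(j)}(\theta,X(\theta))
+
\int_{\theta}^{s}\partial_{x}b(r,X(r))D^{j}_{\theta}X(r)\rd r
+
\sum_{\alpha=1}^{d}\int_{\theta}^{s}\partial_{x}\sigma^{(\alpha)}(r,X(r))D^{j}_{\theta}X(r)\rd W^{\alpha}(r)
\end{align*}
for $s\geq\theta$, and $D^{j}_{\theta}X(s)=0$ for $s<\theta$.

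For the second stage I would establish invertibility of $\nabla X(t)$ for all $t$ simultaneously by applying It\^o's formula to the matrix inverse: the process $V(t)$ solving
\begin{align*}
V(t)
=
I_{n}
-
\int_{0}^{t}V(r)\Big\{\partial_{x}b(r,X(r))-\sum_{\alpha=1}^{d}(\partial_{x}\sigma^{(\alpha)}(r,X(r)))^{2}\Big\}\rd r
-
\sum_{\alpha=1}^{d}\int_{0}^{t}V(r)\partial_{x}\sigma^{(\alpha)}(r,X(r))\rd W^{\alpha}(r)
\end{align*}
satisfies $\rd(V(t)\nabla X(t))=0$, whence $V(t)\nabla X(t)=I_{n}$ and $V(t)=(\nabla X(t))^{-1}$. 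Then, for fixed $\theta$, the process $s\mapsto\nabla X(s)(\nabla X(\theta))^{-1}\sigma^{(j)}(\theta,X(\theta))$ and $s\mapsto D^{j}_{\theta}X(s)$ solve the same linear SDE on $[\theta,T]$ with the same value $\sigma^{(j)}(\theta,X(\theta))$ at $s=\theta$, so uniqueness gives \eqref{Lem_SDE_0_1}.

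Finally, both $\nabla X$ and $V=(\nabla X)^{-1}$ solve linear SDEs whose coefficients are bounded by a constant depending only on $L$, so standard BDG-and-Gronwall estimates yield the uniform bound \eqref{Lem_SDE_0_3}; for \eqref{Lem_SDE_0_4} one writes $V(s')-V(s)$ as the integral of the drift and diffusion of $V$ over $[s,s']$, takes the supremum over $s'\in[s,t]$, and applies the BDG inequality together with \eqref{Lem_SDE_0_3} to obtain the rate $|t-s|^{p/2}$. I expect the main obstacle to lie in the first two stages rather than in the moment bounds: the differentiability argument requires the approximation-and-closedness scheme to be set up carefully, and the invertibility of $\nabla X$ rests on the It\^o computation for the matrix inverse (valid here precisely because $\partial_{x}b$ and $\partial_{x}\sigma^{(\alpha)}$ are bounded, though only measurable in time), which is exactly what makes the explicit representation \eqref{Lem_SDE_0_1} possible.
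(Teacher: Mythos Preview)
Your proposal is correct and follows the classical route from Nualart \cite{Nu06} and Pardoux--Peng \cite{PaPe92}. The paper itself omits the proof entirely, declaring the lemma ``standard'' and citing precisely these two references, so your sketch is exactly the argument the authors have in mind.
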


Fix $\pi=\{t_{0},t_{1},\ldots,t_{N}\} \in \Pi[0,T]$.
When $\psi$ and $g$ in $(\mathrm{H}_{\psi,g})$ are $C^{1}$ in $(x_{1},x_{2},y,z_{1},z_{2}) \in \real^{n} \times\real^{n} \times \real^{m} \times \real^{m\times d} \times \real^{m\times d}$, we define
\begin{align*}
\psi_{\xi}(t_{k})
&:=
\partial_{\xi} \psi(t_{k},X(t_{k}),X(T)),
\\g_{\xi}(t_{k},s)
&:=
\partial_{\xi}
g
(
	t_{k},
	s,
	X(t_{k}),
	X(s),
	\mathscr{Y}^{\pi}(\tau(s),s),
	\mathscr{Z}^{\pi}(t_{k},s),
	\mathcal{I}^{\pi}[\mathscr{Z}^{\pi}(\tau(s),\cdot)](t_{k})
),
\end{align*}
for $\xi=x_{1},x_{2},y,z_{1}^{(\alpha)},z_{2}^{(\alpha)}$, $
\alpha=1,\ldots,d$.
By the Lipschitz continuity of $\psi$ and $g$, the above processes are bounded by $L$.
We introduce the following \emph{variational BSDE system} with parameter $\theta \in [0,T]$:
\begin{align}\label{Prop_L2_2}
\begin{cases}
\nabla_{\theta}\mathscr{Y}^{\pi}(t_{k},s)
\displaystyle
=
\nabla_{\theta}\mathscr{Y}^{\pi}(t_{k},t_{\ell+1})
\\\displaystyle\quad\quad\quad\quad\quad\quad\quad
+
\int_{s}^{t_{\ell+1}}
	\Big\{
		g_{x_{1}}(t_{k},r)
		\nabla X(t_{k})
		\1_{\{\theta \leq t_{k}\}}
		+
		g_{x_{2}}(t_{k},r)
		\nabla X(r)
		+
		g_{y}(t_{k},r)
		\nabla_{\theta} \mathscr{Y}^{\pi}(t_{\ell},r)
		\\
		\displaystyle \quad\quad\quad\quad\quad\quad\quad\quad\quad\quad\quad\quad
		+
		\sum_{\alpha=1}^{d}
		g_{z_{1}^{(\alpha)}}(t_{k},r)
		\nabla_{\theta} \mathscr{Z}^{\pi,(\alpha)}(t_{k},r)
		\\
		\displaystyle \quad\quad\quad\quad\quad\quad\quad\quad\quad\quad\quad\quad
		+
		\sum_{\alpha=1}^{d}
		g_{z_{2}^{(\alpha)}}(t_{k},r)
		\mathcal{I}^{\pi,\theta}[\nabla_{\theta} \mathscr{Z}^{\pi,(\alpha)}(t_{\ell},\cdot)](t_{k})
	\Big\}
	\1_{\{k<\ell\}}
\rd r
\\
\displaystyle \quad\quad\quad\quad\quad\quad\quad
-
\int_{s}^{t_{\ell+1}}
	\nabla_{\theta} \mathscr{Z}^{\pi}(t_{k},r)
\rd W(r),
~s \in [t_{\ell}, t_{\ell+1}],
~k,\ell=0,\ldots,N-1,
\\
\nabla_{\theta}\mathscr{Y}^{\pi}(t_{k},t_{N})
\displaystyle =
\psi_{x_{1}}(t_{k})
\nabla X(t_{k})
\1_{\{\theta \leq t_{k}\}}
+
\psi_{x_{2}}(t_{k})
\nabla X(t_{N}),
~k=0,\ldots,N-1,
\end{cases}
\end{align}
where $\nabla_{\theta} \mathscr{Z}^{\pi}(t_{k},s)=(\nabla_{\theta} \mathscr{Z}^{\pi,(1)}(t_{k},s),\ldots,\nabla_{\theta} \mathscr{Z}^{\pi,(d)}(t_{k},s))$ which takes values in $\real^{(m \times n) \times d}$.
By Lemma \ref{Lem_BSDE_sys_0}, for each $\theta \in [0,T]$, there exists a unique solution
\begin{align*}
\{
	(\nabla_{\theta}\mathscr{Y}^{\pi}(t_{k},\cdot),
	\nabla_{\theta}\mathscr{Z}^{\pi}(t_{k},\cdot))
\}_{k=0}^{N-1}
\in
\big(
	L^{2}_{\mathbb{F}}(\Omega, C([0,T];\real^{m \times n}))
	\times
	L^{2}_{\mathbb{F}}(0,T;\real^{(m \times n) \times d})
\big)^{N}
\end{align*}
of variational BSDE system \eqref{Prop_L2_2}.

Now we provide an expression of $\mathscr{Z}^{\pi}$ in terms of the solution of variational BSDE system \eqref{Prop_L2_2}.

\begin{prop}\label{Prop_L2_0}
Suppose that $(\mathrm{H}_{\psi,g})$ and $(\mathrm{H}_{b,\sigma})$ hold.
Assume that $b$, $\sigma$ are $C^{1}$ in $x \in \real^{n}$ and $\psi, g$ are $C^{1}$ in $(x_{1},x_{2},y,z_{1},z_{2}) \in \real^{n} \times\real^{n} \times \real^{m} \times \real^{m \times d} \times \real^{m \times d}$.
Then for any $\pi=\{t_{0},t_{1},\ldots,t_{N}\} \in \Pi[0,T]$, $k=0,\ldots,N-1$ and $s \in [0,T]$, $\mathscr{Y}^{\pi}(t_{k},s)$ is in $\mathbb{D}^{1,2}(\real^{m})$ and $\mathscr{Z}^{\pi}(t_{k},s)$ is in $\mathbb{D}^{1,2}(\real^{m \times d})$.
Furthermore, for each $j=1,\ldots,d$, there exists a version of $\{(D_{\theta}^{j}\mathscr{Y}^{\pi}(t_{k},s),D_{\theta}^{j}\mathscr{Z}^{\pi}(t_{k},s))\,|\,0 \leq \theta \leq s \leq T,~k=0,\ldots,N-1\}$ such that
\begin{align}\label{Prop_L2_1}
\begin{cases}
D_{\theta}^{j}\mathscr{Y}^{\pi}(t_{k},s)
\displaystyle
=
\nabla_{\theta}\mathscr{Y}^{\pi}(t_{k},s)
(\nabla X(\theta))^{-1}
\sigma^{(j)}(\theta,X(\theta)),
\\
D_{\theta}^{j}\mathscr{Z}^{\pi,(\alpha)}(t_{k},s)
\displaystyle
=
\nabla_{\theta}\mathscr{Z}^{\pi,(\alpha)}(t_{k},s)
(\nabla X(\theta))^{-1}
\sigma^{(j)}(\theta,X(\theta)),~
\alpha=1,\ldots,d,
\end{cases}
\end{align}
for $0\leq \theta \leq s \leq T$ and $k=0,\ldots,N-1$.
Moreover, it holds that
\begin{align}\label{Prop_L2_3}
\mathscr{Z}^{\pi}(t_{k},s)
=
\nabla_{s}\mathscr{Y}^{\pi}(t_{k},s)
(\nabla X(s))^{-1}
\sigma(s,X(s)),
~\text{a.e.}~s \in [0,T],
~k=0,\ldots,N-1.
\end{align}
\end{prop}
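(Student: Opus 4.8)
The plan is to prove the two differentiability claims and the two formulas \eqref{Prop_L2_1}, \eqref{Prop_L2_3} simultaneously by a \emph{backward induction} on $k=N-1,N-2,\ldots,0$, at each step invoking the standard Malliavin differentiability theory for Lipschitz BSDEs (as developed in \cite{Nu06,PaPe92}) together with the chain rule and the SDE derivative formula of Lemma \ref{Lem_SDE_0}. The key structural observation that makes the induction work is that, in BSDE system \eqref{BSDE_sys_2}, the level-$k$ equation for $(\mathscr{Y}^{\pi}(t_{k},\cdot),\mathscr{Z}^{\pi}(t_{k},\cdot))$ depends on the level-$k$ unknown \emph{only} through its own martingale integrand $\mathscr{Z}^{\pi}(t_{k},\cdot)$ (the $z_{1}$-slot of $g$); the $y$-slot and the $z_{2}$-slot are filled by the higher-level quantities $\mathscr{Y}^{\pi}(\tau(r),r)$ and $\mathcal{I}^{\pi}[\mathscr{Z}^{\pi}(\tau(r),\cdot)](t_{k})$ with $\tau(r)>k$, which are already constructed. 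Thus each level is a genuine decoupled Lipschitz BSDE whose terminal datum $\psi(t_{k},X(t_{k}),X(t_{N}))$ and whose driver inhomogeneity are, by the induction hypothesis and the $C^{1}$ assumptions on $\psi,g,b,\sigma$, already in $\mathbb{D}^{1,2}$ with the requisite integrability.

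First I would treat the base case $k=N-1$, where $\mathscr{Y}^{\pi}(t_{N-1},\cdot)$ is a conditional expectation of the $\mathbb{D}^{1,2}$-terminal value, so the claims follow from the chain rule and the Clark--Ocone-type identity. For the inductive step I would apply the BSDE Malliavin differentiability theorem to the level-$k$ equation: it yields $\mathscr{Y}^{\pi}(t_{k},s)\in\mathbb{D}^{1,2}(\real^{m})$, $\mathscr{Z}^{\pi}(t_{k},s)\in\mathbb{D}^{1,2}(\real^{m\times d})$, and a linear BSDE for $(D_{\theta}^{j}\mathscr{Y}^{\pi}(t_{k},\cdot),D_{\theta}^{j}\mathscr{Z}^{\pi}(t_{k},\cdot))$ obtained by differentiating the driver termwise (producing the factors $g_{x_{1}},g_{x_{2}},g_{y},g_{z_{1}^{(\alpha)}},g_{z_{2}^{(\alpha)}}$). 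Into this differentiated BSDE I would substitute the SDE formula \eqref{Lem_SDE_0_1}, namely $D_{\theta}^{j}X(s)=\nabla X(s)(\nabla X(\theta))^{-1}\sigma^{(j)}(\theta,X(\theta))\1_{[\theta,T]}(s)$, and then make the ansatz \eqref{Prop_L2_1}. Since the right factor $(\nabla X(\theta))^{-1}\sigma^{(j)}(\theta,X(\theta))$ is $\mathcal{F}_{\theta}$-measurable, it pulls out of all stochastic and Lebesgue integrals over $[\theta,T]$, and what remains for the left factor is precisely variational BSDE system \eqref{Prop_L2_2}; by the uniqueness part of Lemma \ref{Lem_BSDE_sys_0} (and Lemma \ref{apriori_0}) the ansatz is verified, closing the induction.

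Two bookkeeping points deserve emphasis, as they explain the exact shape of \eqref{Prop_L2_2}. Because $\mathscr{Z}^{\pi}(t_{\ell},s)$ is $\mathcal{F}_{s}$-measurable, $D_{\theta}$ annihilates it for $s<\theta$, so differentiating the average gives $D_{\theta}\mathcal{I}^{\pi}[\mathscr{Z}^{\pi}(t_{\ell},\cdot)](t_{k})=\mathcal{I}^{\pi,\theta}[D_{\theta}\mathscr{Z}^{\pi}(t_{\ell},\cdot)](t_{k})$, which is exactly why the parametrized operator $\mathcal{I}^{\pi,\theta}$ appears in the $z_{2}$-term; likewise the indicator $\1_{\{\theta\le t_{k}\}}$ arises from $D_{\theta}X(t_{k})$ vanishing for $\theta>t_{k}$. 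Finally, the diagonal formula \eqref{Prop_L2_3} follows by combining \eqref{Prop_L2_1} with the standard identity $\mathscr{Z}^{\pi,(j)}(t_{k},s)=D_{s}^{j}\mathscr{Y}^{\pi}(t_{k},s)$ for a.e.\ $s$, a.s.: setting $\theta=s$ in \eqref{Prop_L2_1} and stacking over $j=1,\ldots,d$ produces $\mathscr{Z}^{\pi}(t_{k},s)=\nabla_{s}\mathscr{Y}^{\pi}(t_{k},s)(\nabla X(s))^{-1}\sigma(s,X(s))$.

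The hard part will not be any single computation but rather verifying that the integrability hypotheses required by the BSDE Malliavin-differentiability theorem genuinely propagate through the backward induction. The inhomogeneous driver at level $k$ is built from $\mathscr{Y}^{\pi}(\tau(r),r)$, $\mathcal{I}^{\pi}[\mathscr{Z}^{\pi}(\tau(r),\cdot)](t_{k})$, and products with $\nabla X$ and $(\nabla X)^{-1}$, so controlling their $L^{2}$-norms (and the norms of the variational solutions) forces one to pass through $L^{p}$-moments with $p>2$; here the $L^{p}$-a priori estimate of Theorem \ref{Lem_Lp} and the moment bounds \eqref{Lem_SDE_0_3}--\eqref{Lem_SDE_0_4} for $\nabla X$ and $(\nabla X)^{-1}$ are exactly what is needed to guarantee that each $D_{\theta}\mathscr{Y}^{\pi}(t_{k},\cdot)$, $D_{\theta}\mathscr{Z}^{\pi}(t_{k},\cdot)$ lies in the correct space before the next induction step can be invoked.
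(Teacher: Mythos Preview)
Your proposal is correct and follows the paper's argument essentially step for step: backward induction on $k$, Malliavin differentiation of the level-$k$ BSDE via \cite{PaPe92}, substitution of \eqref{Lem_SDE_0_1}, and identification with \eqref{Prop_L2_2} by BSDE uniqueness (the paper likewise observes that $D_{\theta}\mathcal{I}^{\pi}[\mathscr{Z}^{\pi}(\tau(r),\cdot)](t_{k})=\mathcal{I}^{\pi,\theta}[\nabla_{\theta}\mathscr{Z}^{\pi}(\tau(r),\cdot)](t_{k})\,(\nabla X(\theta))^{-1}\sigma(\theta,X(\theta))$, which is your second bookkeeping point). The only cosmetic difference is that for \eqref{Prop_L2_3} you invoke the standard diagonal identity $\mathscr{Z}^{\pi,(j)}(t_{k},s)=D^{j}_{s}\mathscr{Y}^{\pi}(t_{k},s)$ and set $\theta=s$ in \eqref{Prop_L2_1}, whereas the paper re-derives this identity in situ by applying $D_{s}$ to the equation for $\mathscr{Y}^{\pi}(t_{k},t_{\ell})$ on $[t_{\ell},t_{\ell+1}]$ (using Proposition~1.3.8 of \cite{Nu06} to extract the integrand from the stochastic integral) and then matching against the variational equation at $\theta=s$.
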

\begin{proof}
For simplicity of notation, we suppose $d=1$, that is, $W(\cdot)$ is a one-dimensional Brownian motion.

We show that the following property holds for each $k=0,\ldots,N-1$, backward inductively:
\begin{itemize}
\item[$(\mathrm{P}_{k})$]
For any $s \in [0,T]$, $\mathscr{Y}^{\pi}(t_{k},s)$ and $\mathscr{Z}^{\pi}(t_{k},s)$ are in $\mathbb{D}^{1,2}(\real^{m})$, and it holds that
\begin{align*}
\begin{cases}
D_{\theta}\mathscr{Y}^{\pi}(t_{k},s)
\displaystyle
=
\nabla_{\theta}\mathscr{Y}^{\pi}(t_{k},s)
(\nabla X(\theta))^{-1}
\sigma(\theta,X(\theta)),
\\
D_{\theta}\mathscr{Z}^{\pi}(t_{k},s)
\displaystyle
=
\nabla_{\theta}\mathscr{Z}^{\pi}(t_{k},s)
(\nabla X(\theta))^{-1}
\sigma(\theta,X(\theta)),
\end{cases}
~0\leq \theta \leq s \leq T.
\end{align*}
\end{itemize}

For $k=N-1$, note that $(\mathscr{Y}^{\pi}(t_{N-1},\cdot),\mathscr{Z}^{\pi}(t_{N-1},\cdot))$ is the adapted solution of the following BSDE:
\begin{align*}
\mathscr{Y}^{\pi}(t_{N-1},s)
=
\psi(t_{N-1},X(t_{N-1}),X(T))
-
\int_{s}^{T}
	\mathscr{Z}^{\pi}(t_{N-1},r)
\rd W(r),
~s \in [0,T].
\end{align*}
Thus, by the well-known result on the Malliavin calculus for BSDEs (cf.\ \cite{PaPe92}), we see that $\mathscr{Y}^{\pi}(t_{N-1},s)$ and $\mathscr{Z}^{\pi}(t_{N-1},s)$ are in $\mathbb{D}^{1,2}(\real^{m})$ for any $s \in [0,T]$.
Moreover, for any $\theta \in [0,T]$, $(D_{\theta}\mathscr{Y}^{\pi}(t_{N-1},\cdot), D_{\theta}\mathscr{Z}^{\pi}(t_{N-1},\cdot))$ is the adapted solution of the following BSDE:
\begin{align*}
D_{\theta}\mathscr{Y}^{\pi}(t_{N-1},s)
=
D_{\theta}
\psi(t_{N-1},X(t_{N-1}),X(T))
-
\int_{s}^{T}
	D_{\theta}\mathscr{Z}^{\pi}(t_{N-1},r)
\rd W(r),
~s \in [\theta, T].
\end{align*}
By using the chain rule (cf.\ Proposition 1.2.3 in \cite{Nu06}) and \eqref{Lem_SDE_0_1}, we see that
\begin{align*}
D_{\theta}
\psi(t_{N-1},X(t_{N-1}),X(T))
&=
\psi_{x_{1}}(t_{N-1})
D_{\theta}X(t_{N-1})
+
\psi_{x_{2}}(t_{N-1})
D_{\theta}X(T)
\\&=
\big\{
	\psi_{x_{1}}(t_{N-1})
	\nabla X(t_{N-1})\1_{\{\theta \leq t_{N-1}\}}
	+
	\psi_{x_{2}}(t_{N-1})
	\nabla X(T)
\big\}
(\nabla X(\theta))^{-1}
\sigma(\theta,X(\theta)).
\end{align*}
On the other hand, by \eqref{Prop_L2_2},
\begin{align*}
\nabla_{\theta}\mathscr{Y}^{\pi}(t_{N-1},s)
&=
\psi_{x_{1}}(t_{N-1})
\nabla X(t_{N-1})
\1_{\{\theta \leq t_{N-1}\}}
+
\psi_{x_{2}}(t_{N-1})
\nabla X(T)
\\&\quad
-
\int_{s}^{T}
	\nabla_{\theta} \mathscr{Z}^{\pi}(t_{N-1},r)
\rd W(r),
~s \in [0,T].
\end{align*}
Hence, by the uniqueness of the adapted solution of the BSDE, we see that,
\begin{align*}
\begin{cases}
D_{\theta}\mathscr{Y}^{\pi}(t_{N-1},s)
=
\nabla_{\theta}\mathscr{Y}^{\pi}(t_{N-1},s)
(\nabla X(\theta))^{-1}
\sigma(\theta,X(\theta)),
\\
D_{\theta}\mathscr{Z}^{\pi}(t_{N-1},s)
=
\nabla_{\theta}\mathscr{Z}^{\pi}(t_{N-1},s)
(\nabla X(\theta))^{-1}
\sigma(\theta,X(\theta)),
\end{cases}
~s \in [\theta,T].
\end{align*}
Thus $(\mathrm{P}_{N-1})$ holds.

Next, for a fixed $k' \in \{0,\ldots,N-2\}$, we assume that $(\mathrm{P}_{k})$ holds for any $k=k'+1,\ldots,N-1$.
We observe that $(\mathscr{Y}^{\pi}(t_{k'},\cdot),\mathscr{Z}^{\pi}(t_{k'},\cdot))$ is the adapted solution of the BSDE
\begin{align*}
\mathscr{Y}^{\pi}(t_{k'},s)
&=
\psi(t_{k'},X(t_{k'}),X(T))
\\&\quad
+
\int_{s}^{T}
	g(
		t_{k'},
		r,
		X(t_{k'}),
		X(r),
		\mathscr{Y}^{\pi}(\tau(r),r),
		\mathscr{Z}^{\pi}(t_{k'},r),
		\mathcal{I}^{\pi}[\mathscr{Z}^{\pi}(\tau(r),\cdot)](t_{k'})
	)
	\1_{[t_{k'+1},T)}(r)
\rd r
\\&\quad
-
\int_{s}^{T}
	\mathscr{Z}^{\pi}(t_{k'},r)
\rd W(r),~s \in [0,T].
\end{align*}
By the assumption of the induction, for any $r \in [t_{k'+1},T)$, $\mathscr{Y}^{\pi}(\tau(r),r)$ and $ \mathcal{I}^{\pi}[\mathscr{Z}^{\pi}(\tau(r),\cdot)](t_{k'})$ are in $\mathbb{D}^{1,2}(\real^{m})$.
Thus, by the well-known result on the Malliavin calculus for BSDEs (cf.\ \cite{PaPe92}), $\mathscr{Y}^{\pi}(t_{k'},s)$ and $\mathscr{Z}^{\pi}(t_{k'},s)$ are in $\mathbb{D}^{1,2}(\real^{m})$ for any $s \in [0,T]$.
Also, noting the chain rule, for each $\theta \in [0,T]$, we see that the pair $(D_{\theta} \mathscr{Y}^{\pi}(t_{k'},\cdot),D_{\theta} \mathscr{Z}^{\pi}(t_{k'},\cdot))$ solves the following BSDE:
\begin{align*}
D_{\theta}\mathscr{Y}^{\pi}(t_{k'},s)
&=
\psi_{x_{1}}(t_{k'})
D_{\theta}X(t_{k'})
+
\psi_{x_{2}}(t_{k'})
D_{\theta}X(T)
\\&\quad
+
\int_{s}^{T}
	\big\{
		g_{x_{1}}(t_{k'},r)
		D_{\theta} X(t_{k'})
		+
		g_{x_{2}}(t_{k'},r)
		D_{\theta} X(r)
		+
		g_{y}(t_{k'},r)
		D_{\theta} \mathscr{Y}^{\pi}(\tau(r),r)
		\\&\quad\quad\quad\quad\quad
		+
		g_{z_{1}}(t_{k'},r)
		D_{\theta} \mathscr{Z}^{\pi}(t_{k'},r)
		+
		g_{z_{2}}(t_{k'},r)
		D_{\theta}
		\mathcal{I}^{\pi}[\mathscr{Z}^{\pi}(\tau(r),\cdot)](t_{k'})
	\big\}
	\1_{[t_{k'+1},T)}(r)
\rd r
\\&\quad
-
\int_{s}^{T}
	D_{\theta} \mathscr{Z}^{\pi}(t_{k'},r)
\rd W(r),
~s \in [\theta,T].
\end{align*}
On the other hand, by \eqref{Prop_L2_2}, $(\nabla_{\theta}\mathscr{Y}^{\pi}(t_{k'},\cdot), \nabla_{\theta}\mathscr{Z}^{\pi}(t_{k'},\cdot))$ is the adapted solution of the BSDE
\begin{align*}
\nabla_{\theta}\mathscr{Y}^{\pi}(t_{k'},s)
&=
\psi_{x_{1}}(t_{k'})
\nabla X(t_{k'})
\1_{\{\theta \leq t_{k'}\}}
+
\psi_{x_{2}}(t_{k'})
\nabla X(T)
\\*&\quad
+
\int_{s}^{T}
	\big\{
		g_{x_{1}}(t_{k'},r)
		\nabla X(t_{k'})
		\1_{\{\theta \leq t_{k'}\}}
		+
		g_{x_{2}}(t_{k'},r)
		\nabla X(r)
		+
		g_{y}(t_{k'},r)
		\nabla_{\theta} \mathscr{Y}^{\pi}(\tau(r),r)
		\\*&\quad\quad\quad\quad\quad
		+
		g_{z_{1}}(t_{k'},r)
		\nabla_{\theta} \mathscr{Z}^{\pi}(t_{k'},r)
		+
		g_{z_{2}}(t_{k'},r)
		\mathcal{I}^{\pi,\theta}[\nabla_{\theta} \mathscr{Z}^{\pi}(\tau(r),\cdot)](t_{k'})
	\big\}
	\1_{[t_{k'+1},T)}(r)
\rd r
\\*&\quad
-
\int_{s}^{T}
	\nabla_{\theta} \mathscr{Z}^{\pi}(t_{k'},r)
\rd W(r),
~s \in [0,T].
\end{align*}
By the assumption of the induction, for any $r \in [t_{k'+1} \vee \theta, T)$, we have
\begin{align*}
D_{\theta}\mathscr{Y}^{\pi}(\tau(r),r)
&=
\nabla_{\theta}\mathscr{Y}^{\pi}(\tau(r),r)
(\nabla X(\theta))^{-1}
\sigma(\theta,X(\theta))
\end{align*}
and, noting that $\mathscr{Z}^{\pi}(\tau(r),\cdot)$ is adapted,
\begin{align*}
D_{\theta}
\mathcal{I}^{\pi}[\mathscr{Z}^{\pi}(\tau(r),\cdot)](t_{k'})
&=
\frac{1}{\Delta t_{k'}}
D_{\theta}
\int_{t_{k'}}^{t_{k'+1}}
	\mathscr{Z}^{\pi}(\tau(r),s)
\rd s
=
\frac{1}{\Delta t_{k'}}
\int_{t_{k'}\vee \theta}^{t_{k'+1} \vee \theta}
	D_{\theta}
	\mathscr{Z}^{\pi}(\tau(r),s)
\rd s
\notag\\&=
\frac{1}{\Delta t_{k'}}
\int_{t_{k'}\vee \theta}^{t_{k'+1} \vee \theta}
	\nabla_{\theta}\mathscr{Z}^{\pi}(\tau(r),s)
	(\nabla X(\theta))^{-1}
	\sigma(\theta,X(\theta))
\rd s
\notag\\&=
\mathcal{I}^{\pi,\theta}[\nabla_{\theta} \mathscr{Z}^{\pi}(\tau(r),\cdot)](t_{k'})
(\nabla X(\theta))^{-1}
\sigma(\theta,X(\theta)).
\end{align*}
From the above, together with \eqref{Lem_SDE_0_1}, by the uniqueness of the adapted solution of the BSDE, we have
\begin{align*}
\begin{cases}
D_{\theta}\mathscr{Y}^{\pi}(t_{k'},s)
=
\nabla_{\theta}\mathscr{Y}^{\pi}(t_{k'},s)
(\nabla X(\theta))^{-1}
\sigma(\theta,X(\theta)),
\\
D_{\theta}\mathscr{Z}^{\pi}(t_{k'},s)
=
\nabla_{\theta}\mathscr{Z}^{\pi}(t_{k'},s)
(\nabla X(\theta))^{-1}
\sigma(\theta,X(\theta)),
\end{cases}
~s \in [\theta,T].
\end{align*}
Thus $(\mathrm{P}_{k'})$ holds.
By the backward induction, we see that $(\mathrm{P}_{k})$ holds for every $k=0,\ldots,N-1$.

It remains to prove \eqref{Prop_L2_3}.
Let $k,\ell=0,\ldots,N-1$ and $s \in (t_{\ell},t_{\ell+1}]$ be fixed.
We observe that
\begin{align*}
\mathscr{Y}^{\pi}(t_{k},t_{\ell})
&=
\mathscr{Y}^{\pi}(t_{k},t_{\ell+1})
+
\int_{t_{\ell}}^{t_{\ell+1}}
	g(
		t_{k},
		r,
		X(t_{k}),
		X(r),
		\mathscr{Y}^{\pi}(t_{\ell},r),
		\mathscr{Z}^{\pi}(t_{k},r),
		\mathcal{I}^{\pi}[\mathscr{Z}^{\pi}(t_{\ell},\cdot)](t_{k})
	)
	\1_{\{k<\ell\}}
\rd r
\\*&\quad
-
\int_{t_{\ell}}^{t_{\ell+1}}
	\mathscr{Z}^{\pi}(t_{k},r)
\rd W(r).
\end{align*}
We apply $D_{s}$ on both side of the above equality.
By the chain rule and Proposition 1.3.8 in \cite{Nu06}, we have
\begin{align*}
0&=
D_{s}\mathscr{Y}^{\pi}(t_{k},t_{\ell+1})
\\&\quad
+
\int_{s}^{t_{\ell+1}}
	\big\{
		g_{x_{1}}(t_{k},r)
		D_{s}X(t_{k})
		+
		g_{x_{2}}(t_{k},r)
		D_{s}X(r)
		+
		g_{y}(t_{k},r)
		D_{s}\mathscr{Y}^{\pi}(t_{\ell},r)
		\\&\quad\quad\quad\quad\quad\quad
		+
		g_{z_{1}}(t_{k},r)
		D_{s}\mathscr{Z}^{\pi}(t_{k},r)
		+
		g_{z_{2}}(t_{k},r)
		D_{s}
		\mathcal{I}^{\pi}[\mathscr{Z}^{\pi}(t_{\ell},\cdot)](t_{k})
	\big\}
	\1_{\{k<\ell\}}
\rd r
\\&\quad
-
\mathscr{Z}^{\pi}(t_{k},s)
-
\int_{s}^{t_{\ell+1}}
	D_{s}
	\mathscr{Z}^{\pi}(t_{k},r)
\rd W(r)
\\
&=
D_{s}\mathscr{Y}^{\pi}(t_{k},t_{\ell+1})
+
\int_{s}^{t_{\ell+1}}
	\big\{
		g_{x_{2}}(t_{k},r)
		D_{s}X(r)
		+
		g_{y}(t_{k},r)
		D_{s}\mathscr{Y}^{\pi}(t_{\ell},r)
		+
		g_{z_{1}}(t_{k},r)
		D_{s}\mathscr{Z}^{\pi}(t_{k},r)
	\big\}
	\1_{\{k<\ell\}}
\rd r
\\&\quad
-
\mathscr{Z}^{\pi}(t_{k},s)
-
\int_{s}^{t_{\ell+1}}
	D_{s}
	\mathscr{Z}^{\pi}(t_{k},r)
\rd W(r).
\end{align*}
Thus, by using \eqref{Lem_SDE_0_1}, \eqref{Prop_L2_1} and the equation \eqref{Prop_L2_2}, we obtain
\begin{align*}
\mathscr{Z}^{\pi}(t_{k},s)
&=
\Big\{
	\nabla_{s}\mathscr{Y}^{\pi}(t_{k},t_{\ell+1})
	\\*&\quad\quad
	+
	\int_{s}^{t_{\ell+1}}
		\big\{
			g_{x_{2}}(t_{k},r)
			\nabla X(r)
			+
			g_{y}(t_{k},r)
			\nabla_{s}\mathscr{Y}^{\pi}(t_{\ell},r)
			+
			g_{z_{1}}(t_{k},r)
			\nabla_{s}\mathscr{Z}^{\pi}(t_{k},r)
		\big\}
		\1_{\{k<\ell\}}
	\rd r
	\\*&\quad\quad
	-
	\int_{s}^{t_{\ell+1}}
		\nabla_{s}
		\mathscr{Z}^{\pi}(t_{k},r)
	\rd W(r)
\Big\}
(\nabla X(s))^{-1}
\sigma(s,X(s))
\\*&=
\nabla_{s}\mathscr{Y}^{\pi}(t_{k},s)
(\nabla X(s))^{-1}
\sigma(s,X(s)).
\end{align*}
Hence \eqref{Prop_L2_3} holds, and we complete the proof.
\end{proof}

\begin{rem}
Variational BSDE system~\eqref{Prop_L2_2} is a continuum of BSDE systems  parametrized by $\theta\in[0,T]$ (where each BSDE system consists of a finite number of BSDEs).
On one hand, due to the appearance of the operator $\mathcal{I}^{\pi,\theta}[\cdot]$, the variational BSDE system of a true Type-\Rnum{2} BSVIE (where $g_{z^{(\alpha)}_2}(t_k,r)\neq0$) cannot be reduced to a finite number of systems.
On the other hand, in the case of Type-\Rnum{1} BSVIEs (where $g(t,s,x_{1},x_{2},y,z_{1},z_{2})$ does not depend on $z_{2}$, and hence  $g_{z^{(\alpha)}_2}(t_k,r)=0$), it is reduced to a finite number of BSDE systems.
Indeed, in this case, the dependency of the system on the parameter $\theta$ stems only from the indicator functions $\1_{\{\theta\leq t_k\}}$.
Since $\theta\leq t_k$ if and only if $i<k$ for each $k=0,\ldots,N-1$ and $\theta\in(t_i,t_{i+1}]$ with $i=0,\ldots,N-1$, by the uniqueness of the solution of the BSDE system (see Lemma~\ref{Lem_BSDE_sys_0}), the variational BSDE system with parameter $\theta\in(0,T]$ depends only on the number $i\in\{0,1,\ldots,N-1\}$ such that $\theta\in(t_i,t_{i+1}]$.
Moreover, if in addition the free term $\psi(t,x_1,x_2)$ and the driver $g(t,s,x_1,x_2,y,z_1)$ of the Type-\Rnum{1} BSVIE do not depend on $x_1$, then the terms $\psi_{x_1}(t_k)$ and $g_{x_1}(t_k,r)$ vanish, and thus the variational BSDE system becomes independent of the parameter $\theta\in[0,T]$.
\end{rem}

In order to investigate the $L^2$-time regularity of $\mathscr{Z}^\pi$, we prove some key properties of variational BSDE system~\eqref{Prop_L2_2}.
The following lemma shows a useful structural property.

\begin{lemm}\label{Lem_Lp_0}
Let the assumptions in Proposition \ref{Prop_L2_0} hold.
Then for any $\pi=\{t_{0},t_{1},\ldots,t_{N}\} \in \Pi[0,T]$, it holds that
\begin{align*}
(\nabla_{\theta} \mathscr{Y}^{\pi}(t_{k},s), \nabla_{\theta} \mathscr{Z}^{\pi}(t_{k},s))
=
(\nabla_{0} \mathscr{Y}^{\pi}(t_{k},s), \nabla_{0} \mathscr{Z}^{\pi}(t_{k},s)),~
s \in [0,T],~\theta \in [0,t_{k}],~k=0,\ldots,N-1.
\end{align*}
\end{lemm}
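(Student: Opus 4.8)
The plan is to prove the identity by a backward induction on $k=N-1,N-2,\ldots,0$, showing at each stage that the pair $(\nabla_{\theta}\mathscr{Y}^{\pi}(t_{k},\cdot),\nabla_{\theta}\mathscr{Z}^{\pi}(t_{k},\cdot))$ does not depend on $\theta$ as $\theta$ ranges over $[0,t_{k}]$; taking $\theta=0$ then yields the stated equality. The whole argument hinges on two elementary observations about the $\theta$-dependence of variational BSDE system \eqref{Prop_L2_2}. First, for $\theta\in[0,t_{k}]$ the indicator $\1_{\{\theta\leq t_{k}\}}$ appearing in both the terminal datum and the driver is identically $1$, hence independent of $\theta$. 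Second, whenever $\theta\leq t_{k}$ one has $t_{k}\vee\theta=t_{k}$ and $t_{k+1}\vee\theta=t_{k+1}$, so directly from the definition of $\mathcal{I}^{\pi,\theta}$,
\begin{align*}
\mathcal{I}^{\pi,\theta}[\zeta(\cdot)](t_{k})
=
\frac{1}{\Delta t_{k}}
\int_{t_{k}}^{t_{k+1}}
	\zeta(s)
\rd s
=
\mathcal{I}^{\pi}[\zeta(\cdot)](t_{k}),
\quad
\zeta\in L^{2}_{\mathbb{F}}(0,T;\mathbb{H}),
\end{align*}
that is, the averaging operator evaluated at the node $t_{k}$ loses its $\theta$-dependence.

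For the base case $k=N-1$, the constraint $\1_{\{k<\ell\}}$ forces the driver of the $(N-1)$-th equation to vanish, and by the first observation the terminal datum is independent of $\theta$ on $[0,t_{N-1}]$; hence the linear BSDE for $(\nabla_{\theta}\mathscr{Y}^{\pi}(t_{N-1},\cdot),\nabla_{\theta}\mathscr{Z}^{\pi}(t_{N-1},\cdot))$ has $\theta$-independent data, and its unique adapted solution (Lemma \ref{apriori_0}) is independent of $\theta$ there. For the inductive step, I would fix $k\leq N-2$, assume the assertion for all indices larger than $k$, and inspect the $k$-th equation for an arbitrary $\theta\in[0,t_{k}]$. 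Its terminal datum, together with the driver terms $g_{x_{1}}(t_{k},r)\nabla X(t_{k})\1_{\{\theta\leq t_{k}\}}$ and $g_{x_{2}}(t_{k},r)\nabla X(r)$, is $\theta$-independent by the first observation. The coupling terms $g_{y}(t_{k},r)\nabla_{\theta}\mathscr{Y}^{\pi}(t_{\ell},r)$ and $g_{z_{2}^{(\alpha)}}(t_{k},r)\mathcal{I}^{\pi,\theta}[\nabla_{\theta}\mathscr{Z}^{\pi,(\alpha)}(t_{\ell},\cdot)](t_{k})$ enter only for $\ell>k$, where $t_{\ell}>t_{k}\geq\theta$; there the induction hypothesis gives $\nabla_{\theta}\mathscr{Y}^{\pi}(t_{\ell},\cdot)=\nabla_{0}\mathscr{Y}^{\pi}(t_{\ell},\cdot)$ and $\nabla_{\theta}\mathscr{Z}^{\pi}(t_{\ell},\cdot)=\nabla_{0}\mathscr{Z}^{\pi}(t_{\ell},\cdot)$, while the second observation turns $\mathcal{I}^{\pi,\theta}[\cdot](t_{k})$ into $\mathcal{I}^{\pi}[\cdot](t_{k})$, so these terms are $\theta$-independent as well. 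All $\theta$-dependence of the coefficients and terminal datum has thereby been removed, the only residual $\theta$ sitting in the unknowns $\nabla_{\theta}\mathscr{Y}^{\pi}(t_{k},\cdot),\nabla_{\theta}\mathscr{Z}^{\pi}(t_{k},\cdot)$ themselves. Consequently, for every $\theta\in[0,t_{k}]$ the pair solves one and the same linear BSDE with $\theta$-independent coefficients and terminal datum, and by uniqueness (Lemma \ref{apriori_0}) it coincides with the solution at $\theta=0$, closing the induction.

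I do not expect a genuine difficulty here: once the two structural observations are isolated, the conclusion is forced by the backward recursive structure of \eqref{Prop_L2_2} and the uniqueness of adapted solutions of linear BSDEs. The one point deserving care is the bookkeeping of indices, namely verifying that the averaging operator occurring in the $k$-th equation is always evaluated at the node $t_{k}$ rather than at a higher node, since it is exactly this matching between the evaluation node $t_{k}$ and the induction range $\theta\in[0,t_{k}]$ that makes the second observation applicable.
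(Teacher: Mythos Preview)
Your proposal is correct and follows essentially the same approach as the paper: a backward induction on $k$ exploiting that for $\theta\in[0,t_{k}]$ the indicator $\1_{\{\theta\le t_{k}\}}$ equals $1$ and the operator $\mathcal{I}^{\pi,\theta}[\cdot](t_{k})$ reduces to $\mathcal{I}^{\pi}[\cdot](t_{k})$, then invoking uniqueness of adapted solutions for the resulting $\theta$-independent linear BSDE. The paper organises the induction slightly differently (fixing $\theta\in[0,t_{j}]$ first and running the induction over $k=j,\ldots,N-1$), but the substance is identical.
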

\begin{proof}
For simplicity of notation, we suppose $d=1$, that is, $W(\cdot)$ is a one-dimensional Brownian motion.

Let $\theta \in [0,t_{j}]$ with $j=0,\ldots,N-1$ be fixed.
We show that, by a backward induction, the following property holds for each $k=j,j+1,\ldots,N-1$:
\begin{itemize}
\item[$(\mathrm{P}_{k})$]
For any $s \in [0,T]$,
\begin{align*}
(\nabla_{\theta} \mathscr{Y}^{\pi}(t_{k},s), \nabla_{\theta} \mathscr{Z}^{\pi}(t_{k},s))
=
(\nabla_{0} \mathscr{Y}^{\pi}(t_{k},s), \nabla_{0} \mathscr{Z}^{\pi}(t_{k},s)).
\end{align*}
\end{itemize}

For $k=N-1$, we observe that
\begin{align*}
\nabla_{\theta}\mathscr{Y}^{\pi}(t_{N-1},s)
&=
\psi_{x_{1}}(t_{N-1})
\nabla X(t_{N-1})
+
\psi_{x_{2}}(t_{N-1})
\nabla X(T)
-
\int_{s}^{T}
	\nabla_{\theta} \mathscr{Z}^{\pi}(t_{N-1},r)
\rd W(r),
~s \in [0,T],
\\
\nabla_{0}\mathscr{Y}^{\pi}(t_{N-1},s)
&=
\psi_{x_{1}}(t_{N-1})
\nabla X(t_{N-1})
+
\psi_{x_{2}}(t_{N-1})
\nabla X(T)
-
\int_{s}^{T}
	\nabla_{0} \mathscr{Z}^{\pi}(t_{N-1},r)
\rd W(r),
~s \in [0,T].
\end{align*}
By the uniqueness of the adapted solution of the BSDE, we see that $(\mathrm{P}_{N-1})$ holds.

Next, for a fixed $k' \in \{j,j+1,\ldots,N-2\}$, assume that $(\mathrm{P}_{k})$ holds for any $k=k'+1,\ldots,N-1$.
We observe that
\begin{align*}
\nabla_{\theta}\mathscr{Y}^{\pi}(t_{k'},s)
&=
\psi_{x_{1}}(t_{k'})
\nabla X(t_{k'})
+
\psi_{x_{2}}(t_{k'})
\nabla X(T)
\\*&\quad+
\int_{s}^{T}
	\big\{
		g_{x_{1}}(t_{k'},r)
		\nabla X(t_{k'})
		+
		g_{x_{2}}(t_{k'},r)
		\nabla X(r)
		+
		g_{y}(t_{k'},r)
		\nabla_{\theta} \mathscr{Y}^{\pi}(\tau(r),r)
		\\*&\quad\quad\quad\quad\quad\quad
		+
		g_{z_{1}}(t_{k'},r)
		\nabla_{\theta} \mathscr{Z}^{\pi}(t_{k'},r)
		+
		g_{z_{2}}(t_{k'},r)
		\mathcal{I}^{\pi,\theta}[\nabla_{\theta}\mathscr{Z}^{\pi}(\tau(r),\cdot)](t_{k'})
	\big\}
	\1_{[t_{k'+1},T)}(r)
\rd r
\\*&\quad
-
\int_{s}^{T}
	\nabla_{\theta} \mathscr{Z}^{\pi}(t_{k'},r)
\rd W(r),~s \in [0,T].
\end{align*}
By the assumption of the induction, for any $r \in [t_{k'+1}, T)$,
\begin{align*}
\nabla_{\theta} \mathscr{Y}^{\pi}(\tau(r),r)
=
\nabla_{0} \mathscr{Y}^{\pi}(\tau(r),r)
\end{align*}
and
\begin{align*}
\mathcal{I}^{\pi,\theta}[\nabla_{\theta}\mathscr{Z}^{\pi}(\tau(r),\cdot)](t_{k'})
&=
\frac{1}{\Delta t_{k'}}
\int_{t_{k'}\vee \theta}^{t_{k'+1} \vee \theta}
	\nabla_{\theta}
	\mathscr{Z}^{\pi}(\tau(r),s)
\rd s
=
\frac{1}{\Delta t_{k'}}
\int_{t_{k'}}^{t_{k'+1}}
	\nabla_{0}
	\mathscr{Z}^{\pi}(\tau(r),s)
\rd s
\\*&=
\mathcal{I}^{\pi}[\nabla_{0}\mathscr{Z}^{\pi}(\tau(r),\cdot)](t_{k'}).
\end{align*}
Thus we have
\begin{align*}
\nabla_{\theta}\mathscr{Y}^{\pi}(t_{k'},s)
&=
\psi_{x_{1}}(t_{k'})
\nabla X(t_{k'})
+
\psi_{x_{2}}(t_{k'})
\nabla X(T)
\\*&\quad+
\int_{s}^{T}
	\big\{
		g_{x_{1}}(t_{k'},r)
		\nabla X(t_{k'})
		+
		g_{x_{2}}(t_{k'},r)
		\nabla X(r)
		+
		g_{y}(t_{k'},r)
		\nabla_{0} \mathscr{Y}^{\pi}(\tau(r),r)
		\\*&\quad\quad\quad\quad\quad\quad
		+
		g_{z_{1}}(t_{k'},r)
		\nabla_{\theta} \mathscr{Z}^{\pi}(t_{k'},r)
		+
		g_{z_{2}}(t_{k'},r)
		\mathcal{I}^{\pi}[\nabla_{0}\mathscr{Z}^{\pi}(\tau(r),\cdot)](t_{k'})
	\big\}
	\1_{[t_{k'+1},T)}(r)
\rd r
\\*&\quad
-
\int_{s}^{T}
	\nabla_{\theta} \mathscr{Z}^{\pi}(t_{k'},r)
\rd W(r),~s \in [0,T].
\end{align*}
On the other hand,  we have
\begin{align*}
\nabla_{0}\mathscr{Y}^{\pi}(t_{k'},s)
&=
\psi_{x_{1}}(t_{k'})
\nabla X(t_{k'})
+
\psi_{x_{2}}(t_{k'})
\nabla X(T)
\\&\quad+
\int_{s}^{T}
	\big\{
		g_{x_{1}}(t_{k'},r)
		\nabla X(t_{k'})
		+
		g_{x_{2}}(t_{k'},r)
		\nabla X(r)
		+
		g_{y}(t_{k'},r)
		\nabla_{0} \mathscr{Y}^{\pi}(\tau(r),r)
		\\&\quad\quad\quad\quad\quad\quad
		+
		g_{z_{1}}(t_{k'},r)
		\nabla_{0} \mathscr{Z}^{\pi}(t_{k'},r)
		+
		g_{z_{2}}(t_{k'},r)
		\mathcal{I}^{\pi}[\nabla_{0}\mathscr{Z}^{\pi}(\tau(r),\cdot)](t_{k'})
	\big\}
	\1_{[t_{k'+1},T)}(r)
\rd r
\\*&\quad
-
\int_{s}^{T}
	\nabla_{0} \mathscr{Z}^{\pi}(t_{k'},r)
\rd W(r),~s \in [0,T].
\end{align*}
By the uniqueness of the adapted solution of the BSDE, we see that $(\mathrm{P}_{k'})$ holds.
By the backward induction, $(\mathrm{P}_{k})$ holds for any $k=j,j+1,\ldots,N-1$.
Since $\theta \in [0,t_{j}]$ and $j=0,\ldots,N-1$ are arbitrary, we see that the assertion holds.
\end{proof}

Next, we provide some key $L^p$-estimates for $p \geq 2$.

\begin{lemm}\label{Lem_Lp_1}
Let the assumptions in Proposition \ref{Prop_L2_0} hold.
Fix arbitrary $p \geq 2$.
Then for any $\pi=\{t_{0},t_{1},\ldots,t_{N}\} \in \Pi[0,T]$, it holds that
\begin{align}
\begin{split}\label{Lem_Lp_1_0}
&\sup_{\theta \in [0,T]}
\e
\Big[
	\sup_{s \in [0,T]}
	\Big(
		\sum_{k=0}^{N-1}
		\Delta t_{k}
		\big|
			\nabla_{\theta}
			\mathscr{Y}^{\pi}(t_{k},s)
		\big|^{2}
	\Big)^{p/2}
	+
	\Big(
		\int_{0}^{T}
			\big|
				\nabla_{\theta} \mathscr{Y}^{\pi}(\tau(t),t)
			\big|^{2}
		\rd t
	\Big)^{p/2}
\\&\quad\quad\quad\quad\quad\quad\quad\quad
+
\Big(
	\int_{0}^{T}
		\int_{0}^{T}
			\big|
				\nabla_{\theta}
				\mathscr{Z}^{\pi}(\tau(t),s)
			\big|^{2}
		\rd s
	\rd t
	\Big)^{p/2}
\Big]
\leq
C_{p},
\end{split}
\\
\begin{split}\label{Lem_Lp_1_1}
&
\e
\Big[
	\sup_{s \in [0,T]}
	\Big(
		\int_{0}^{T}
			\big|
				\nabla_{t}
				\mathscr{Y}^{\pi}(\tau(t),s)
			\big|^{2}
		\rd t
	\Big)^{p/2}
	+
	\Big(
		\int_{0}^{T}
			\big|
				\nabla_{t}
				\mathscr{Y}^{\pi}(\tau(t),t)
			\big|^{2}
		\rd t
	\Big)^{p/2}
\\&\quad\quad\quad\quad\quad\quad\quad\quad
+
\Big(
	\int_{0}^{T}
		\int_{0}^{T}
			\big|
				\nabla_{t}
				\mathscr{Z}^{\pi}(\tau(t),s)
			\big|^{2}
		\rd s
	\rd t
	\Big)^{p/2}
\Big]
\leq
C_{p},
\end{split}
\\
\begin{split}\label{Lem_Lp_1_2}
&
\max_{k=0,\ldots,N-2}
\sup_{\theta \in [t_{k+1},T]}
\e
\Big[
	\sup_{s \in [0,T]}
	\big|
		\nabla_{\theta}
		\mathscr{Y}^{\pi}(t_{k},s)
	\big|^{p}
	+
	\Big(
		\int_{0}^{T}
			\big|
				\nabla_{\theta} \mathscr{Z}^{\pi}(t_{k},s)
			\big|^{2}
		\rd s
	\Big)^{p/2}
\Big]
\leq
C_{p}
\end{split}
\end{align}
and
\begin{align}
\begin{split}\label{Lem_Lp_1_3}
&\max_{k=0,\ldots,N-2}
\max_{\ell=k+1,\ldots, N-1}
\sup_{\theta \in [t_{\ell},t_{\ell+1}]}
\e
\Big[
	\sup_{s \in [t_{\ell},T]}
	\big|
		\nabla_{\theta}
		\mathscr{Y}^{\pi}(t_{k},s)
		-
		\nabla_{t_{k+1}}
		\mathscr{Y}^{\pi}(t_{k},s)
	\big|^{p}
	\\&\quad\quad\quad\quad\quad\quad\quad\quad\quad\quad\quad\quad\quad\quad\quad
	+
	\Big(
		\int_{t_{\ell}}^{T}
			\big|
				\nabla_{\theta}
				\mathscr{Z}^{\pi}(t_{k},s)
				-
				\nabla_{t_{k+1}}
				\mathscr{Z}^{\pi}(t_{k},s)
			\big|^{2}
		\rd s
	\Big)^{p/2}
\Big]
\leq
C_{p}
|\pi|^{p/2}.
\end{split}
\end{align}
\end{lemm}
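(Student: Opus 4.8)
The plan is to regard the variational system \eqref{Prop_L2_2}, for each fixed $\theta$, as an instance of the BSDE system \eqref{BSDE_sys_0}: its free term is $\psi_{x_{1}}(t_k)\nabla X(t_k)\1_{\{\theta\le t_k\}}+\psi_{x_{2}}(t_k)\nabla X(t_N)$, and its driver is affine in the unknowns with coefficients $g_{y},g_{z_1^{(\alpha)}},g_{z_2^{(\alpha)}}$ (all bounded by $L$ by the Lipschitz continuity of $g$) and inhomogeneous part $g_{x_1}(t_k,r)\nabla X(t_k)\1_{\{\theta\le t_k\}}+g_{x_2}(t_k,r)\nabla X(r)$. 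The aggregated bound \eqref{Lem_Lp_1_0} then follows by applying Theorem \ref{Lem_Lp}(ii) to this system: the right-hand side of that a priori estimate involves only the free term and the value of the driver at $(0,0,0)$, both of which are dominated by $|\nabla X(t_k)|+|\nabla X(t_N)|+|\nabla X(r)|$ and hence, after taking the $L^{p}$-norm, by $\e[\sup_{s}|\nabla X(s)|^{p}]\le C_p$ thanks to \eqref{Lem_SDE_0_3}. Since $\theta$ enters only through the indicator $\1_{\{\theta\le t_k\}}$ sitting inside uniformly bounded data, the bound is uniform in $\theta$, as required.

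For \eqref{Lem_Lp_1_2} I would exploit two vanishing phenomena that hold precisely when $\theta\ge t_{k+1}$: first $\1_{\{\theta\le t_k\}}=0$, killing the $\psi_{x_1}$ and $g_{x_1}$ terms, and second $\mathcal{I}^{\pi,\theta}[\,\cdot\,](t_k)=\frac{1}{\Delta t_k}\int_{t_k\vee\theta}^{t_{k+1}\vee\theta}(\cdot)\,\rd s=0$ because the integration interval collapses. Hence the $k$-th equation becomes a single BSDE whose only self-dependence is the Lipschitz term $g_{z_1}\nabla_\theta\mathscr{Z}^\pi(t_k,\cdot)$ and whose inhomogeneous data $g_{x_2}(t_k,r)\nabla X(r)+g_y(t_k,r)\nabla_\theta\mathscr{Y}^\pi(\tau(r),r)$ can, via Lemma \ref{apriori_0}, be controlled by $\e[\sup_s|\nabla X(s)|^p]$ together with $\e[(\int_0^T|\nabla_\theta\mathscr{Y}^\pi(\tau(r),r)|^2\,\rd r)^{p/2}]$; the latter is exactly the quantity already bounded in \eqref{Lem_Lp_1_0}, and the resulting estimate is uniform over $k$ and $\theta\ge t_{k+1}$.

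The diagonal estimate \eqref{Lem_Lp_1_1} is where the structural Lemma \ref{Lem_Lp_0} becomes essential. For $t\in[t_k,t_{k+1})$ the parameter $\theta=t$ satisfies $\theta\le t_j$ for every $j>k$, so Lemma \ref{Lem_Lp_0} replaces all coupling terms $\nabla_t\mathscr{Y}^\pi(t_j,\cdot)$ and $\nabla_t\mathscr{Z}^\pi(t_j,\cdot)$ with $j=\tau(r)>k$ by their $\theta=0$ counterparts. Consequently $\{(\nabla_t\mathscr{Y}^\pi(\tau(t),\cdot),\nabla_t\mathscr{Z}^\pi(\tau(t),\cdot))\}_t$ becomes a family of \emph{decoupled} BSDEs parametrized by $t$, whose inhomogeneous data is built only from $\nabla X$, from $\nabla_0\mathscr{Y}^\pi(\tau(r),r)$, and from $\mathcal{I}^{\pi,t}[\nabla_0\mathscr{Z}^\pi(\tau(r),\cdot)](\tau(t))$. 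Viewing this as an auxiliary (Type-\Rnum{1}) BSVIE whose driver does not depend on the unknown through $Y(s)$ or $Z(s,t)$, I would invoke the $L^p$-a priori estimate of Theorem \ref{Lem_Lp}(i). The only nontrivial term is the one carrying $\mathcal{I}^{\pi,t}$, which I would bound by Cauchy--Schwarz, $\bigl(\sum_{j}\Delta t_{j}\sqrt{a_{k,j}}\bigr)^{2}\le T\sum_{j}\Delta t_{j}\,a_{k,j}$ with $a_{k,j}:=\frac{1}{\Delta t_k}\int_{t_k}^{t_{k+1}}|\nabla_0\mathscr{Z}^\pi(t_j,u)|^2\,\rd u$; after summation over $k$ this telescopes into $\int_0^T\int_0^T|\nabla_0\mathscr{Z}^\pi(\tau(t),u)|^2\,\rd u\,\rd t$, again controlled by \eqref{Lem_Lp_1_0}.

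The H\"older-type difference estimate \eqref{Lem_Lp_1_3} is the main obstacle. Fixing $\theta\in[t_\ell,t_{\ell+1}]$ with $\ell\ge k+1$, I would write the BSDE solved on $[t_\ell,T]$ by $\nabla_\theta\mathscr{Y}^\pi(t_k,\cdot)-\nabla_{t_{k+1}}\mathscr{Y}^\pi(t_k,\cdot)$. Since both $\theta$ and $t_{k+1}$ exceed $t_k$, the terminal difference vanishes and both operators $\mathcal{I}^{\pi,\cdot}[\,\cdot\,](t_k)$ vanish; moreover Lemma \ref{Lem_Lp_0} forces the coupling differences to vanish for $r\ge t_{\ell+1}$, so the only inhomogeneous contribution is $g_y(t_k,r)\,\Delta_\ell(r)$ supported on the single interval $r\in[t_\ell,t_{\ell+1})$, where $\Delta_\ell(r):=\nabla_\theta\mathscr{Y}^\pi(t_\ell,r)-\nabla_0\mathscr{Y}^\pi(t_\ell,r)$. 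Lemma \ref{apriori_0} on $[t_\ell,T]$ then bounds the left-hand side by $C_p\,\e[(\int_{t_\ell}^{t_{\ell+1}}|\Delta_\ell(r)|\,\rd r)^p]\le C_p(\Delta t_\ell)^p\,\e[\sup_r|\Delta_\ell(r)|^p]$. The quantitative heart of the matter is that extracting a single summand from \eqref{Lem_Lp_1_0} yields the deliberately lossy individual bound $\e[\sup_s|\nabla_\bullet\mathscr{Y}^\pi(t_\ell,s)|^p]\le C_p(\Delta t_\ell)^{-p/2}$, so that the factor $(\Delta t_\ell)^p$ and the blow-up $(\Delta t_\ell)^{-p/2}$ combine to give $C_p(\Delta t_\ell)^{p/2}\le C_p|\pi|^{p/2}$. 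The delicate part of the proof is the bookkeeping: verifying exactly which coupling and $\mathcal{I}^{\pi,\cdot}$ terms survive on $[t_\ell,T]$ and confirming that the compensation between the prefactor and the blow-up is exact; everything else reduces to the linear a priori estimates of Lemma \ref{apriori_0} and Theorem \ref{Lem_Lp} together with the moment bounds of Lemma \ref{Lem_SDE_0}.
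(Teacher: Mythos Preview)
Your plan matches the paper's proof essentially step by step: Theorem~\ref{Lem_Lp}(ii) for \eqref{Lem_Lp_1_0}, the vanishing of $\mathcal{I}^{\pi,\theta}[\cdot](t_k)$ together with Lemma~\ref{apriori_0} for \eqref{Lem_Lp_1_2}, Lemma~\ref{Lem_Lp_0} combined with Theorem~\ref{Lem_Lp}(i) (viewing $(\nabla_t\mathscr{Y}^\pi(\tau(t),t),\nabla_t\mathscr{Z}^\pi(\tau(t),s))$ as an adapted M-solution) for \eqref{Lem_Lp_1_1}, and the difference-BSDE argument on $[t_\ell,T]$ for \eqref{Lem_Lp_1_3}.

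The one place you diverge is the final step of \eqref{Lem_Lp_1_3}. You bound $\bigl(\int_{t_\ell}^{t_{\ell+1}}|\Delta_\ell(r)|\,\rd r\bigr)^p\le(\Delta t_\ell)^p\sup_r|\Delta_\ell(r)|^p$ and then extract a single summand from the first term of \eqref{Lem_Lp_1_0} to obtain the lossy bound $\e[\sup_s|\nabla_\bullet\mathscr{Y}^\pi(t_\ell,s)|^p]\le C_p(\Delta t_\ell)^{-p/2}$, relying on the compensation $(\Delta t_\ell)^p\cdot(\Delta t_\ell)^{-p/2}=(\Delta t_\ell)^{p/2}$. The paper instead applies Cauchy--Schwarz,
\[
\Bigl(\int_{t_\ell}^{t_{\ell+1}}|\Delta_\ell(r)|\,\rd r\Bigr)^p\le(\Delta t_\ell)^{p/2}\Bigl(\int_{t_\ell}^{t_{\ell+1}}|\Delta_\ell(r)|^2\,\rd r\Bigr)^{p/2},
\]
and then controls the remaining $L^2$-integral directly by the \emph{second} (diagonal) term in \eqref{Lem_Lp_1_0}, namely $\sup_\theta\e\bigl[(\int_0^T|\nabla_\theta\mathscr{Y}^\pi(\tau(t),t)|^2\,\rd t)^{p/2}\bigr]\le C_p$, with no blow-up at all. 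Both arguments are correct; the paper's route sidesteps precisely the compensation you flagged as the delicate part.
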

\begin{proof}
For simplicity of notation, we suppose $d=1$, that is, $W(\cdot)$ is a one-dimensional Brownian motion.

\emph{Proof of \eqref{Lem_Lp_1_0}.}
Noting that $\e[\sup_{s \in [0,T]} |\nabla X(s)|^{p}] \leq C_{p}$, by the $L^{p}$-esimtate for BSDE systems (see Theorem \ref{Lem_Lp} (ii)), we see that \eqref{Lem_Lp_1_0} holds.

\emph{Proof of \eqref{Lem_Lp_1_1}.}
For any $t \in [0,T] \setminus \pi$, it holds that
\begin{align*}
\nabla_{t}\mathscr{Y}^{\pi}(\tau(t),t)
&=
\psi_{x_{2}}(\tau(t))
\nabla X(T)
\\*&\quad+
\int_{t}^{T}
	\big\{
		g_{x_{2}}(\tau(t),s)
		\nabla X(s)
		+
		g_{y}(\tau(t),s)
		\nabla_{t} \mathscr{Y}^{\pi}(\tau(s),s)
		\\*&\quad\quad\quad\quad\quad
		+
		g_{z_{1}}(\tau(t),s)
		\nabla_{t} \mathscr{Z}^{\pi}(\tau(t),s)
		+
		g_{z_{2}}(\tau(t),s)
		\mathcal{I}^{\pi,t}[\nabla_{t}\mathscr{Z}^{\pi}(\tau(s),\cdot)](\tau(t))
	\big\}
	\1_{[\tau^{*}(t),T)}(s)
\rd s
\\&\quad
-
\int_{t}^{T}
	\nabla_{t} \mathscr{Z}^{\pi}(\tau(t),s)
\rd W(s).
\end{align*}
From Lemma \ref{Lem_Lp_0}, for any $s \in [\tau^{*}(t),T)$, we have
\begin{align*}
\nabla_{t}\mathscr{Y}^{\pi}(\tau(s),s)
=
\nabla_{0}\mathscr{Y}^{\pi}(\tau(s),s)
\quad\text{and}\quad
\mathcal{I}^{\pi,t}[\nabla_{t}\mathscr{Z}^{\pi}(\tau(s),\cdot)](\tau(t))
&=
\mathcal{I}^{\pi,t}[\nabla_{0}\mathscr{Z}^{\pi}(\tau(s),\cdot)](\tau(t)).
\end{align*}
Thus, we have
\begin{align*}
\nabla_{t}\mathscr{Y}^{\pi}(\tau(t),t)
&=
\psi_{x_{2}}(\tau(t))
\nabla X(T)
\\&\quad+
\int_{t}^{T}
	\big\{
		g_{x_{2}}(\tau(t),s)
		\nabla X(s)
		+
		g_{y}(\tau(t),s)
		\nabla_{0} \mathscr{Y}^{\pi}(\tau(s),s)
		+
		g_{z_{1}}(\tau(t),s)
		\nabla_{t} \mathscr{Z}^{\pi}(\tau(t),s)
		\\&\quad\quad\quad\quad\quad
		+
		g_{z_{2}}(\tau(t),s)
		\mathcal{I}^{\pi,t}[\nabla_{0}\mathscr{Z}^{\pi}(\tau(s),\cdot)](\tau(t))
	\big\}
	\1_{[\tau^{*}(t),T)}(s)
\rd s
\\&\quad
-
\int_{t}^{T}
	\nabla_{t} \mathscr{Z}^{\pi}(\tau(t),s)
\rd W(s).
\end{align*}
Also, it holds that
\begin{align*}
\nabla_{t}\mathscr{Y}^{\pi}(\tau(t),t)
=
\e\Big[
	\nabla_{t}\mathscr{Y}^{\pi}(\tau(t),t)
\Big]
+
\int_{0}^{t}
	\nabla_{t}\mathscr{Z}^{\pi}(\tau(t),s)
\rd W(s).
\end{align*}
Thus, by defining
\begin{align*}
\begin{cases}
\eta(t)
:=
\nabla_{t}\mathscr{Y}^{\pi}(\tau(t),t),~
t \in [0,T],\\
\zeta(t,s)
:=
\nabla_{t}\mathscr{Z}^{\pi}(\tau(t),s),~
(t,s) \in [0,T]^{2},
\end{cases}
\end{align*}
we see that $(\eta(\cdot),\zeta(\cdot,\cdot))$  is the adapted M-solution of BSVIE \eqref{BSVIE_II_1} with the coefficients
\begin{align*}
&\Psi(t)
=
\psi_{x_{2}}(\tau(t))
\nabla X(T),
\\
&G(t,s,y,z_{1},z_{2})
=
\big\{
	g_{x_{2}}(\tau(t),s)
	\nabla X(s)
	+
	g_{y}(\tau(t),s)
	\nabla_{0} \mathscr{Y}^{\pi}(\tau(s),s)
	+
	g_{z_{1}}(\tau(t),s)
	z_{1}
	\\&\hspace{3cm}
	+
	g_{z_{2}}(\tau(t),s)
	\mathcal{I}^{\pi,t}[\nabla_{0}\mathscr{Z}^{\pi}(\tau(s),\cdot)](\tau(t))
\big\}
\1_{[\tau^{*}(t),T)}(s).
\end{align*}
Note that $\psi_{x_{2}}$, $g_{x_{2}}$, $g_{y}$, $g_{z_{1}}$ and $g_{z_{2}}$ are bounded by $L$.
Furthermore, from \eqref{Lem_Lp_1_0}, it holds that
\begin{align*}
\e\Big[
	\Big(
		\int_{0}^{T}
			\big|
				\nabla_{0} \mathscr{Y}^{\pi}(\tau(t),t)
			\big|^{2}
		\rd t
	\Big)^{p/2}
\Big]
\leq C_{p}
\end{align*}
and
\begin{align*}
&\e\Big[
	\Big(
		\int_{0}^{T}
			\int_{\tau^{*}(t)}^{T}
				\big|
					\mathcal{I}^{\pi,t}[\nabla_{0}\mathscr{Z}^{\pi}(\tau(s),\cdot)](\tau(t))
				\big|^{2}
			\rd s
		\rd t
	\Big)^{p/2}
\Big]
\\&\leq
\e\Big[
	\Big(
		\int_{0}^{T}
			\int_{\tau^{*}(t)}^{T}
				\frac{1}{\Delta \tau(t)}
				\int_{t}^{\tau^{*}(t)}
					\big|
						\nabla_{0}
						\mathscr{Z}^{\pi}(\tau(s),u)
					\big|^{2}
				\rd u
			\rd s
		\rd t
	\Big)^{p/2}
\Big]
\\&\leq
\e\Big[
	\Big(
		\int_{0}^{T}
			\int_{0}^{T}
				\big|
					\nabla_{0}
					\mathscr{Z}^{\pi}(\tau(t),s)
				\big|^{2}
			\rd s
		\rd t
	\Big)^{p/2}
\Big]
\leq C_{p}.
\end{align*}
Therefore, by the $L^{p}$-estimate for BSVIEs (see Theorem \ref{Lem_Lp} (i)), we see that \eqref{Lem_Lp_1_1} holds.

\emph{Proof of \eqref{Lem_Lp_1_2}.}
Let $k=0,\ldots,N-2$ and $\theta \in [t_{k+1},T]$ be fixed.
Noting that $\mathcal{I}^{\pi,\theta}[\cdot](t_{k})=0$, we have
\begin{align*}
\nabla_{\theta}\mathscr{Y}^{\pi}(t_{k},s)
&=
\psi_{x_{2}}(t_{k})
\nabla X(T)
\\&\quad+
\int_{s}^{T}
	\big\{
		g_{x_{2}}(t_{k},r)
		\nabla X(r)
		+
		g_{y}(t_{k},r)
		\nabla_{\theta} \mathscr{Y}^{\pi}(\tau(r),r)
		+
		g_{z_{1}}(t_{k},r)
		\nabla_{\theta} \mathscr{Z}^{\pi}(t_{k},r)
	\big\}
	\1_{[t_{k+1},T)}(r)
\rd r
\\&\quad
-
\int_{s}^{T}
	\nabla_{\theta} \mathscr{Z}^{\pi}(t_{k},r)
\rd W(r),~s \in [0,T].
\end{align*}
By the a priori estimate for BSDEs (see Lemma \ref{apriori_0}) and the estimate \eqref{Lem_Lp_1_0}, we have
\begin{align*}
&
\e
\Big[
	\sup_{s \in [0,T]}
	\big|
		\nabla_{\theta}
		\mathscr{Y}^{\pi}(t_{k},s)
	\big|^{p}
	+
	\Big(
		\int_{0}^{T}
			\big|
				\nabla_{\theta} \mathscr{Z}^{\pi}(t_{k},s)
			\big|^{2}
		\rd s
	\Big)^{p/2}
\Big]
\\&\leq
C_{p}
\e\Big[
	\big|
		\psi_{x_{2}}(t_{k})
		\nabla X(T)
	\big|^{p}
	+
	\Big(
		\int_{t_{k+1}}^{T}
			\big|
				g_{x_{2}}(t_{k},r)
				\nabla X(r)
				+
				g_{y}(t_{k},r)
				\nabla_{\theta} \mathscr{Y}^{\pi}(\tau(r),r)
			\big|
		\rd r
	\Big)^{p}
\Big]
\\&\leq
C_{p}
\e\Big[
	\sup_{s \in [0,T]}
	\big|
		\nabla X(s)
	\big|^{p}
	+
	\Big(
		\int_{0}^{T}
			\big|
				\nabla_{\theta} \mathscr{Y}^{\pi}(\tau(t),t)
			\big|^{2}
		\rd t
	\Big)^{p/2}
\Big]
\leq C_{p}.
\end{align*}
Thus, the estimate \eqref{Lem_Lp_1_2} holds.

\emph{Proof of \eqref{Lem_Lp_1_3}.}
Let $k=0,\ldots,N-2$, $\ell=k+1,\ldots,N-1$ and $\theta \in [t_{\ell},t_{\ell+1}]$ be fixed.
Noting that $\mathcal{I}^{\pi,\theta}[\cdot](t_{k})=\mathcal{I}^{\pi,t_{k+1}}[\cdot](t_{k})=0$, it holds from Lemma \ref{Lem_Lp_0} that, for any $s \in [t_{\ell},T]$,
\begin{align*}
\nabla_{\theta}\mathscr{Y}^{\pi}(t_{k},s)
&=
\psi_{x_{2}}(t_{k})
\nabla X(T)
\\*&\quad+
\int_{s}^{T}
	\big\{
		g_{x_{2}}(t_{k},r)
		\nabla X(r)
		+
		g_{y}(t_{k},r)
		\nabla_{\theta} \mathscr{Y}^{\pi}(\tau(r),r)
		+
		g_{z_{1}}(t_{k},r)
		\nabla_{\theta} \mathscr{Z}^{\pi}(t_{k},r)
	\big\}
\rd r
\\*&\quad
-
\int_{s}^{T}
	\nabla_{\theta} \mathscr{Z}^{\pi}(t_{k},r)
\rd W(r)
\\&=
\psi_{x_{2}}(t_{k})
\nabla X(T)
\\&\quad+
\int_{s}^{T}
	\big\{
		g_{x_{2}}(t_{k},r)
		\nabla X(r)
		+
		g_{y}(t_{k},r)
		\nabla_{0} \mathscr{Y}^{\pi}(\tau(r),r)
		\1_{[t_{\ell+1},T)}(r)
		\\&\quad\quad\quad\quad\quad
		+
		g_{y}(t_{k},r)
		\nabla_{\theta} \mathscr{Y}^{\pi}(t_{\ell},r)
		\1_{[t_{\ell},t_{\ell+1})}(r)
		+
		g_{z_{1}}(t_{k},r)
		\nabla_{\theta} \mathscr{Z}^{\pi}(t_{k},r)
	\big\}
\rd r
\\&\quad
-
\int_{s}^{T}
	\nabla_{\theta} \mathscr{Z}^{\pi}(t_{k},r)
\rd W(r),
\end{align*}
and
\begin{align*}
\nabla_{t_{k+1}}\mathscr{Y}^{\pi}(t_{k},s)
&=
\psi_{x_{2}}(t_{k})
\nabla X(T)
\\&\quad+
\int_{s}^{T}
	\big\{
		g_{x_{2}}(t_{k},r)
		\nabla X(r)
		+
		g_{y}(t_{k},r)
		\nabla_{t_{k+1}} \mathscr{Y}^{\pi}(\tau(r),r)
		+
		g_{z_{1}}(t_{k},r)
		\nabla_{t_{k+1}} \mathscr{Z}^{\pi}(t_{k},r)
	\big\}
\rd r
\\&\quad
-
\int_{s}^{T}
	\nabla_{t_{k+1}} \mathscr{Z}^{\pi}(t_{k},r)
\rd W(r)
\\&=
\psi_{x_{2}}(t_{k})
\nabla X(T)
\\&\quad+
\int_{s}^{T}
	\big\{
		g_{x_{2}}(t_{k},r)
		\nabla X(r)
		+
		g_{y}(t_{k},r)
		\nabla_{0} \mathscr{Y}^{\pi}(\tau(r),r)
		+
		g_{z_{1}}(t_{k},r)
		\nabla_{t_{k+1}} \mathscr{Z}^{\pi}(t_{k},r)
	\big\}
\rd r
\\&\quad
-
\int_{s}^{T}
	\nabla_{t_{k+1}} \mathscr{Z}^{\pi}(t_{k},r)
\rd W(r).
\end{align*}
By the stability estimate for BSDEs (see Lemma \ref{apriori_0}) and the estimate \eqref{Lem_Lp_1_0}, we have
\begin{align*}
&\e
\Big[
	\sup_{s \in [t_{\ell},T]}
	\big|
		\nabla_{\theta}
		\mathscr{Y}^{\pi}(t_{k},s)
		-
		\nabla_{t_{k+1}}
		\mathscr{Y}^{\pi}(t_{k},s)
	\big|^{p}
	+
	\Big(
		\int_{t_{\ell}}^{T}
			\big|
				\nabla_{\theta}
				\mathscr{Z}^{\pi}(t_{k},s)
				-
				\nabla_{t_{k+1}}
				\mathscr{Z}^{\pi}(t_{k},s)
			\big|^{2}
		\rd s
	\Big)^{p/2}
\Big]
\\*&\leq
C_{p}
\e\Big[
	\Big(
		\int_{t_{\ell}}^{t_{\ell+1}}
			\big|
				g_{y}(t_{k},r)
				\nabla_{\theta} \mathscr{Y}^{\pi}(t_{\ell},r)
				-
				g_{y}(t_{k},r)
				\nabla_{0} \mathscr{Y}^{\pi}(t_{\ell},r)
			\big|
		\rd r
	\Big)^{p}
\Big]
\\&\leq
C_{p}|\pi|^{p/2}
\Big\{
	\e\Big[
		\Big(
			\int_{t_{\ell}}^{t_{\ell+1}}
				\big|
					\nabla_{\theta} \mathscr{Y}^{\pi}(t_{\ell},r)
				\big|^{2}
			\rd r
		\Big)^{p/2}
	\Big]
	+
	\e\Big[
		\Big(
			\int_{t_{\ell}}^{t_{\ell+1}}
				\big|
					\nabla_{0} \mathscr{Y}^{\pi}(t_{\ell},r)
				\big|^{2}
			\rd r
		\Big)^{p/2}
	\Big]
\Big\}
\\&\leq
C_{p}|\pi|^{p/2}
\sup_{\theta \in [0,T]}
\e\Big[
	\Big(
		\int_{0}^{T}
			\big|
				\nabla_{\theta} \mathscr{Y}^{\pi}(\tau(t),t)
			\big|^{2}
		\rd t
	\Big)^{p/2}
\Big]
\leq C_{p}|\pi|^{p/2}.
\end{align*}
Thus, the estimate \eqref{Lem_Lp_1_3} holds, and we complete the proof.
\end{proof}

Now we are ready to estimate the modulus of the $L^{2}$-time regularity of $\mathscr{Z}^{\pi}$.

\begin{theo}\label{Thm_L2_0}
Under $(\mathrm{H}_{\psi,g})$ and $(\mathrm{H}_{b,\sigma})$, for any $\pi=\{t_{0},t_{1},\ldots,t_{N}\} \in \Pi[0,T]$, it holds that
\begin{align}\label{Thm_L2_0_1}
\begin{split}
&
\sum_{k=0}^{N-1}
\Delta t_{k}
\e
\Big[
	\int_{t_{k}}^{t_{k+1}}
		\big|
			\mathscr{Z}^{\pi}(t_{k},s)
		\big|^{2}
	\rd s
\Big]
\leq
C(1+|x|^{2})|\pi|
\end{split}
\end{align}
and
\begin{align}\label{Thm_L2_0_0}
\begin{split}
&
\sum_{k=0}^{N-1}
\Delta t_{k}
\sum_{\ell=0}^{N-1}
\e
\Big[
	\int_{t_{\ell}}^{t_{\ell+1}}
		\big|
			\mathscr{Z}^{\pi}(t_{k},s)
			-
			\overline{\mathscr{Z}}^{\pi}(t_{k},t_{\ell})
		\big|^{2}
	\rd s
\Big]
\leq
C(1+|x|^{2})
\big\{
	|\pi|
	+
	\rho_{\sigma}(|\pi|)^{2}
\big\}.
\end{split}
\end{align}
\end{theo}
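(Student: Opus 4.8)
The plan is to reduce first to smooth coefficients and then exploit the Malliavin representation \eqref{Prop_L2_3}. Since the right-hand sides of \eqref{Thm_L2_0_1} and \eqref{Thm_L2_0_0} depend only on $L$, $T$, $|x|$, $|\pi|$ and $\rho_\sigma(|\pi|)$, I would mollify $b,\sigma,\psi,g$ in the spatial variables to obtain $C^1$ coefficients with the same Lipschitz constant $L$ and with the time-modulus of $\sigma$ preserved up to a universal constant, prove the two estimates for the mollified system (where Proposition~\ref{Prop_L2_0} applies), and pass to the limit: by the stability estimate \eqref{Lem_3_1_1} the mollified martingale integrands converge to $\mathscr{Z}^\pi$ in $L^2$, and since the left-hand sides of \eqref{Thm_L2_0_1} and \eqref{Thm_L2_0_0} are continuous functionals of $\mathscr{Z}^\pi$ in $L^2$ (note that $\overline{\mathscr{Z}}^\pi(t_k,t_\ell)$ is an $L^2$-contraction of $\mathscr{Z}^\pi$), the bounds survive. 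Assuming henceforth the $C^1$ setting, \eqref{Thm_L2_0_1} is immediate: on the diagonal block $\tau(s)=t_k$, so by \eqref{Prop_L2_3} its left-hand side is at most $|\pi|\,\e[\int_0^T|\nabla_s\mathscr{Y}^\pi(\tau(s),s)(\nabla X(s))^{-1}\sigma(s,X(s))|^2\rd s]$, which is $\leq C(1+|x|^2)$ by H\"older's inequality together with \eqref{Lem_Lp_1_1}, \eqref{Lem_SDE_0_3} and \eqref{Lem_SDE_1_0}.

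For \eqref{Thm_L2_0_0} I would split the double sum into the diagonal $\ell=k$ and the off-diagonal $\ell\neq k$ parts. Since $\overline{\mathscr{Z}}^\pi(t_k,t_k)$ is the best $\mathcal{F}_{t_k}$-measurable approximation of $\mathscr{Z}^\pi(t_k,\cdot)$ on $[t_k,t_{k+1}]$, the diagonal part is dominated by the left-hand side of \eqref{Thm_L2_0_1}, hence by $C(1+|x|^2)|\pi|$. For an off-diagonal block the same best-approximation property lets me replace $\overline{\mathscr{Z}}^\pi(t_k,t_\ell)$ by a convenient $\mathcal{F}_{t_\ell}$-measurable ``frozen'' quantity $\zeta_{k,\ell}:=\nabla_{\theta_\ell}\mathscr{Y}^\pi(t_k,t_\ell)(\nabla X(t_\ell))^{-1}\sigma(t_\ell,X(t_\ell))$, with $\theta_\ell=0$ for $\ell<k$ and $\theta_\ell=t_{k+1}$ for $\ell>k$ (both choices keeping $\theta_\ell\leq t_\ell$, so $\zeta_{k,\ell}$ is $\mathcal{F}_{t_\ell}$-measurable). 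Using \eqref{Prop_L2_3} I would then write, for $s\in[t_\ell,t_{\ell+1}]$, the error $\mathscr{Z}^\pi(t_k,s)-\zeta_{k,\ell}$ as a sum of three differences carrying, respectively, $[\nabla_s\mathscr{Y}^\pi(t_k,s)-\nabla_{\theta_\ell}\mathscr{Y}^\pi(t_k,t_\ell)]$, $[(\nabla X(s))^{-1}-(\nabla X(t_\ell))^{-1}]$ and $[\sigma(s,X(s))-\sigma(t_\ell,X(t_\ell))]$, each multiplied by factors with bounded moments.

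The last two differences are routine: by \eqref{Lem_SDE_0_4} the $(\nabla X)^{-1}$-difference contributes $O(|\pi|)$, while the splitting $\sigma(s,X(s))-\sigma(t_\ell,X(t_\ell))=[\sigma(s,X(s))-\sigma(s,X(t_\ell))]+[\sigma(s,X(t_\ell))-\sigma(t_\ell,X(t_\ell))]$ together with \eqref{Lem_SDE_1_1} for the first bracket and the modulus $\rho_\sigma$ of $(\mathrm{H}_{b,\sigma})$ for the second yields exactly the $O(|\pi|+\rho_\sigma(|\pi|)^2)$ contribution, the accompanying factors being controlled via H\"older's inequality with some $p>2$ through \eqref{Lem_Lp_1_1}, \eqref{Lem_Lp_1_2}, \eqref{Lem_SDE_0_3} and \eqref{Lem_SDE_1_0}, which is precisely where the $L^p$-estimates are needed. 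The first (variational) difference is the crux. For sub-diagonal blocks ($\ell<k$) I would use Lemma~\ref{Lem_Lp_0} to replace $\nabla_s$ and $\nabla_{t_\ell}$ by $\nabla_0$, note that the driver of \eqref{Prop_L2_2} vanishes there so that $s\mapsto\nabla_0\mathscr{Y}^\pi(t_k,s)$ is a martingale, and bound the increment by the It\^o isometry in terms of $\int_{t_\ell}^{t_{\ell+1}}|\nabla_0\mathscr{Z}^\pi(t_k,r)|^2\rd r$, whose weighted sum is $O(|\pi|)$ by \eqref{Lem_Lp_1_0}. For super-diagonal blocks ($\ell>k$) I would insert $\nabla_{t_{k+1}}\mathscr{Y}^\pi(t_k,s)$ and split into the $\theta$-shift $\nabla_s\mathscr{Y}^\pi(t_k,s)-\nabla_{t_{k+1}}\mathscr{Y}^\pi(t_k,s)$, bounded by \eqref{Lem_Lp_1_3} as $O(|\pi|^{1/2})$, and the genuine $s$-increment $\nabla_{t_{k+1}}\mathscr{Y}^\pi(t_k,s)-\nabla_{t_{k+1}}\mathscr{Y}^\pi(t_k,t_\ell)$, estimated via the BSDE it solves (bounded drift plus a martingale part handled by the It\^o isometry and \eqref{Lem_Lp_1_2}), again summing to $O(|\pi|)$.

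The main obstacle is this variational difference. One must choose the reference Malliavin time correctly: taking $\theta_\ell=t_{k+1}$ rather than $t_\ell$ for $\ell>k$ is essential, because the variational solution depends on $\theta$ only through its block (Lemma~\ref{Lem_Lp_0}) and \eqref{Lem_Lp_1_3} is tailored to the reference $t_{k+1}$, whereas a naive choice would leave an uncontrolled long-range $\theta$-difference. One must also verify the $\mathcal{F}_{t_\ell}$-measurability of the frozen quantities and correctly disentangle the simultaneous regularity in the forward time $s$ and in the Malliavin time $\theta$. Once these points are settled, collecting the three contributions and applying the $L^p$-moment bounds of Lemma~\ref{Lem_Lp_1} and of Lemmas~\ref{Lem_SDE_1} and \ref{Lem_SDE_0} through H\"older's inequality gives the stated bound $C(1+|x|^2)\{|\pi|+\rho_\sigma(|\pi|)^2\}$.
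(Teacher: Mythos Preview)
Your proposal is correct and follows essentially the same approach as the paper: reduction to $C^1$ coefficients by mollification, the Malliavin representation \eqref{Prop_L2_3}, the three-term decomposition in $\nabla\mathscr{Y}^\pi$, $(\nabla X)^{-1}$ and $\sigma$, and the case split sub-diagonal/diagonal/super-diagonal handled with Lemma~\ref{Lem_Lp_0} and the $L^p$-estimates of Lemma~\ref{Lem_Lp_1}. The only notable difference is that the paper freezes at $\theta_\ell=t_\ell$ throughout and therefore, in the super-diagonal case, splits the variational difference into \emph{three} pieces (adding $\nabla_{t_\ell}\mathscr{Y}^\pi(t_k,t_\ell)-\nabla_{t_{k+1}}\mathscr{Y}^\pi(t_k,t_\ell)$, also handled by \eqref{Lem_Lp_1_3}); your choice $\theta_\ell=t_{k+1}$ for $\ell>k$ is a small simplification that removes this extra term.
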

\begin{proof}
For each $\varepsilon>0$, let $(b_{\varepsilon},\sigma_{\varepsilon},\psi_{\varepsilon},g_{\varepsilon})$ be a smooth mollifier of $(b,\sigma,\psi,g)$.
Note that $(b_{\varepsilon},\sigma_{\varepsilon},\psi_{\varepsilon},g_{\varepsilon})$ satisfies $(\mathrm{H}_{\psi,g})$ and $(\mathrm{H}_{b,\sigma})$ with the same constant $L$ and continuous functions $\rho_{\psi,g}$ and $\rho_{\sigma}$.
For each $\pi=\{t_{0},t_{1},\ldots,t_{N}\} \in \Pi[0,T]$, denote by $(X_{\varepsilon},\mathscr{Y}_{\varepsilon}^{\pi},\mathscr{Z}_{\varepsilon}^{\pi})$ the corresponding solution of SDE \eqref{SDE_0} and BSDE system \eqref{BSDE_sys_2}.
By using the stability estimate for SDEs (cf.\ Theorem 3.2.4 in \cite{Zh17}), we see that $\lim_{\varepsilon \downarrow 0} X_{\varepsilon}(\cdot)=X(\cdot)$ in $L_{\mathbb{F}}^{2}(\Omega;C([0,T];\real^{n}))$.
Then, by the stability estimate for BSDE systems (see Lemma \ref{Lem_BSDE_sys_0}), we can easily show that
\begin{align*}
\lim_{\varepsilon \downarrow 0}
\sum_{k=0}^{N-1}
\Delta t_{k}
\e\Big[
	\int_{0}^{T}
		\big|
			\mathscr{Z}_{\varepsilon}^{\pi}(t_{k},s)
			-
			\mathscr{Z}^{\pi}(t_{k},s)
		\big|^{2}
	\rd s
\Big]
=
0
\end{align*}
and
\begin{align*}
\lim_{\varepsilon \downarrow 0}
\sum_{k=0}^{N-1}
\Delta t_{k}
\sum_{\ell=0}^{N-1}
\Delta t_{\ell}
\e\Big[
	\big|
		\overline{\mathscr{Z}}_{\varepsilon}^{\pi}(t_{k},t_{\ell})
		-
		\overline{\mathscr{Z}}^{\pi}(t_{k},t_{\ell})
	\big|^{2}
\Big]
=
0.
\end{align*}
Thus, without loss of generality, we may assume that $b(s,x)$, $\sigma(s,x)$ are $C^{1}$ in $x \in \real^{n}$, and $\psi(t,x_{1},x_{2})$, $g(t,s,x_{1},x_{2},y,z_{1},z_{2})$ are $C^{1}$ in $(x_{1},x_{2},y,z_{1},z_{2}) \in \real^{n} \times \real^{n} \times \real^{m} \times \real^{m \times d} \times \real^{m \times d}$.
In this case, the assumptions in Proposition \ref{Prop_L2_0} hold.

We first prove \eqref{Thm_L2_0_1}.
By using the representation formula \eqref{Prop_L2_3}, together with the estimates \eqref{Lem_SDE_1_0}, \eqref{Lem_SDE_0_3} and \eqref{Lem_Lp_1_1}, we have
\begin{align*}
&
\sum_{k=0}^{N-1}
\Delta t_{k}
\e
\Big[
	\int_{t_{k}}^{t_{k+1}}
		\big|
			\mathscr{Z}^{\pi}(t_{k},s)
		\big|^{2}
	\rd s
\Big]
=
\sum_{k=0}^{N-1}
\Delta t_{k}
\e
\Big[
	\int_{t_{k}}^{t_{k+1}}
		\big|
			\nabla_{s}\mathscr{Y}^{\pi}(t_{k},s)
			(\nabla X(s))^{-1}
			\sigma(s,X(s))
		\big|^{2}
	\rd s
\Big]
\\&\leq
C
\sum_{k=0}^{N-1}
\Delta t_{k}
\e
\Big[
	\int_{t_{k}}^{t_{k+1}}
		\big|
			\nabla_{s}\mathscr{Y}^{\pi}(t_{k},s)
		\big|^{2}
	\rd s
	\sup_{s \in [0,T]}
	\big|
		(\nabla X(s))^{-1}
	\big|^{2}
	\Big(
		1
		+
		\sup_{s \in [0,T]}
		\big|X(s)\big|^{2}
	\Big)
\Big]
\\&\leq
C|\pi|
\e
\Big[
	\int_{0}^{T}
		\big|
			\nabla_{t}\mathscr{Y}^{\pi}(\tau(t),t)
		\big|^{2}
	\rd t
	\sup_{s \in [0,T]}
	\big|
		(\nabla X(s))^{-1}
	\big|^{2}
	\Big(
		1
		+
		\sup_{s \in [0,T]}
		\big|X(s)\big|^{2}
	\Big)
\Big]
\\&\leq
C|\pi|
\e
\Big[
	\Big(
		\int_{0}^{T}
			\big|
				\nabla_{t}\mathscr{Y}^{\pi}(\tau(t),t)
			\big|^{2}
		\rd t
	\Big)^{3}
\Big]^{1/3}
\e
\Big[
	\sup_{s \in [0,T]}
	\big|
		(\nabla X(s))^{-1}
	\big|^{6}
\Big]^{1/3}
\Big(
	1
	+
	\e
	\Big[
		\sup_{s \in [0,T]}
		\big|X(s)\textbf{}|^{6}
	\Big]^{1/3}
\Big)
\\&\leq
C(1+|x|^{2})|\pi|.
\end{align*}
Thus, the estimate \eqref{Thm_L2_0_1} holds.

Next, we prove \eqref{Thm_L2_0_0}.
Let $k,\ell=0,\ldots,N-1$ be fixed.
Note that $\overline{\mathscr{Z}}^{\pi}(t_{k},t_{\ell})$ is the best approximation of $\mathscr{Z}^{\pi}(t_{k},\cdot)$ on $[t_{\ell},t_{\ell+1}]$ in the following sense:
\begin{align*}
\e\Big[
	\int_{t_{\ell}}^{t_{\ell+1}}
		\big|
			\mathscr{Z}^{\pi}(t_{k},s)
			-
			\overline{\mathscr{Z}}^{\pi}(t_{k},t_{\ell})
		\big|^{2}
	\rd s
\Big]
\leq
\e\Big[
	\int_{t_{\ell}}^{t_{\ell+1}}
		\big|
			\mathscr{Z}^{\pi}(t_{k},s)
			-
			\zeta
		\big|^{2}
	\rd s
\Big]
\end{align*}
for any $\zeta \in L^{2}_{\mathcal{F}_{t_{\ell}}}(\Omega;\real^{m \times d})$.
Thus, by the representation formula \eqref{Prop_L2_3}, we have
\begin{align}\label{Thm_L2_0_5}
\begin{split}
&\e
\Big[
	\int_{t_{\ell}}^{t_{\ell+1}}
		\big|
			\mathscr{Z}^{\pi}(t_{k},s)
			-
			\overline{\mathscr{Z}}^{\pi}(t_{k},t_{\ell})
		\big|^{2}
	\rd s
\Big]
\\&
\leq
\e
\Big[
	\int_{t_{\ell}}^{t_{\ell+1}}
		\big|
			\nabla_{s}\mathscr{Y}^{\pi}(t_{k},s)
			(\nabla X(s))^{-1}
			\sigma(s,X(s))
			-
			\nabla_{t_{\ell}}\mathscr{Y}^{\pi}(t_{k},t_{\ell})
			(\nabla X(t_{\ell}))^{-1}
			\sigma(t_{\ell},X(t_{\ell}))
		\big|^{2}
	\rd s
\Big]
\\&
\leq
3
\big\{
	I_{k,\ell}^{(1)}
	+
	I_{k,\ell}^{(2)}
	+
	I_{k,\ell}^{(3)}
\big\},
\end{split}
\end{align}
where
\begin{align*}
I_{k,\ell}^{(1)}
&:=
\e
\Big[
	\int_{t_{\ell}}^{t_{\ell+1}}
		\big|
			\nabla_{s}\mathscr{Y}^{\pi}(t_{k},s)
		\big|^{2}
		\big|
			(\nabla X(s))^{-1}
		\big|^{2}
		\big|
			\sigma(s,X(s))
			-
			\sigma(t_{\ell},X(t_{\ell}))
		\big|^{2}
	\rd s
\Big],
\\
I_{k,\ell}^{(2)}
&:=
\e
\Big[
	\int_{t_{\ell}}^{t_{\ell+1}}
		\big|
			\nabla_{s}\mathscr{Y}^{\pi}(t_{k},s)
		\big|^{2}
		\big|
			(\nabla X(s))^{-1}
			-
			(\nabla X(t_{\ell}))^{-1}
		\big|^{2}
		\big|
			\sigma(t_{\ell},X(t_{\ell}))
		\big|^{2}
	\rd s
\Big],
\\
I_{k,\ell}^{(3)}
&:=
\e
\Big[
	\int_{t_{\ell}}^{t_{\ell+1}}
		\big|
			\nabla_{s}\mathscr{Y}^{\pi}(t_{k},s)
			-
			\nabla_{t_{\ell}}\mathscr{Y}^{\pi}(t_{k},t_{\ell})
		\big|^{2}
		\big|
			(\nabla X(t_{\ell}))^{-1}
		\big|^{2}
		\big|
			\sigma(t_{\ell},X(t_{\ell}))
		\big|^{2}
	\rd s
\Big].
\end{align*}
We estimate $\sum_{k=0}^{N-1}\Delta t_{k} \sum_{\ell=0}^{N-1} I_{k,\ell}^{(i)}$ for $i=1,2,3$.

For $i=1$, we divide the sum $\sum_{k=0}^{N-1}\Delta t_{k} \sum_{\ell=0}^{N-1} I_{k,\ell}^{(1)}$ into three cases: $k>\ell$, $k=\ell$ and $k<\ell$.

\emph{The sum of $I_{k,\ell}^{(1)}$ for $k>\ell$.}
By using Lemma \ref{Lem_Lp_0}, together with the estimates \eqref{Lem_SDE_1_0}, \eqref{Lem_SDE_1_1}, \eqref{Lem_SDE_0_3} and \eqref{Lem_Lp_1_0}, we have
\begin{align*}
&
\sum_{k=1}^{N-1}\Delta t_{k} \sum_{\ell=0}^{k-1}
I_{k,\ell}^{(1)}
=
\sum_{k=1}^{N-1}\Delta t_{k} \sum_{\ell=0}^{k-1}
\e
\Big[
	\int_{t_{\ell}}^{t_{\ell+1}}
		\big|
			\nabla_{s}\mathscr{Y}^{\pi}(t_{k},s)
		\big|^{2}
		\big|
			(\nabla X(s))^{-1}
		\big|^{2}
		\big|
			\sigma(s,X(s))
			-
			\sigma(t_{\ell},X(t_{\ell}))
		\big|^{2}
	\rd s
\Big]
\\&=
\sum_{k=1}^{N-1}\Delta t_{k} \sum_{\ell=0}^{k-1}
\e
\Big[
	\int_{t_{\ell}}^{t_{\ell+1}}
		\big|
			\nabla_{0}\mathscr{Y}^{\pi}(t_{k},s)
		\big|^{2}
		\big|
			(\nabla X(s))^{-1}
		\big|^{2}
		\big|
			\sigma(s,X(s))
			-
			\sigma(t_{\ell},X(t_{\ell}))
		\big|^{2}
	\rd s
\Big]
\\&\leq
C
\sum_{\ell=0}^{N-2}
\Delta t_{\ell}
\e
\Big[
	\sup_{s \in [0,T]}
	\Big(
		\sum_{k=\ell+1}^{N-1}
		\Delta t_{k}
		\big|
			\nabla_{0}\mathscr{Y}^{\pi}(t_{k},s)
		\big|^{2}
	\Big)
	\sup_{s \in [0,T]}
	\big|
		(\nabla X(s))^{-1}
	\big|^{2}
	\\&\quad \quad \quad\quad\quad\quad\quad
	\times
	\Big\{
		\rho_{\sigma}(|\pi|)^{2}
		\Big(
			1
			+
			\sup_{s \in [0,T]}|X(s)|^{2}
		\Big)
		+
		\sup_{s \in [t_{\ell},t_{\ell+1}]}
		\big|
			X(s)
			-
			X(t_{\ell})
		\big|^{2}
	\Big\}
\Big]
\\&\leq
C
\sum_{\ell=0}^{N-1}
\Delta t_{\ell}
\e
\Big[
	\sup_{s \in [0,T]}
	\Big(
		\sum_{k=0}^{N-1}
		\Delta t_{k}
		\big|
			\nabla_{0}\mathscr{Y}^{\pi}(t_{k},s)
		\big|^{2}
	\Big)^{3}
\Big]^{1/3}
\e
\Big[
	\sup_{s \in [0,T]}
	\big|
		(\nabla X(s))^{-1}
	\big|^{6}
\Big]^{1/3}
\\*&\quad \quad \quad\quad\quad
\times
\Big\{
	\rho_{\sigma}(|\pi|)^{2}
	\Big(
		1
		+
		\e\Big[
			\sup_{s \in [0,T]}\big|X(s)\big|^{6}
		\Big]^{1/3}
	\Big)
	+
	\e\Big[
		\sup_{s \in [t_{\ell},t_{\ell+1}]}
		\big|
			X(s)
			-
			X(t_{\ell})
		\big|^{6}
	\Big]^{1/3}
\Big\}
\\*&\leq
C(1+|x|^{2})
\big\{
	|\pi|
	+
	\rho_{\sigma}(|\pi|)^{2}
\big\}.
\end{align*}

\emph{The sum of $I_{k,\ell}^{(1)}$ for $k=\ell$.}
By using the estimates \eqref{Lem_SDE_1_0}, \eqref{Lem_SDE_1_1}, \eqref{Lem_SDE_0_3} and \eqref{Lem_Lp_1_1}, we have
\begin{align*}
&
\sum_{k=0}^{N-1}
\Delta t_{k}
I_{k,k}^{(1)}
=
\sum_{k=0}^{N-1}
\Delta t_{k}
\e
\Big[
	\int_{t_{k}}^{t_{k+1}}
		\big|
			\nabla_{s}\mathscr{Y}^{\pi}(t_{k},s)
		\big|^{2}
		\big|
			(\nabla X(s))^{-1}
		\big|^{2}
		\big|
			\sigma(s,X(s))
			-
			\sigma(t_{k},X(t_{k}))
		\big|^{2}
	\rd s
\Big]
\\&\leq
C
\sum_{k=0}^{N-1}
\Delta t_{k}
\e
\Big[
	\int_{0}^{T}
		\big|
			\nabla_{t}\mathscr{Y}^{\pi}(\tau(t),t)
		\big|^{2}
	\rd t
	\sup_{s \in [0,T]}
	\big|
		(\nabla X(s))^{-1}
	\big|^{2}
	\\&\quad \quad \quad\quad\quad\quad\quad\quad
	\times
	\Big\{
		\rho_{\sigma}(|\pi|)^{2}
		\Big(
			1
			+
			\sup_{s \in [0,T]}|X(s)|^{2}
		\Big)
		+
		\sup_{s \in [t_{k},t_{k+1}]}
		\big|
			X(s)
			-
			X(t_{k})
		\big|^{2}
	\Big\}
\Big]
\\&\leq
C
\sum_{k=0}^{N-1}
\Delta t_{k}
\e
\Big[
	\Big(
		\int_{0}^{T}
			\big|
				\nabla_{t}\mathscr{Y}^{\pi}(\tau(t),t)
			\big|^{2}
		\rd t
	\Big)^{3}
\Big]^{1/3}
\e
\Big[
	\sup_{s \in [0,T]}
	\big|
		(\nabla X(s))^{-1}
	\big|^{6}
\Big]^{1/3}
\\&\quad \quad \quad\quad\quad
\times
\Big\{
	\rho_{\sigma}(|\pi|)^{2}
	\Big(
		1
		+
		\e\Big[
			\sup_{s \in [0,T]}|X(s)|^{6}
		\Big]^{1/3}
	\Big)
	+
	\e\Big[
		\sup_{s \in [t_{k},t_{k+1}]}
		\big|
			X(s)
			-
			X(t_{k})
		\big|^{6}
	\Big]^{1/3}
\Big\}
\\&\leq
C(1+|x|^{2})
\big\{
	|\pi|
	+
	\rho_{\sigma}(|\pi|)^{2}
\big\}.
\end{align*}

\emph{The sum of $I_{k,\ell}^{(1)}$ for $k<\ell$.}
By using the estimates \eqref{Lem_SDE_1_0}, \eqref{Lem_SDE_1_1}, \eqref{Lem_SDE_0_3} and \eqref{Lem_Lp_1_2}, we have
\begin{align*}
&
\sum_{k=0}^{N-2}\Delta t_{k} \sum_{\ell=k+1}^{N-1}
I_{k,\ell}^{(1)}
=
\sum_{k=0}^{N-2}\Delta t_{k} \sum_{\ell=k+1}^{N-1}
\e
\Big[
	\int_{t_{\ell}}^{t_{\ell+1}}
		\big|
			\nabla_{s}\mathscr{Y}^{\pi}(t_{k},s)
		\big|^{2}
		\big|
			(\nabla X(s))^{-1}
		\big|^{2}
		\big|
			\sigma(s,X(s))
			-
			\sigma(t_{\ell},X(t_{\ell}))
		\big|^{2}
	\rd s
\Big]
\\&\leq
\sum_{k=0}^{N-2}\!
\Delta t_{k}\!
\sum_{\ell=k+1}^{N-1}\!
\int_{t_{\ell}}^{t_{\ell+1}}\!
	\e
	\Big[
		\big|
			\nabla_{s}\mathscr{Y}^{\pi}(t_{k},s)
		\big|^{6}
	\Big]^{1/3}
	\e
	\Big[
		\sup_{s \in [0,T]}
		\big|
			(\nabla X(s))^{-1}
		\big|^{6}
	\Big]^{1/3}
	\e\Big[
		\big|
			\sigma(s,X(s))
			-
			\sigma(t_{\ell},X(t_{\ell}))
		\big|^{6}
	\Big]^{1/3}
\rd s
\\&\leq
C
\sum_{k=0}^{N-2}
\Delta t_{k}
\sum_{\ell=k+1}^{N-1}
\Delta t_{\ell}
\sup_{\theta \in [t_{k+1},T]}
\e
\Big[
	\sup_{s \in [0,T]}
	\big|
		\nabla_{\theta}\mathscr{Y}^{\pi}(t_{k},s)
	\big|^{6}
\Big]^{1/3}
\e
\Big[
	\sup_{s \in [0,T]}
	\big|
		(\nabla X(s))^{-1}
	\big|^{6}
\Big]^{1/3}
\\&\hspace{4cm}\times
\Big\{
	\rho_{\sigma}(|\pi|)^{2}
	\Big(
		1
		+
		\e\Big[
			\sup_{s \in [0,T]}|X(s)|^{6}
		\Big]^{1/3}
	\Big)
	+
	\e\Big[
		\sup_{s \in [t_{\ell},t_{\ell+1}]}
		\big|
			X(s)
			-
			X(t_{\ell})
		\big|^{6}
	\Big]^{1/3}
\Big\}
\\&\leq
C(1+|x|^{2})
\big\{
	|\pi|
	+
	\rho_{\sigma}(|\pi|)^{2}
\big\}.
\end{align*}

Thus, we get
\begin{align}\label{Thm_L2_0_6}
\sum_{k=0}^{N-1}
\Delta t_{k}
\sum_{\ell=0}^{N-1}
I_{k,\ell}^{(1)}
&\leq
C(1+|x|^{2})
\big\{
	|\pi|
	+
	\rho_{\sigma}(|\pi|)^{2}
\big\}.
\end{align}

By the same way as above, noting the estimate \eqref{Lem_SDE_0_4}, we can show that
\begin{align}\label{Thm_L2_0_7}
\sum_{k=0}^{N-1}
\Delta t_{k}
\sum_{\ell=0}^{N-1}
I_{k,\ell}^{(2)}
&\leq
C(1+|x|^{2})|\pi|.
\end{align}

It remains to estimate $\sum_{k=0}^{N-1}\Delta t_{k}\sum_{\ell=0}^{N-1}I_{k,\ell}^{(3)}$.
Again we divide the sum into three cases: $k>\ell$, $k=\ell$ and $k<\ell$.

\emph{The sum of $I_{k,\ell}^{(3)}$ for $k>\ell$.}
By using Lemma \ref{Lem_Lp_0}, we have
\begin{align*}
&\sum_{k=1}^{N-1}
\Delta t_{k}
\sum_{\ell=0}^{k-1}
I_{k,\ell}^{(3)}
=
\sum_{k=1}^{N-1}
\Delta t_{k}
\sum_{\ell=0}^{k-1}
\e
\Big[
	\int_{t_{\ell}}^{t_{\ell+1}}
		\big|
			\nabla_{s}\mathscr{Y}^{\pi}(t_{k},s)
			-
			\nabla_{t_{\ell}}\mathscr{Y}^{\pi}(t_{k},t_{\ell})
		\big|^{2}
		\big|
			(\nabla X(t_{\ell}))^{-1}
		\big|^{2}
		\big|
			\sigma(t_{\ell},X(t_{\ell}))
		\big|^{2}
	\rd s
\Big]
\\&=
\sum_{k=1}^{N-1}
\Delta t_{k}
\sum_{\ell=0}^{k-1}
\e
\Big[
	\int_{t_{\ell}}^{t_{\ell+1}}
		\big|
			\nabla_{0}\mathscr{Y}^{\pi}(t_{k},s)
			-
			\nabla_{0}\mathscr{Y}^{\pi}(t_{k},t_{\ell})
		\big|^{2}
		\big|
			(\nabla X(t_{\ell}))^{-1}
		\big|^{2}
		\big|
			\sigma(t_{\ell},X(t_{\ell}))
		\big|^{2}
	\rd s
\Big]
\\*&\leq
|\pi|
\sum_{k=1}^{N-1}
\Delta t_{k}
\sum_{\ell=0}^{k-1}
\sup_{s \in [t_{\ell},t_{\ell+1}]}
\e
\Big[
	\big|
		\nabla_{0}\mathscr{Y}^{\pi}(t_{k},s)
		-
		\nabla_{0}\mathscr{Y}^{\pi}(t_{k},t_{\ell})
	\big|^{2}
	\big|
		(\nabla X(t_{\ell}))^{-1}
	\big|^{2}
	\big|
		\sigma(t_{\ell},X(t_{\ell}))
	\big|^{2}
\Big].
\end{align*}
Observe that, for any $k=0,\ldots,N-1$, $\ell=0,\ldots,k$ and $s \in [t_{\ell},t_{\ell+1}]$,
\begin{align}\label{eq_YZ_0}
\begin{split}
&\e
\Big[
	\big|
		\nabla_{0}\mathscr{Y}^{\pi}(t_{k},s)
		-
		\nabla_{0}\mathscr{Y}^{\pi}(t_{k},t_{\ell})
	\big|^{2}
	\big|
		(\nabla X(t_{\ell}))^{-1}
	\big|^{2}
	\big|
		\sigma(t_{\ell},X(t_{\ell}))
	\big|^{2}
\Big]
\\&=
\e
\Big[
	\Big|
		\int_{t_{\ell}}^{s}
			\nabla_{0}\mathscr{Z}^{\pi}(t_{k},r)
		\rd W(r)
	\Big|^{2}
	\big|
		(\nabla X(t_{\ell}))^{-1}
	\big|^{2}
	\big|
		\sigma(t_{\ell},X(t_{\ell}))
	\big|^{2}
\Big]
\\&=
\e
\Big[
	\int_{t_{\ell}}^{s}
		\big|
			\nabla_{0}\mathscr{Z}^{\pi}(t_{k},r)
		\big|^{2}
	\rd r
	\big|
		(\nabla X(t_{\ell}))^{-1}
	\big|^{2}
	\big|
		\sigma(t_{\ell},X(t_{\ell}))
	\big|^{2}
\Big].
\end{split}
\end{align}
Thus, by using the estimates \eqref{Lem_SDE_1_0}, \eqref{Lem_SDE_0_3} and \eqref{Lem_Lp_1_0}, we have
\begin{align*}
&\sum_{k=1}^{N-1}
\Delta t_{k}
\sum_{\ell=0}^{k-1}
I_{k,\ell}^{(3)}
\\&\leq
|\pi|
\sum_{k=1}^{N-1}
\Delta t_{k}
\sum_{\ell=0}^{k-1}
\e
\Big[
	\int_{t_{\ell}}^{t_{\ell+1}}
		\big|
			\nabla_{0}\mathscr{Z}^{\pi}(t_{k},s)
		\big|^{2}
	\rd s
	\big|
		(\nabla X(t_{\ell}))^{-1}
	\big|^{2}
	\big|
		\sigma(t_{\ell},X(t_{\ell}))
	\big|^{2}
\Big]
\\&\leq
C
|\pi|
\e
\Big[
	\sum_{k=1}^{N-1}
	\Delta t_{k}
		\int_{0}^{t_{k}}
			\big|
				\nabla_{0}\mathscr{Z}^{\pi}(t_{k},s)
			\big|^{2}
		\rd s
		\sup_{s \in [0,T]}
		\big|
			(\nabla X(s))^{-1}
		\big|^{2}
		\Big(
			1
			+
			\sup_{s \in [0,T]}
			\big|X(s)\big|^{2}
		\Big)
\Big]
\\&\leq
C|\pi|
\e
\Big[
	\Big(
		\int_{0}^{T}
			\int_{0}^{T}
				\big|
					\nabla_{0}\mathscr{Z}^{\pi}(\tau(t),s)
				\big|^{2}
			\rd s
		\rd t
	\Big)^{3}
\Big]^{1/3}
\e\Big[
	\sup_{s \in [0,T]}
	\big|
		(\nabla X(s))^{-1}
	\big|^{6}
\Big]^{1/3}
\Big(
	1+
	\e\Big[
		\sup_{s \in [0,T]}
		\big|X(s)\big|^{6}
	\Big]^{1/3}
\Big)
\\&\leq
C(1+|x|^{2})|\pi|.
\end{align*}

\emph{The sum of $I_{k,\ell}^{(3)}$ for $k=\ell$.}
By using Lemma \ref{Lem_Lp_0}, we have
\begin{align*}
&\sum_{k=0}^{N-1}
\Delta t_{k}
I_{k,k}^{(3)}
=
\sum_{k=0}^{N-1}
\Delta t_{k}
\e
\Big[
	\int_{t_{k}}^{t_{k+1}}
		\big|
			\nabla_{s}\mathscr{Y}^{\pi}(t_{k},s)
			-
			\nabla_{t_{k}}\mathscr{Y}^{\pi}(t_{k},t_{k})
		\big|^{2}
		\big|
			(\nabla X(t_{k}))^{-1}
		\big|^{2}
		\big|
			\sigma(t_{k},X(t_{k}))
		\big|^{2}
	\rd s
\Big]
\\&
=
\sum_{k=0}^{N-1}
\Delta t_{k}
\e
\Big[
	\int_{t_{k}}^{t_{k+1}}
		\big|
			\nabla_{s}\mathscr{Y}^{\pi}(t_{k},s)
			-
			\nabla_{0}\mathscr{Y}^{\pi}(t_{k},t_{k})
		\big|^{2}
		\big|
			(\nabla X(t_{k}))^{-1}
		\big|^{2}
		\big|
			\sigma(t_{k},X(t_{k}))
		\big|^{2}
	\rd s
\Big]
\\&\leq
3
\Big\{
\sum_{k=0}^{N-1}
\Delta t_{k}
\e
\Big[
	\int_{t_{k}}^{t_{k+1}}
		\big|
			\nabla_{s}\mathscr{Y}^{\pi}(t_{k},s)
		\big|^{2}
		\big|
			(\nabla X(t_{k}))^{-1}
		\big|^{2}
		\big|
			\sigma(t_{k},X(t_{k}))
		\big|^{2}
	\rd s
\Big]
\\&\quad\quad\quad+
\sum_{k=1}^{N-1}
\Delta t_{k}
\e
\Big[
	\int_{t_{k}}^{t_{k+1}}
		\big|
			\nabla_{0}\mathscr{Y}^{\pi}(t_{k},s)
		\big|^{2}
		\big|
			(\nabla X(t_{k}))^{-1}
		\big|^{2}
		\big|
			\sigma(t_{k},X(t_{k}))
		\big|^{2}
	\rd s
\Big]
\\&\quad \quad\quad+
\sum_{k=0}^{N-1}
\Delta t_{k}
\e
\Big[
	\int_{t_{k}}^{t_{k+1}}
		\big|
			\nabla_{0}\mathscr{Y}^{\pi}(t_{k},s)
			-
			\nabla_{0}\mathscr{Y}^{\pi}(t_{k},t_{k})
		\big|^{2}
		\big|
			(\nabla X(t_{k}))^{-1}
		\big|^{2}
		\big|
			\sigma(t_{k},X(t_{k}))
		\big|^{2}
	\rd s
\Big]
\Big\}.
\end{align*}
By using the equality \eqref{eq_YZ_0}, together with the estimates \eqref{Lem_SDE_1_0}, \eqref{Lem_SDE_0_3}, \eqref{Lem_Lp_1_0} and \eqref{Lem_Lp_1_1}, we have
\begin{align*}
\sum_{k=0}^{N-1}
\Delta t_{k}
I_{k,k}^{(3)}
&\leq
C|\pi|
\Big\{
	\e
	\Big[
		\int_{0}^{T}
			\big|
				\nabla_{t}\mathscr{Y}^{\pi}(\tau(t),t)
			\big|^{2}
		\rd t
		\sup_{s \in [0,T]}
		\big|
			(\nabla X(s))^{-1}
		\big|^{2}
		\Big(
			1
			+
			\sup_{s \in [0,T]}
			\big|
				X(s)
			\big|^{2}
		\Big)
	\Big]
	\\&\quad\quad\quad\quad
	+
	\e
	\Big[
		\int_{0}^{T}
			\big|
				\nabla_{0}\mathscr{Y}^{\pi}(\tau(t),t)
			\big|^{2}
		\rd t
		\sup_{s \in [0,T]}
		\big|
			(\nabla X(s))^{-1}
		\big|^{2}
		\Big(
			1
			+
			\sup_{s \in [0,T]}
			\big|
				X(s)
			\big|^{2}
		\Big)
	\Big]
	\\&\quad\quad\quad\quad
	+
	\sum_{k=0}^{N-1}
	\Delta t_{k}
	\e
	\Big[
		\int_{t_{k}}^{t_{k+1}}
			\big|
				\nabla_{0}\mathscr{Z}^{\pi}(t_{k},s)
			\big|^{2}
		\rd s
		\sup_{s \in [0,T]}
		\big|
			(\nabla X(s))^{-1}
		\big|^{2}
		\Big(
			1
			+
			\sup_{s \in [0,T]}
			\big|
				X(s)
			\big|^{2}
		\Big)
	\Big]
\Big\}
\\&\leq
C|\pi|
\Big\{
	\e
	\Big[
		\Big(
			\int_{0}^{T}
				\big|
					\nabla_{t}\mathscr{Y}^{\pi}(\tau(t),t)
				\big|^{2}
			\rd t
		\Big)^{3}
	\Big]^{1/3}
	+
	\e
	\Big[
		\Big(
			\int_{0}^{T}
				\big|
					\nabla_{0}\mathscr{Y}^{\pi}(\tau(t),t)
				\big|^{2}
			\rd t
		\Big)^{3}
	\Big]^{1/3}
	\\*&\quad\quad\quad\quad\quad\quad\quad
	+
	\e
	\Big[
		\Big(
			\int_{0}^{T}
				\int_{0}^{T}
					\big|
						\nabla_{0}\mathscr{Z}^{\pi}(\tau(t),s)
					\big|^{2}
				\rd s
			\rd t
		\Big)^{3}
	\Big]^{1/3}
\Big\}
\\*&\quad \times
\e\Big[
	\sup_{s \in [0,T]}
	\big|
		(\nabla X(s))^{-1}
	\big|^{6}
\Big]^{1/3}
\Big(
	1+
	\e\Big[
		\sup_{s \in [0,T]}
		\big|X(s)\big|^{6}
	\Big]^{1/3}
\Big)
\\&\leq
C(1+|x|^{2})|\pi|.
\end{align*}

\emph{The sum of $I_{k,\ell}^{(3)}$ for $k<\ell$.}
By using the estimates \eqref{Lem_SDE_1_0}, \eqref{Lem_SDE_0_3} and \eqref{Lem_Lp_1_3}, we have
\begin{align*}
&\sum_{k=0}^{N-2}
\Delta t_{k}\!
\sum_{\ell=k+1}^{N-1}\!
I_{k,\ell}^{(3)}
=
\sum_{k=0}^{N-2}
\Delta t_{k}
\sum_{\ell=k+1}^{N-1}
\e
\Big[
	\int_{t_{\ell}}^{t_{\ell+1}}
		\big|
			\nabla_{s}\mathscr{Y}^{\pi}(t_{k},s)
			-
			\nabla_{t_{\ell}}\mathscr{Y}^{\pi}(t_{k},t_{\ell})
		\big|^{2}
		\big|
			(\nabla X(t_{\ell}))^{-1}
		\big|^{2}
		\big|
			\sigma(t_{\ell},X(t_{\ell}))
		\big|^{2}
	\rd s
\Big]
\\&\leq
3
\Big\{
\sum_{k=0}^{N-2}
\Delta t_{k}
\sum_{\ell=k+1}^{N-1}
\e
\Big[
	\int_{t_{\ell}}^{t_{\ell+1}}
		\big|
			\nabla_{s}\mathscr{Y}^{\pi}(t_{k},s)
			-
			\nabla_{t_{k+1}}\mathscr{Y}^{\pi}(t_{k},s)
		\big|^{2}
		\big|
			(\nabla X(t_{\ell}))^{-1}
		\big|^{2}
		\big|
			\sigma(t_{\ell},X(t_{\ell}))
		\big|^{2}
	\rd s
\Big]
\\&\quad\quad+
\sum_{k=0}^{N-2}
\Delta t_{k}
\sum_{\ell=k+1}^{N-1}
\Delta t_{\ell}
\e
\Big[
	\big|
		\nabla_{t_{\ell}}\mathscr{Y}^{\pi}(t_{k},t_{\ell})
		-
		\nabla_{t_{k+1}}\mathscr{Y}^{\pi}(t_{k},t_{\ell})
	\big|^{2}
	\big|
		(\nabla X(t_{\ell}))^{-1}
	\big|^{2}
	\big|
		\sigma(t_{\ell},X(t_{\ell}))
	\big|^{2}
\Big]
\\&\quad\quad+
\sum_{k=0}^{N-2}
\Delta t_{k}
\sum_{\ell=k+1}^{N-1}
\e
\Big[
	\int_{t_{\ell}}^{t_{\ell+1}}
		\big|
			\nabla_{t_{k+1}}\mathscr{Y}^{\pi}(t_{k},s)
			-
			\nabla_{t_{k+1}}\mathscr{Y}^{\pi}(t_{k},t_{\ell})
		\big|^{2}
		\big|
			(\nabla X(t_{\ell}))^{-1}
		\big|^{2}
		\big|
			\sigma(t_{\ell},X(t_{\ell}))
		\big|^{2}
	\rd s
\Big]
\Big\}
\\&\leq
C
\sum_{k=0}^{N-2}
\Delta t_{k}
\sum_{\ell=k+1}^{N-1}
\Delta t_{\ell}
\sup_{\theta \in [t_{\ell},t_{\ell+1}]}
\e
\Big[
	\sup_{s \in [t_{\ell},T]}
	\big|
		\nabla_{\theta}\mathscr{Y}^{\pi}(t_{k},s)
		-
		\nabla_{t_{k+1}}\mathscr{Y}^{\pi}(t_{k},s)
	\big|^{6}
\Big]^{1/3}
\\&\quad\quad\quad\quad\quad\quad\quad\quad 
\times
\e\Big[
	\sup_{s \in [0,T]}
	\big|
		(\nabla X(s))^{-1}
	\big|^{6}
\Big]^{1/3}
\Big(
	1+
	\e\Big[
		\sup_{s \in [0,T]}
		\big|X(s)\big|^{6}
	\Big]^{1/3}
\Big)
\\&\quad+
3
|\pi|
\sum_{k=0}^{N-2}
\Delta t_{k}
\sum_{\ell=k+1}^{N-1}
\sup_{s \in [t_{\ell},t_{\ell+1}]}
\e
\Big[
	\big|
		\nabla_{t_{k+1}}\mathscr{Y}^{\pi}(t_{k},s)
		-
		\nabla_{t_{k+1}}\mathscr{Y}^{\pi}(t_{k},t_{\ell})
	\big|^{2}
	\big|
		(\nabla X(t_{\ell}))^{-1}
	\big|^{2}
	\big|
		\sigma(t_{\ell},X(t_{\ell}))
	\big|^{2}
\Big]
\\&\leq
C(1+|x|^{2})|\pi|
\\&\quad+
3
|\pi|
\sum_{k=0}^{N-2}
\Delta t_{k}
\sum_{\ell=k+1}^{N-1}
\sup_{s \in [t_{\ell},t_{\ell+1}]}
\e
\Big[
	\big|
		\nabla_{t_{k+1}}\mathscr{Y}^{\pi}(t_{k},s)
		-
		\nabla_{t_{k+1}}\mathscr{Y}^{\pi}(t_{k},t_{\ell})
	\big|^{2}
	\big|
		(\nabla X(t_{\ell}))^{-1}
	\big|^{2}
	\big|
		\sigma(t_{\ell},X(t_{\ell}))
	\big|^{2}
\Big].
\end{align*}
We estimate the last sum in the above inequalities.
By using Lemma \ref{Lem_Lp_0}, together with the estimates \eqref{Lem_SDE_1_0}, \eqref{Lem_SDE_0_3}, \eqref{Lem_Lp_1_0} and \eqref{Lem_Lp_1_2}, we have
\begin{align*}
&
\sum_{k=0}^{N-2}
\Delta t_{k}
\sum_{\ell=k+1}^{N-1}
\sup_{s \in [t_{\ell},t_{\ell+1}]}
\e
\Big[
	\big|
		\nabla_{t_{k+1}}\mathscr{Y}^{\pi}(t_{k},s)
		-
		\nabla_{t_{k+1}}\mathscr{Y}^{\pi}(t_{k},t_{\ell})
	\big|^{2}
	\big|
		(\nabla X(t_{\ell}))^{-1}
	\big|^{2}
	\big|
		\sigma(t_{\ell},X(t_{\ell}))
	\big|^{2}
\Big]
\\*&=
\sum_{k=0}^{N-2}
\Delta t_{k}
\sum_{\ell=k+1}^{N-1}
\sup_{s \in [t_{\ell},t_{\ell+1}]}
\e
\Big[
	\Big|
		-
		\int_{t_{\ell}}^{s}
			\Big\{
				g_{x_{2}}(t_{k},r)
				\nabla X(r)
				+
				g_{y}(t_{k},r)
				\nabla_{t_{k+1}} \mathscr{Y}^{\pi}(t_{\ell},r)
				\\&\hspace{1cm}
				+
				\sum_{\alpha=1}^{d}
				g_{z_{1}^{(\alpha)}}(t_{k},r)
				\nabla_{t_{k+1}} \mathscr{Z}^{\pi,(\alpha)}(t_{k},r)
			\Big\}
		\rd r
		+
		\int_{t_{\ell}}^{s}
			\nabla_{t_{k+1}} \mathscr{Z}^{\pi}(t_{k},r)
		\rd W(r)
	\Big|^{2}
	\big|
		(\nabla X(t_{\ell}))^{-1}
	\big|^{2}
	\big|
		\sigma(t_{\ell},X(t_{\ell}))
	\big|^{2}
\Big]
\\&=
\sum_{k=0}^{N-2}
\Delta t_{k}
\sum_{\ell=k+1}^{N-1}
\sup_{s \in [t_{\ell},t_{\ell+1}]}
\e
\Big[
	\Big|
		-
		\int_{t_{\ell}}^{s}
			\Big\{
				g_{x_{2}}(t_{k},r)
				\nabla X(r)
				+
				g_{y}(t_{k},r)
				\nabla_{0} \mathscr{Y}^{\pi}(t_{\ell},r)
				\\&\hspace{1cm}
				+
				\sum_{\alpha=1}^{d}
				g_{z_{1}^{(\alpha)}}(t_{k},r)
				\nabla_{t_{k+1}} \mathscr{Z}^{\pi,(\alpha)}(t_{k},r)
			\Big\}
		\rd r
		+
		\int_{t_{\ell}}^{s}
			\nabla_{t_{k+1}} \mathscr{Z}^{\pi}(t_{k},r)
		\rd W(r)
	\Big|^{2}
	\big|
		(\nabla X(t_{\ell}))^{-1}
	\big|^{2}
	\big|
		\sigma(t_{\ell},X(t_{\ell}))
	\big|^{2}
\Big]
\\&\leq
C
\sum_{k=0}^{N-2}
\Delta t_{k}
\sum_{\ell=k+1}^{N-1}
\e
\Big[
	\Big\{
		(\Delta t_{\ell})^{2}
		\sup_{s \in [0,T]}
		\big|
			\nabla X(s)
		\big|^{2}
		+
		\Delta t_{\ell}
		\int_{t_{\ell}}^{t_{\ell+1}}
			\big|
				\nabla_{0} \mathscr{Y}^{\pi}(t_{\ell},r)
			\big|^{2}
		\rd r
		\\&\quad\quad\quad\quad\quad\quad\quad\quad\quad\quad\quad
		+
		(1+\Delta t_{\ell})
		\int_{t_{\ell}}^{t_{\ell+1}}
			\big|
				\nabla_{t_{k+1}} \mathscr{Z}^{\pi}(t_{k},r)
			\big|^{2}
		\rd r
	\Big\}
	\sup_{s \in [0,T]}
	\big|
		(\nabla X(s))^{-1}
	\big|^{2}
	\Big(
		1
		+
		\sup_{s \in [0,T]}
		\big|
			X(s)
		\big|^{2}
	\Big)
\Big]
\\&\leq
C
\e\Big[
	\Big\{
		|\pi|
		\sup_{s \in [0,T]}
		\big|
			\nabla X(s)
		\big|^{2}
		+
		|\pi|
		\int_{0}^{T}
			\big|
				\nabla_{0} \mathscr{Y}^{\pi}(\tau(t),t)
			\big|^{2}
		\rd t
		+
		\sum_{k=0}^{ N-2}
		\Delta t_{k}
		\int_{t_{k+1}}^{T}
			\big|
				\nabla_{t_{k+1}} \mathscr{Z}^{\pi}(t_{k},s)
			\big|^{2}
		\rd s
	\Big\}
	\\&\quad\quad\quad\quad\quad
	\times
	\sup_{s \in [0,T]}
	\big|
		(\nabla X(s))^{-1}
	\big|^{2}
	\Big(
		1
		+
		\sup_{s \in [0,T]}
		\big|
			X(s)
		\big|^{2}
	\Big)
\Big]
\\&\leq
C
\Big\{
	|\pi|
	\e\Big[
		\sup_{s \in [0,T]}
		\big|
			\nabla X(s)
		\big|^{6}
	\Big]^{1/3}
	+
	|\pi|
	\e\Big[
		\Big(
			\int_{0}^{T}
				\big|
					\nabla_{0} \mathscr{Y}^{\pi}(\tau(t),t)
				\big|^{2}
			\rd t
		\Big)^{3}
	\Big]^{1/3}
	\\&\quad\quad\quad\quad
	+
	\max_{k=0,\ldots,N-2}
	\sup_{\theta \in [t_{k+1},T]}
	\e\Big[
		\Big(
			\int_{0}^{T}
				\big|
					\nabla_{\theta} \mathscr{Z}^{\pi}(t_{k},s)
				\big|^{2}
			\rd s
		\Big)^{3}
	\Big]^{1/3}
\Big\}
\\&\quad\quad\quad
\times
\e\Big[
	\sup_{s \in [0,T]}
	\big|
		(\nabla X(s))^{-1}
	\big|^{6}
\Big]^{1/3}
\Big(
	1+
	\e\Big[
		\sup_{s \in [0,T]}
		\big|X(s)\big|^{6}
	\Big]^{1/3}
\Big)
\\&\leq
C(1+|x|^{2}).
\end{align*}
Thus, we get
\begin{align*}
\sum_{k=0}^{N-2}
\Delta t_{k}
\sum_{\ell=k+1}^{N-1}
I_{k,\ell}^{(3)}
&\leq
C(1+|x|^{2})|\pi|.
\end{align*}

Consequently, it holds that
\begin{align}\label{Thm_L2_0_8}
\sum_{k=0}^{N-1}
\Delta t_{k}
\sum_{\ell=0}^{N-1}
I_{k,\ell}^{(3)}
&\leq
C(1+|x|^{2})|\pi|.
\end{align}
From \eqref{Thm_L2_0_5}, \eqref{Thm_L2_0_6}, \eqref{Thm_L2_0_7} and \eqref{Thm_L2_0_8}, we obtain \eqref{Thm_L2_0_0}, and we finish the proof.
\end{proof}

We provide proofs of our main results.

\begin{proof}[Proof of Theorem~\ref{Thm_1}]
We observe that, under the assumptions $(\mathrm{H}_{\psi,g})$ and $(\mathrm{H}_{b,\sigma})$, the coefficients $(\Psi,G)$ defined by
\begin{align}\label{proof_1}
\begin{split}
&\Psi(t)
=
\psi(t,X(t),X(T)),~t \in [0,T],\\
&G
(t,s,y,z_{1},z_{2})
=
g
(
	t,
	s,
	X(t),
	X(s),
	y,
	z_{1},
	z_{2}
),~
(t,s,y,z_{1},z_{2}) \in \Delta[0,T] \times \real^{m} \times \real^{m \times d} \times \real^{m \times d},
\end{split}
\end{align}
satisfy $(\mathrm{H}_{\Psi,G})$ with $M$ replaced by $C\sqrt{1+|x|^2}$ and $\rho_{\Psi,G}(t)$ replaced by $\sqrt{t}+\rho_{\psi,g}(t)$.
Thus, by Corollary~\ref{Cor_0}, for any $\pi=\{t_0,t_1,\ldots,t_N\}\in\Pi[0,T]$, it holds that
\begin{align*}
	\mathcal{E}(Y;\pi)+\mathcal{E}(Z;\pi)
	&\leq
	C\Bigl\{\bigl(1+|x|^2\bigr)\bigl(|\pi|+\rho_{\psi,g}(|\pi|)^2\bigr)+\sum^{N-1}_{k=0}\Delta t_k\e\Bigl[\int^{t_{k+1}}_{t_k}\big|\mathscr{Z}^\pi(t_k,s)\big|^2\rd s\Bigr]\\*
	&\hspace{1cm}+\sum^{N-1}_{k=0}\Delta t_k\sum^{N-1}_{\ell=0}\e\Bigl[\int^{t_{\ell+1}}_{t_\ell}\big|\mathscr{Z}^\pi(t_k,s)-\overline{\mathscr{Z}}^\pi(t_k,t_\ell)\big|^2\rd s\Bigr]\Bigr\}.
\end{align*}
Therefore, by Theorem~\ref{Thm_L2_0}, we get the assertions.
\end{proof}

\begin{proof}[Proof of Theorem~\ref{Thm_2}]
Under the assumptions $(\mathrm{H}_{\psi,g})'$ and $(\mathrm{H}_{b,\sigma})'$, $(\Psi,G)$ defined by \eqref{proof_1}, together with $(\Psi^\pi,G^\pi)$ defined by
\begin{align*}
&\Psi^{\pi}(t_{k})
=
\psi(t_{k},X^{\pi}(t_{k}),X^{\pi}(t_{N})),~
k=0,\ldots,N-1,
\\
&G^{\pi}(t_{k},t_{\ell},y,z_{1},z_{2})
=
g(t_{k},t_{\ell},X^{\pi}(t_{k}),X^{\pi}(t_{\ell}), y, z_{1}, z_{2}),\\
&
\hspace{2cm}
(y,z_{1},z_{2}) \in \real^{m} \times \real^{m \times d} \times \real^{m \times d},~
k=0,\ldots,N-1,~\ell=k,\ldots,N-1,
\end{align*}
satisfy $(\mathrm{H}_{\Psi,G})'$ with $M$ replaced by $C\sqrt{1+|x|^2}$.
Thus, by Proposition~\ref{Prop_EM_1}, for any $\pi=\{t_0,t_1,\ldots,t_N\}\in\Pi[0,T]$ with $|\pi|\leq\delta$, it holds that
\begin{align*}
&\sum^{N-1}_{k=0}
\e\Bigl[\int^{t_{k+1}}_{t_k}\big|Y(t)-Y^\pi(t_k,t_k)\big|^2\rd t
+
\sum^{N-1}_{k=0}\sum^{N-1}_{\ell=0}
\e\Bigl[\int^{t_{k+1}}_{t_k}\int^{t_{\ell+1}}_{t_\ell}\big|Z(t,s)-Z^\pi(t_k,t_\ell)\big|^2\rd s\rd t\Bigr]\\
&\leq C\Bigl\{\bigl(1+|x|^2\bigr)|\pi|
+\sum^{N-1}_{k=0}\Delta t_k
\e\Bigl[\int^{t_{k+1}}_{t_k}\big|\mathscr{Z}^\pi(t_k,s)\big|^2\rd s\Bigr]\\
&\hspace{2cm}+\sum^{N-1}_{k=0}\Delta t_k\sum^{N-1}_{\ell=0}\e\Bigl[\int^{t_{\ell+1}}_{t_\ell}\big|\mathscr{Z}^\pi(t_k,s)-\overline{\mathscr{Z}}^\pi(t_k,t_\ell)\big|^2\rd s\Bigr]\Bigr\}.
\end{align*}
Noting that $\rho_\sigma(t)=L\sqrt{t}$, by Theorem~\ref{Thm_L2_0}, we get the assertion.
\end{proof}


\begin{appendix}
\section{Appendix: $L^{p}$-a priori estimates}\label{sec_Appendix}

In this appendix, we provide a proof of Theorem \ref{Lem_Lp}.
First, we show the following lemma.

\begin{lemm}\label{lemma_A}
Assume that $(\cH,\cG)$ satisfies the conditions (\rnum{1}), (\rnum{2}) and (\rnum{3}) in $\mathrm{(H}_{\Psi,G}\mathrm{)}$ with $(\Psi,G)=(\cH,\cG)$, and that $\cH$ and $\cG$ are bounded. Suppose that a triplet
\begin{equation*}
	(\cY(\cdot,\cdot),\cZ(\cdot,\cdot),\zeta(\cdot,\cdot))\in L^2(0,T;L^2_\mathbb{F}(\Omega;C([0,T];\real^m)))\times L^2(0,T;L^2_\mathbb{F}(0,T;\real^{m\times d}))\times L^2(0,T;L^2_\mathbb{F}(0,T;\real^{m\times d}))
\end{equation*}
satisfies
\begin{equation}\label{A_*}
\begin{split}
	&\cY(t,s)=\cH(t)+\int^T_s\cG(t,r,\cY(r,r),\cZ(t,r),\zeta(t,r))\1_{[t,T]}(r)\rd r-\int^T_s\cZ(t,r)\rd W(r),\\
	&\hspace{5cm}s\in[0,T],\ \text{a.e.}\ t\in[0,T], \text{a.s.},
\end{split}
\end{equation}
and
\begin{equation}\label{A_**}
	\int^s_0\big|\zeta(t,s)\big|^2\rd t\leq\int^s_0\big|\cZ(s,t)\big|^2\rd t,\ \text{a.e.}\ s\in[0,T],\ \text{a.s.}
\end{equation}
Then for any $p\geq2$, it holds that
\begin{equation}\label{A_***}
\begin{split}
	&\e\Bigl[\sup_{s\in[0,T]}\Bigl(\int^T_0\big|\cY(t,s)\big|^2\rd t\Bigr)^{p/2}+\Bigl(\int^T_0\big|\cY(t,t)\big|^2\rd t\Bigr)^{p/2}+\Bigl(\int^T_0\int^T_0\big|\cZ(t,s)\big|^2\rd s\rd t\Bigr)^{p/2}\Bigr]\\
	&\leq C_p\e\Bigl[\Bigl(\int^T_0\big|\cH(t)\big|^2\rd t\Bigr)^{p/2}+\Bigl(\int^T_0\int^T_t\big|\cG(t,s,0,0,0)\big|^2\rd s\rd t\Bigr)^{p/2}\Bigr].
\end{split}
\end{equation}
\end{lemm}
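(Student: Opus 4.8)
The plan is to follow the route indicated in Remark~(i): exhibit the integrated process $\Phi(s):=\int_0^T|\cY(t,s)|^2\,\rd t$ as a one-dimensional It\^o process and then run the standard $L^p$ a~priori machinery for BSDEs on it, using the constraint \eqref{A_**} to absorb the off-diagonal term $\zeta$. First I would fix $t$, apply It\^o's formula to $s\mapsto|\cY(t,s)|^2$ via \eqref{A_*}, integrate over $t\in[0,T]$, and use the (stochastic) Fubini theorem to obtain
\begin{equation*}
\Phi(s)+\int_s^T\Big(\int_0^T|\cZ(t,r)|^2\,\rd t\Big)\rd r=\int_0^T|\cH(t)|^2\,\rd t+\int_s^T 2\Big(\int_0^r\cY(t,r)^\top\cG(t,r,\cY(r,r),\cZ(t,r),\zeta(t,r))\,\rd t\Big)\rd r-\int_s^T\Xi(r)\,\rd W(r),
\end{equation*}
where $\Xi(r):=2\int_0^T\cY(t,r)^\top\cZ(t,r)\,\rd t$. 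The structural gain is the nonnegative term $\int_s^T\int_0^T|\cZ(t,r)|^2\,\rd t\,\rd r$ on the left, which is what will control the full mass $A:=\int_0^T\int_0^T|\cZ(t,s)|^2\,\rd s\,\rd t$.

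Because the driver is inactive on $[0,t]$, each $\cY(t,\cdot)$ is a martingale there, so applying It\^o on $[0,t]$ and integrating over $t$ yields the diagonal identity
\begin{equation*}
\int_0^T|\cY(t,t)|^2\,\rd t=\Phi(0)+\int_0^T\int_0^t|\cZ(t,u)|^2\,\rd u\,\rd t+\mathcal M,\qquad \mathcal M=\int_0^T\Theta(u)\,\rd W(u),
\end{equation*}
where, after stochastic Fubini, $\Theta(u)=2\int_u^T\cY(t,u)^\top\cZ(t,u)\,\rd t$ satisfies $|\Theta(u)|^2\le 4\Phi(u)\int_0^T|\cZ(t,u)|^2\,\rd t$. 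Hence the diagonal quantity $\int_0^T|\cY(t,t)|^2\,\rd t$ is controlled by $\sup_s\Phi(s)$, by $A$, and by a martingale amenable to the Burkholder--Davis--Gundy (BDG) inequality. I would also integrate \eqref{A_**} over $s$ to get $\int_0^T\int_0^r|\zeta(t,r)|^2\,\rd t\,\rd r\le A$, turning every occurrence of $\zeta$ in the drift into $\cZ$-mass.

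Next I would apply It\^o to $\Phi(s)^{p/2}$ and estimate term by term. Using the Lipschitz bound on $\cG$ together with Young's inequality, the $\cZ$-cross term is absorbed into the good term $c_p\int_s^T\Phi(r)^{p/2-1}\int_0^T|\cZ(t,r)|^2\,\rd t\,\rd r$ (the It\^o correction $\Phi^{p/2-2}|\Xi|^2\le 4\Phi^{p/2-1}\int_0^T|\cZ(t,r)|^2\,\rd t$ is folded into the same term); the $\zeta$-cross term is dominated by $A$ through the integrated constraint; and the $\cY(r,r)$-term is rewritten through the diagonal identity above. Taking $\sup_s$, using BDG on the martingale parts, and Young's inequality on the mixed products $(\sup_s\Phi)^{\alpha}A^{\beta}$ produced by $\mathcal M$ and $\Theta$, I would reabsorb $\e[(\sup_s\Phi)^{p/2}]$ and $\e[A^{p/2}]$ with small coefficients and conclude \eqref{A_***}.

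The step I expect to be the main obstacle is closing this loop when the Lipschitz constant $L$ is large. The difficulty is a transpose mismatch: the favorable term only controls the ``column mass'' $\int_0^T|\cZ(t,r)|^2\,\rd t$ weighted by the current time $r$, whereas both $\zeta$ (through \eqref{A_**}) and the diagonal value $\cY(r,r)$ (through the diagonal identity) feed back the ``row mass'' $\int_0^r|\cZ(r,u)|^2\,\rd u$, weighted by the parameter; naive absorption then fails for large $L$. The remedy I would use is to carry an exponential weight $e^{\gamma s}$ through the estimate and let $\gamma$ be large, exactly as in the proof of Lemma~\ref{Lem_Gron_cont}: the weight gives every coupling term a factor $1/\gamma$ and simultaneously reconciles the row/column transpose, so that for $\gamma$ large (depending only on $L$ and $T$) the quantities $\sup_s\Phi$, $A$, and $\int_0^T|\cY(t,t)|^2\,\rd t$ decouple and \eqref{A_***} follows. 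The case $p=2$ is the cleanest instance of this scheme; the general $p\ge2$ case only adds the routine bookkeeping of the It\^o correction and BDG when passing to powers.
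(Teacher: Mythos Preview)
Your plan is correct and matches the paper's route: derive the It\^o dynamics of the integrated process $\int_0^T|\cY(t,s)|^2\,\rd t$, use the constraint \eqref{A_**} together with the martingale-part bound $\int_0^s|\cZ(s,t)|^2\,\rd t\le|\cY(s,s)|^2-\text{(martingale)}$ to convert the $\zeta$ and diagonal contributions into absorbable pieces, and close with an exponential weight. Two refinements you will need when executing it are exactly what the paper does: the weight must be $e^{\lambda(t\vee s)}$ rather than $e^{\gamma s}$ (constant on $s\in[0,t]$, so no spurious $\lambda$-term is injected on the martingale part, and the resulting $\cZ$-mass $\int\!\!\int e^{\lambda(t\vee s)}|\cZ(t,s)|^2$ is symmetric in the two variables, which is precisely what reconciles the row/column transpose you flagged); and the It\^o formula for the $p/2$-power is applied to $\bigl(\int_0^T e^{\lambda(t\vee s)}|\cY(t,s)|^2\,\rd t+\varepsilon\bigr)^{p/2}$, with the argument split into two claims---first bounding the diagonal and the $\cZ$-mass by the data plus the sup at the linear level (then raising to the $p/2$ power and using BDG), and second bounding the sup via It\^o on the power with a free Young parameter $\mu$ that is sent large at the end.
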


\begin{proof}
For simplicity of notation, we let $m=d=1$. The proof can be easily generalized to the multi-dimensional case. We fix $p\geq2$ and prove the estimate \eqref{A_***}.

By taking conditional expectations on both sides of \eqref{A_*}, we get
\begin{equation*}
	\cY(t,s)=\e_s\Bigl[\cH(t)+\int^T_s\cG(t,r,\cY(r,r),\cZ(t,r),\zeta(t,r))\1_{[t,T]}(r)\rd r\Bigr],\ s\in[0,T],\ \text{a.e.}\ t\in[0,T],\ \text{a.s.}
\end{equation*}
Since $\cH$ and $\cG$ are bounded, we see that $\cY(\cdot,\cdot)$ is bounded. Again by \eqref{A_*} and the Burkholder--Davis--Gundy inequality, we see that
\begin{equation*}
	\underset{t\in[0,T]}{\mathrm{ess\,sup}}\,\e\Bigl[\Bigl(\int^T_0\big|\cZ(t,s)\big|^2\rd s\Bigr)^{p/2}\Bigr]<\infty.
\end{equation*}

We investigate some key dynamics. Fix an arbitrary constant $\lambda>0$. Noting \eqref{A_*}, by using It\^{o}'s formula for $s\mapsto e^{\lambda s}|\cY(t,s)|^2$ on $[t,T]$ for each fixed $t\in[0,T]$, we get
\begin{equation}\label{A_1}
\begin{split}
	e^{\lambda s}\big|\cY(t,s)\big|^2
	&=e^{\lambda T}\big|\cH(t)\big|^2\\
	&\quad+\int^T_s\bigl\{-\lambda e^{\lambda r}\big|\cY(t,r)\big|^2+2e^{\lambda r}\cY(t,r)\cG(t,r,\cY(r,r),\cZ(t,r),\zeta(t,r))-e^{\lambda r}\big|\cZ(t,r)\big|^2\bigr\}\rd r\\
	&\quad-\int^T_s2e^{\lambda r}\cY(t,r)\cZ(t,r)\rd W(r),\ s\in[t,T],\ \text{a.e.}\ t\in[0,T].
\end{split}
\end{equation}
In particular, we have
\begin{equation}\label{A_2}
\begin{split}
	e^{\lambda t}\big|\cY(t,t)\big|^2
	&=e^{\lambda T}\big|\cH(t)\big|^2\\
	&\quad+\int^T_t\bigl\{-\lambda e^{\lambda s}\big|\cY(t,s)\big|^2+2e^{\lambda s}\cY(t,s)\cG(t,s,\cY(s,s),\cZ(t,s),\zeta(t,s))-e^{\lambda s}\big|\cZ(t,s)\big|^2\bigr\}\rd s\\
	&\quad-\int^T_t2e^{\lambda s}\cY(t,s)\cZ(t,s)\rd W(s),\ \text{a.e.}\ t\in[0,T].
\end{split}
\end{equation}
Also, by using It\^o's formula for $s \mapsto |\cY(t,s)|^{2}$ on $[0,t]$ for each fixed $t \in [0,T]$, we have
\begin{equation}\label{A_3}
	\big|\cY(t,s)\big|^2
	=\big|\cY(t,t)\big|^2
	-
	\int^t_s\big|\cZ(t,r)\big|^2\rd r
	-
	\int^t_s2\cY(t,r)\cZ(t,r)\rd W(r),\ s\in[0,t],\ \text{a.e.}\ t\in[0,T],
\end{equation}
and thus
\begin{equation}\label{A_Z}
	\int^t_0\big|\cZ(t,s)\big|^2\rd s\leq\big|\cY(t,t)\big|^2-\int^t_02\cY(t,s)\cZ(t,s)\rd W(s),\ \text{a.e.}\ t\in[0,T].
\end{equation}
Combining \eqref{A_1}, \eqref{A_2} and \eqref{A_3}, we obtain
\begin{equation}\label{A_4}
\begin{split}
	e^{\lambda(t\vee s)}\big|\cY(t,s)\big|^2&=e^{\lambda T}\big|\cH(t)\big|^2\\
	&\quad+\int^T_s\bigl\{-\lambda e^{\lambda r}\big|\cY(t,r)\big|^2+2e^{\lambda r}\cY(t,r)\cG(t,r,\cY(r,r),\cZ(t,r),\zeta(t,r))\bigr\}\1_{[t,T]}(r)\rd r\\
	&\quad-\int^T_se^{\lambda(t\vee r)}\big|\cZ(t,r)\big|^2\rd r\\
	&\quad-\int^T_s2e^{\lambda(t\vee r)}\cY(t,r)\cZ(t,r)\rd W(r),\ s\in[0,T],\ \text{a.e.}\ t\in[0,T].
\end{split}
\end{equation}
By integrating both sides of \eqref{A_2} and \eqref{A_4} with respect to $t\in[0,T]$, and by using (stochastic) Fubini's theorem (cf.\ Theorem 4.A in \cite{BeMi80}), we get
\begin{equation}\label{A_5}
\begin{split}
	\int^T_0e^{\lambda t}\big|\cY(t,t)\big|^2\rd t&=e^{\lambda T}\int^T_0\big|\cH(t)\big|^2\rd t\\
	&\quad+\int^T_0\int^s_0
		\bigl\{
			-\lambda e^{\lambda s}\big|\cY(t,s)\big|^2+2e^{\lambda s}\cY(t,s)\cG(t,s,\cY(s,s),\cZ(t,s),\zeta(t,s))\\
		&\hspace{2.5cm}-e^{\lambda s}\big|\cZ(t,s)\big|^2
		\bigr\}\rd t\rd s\\
	&\quad-\int^T_0\int^s_02e^{\lambda s}\cY(t,s)\cZ(t,s)\rd t\rd W(s)
\end{split}
\end{equation}
and
\begin{align*}
	\int^T_0e^{\lambda(t\vee s)}\big|\cY(t,s)\big|^2\rd t&=e^{\lambda T}\int^T_0\big|\cH(t)\big|^2\rd t\\*
	&\quad+\int^T_s\int^r_0\bigl\{-\lambda e^{\lambda r}\big|\cY(t,r)\big|^2+2e^{\lambda r}\cY(t,r)\cG(t,r,\cY(r,r),\cZ(t,r),\zeta(t,r))\bigr\}\rd t\rd r\\*
	&\quad-\int^T_s\int^T_0e^{\lambda(t\vee r)}\big|\cZ(t,r)\big|^2\rd t\rd r\\
	&\quad-\int^T_s\int^T_02e^{\lambda(t\vee r)}\cY(t,r)\cZ(t,r)\rd t\rd W(r),\ s\in[0,T].
\end{align*}
For any $\varepsilon>0$, define $\widetilde{\cY}^{\lambda, \varepsilon}(s):=\int_{0}^{T} e^{\lambda (t\vee s)}\big|\cY(t,s)\big|^{2}\rd t+\varepsilon$, $s \in [0,T]$.
We note that the process $\widetilde{\cY}^{\lambda, \varepsilon}(\cdot)$ is an It\^{o} process with the terminal condition
$\widetilde{\cY}^{\lambda, \varepsilon}(T)=e^{\lambda T}\int^T_0|\cH(t)|^2\rd t+\varepsilon$.
Thus, by using It\^{o}'s formula for $s\mapsto (\widetilde{\cY}^{\lambda, \varepsilon}(s))^{p/2}$, we obtain
\begin{align*}
	\bigl(\widetilde{\cY}^{\lambda, \varepsilon}(s)\bigr)^{p/2}
	&=
	\Bigl( e^{\lambda T}\int^T_0\big|\cH(t)\big|^2\rd t+\varepsilon\Bigr)^{p/2}\\
	&\quad+
	\int^T_s\frac{p}{2}\bigl(\widetilde{\cY}^{\lambda, \varepsilon}(r)\bigr)^{p/2-1}
	\int^r_0\bigl\{-\lambda e^{\lambda r}\big|\cY(t,r)\big|^2+2e^{\lambda r}\cY(t,r)\cG(t,r,\cY(r,r),\cZ(t,r),\zeta(t,r))\bigr\}\rd t\rd r\\
	&\quad-\int^T_s\frac{p}{2}\bigl(\widetilde{\cY}^{\lambda, \varepsilon}(r)\bigr)^{p/2-1}\int^T_0e^{\lambda(t\vee r)}\big|\cZ(t,r)\big|^2\rd t\rd r\\
	&\quad-\int^T_s\frac{p}{4}\Bigl(\frac{p}{2}-1\Bigr)\bigl(\widetilde{\cY}^{\lambda, \varepsilon}(r)\bigr)^{p/2-2}\Bigl|\int^T_02e^{\lambda(t\vee r)}\cY(t,r)\cZ(t,r)\rd t\Bigr|^2\rd r\\
	&\quad-\int^T_s\frac{p}{2}\bigl(\widetilde{\cY}^{\lambda, \varepsilon}(r)\bigr)^{p/2-1}\int^T_02e^{\lambda(t\vee r)}\cY(t,r)\cZ(t,r)\rd t\rd W(r),~
	s\in[0,T],
\end{align*}
and thus
\begin{equation}\label{A_7}
\begin{split}
	&\bigl(\widetilde{\cY}^{\lambda, \varepsilon}(s)\bigr)^{p/2}
	+
	\frac{p\lambda}{2}\int^T_s\bigl(\widetilde{\cY}^{\lambda, \varepsilon}(r)\bigr)^{p/2-1}e^{\lambda r}\int^r_0\big|\cY(t,r)\big|^2\rd t\rd r
	+\frac{p}{2}\int^T_s\bigl(\widetilde{\cY}^{\lambda, \varepsilon}(r)\bigr)^{p/2-1}\int^T_0e^{\lambda(t\vee r)}\big|\cZ(t,r)\big|^2\rd t\rd r\\
	&+p\Bigl(\frac{p}{2}-1\Bigr)\int^T_s\bigl(\widetilde{\cY}^{\lambda, \varepsilon}(r)\bigr)^{p/2-2}\Bigl|\int^T_0e^{\lambda(t\vee r)}\cY(t,r)\cZ(t,r)\rd t\Bigr|^2\rd r\\
	&=\Bigl(e^{\lambda T}\int^T_0\big|\cH(t)\big|^2\rd t+\varepsilon\Bigr)^{p/2}
	+\frac{p}{2}\int^T_s\bigl(\widetilde{\cY}^{\lambda, \varepsilon}(r)\bigr)^{p/2-1}e^{\lambda r}\int^r_02\cY(t,r)\cG(t,r,\cY(r,r),\cZ(t,r),\zeta(t,r))\rd t\rd r\\
	&\quad-p\int^T_s\bigl(\widetilde{\cY}^{\lambda, \varepsilon}(r)\bigr)^{p/2-1}\int^T_0e^{\lambda(t\vee r)}\cY(t,r)\cZ(t,r)\rd t\rd W(r),\ s\in[0,T].
\end{split}
\end{equation}
By using the above dynamics, we prove \emph{Claim 1} and \emph{Claim 2} below. In the following, we use the notation $\cG_0(t,s):=\cG(t,s,0,0,0)$.

\paragraph{}
\emph{Claim 1}. For any constant $\lambda\geq1+2L^{2}(2T+3)$, it holds that
\begin{align*}
	&\e\Bigl[\Bigl(\int^T_0e^{\lambda t}\big|\cY(t,t)\big|^2\rd t\Bigr)^{p/2}+\Bigl(\int^T_0\int^T_0e^{\lambda(t\vee s)}\big|\cZ(t,s)\big|^2\rd s\rd t\Bigr)^{p/2}\Bigr]\\
	&\leq C_p\e\Bigl[e^{p\lambda T/2}\Bigl(\int^T_0\big|\cH(t)\big|^2 \rd t\Bigr)^{p/2}+\Bigl(\int^T_0\int^T_te^{\lambda s}\big|\cG_0(t,s)\big|^2\rd s\rd t\Bigr)^{p/2}+\sup_{s\in[0,T]}\Bigl(\int^T_0e^{\lambda(t\vee s)}\big|\cY(t,s)\big|^2\rd t\Bigr)^{p/2}\Bigr].
\end{align*}
\emph{Proof of Claim 1}. Fix arbitrary constants $\mu,\lambda>0$. By the Lipschitz continuity of $\cG$ and Young's inequality, it holds that
\begin{equation}\label{A_Young}
\begin{split}
	&2\cY(t,s)\cG(t,s,\cY(s,s),\cZ(t,s),\zeta(t,s))\\
	&\leq\bigl(1+2\mu L^{2}+2L^{2}\bigr)\big|\cY(t,s)\big|^2+\big|\cG_0(t,s)\big|^2+\frac{1}{\mu}\big|\cY(s,s)\big|^2+\frac{1}{2}\big|\cZ(t,s)\big|^2+\frac{1}{\mu}\big|\zeta(t,s)\big|^2
\end{split}
\end{equation}
for any $(t,s)\in\Delta[0,T]$. Thus, by \eqref{A_5}, for any $\mu,\lambda>0$, it holds that
\begin{align*}
	\int^T_0e^{\lambda t}\big|\cY(t,t)\big|^2\rd t &\leq e^{\lambda T}\int^T_0\big|\cH(t)\big|^2\rd t\\*
	&\quad+\int^T_0e^{\lambda s}\Bigl\{\bigl(1+2\mu L^{2}+2L^{2}-\lambda\bigr)\int^s_0\big|\cY(t,s)\big|^2\rd t-\frac{1}{2}\int^s_0\big|\cZ(t,s)\big|^2\rd t\\*
	&\hspace{2.5cm}+\int^s_0\big|\cG_0(t,s)\big|^2\rd t+\frac{T}{\mu}\big|\cY(s,s)\big|^2+\frac{1}{\mu}\int^s_0\big|\zeta(t,s)\big|^2\rd t\Bigr\}\rd s\\
	&\quad-\int^T_0\int^s_02e^{\lambda s}\cY(t,s)\cZ(t,s)\rd t\rd W(s).
\end{align*}
By the assumption \eqref{A_**}, together with \eqref{A_Z}, we have
\begin{equation*}
	\int^s_0\big|\zeta(t,s)\big|^2\rd t\leq\int^s_0\big|\cZ(s,t)\big|^2\rd t\leq \big|\cY(s,s)\big|^2-\int^s_02\cY(s,t)\cZ(s,t)\rd W(t),\ \text{a.e.}\ s\in[0,T].
\end{equation*}
Hence, by the stochastic Fubini's theorem, we obtain
\begin{align*}
	\int^T_0e^{\lambda t}\big|\cY(t,t)\big|^2\rd t &\leq e^{\lambda T}\int^T_0\big|\cH(t)\big|^2\rd t\\
	&\quad+\int^T_0e^{\lambda s}\Bigl\{\bigl(1+2\mu L^{2}+2L^{2}-\lambda\bigr)\int^s_0\big|\cY(t,s)\big|^2\rd t-\frac{1}{2}\int^s_0\big|\cZ(t,s)\big|^2\rd t\\
	&\hspace{2.5cm}+\int^s_0\big|\cG_0(t,s)\big|^2\rd t+\frac{T+1}{\mu}\big|\cY(s,s)\big|^2\Bigr\}\rd s\\
	&\quad-\int^T_0\Bigl\{2\int^s_0 e^{\lambda s} \cY(t,s)\cZ(t,s)\rd t+\frac{2}{\mu}\int^T_se^{\lambda t}\cY(t,s)\cZ(t,s)\rd t\Bigr\}\rd W(s).
\end{align*}
We set $\mu=2(T+1)$. Then for any $\lambda\geq1+2\mu L^{2}+2L^{2}=1+2L^{2}(2T+3)$, we have
\begin{align*}
	&\int^T_0e^{\lambda t}\big|\cY(t,t)\big|^2\rd t+\int^T_0\int^T_te^{\lambda s}\big|\cZ(t,s)\big|^2\rd s\rd t\\
	&\leq2e^{\lambda T}\int^T_0\big|\cH(t)\big|^2\rd t+2\int^T_0\int^T_te^{\lambda s}\big|\cG_0(t,s)\big|^2\rd s\rd t\\
	&\hspace{0.5cm}-\int^T_0\Bigl\{4\int^s_0 e^{\lambda s}\cY(t,s)\cZ(t,s)\rd t+\frac{2}{T+1}\int^T_se^{\lambda t}\cY(t,s)\cZ(t,s)\rd t\Bigr\}\rd W(s).
\end{align*}
Furthermore, again by \eqref{A_Z} and the stochastic Fubini's theorem,
\begin{align*}
	&\int^T_0e^{\lambda t}\big|\cY(t,t)\big|^2\rd t+\int^T_0\int^T_0e^{\lambda(t\vee s)}\big|\cZ(t,s)\big|^2\rd s\rd t\\
	&=\int^T_0e^{\lambda t}\big|\cY(t,t)\big|^2\rd t+\int^T_0\int^T_te^{\lambda s}\big|\cZ(t,s)\big|^2\rd s\rd t+\int^T_0e^{\lambda t}\int^t_0\big|\cZ(t,s)\big|^2\rd s\rd t\\
	&\leq2\int^T_0e^{\lambda t}\big|\cY(t,t)\big|^2\rd t+\int^T_0\int^T_te^{\lambda s}\big|\cZ(t,s)\big|^2\rd s\rd t-\int^T_0e^{\lambda t}\int^t_02\cY(t,s)\cZ(t,s)\rd W(s)\rd t\\
	&\leq4e^{\lambda T}\int^T_0\big|\cH(t)\big|^2\rd t+4\int^T_0\int^T_te^{\lambda s}\big|\cG_0(t,s)\big|^2\rd s\rd t\\
	&\hspace{0.5cm}-\int^T_0\Bigl\{8\int^s_0 e^{\lambda s} \cY(t,s)\cZ(t,s)\rd t+\Bigl(\frac{4}{T+1}+2\Bigr)\int^T_se^{\lambda t}\cY(t,s)\cZ(t,s)\rd t\Bigr\}\rd W(s).
\end{align*}
By using the Burkholder--Davis--Gundy inequality, H\"{o}lder's inequality and Young's inequality, we see that
\begin{align*}
	&\e\Bigl[\Bigl(\int^T_0e^{\lambda t}\big|\cY(t,t)\big|^2\rd t\Bigr)^{p/2}+\Bigl(\int^T_0\int^T_0e^{\lambda(t\vee s)}\big|\cZ(t,s)\big|^2\rd s\rd t\Bigr)^{p/2}\Bigr]\\*
	&\leq C_p\e\Bigl[e^{p\lambda T/2}\Bigl(\int^T_0\big|\cH(t)\big|^2\rd t\Bigr)^{p/2}+\Bigl(\int^T_0\int^T_te^{\lambda s}\big|\cG_0(t,s)\big|^2\rd s\rd t\Bigr)^{p/2}\\
	&\hspace{2cm}+\Bigl(\int^T_0\Bigl(\int^T_0e^{\lambda(t\vee s)}\big|\cY(t,s)\big|\big|\cZ(t,s)\big|\rd t\Bigr)^2\rd s\Bigr)^{p/4}\Bigr]\\
	&\leq C_p\e\Bigl[e^{p\lambda T/2}\Bigl(\int^T_0\big|\cH(t)\big|^2\rd t\Bigr)^{p/2}+\Bigl(\int^T_0\int^T_te^{\lambda s}\big|\cG_0(t,s)\big|^2\rd s\rd t\Bigr)^{p/2}\\
	&\hspace{2cm}+\Bigl(\int^T_0\int^T_0e^{\lambda(t\vee s)}\big|\cY(t,s)\big|^2\rd t\int^T_0e^{\lambda(t\vee s)}\big|\cZ(t,s)\big|^2\rd t\rd s\Bigr)^{p/4}\Bigr]\\
	&\leq C_p\e\Bigl[e^{p\lambda T/2}\Bigl(\int^T_0\big|\cH(t)\big|^2\rd t\Bigr)^{p/2}+\Bigl(\int^T_0\int^T_te^{\lambda s}\big|\cG_0(t,s)\big|^2\rd s\rd t\Bigr)^{p/2}\\
	&\hspace{2cm}+\sup_{s\in[0,T]}\Bigl(\int^T_0e^{\lambda(t\vee s)}\big|\cY(t,s)\big|^2\rd t\Bigr)^{p/4}\Bigl(\int^T_0\int^T_0e^{\lambda(t\vee s)}\big|\cZ(t,s)\big|^2\rd s\rd t\Bigr)^{p/4}\Bigr]\\
	&\leq C_p\e\Bigl[e^{p\lambda T/2}\Bigl(\int^T_0\big|\cH(t)\big|^2\rd t\Bigr)^{p/2}+\Bigl(\int^T_0\int^T_te^{\lambda s}\big|\cG_0(t,s)\big|^2\rd s\rd t\Bigr)^{p/2}+\sup_{s\in[0,T]}\Bigl(\int^T_0e^{\lambda(t\vee s)}\big|\cY(t,s)\big|^2\rd t\Bigr)^{p/2}\Bigr]\\
	&\hspace{0.5cm}+\frac{1}{2}\e\Bigl[\Bigl(\int^T_0\int^T_0e^{\lambda(t\vee s)}\big|\cZ(t,s)\big|^2\rd s\rd t\Bigr)^{p/2}\Bigr].
\end{align*}
Noting that
\begin{equation*}
	\e\Bigl[\Bigl(\int^T_0\int^T_0e^{\lambda(t\vee s)}\big|\cZ(t,s)\big|^2\rd s\rd t\Bigr)^{p/2}\Bigr]\leq T^{p/2}e^{p\lambda T/2}\,\underset{t\in[0,T]}{\mathrm{ess\,sup}}\,\e\Bigl[\Bigl(\int^T_0\big|\cZ(t,s)\big|^2\rd s\Bigr)^{p/2}\Bigr]<\infty,
\end{equation*}
we get the assertion of \emph{Claim 1}.

\paragraph{}
\emph{Claim 2}. For any constants $\mu,\lambda>0$ with $\lambda\geq1+2\mu L^{2}+2L^{2}$, it holds that
\begin{align*}
	&\e\Bigl[\sup_{s\in[0,T]}\Bigl(\int^T_0e^{\lambda(t\vee s)}\big|\cY(t,s)\big|^2\rd t\Bigr)^{p/2}\Bigr]\\
	&\leq C_p\e\Bigl[e^{p\lambda T/2}\Bigl(\int^T_0\big|\cH(t)\big|^2\rd t\Bigr)^{p/2}+\Bigl(\int^T_0\int^T_te^{\lambda s}\big|\cG_0(t,s)\big|^2\rd s\rd t\Bigr)^{p/2}\\
	&\hspace{2cm}+\frac{1}{\mu^{p/2}}\Bigl(\int^T_0e^{\lambda t}\big|\cY(t,t)\big|^2\rd t\Bigr)^{p/2}+\frac{1}{\mu^{p/2}}\Bigl(\int^T_0\int^T_0e^{\lambda(t\vee s)}\big|\cZ(t,s)\big|^2\rd s\rd t\Bigr)^{p/2}\Bigr].
\end{align*}

\emph{Proof of Claim 2}. Fix arbitrary constants $\mu,\lambda>0$. By \eqref{A_7} and \eqref{A_Young}, for each $s\in[0,T]$, we have
\begin{align*}
	&\bigl(\widetilde{\cY}^{\lambda, \varepsilon}(s)\bigr)^{p/2}+\frac{p\lambda}{2}\int^T_s\bigl(\widetilde{\cY}^{\lambda, \varepsilon}(r)\bigr)^{p/2-1}e^{\lambda r}\int^r_0\big|\cY(t,r)\big|^2\rd t\rd r
	\\&
	+\frac{p}{2}\int^T_s\bigl(\widetilde{\cY}^{\lambda, \varepsilon}(r)\bigr)^{p/2-1}\int^T_0e^{\lambda(t\vee r)}\big|\cZ(t,r)\big|^2\rd t\rd r\\*
	&+p\Bigl(\frac{p}{2}-1\Bigr)\int^T_s\bigl(\widetilde{\cY}^{\lambda,\varepsilon}(r)\bigr)^{p/2-2}\Bigl|\int^T_0e^{\lambda(t\vee r)}\cY(t,r)\cZ(t,r)\rd t\Bigr|^2\rd r\\
	&\leq \Bigl(e^{\lambda T}\int^T_0\big|\cH(t)\big|^2\rd t+\varepsilon\Bigr)^{p/2}\\
	&\hspace{0.5cm}+\frac{p}{2}\bigl(1+2\mu L^{2}+2L^{2}\bigr)\int^T_s\bigl(\widetilde{\cY}^{\lambda, \varepsilon}(r)\bigr)^{p/2-1}e^{\lambda r}\int^r_0\big|\cY(t,r)\big|^2\rd t\rd r\\
	&\hspace{0.5cm}+\frac{p}{4}\int^T_s\bigl(\widetilde{\cY}^{\lambda, \varepsilon}(r)\bigr)^{p/2-1}e^{\lambda r}\int^r_0\big|\cZ(t,r)\big|^2\rd t\rd r\\
	&\hspace{0.5cm}+
	\frac{p}{2}\int^T_s
		\bigl(\widetilde{\cY}^{\lambda, \varepsilon}(r)\bigr)^{p/2-1}e^{\lambda r}
		\Big\{
			\int^r_0\big|\cG_0(t,r)\big|^2\rd t
			+
			\frac{T}{\mu}
			\big|\cY(r,r)\big|^2
			+
			\frac{1}{\mu}
			\int^r_0\big|\zeta(t,r)\big|^2\rd t
		\Big\}
	\rd r
	\\
	&\hspace{0.5cm}-p\int^T_s\bigl(\widetilde{\cY}^{\lambda, \varepsilon}(r)\bigr)^{p/2-1}\int^T_0e^{\lambda(t\vee r)}\cY(t,r)\cZ(t,r)\rd t\rd W(r).
\end{align*}
Noting the assumption \eqref{A_**}, when $\lambda\geq1+2\mu L^{2}+2L^{2}$, we have, for each $s\in[0,T]$,
\begin{equation}\label{A_8}
\begin{split}
	&\bigl(\widetilde{\cY}^{\lambda, \varepsilon}(s)\bigr)^{p/2}
	+
	p\Bigl(\frac{p}{2}-1\Bigr)\int^T_s\bigl(\widetilde{\cY}^{\lambda, \varepsilon}(r)\bigr)^{p/2-2}\Bigl|\int^T_0e^{\lambda(t\vee r)}\cY(t,r)\cZ(t,r)\rd t\Bigr|^2\rd r\\
	&\leq \Bigl(e^{\lambda T}\int^T_0\big|\cH(t)\big|^2\rd t+\varepsilon\Bigr)^{p/2}
	\\
	&\quad+
	\frac{p}{2}\int^T_s
		\bigl(\widetilde{\cY}^{\lambda, \varepsilon}(r)\bigr)^{p/2-1}e^{\lambda r}
		\Big\{
			\int^r_0\big|\cG_0(t,r)\big|^2\rd t
			+
			\frac{T}{\mu}
			\big|\cY(r,r)\big|^2
			+
			\frac{1}{\mu}
			\int^r_0\big|\cZ(r,t)\big|^2\rd t
		\Big\}
	\rd r
	\\&\hspace{0.5cm}-p\int^T_s\bigl(\widetilde{\cY}^{\lambda, \varepsilon}(r)\bigr)^{p/2-1}\int^T_0e^{\lambda(t\vee r)}\cY(t,r)\cZ(t,r)\rd t\rd W(r).
\end{split}
\end{equation}
Thus, by the Burkholder--Davis--Gundy inequality, we get
\begin{align*}
	&\e\Bigl[\sup_{s\in[0,T]}\bigl(\widetilde{\cY}^{\lambda, \varepsilon}(s)\bigr)^{p/2}\Bigr]\\
	&\leq C_p\e\Bigl[\Bigl(e^{\lambda T}\int^T_0\big|\cH(t)\big|^2\rd t+\varepsilon\Bigr)^{p/2}
	\\
	&\hspace{1.5cm}+
	\int^T_0
		\bigl(\widetilde{\cY}^{\lambda, \varepsilon}(s)\bigr)^{p/2-1}e^{\lambda s}
		\Big\{
			\int^s_0\big|\cG_0(t,s)\big|^2\rd t
			+
			\frac{1}{\mu}
			\big|\cY(s,s)\big|^2
			+
			\frac{1}{\mu}
			\int^s_0\big|\cZ(s,t)\big|^2\rd t
		\Big\}
	\rd s
	\\
	&\hspace{1.5cm}+\Bigl\{\int^T_0\bigl(\widetilde{\cY}^{\lambda, \varepsilon}(s)\bigr)^{p-2}\Bigl|\int^T_0e^{\lambda(t\vee s)}\cY(t,s)\cZ(t,s)\rd t\Bigr|^2\rd s\Bigr\}^{1/2}\Bigr]\\
	&\leq C_p\e\Bigl[\Bigl(e^{\lambda T}\int^T_0\big|\cH(t)\big|^2\rd t+\varepsilon\Bigr)^{p/2}\\
	&\hspace{1.5cm}+\sup_{s\in[0,T]}\bigl(\widetilde{\cY}^{\lambda, \varepsilon}(s)\bigr)^{p/2-1}
	\Bigl\{\int^T_0\int^T_te^{\lambda s}\big|\cG_0(t,s)\big|^2\rd s\rd t+\frac{1}{\mu}\int^T_0e^{\lambda t}\big|\cY(t,t)\big|^2\rd t\\&\hspace{6cm}+\frac{1}{\mu}\int^T_0\int^T_0e^{\lambda(t\vee s)}\big|\cZ(t,s)\big|^2\rd s\rd t\Bigr\}\\
	&\hspace{1.5cm}+\sup_{s\in[0,T]}\bigl(\widetilde{\cY}^{\lambda, \varepsilon}(s)\bigr)^{p/4}\Bigl\{\int^T_0\bigl(\widetilde{\cY}^{\lambda, \varepsilon}(s)\bigr)^{p/2-2}\Bigl|\int^T_0e^{\lambda(t\vee s)}\cY(t,s)\cZ(t,s)\rd t\Bigr|^2\rd s\Bigr\}^{1/2}\Bigr].
\end{align*}
Furthermore, by Young's inequality,
\begin{align*}
	\e\Bigl[\sup_{s\in[0,T]}\bigl(\widetilde{\cY}^{\lambda, \varepsilon}(s)\bigr)^{p/2}\Bigr]
	&\leq C_p\e\Bigl[\Bigl(e^{\lambda T}\int^T_0\big|\cH(t)\big|^2\rd t+\varepsilon\Bigr)^{p/2}+\Bigl(\int^T_0\int^T_te^{\lambda s}\big|\cG_0(t,s)\big|^2\rd s\rd t\Bigr)^{p/2}\\*
	&\hspace{1.5cm}+\frac{1}{\mu^{p/2}}\Bigl(\int^T_0e^{\lambda t}\big|\cY(t,t)\big|^2\rd t\Bigr)^{p/2}+\frac{1}{\mu^{p/2}}\Bigl(\int^T_0\int^T_0e^{\lambda(t\vee s)}\big|\cZ(t,s)\big|^2\rd s\rd t\Bigr)^{p/2}\\*
	&\hspace{1.5cm}+\int^T_0\bigl(\widetilde{\cY}^{\lambda, \varepsilon}(s)\bigr)^{p/2-2}\Bigl|\int^T_0e^{\lambda(t\vee s)}\cY(t,s)\cZ(t,s)\rd t\Bigr|^2\rd s\Bigr]\\*
	&\hspace{0.5cm}+\frac{1}{2}\e\Bigl[\sup_{s\in[0,T]}\bigl(\widetilde{\cY}^{\lambda, \varepsilon}(s)\bigr)^{p/2}\Bigr],
\end{align*}
and thus, it holds that
\begin{equation}\label{A_9}
\begin{split}
	\e\Bigl[\sup_{s\in[0,T]}\bigl(\widetilde{\cY}^{\lambda, \varepsilon}(s)\bigr)^{p/2}\Bigr]
	&\leq C_p\e\Bigl[\Bigl(e^{\lambda T}\int^T_0\big|\cH(t)\big|^2\rd t+\varepsilon\Bigr)^{p/2}+\Bigl(\int^T_0\int^T_te^{\lambda s}\big|\cG_0(t,s)\big|^2\rd s\rd t\Bigr)^{p/2}\\
	&\hspace{1.5cm}+\frac{1}{\mu^{p/2}}\Bigl(\int^T_0e^{\lambda t}\big|\cY(t,t)\big|^2\rd t\Bigr)^{p/2}+\frac{1}{\mu^{p/2}}\Bigl(\int^T_0\int^T_0e^{\lambda(t\vee s)}\big|\cZ(t,s)\big|^2\rd s\rd t\Bigr)^{p/2}\\
	&\hspace{1.5cm}+\int^T_0\bigl(\widetilde{\cY}^{\lambda, \varepsilon}(s)\bigr)^{p/2-2}\Bigl|\int^T_0e^{\lambda(t\vee s)}\cY(t,s)\cZ(t,s)\rd t\Bigr|^2\rd s\Bigr].
\end{split}
\end{equation}
On the other hand, noting that $\cY(\cdot,\cdot)$ is bounded and
\begin{align*}
	\e\Bigl[\int^T_0\Bigl(\int^T_0\big|\cZ(t,s)\big|\rd t\Bigr)^2\rd s\Bigr]\leq T^2\,\underset{t\in[0,T]}{\mathrm{ess\,sup}}\,\e\Bigl[\int^T_0\big|\cZ(t,s)\big|^2\rd s\Bigr]<\infty,
\end{align*}
by letting $s=0$ in \eqref{A_8} and taking expectations on both sides, we have
\begin{align*}
	&\e\Bigl[\int^T_0\bigl(\widetilde{\cY}^{\lambda, \varepsilon}(s)\bigr)^{p/2-2}\Bigl|\int^T_0e^{\lambda(t\vee s)}\cY(t,s)\cZ(t,s)\rd t\Bigr|^2\rd s\Bigr]\\*
	&\leq C_p\e\Bigl[\Bigl(e^{\lambda T}\int^T_0\big|\cH(t)\big|^2\rd t+\varepsilon\Bigr)^{p/2}
	\\
	&\hspace{1.5cm}+
	\int^T_0\bigl(\widetilde{\cY}^{\lambda, \varepsilon}(s)\bigr)^{p/2-1}e^{\lambda s}
	\Big\{
		\int^s_0\big|\cG_0(t,s)\big|^2\rd t
		+
		\frac{1}{\mu}
		\big|\cY(s,s)\big|^2
		+
		\frac{1}{\mu}
		\int^s_0\big|\cZ(s,t)\big|^2\rd t
	\Big\}
	\rd s
	\Big],
\end{align*}
and hence
\begin{equation}\label{A_10}
\begin{split}
	&\e\Bigl[\int^T_0\bigl(\widetilde{\cY}^{\lambda, \varepsilon}(s)\bigr)^{p/2-2}\Bigl|\int^T_0e^{\lambda(t\vee s)}\cY(t,s)\cZ(t,s)\rd t\Bigr|^2\rd s\Bigr]\\
	&\leq C_p\e\Bigl[\Bigl(e^{\lambda T}\int^T_0\big|\cH(t)\big|^2\rd t+\varepsilon\Bigr)^{p/2}\\
	&\hspace{1.5cm}+\sup_{s\in[0,T]}\bigl(\widetilde{\cY}^{\lambda, \varepsilon}(s)\bigr)^{p/2-1}
	\Bigl\{\int^T_0\int^T_te^{\lambda s}\big|\cG_0(t,s)\big|^2\rd s\rd t+\frac{1}{\mu}\int^T_0e^{\lambda t}\big|\cY(t,t)\big|^2\rd t
	\\&\hspace{6cm}+\frac{1}{\mu}\int^T_0\int^T_0e^{\lambda(t\vee s)}\big|\cZ(t,s)\big|^2\rd s\rd t\Bigr\}\Bigr].
\end{split}
\end{equation}
By \eqref{A_9} and \eqref{A_10}, together with Young's inequality, we can show that
\begin{align*}
	\e\Bigl[\sup_{s\in[0,T]}\bigl(\widetilde{\cY}^{\lambda, \varepsilon}(s)\bigr)^{p/2}\Bigr]
	&\leq C_p\e\Bigl[\Bigl(e^{\lambda T}\int^T_0\big|\cH(t)\big|^2\rd t+\varepsilon\Bigr)^{p/2}+\Bigl(\int^T_0\int^T_te^{\lambda s}\big|\cG_0(t,s)\big|^2\rd s\rd t\Bigr)^{p/2}\\*
	&\hspace{1.5cm}+\frac{1}{\mu^{p/2}}\Bigl(\int^T_0e^{\lambda t}\big|\cY(t,t)\big|^2\rd t\Bigr)^{p/2}+\frac{1}{\mu^{p/2}}\Bigl(\int^T_0\int^T_0e^{\lambda(t\vee s)}\big|\cZ(t,s)\big|^2\rd s\rd t\Bigr)^{p/2}\Bigr].
\end{align*}
Recall that $\widetilde{\cY}^{\lambda, \varepsilon}(s)=\int_{0}^{T} e^{\lambda (t\vee s)}|\cY(t,s)|^{2}\rd t+\varepsilon$, $s \in [0,T]$, and that $\cY(\cdot,\cdot)$ and $\cH(\cdot)$ are bounded.
Thus, by letting $\varepsilon \downarrow 0$ and using the dominated convergence theorem, we see that the assertion of \emph{Claim 2} holds.

\paragraph{}
By \emph{Claim 1} and \emph{Claim 2}, for a sufficiently large $\lambda>0$ (which depends only on $p$, $L$ and $T$), it holds that
\begin{align*}
	&\e\Bigl[\sup_{s\in[0,T]}\Bigl(\int^T_0e^{\lambda(t\vee s)}\big|\cY(t,s)\big|^2\rd t\Bigr)^{p/2}+\Bigl(\int^T_0e^{\lambda t}\big|\cY(t,t)\big|^2\rd t\Bigr)^{p/2}+\Bigl(\int^T_0\int^T_0e^{\lambda(t\vee s)}\big|\cZ(t,s)\big|^2\rd s\rd t\Bigr)^{p/2}\Bigr]\\
	&\leq C_p\e\Bigl[e^{p\lambda T/2}\Bigl(\int^T_0\big|\cH(t)\big|^2\rd t\Bigr)^{p/2}+\Bigl(\int^T_0\int^T_te^{\lambda s}\big|\cG_0(t,s)\big|^2\rd s\rd t\Bigr)^{p/2}\Bigr].
\end{align*}
Therefore, the desired estimate \eqref{A_***} holds, and we finish the proof of the lemma.
\end{proof}

Now we are ready to prove the $L^p$-a priori estimates of the solutions of Type-\Rnum{2} BSVIEs and BSDE systems for $p\geq2$.

\begin{proof}[Proof of Theorem~\ref{Lem_Lp}]
For simplicity of notation, we let $m=d=1$. The proof can be easily generalized to the multi-dimensional case.

First, we prove (\rnum{1}). For each $n\in\n$, define
\begin{equation*}
	\Psi_{n}(t):=(-n)\vee\bigl(\Psi(t)\wedge n\bigr)\ \text{and}\ G_n(t,s,y,z_1,z_2):=(-n)\vee\bigl(G(t,s,y,z_1,z_2)\wedge n\bigr).
\end{equation*}
Then $(\Psi_n,G_n)$ satisfies the conditions (\rnum{1}), (\rnum{2}) and (\rnum{3}) in $\mathrm{(H}_{\Psi,G}\mathrm{)}$ with the common constant $L$, and $\Psi_n$ and $G_n$ are bounded. Denote by $(Y_n(\cdot),Z_n(\cdot,\cdot))\in L^2_\mathbb{F}(0,T;\real)\times L^2(0,T;L^2_\mathbb{F}(0,T;\real))$ the adapted M-solution of Type-\Rnum{2} BSVIE \eqref{BSVIE_II_1} corresponding to the coefficients $(\Psi,G)=(\Psi_n,G_n)$, and define
\begin{equation*}
	Y_n(t,s):=\e_s\Bigl[\Psi_n(t)+\int^T_sG_n(t,r,Y_n(r),Z_n(t,r),Z_n(r,t))\1_{[t,T]}(r)\rd r\Bigr],\ (t,s)\in[0,T]^2.
\end{equation*}
Then $Y_n(\cdot,\cdot)\in L^2(0,T;L^2_\mathbb{F}(\Omega;C([0,T];\real)))$, and $Y_n(t,t)=Y_n(t)$, a.e.\ $t\in[0,T]$, a.s.
Furthermore, by the same arguments as in the proof of Proposition \ref{Prop_BSDE_0}, we see that the following holds:
\begin{align*}
	&Y_n(t,s)=\Psi_n(t)+\int^T_sG_n(t,r,Y_n(r,r),Z_n(t,r),Z_n(r,t))\1_{[t,T]}(r)\rd r-\int^T_sZ_n(t,r)\rd W(r),\\
	&\hspace{5cm}s\in[0,T],\ \text{a.e.}\ t\in[0,T],\ \text{a.s.}
\end{align*}
Noting that the process $s\mapsto Z_n(s,t)\1_{[t,T]}(s)$ is adapted for each $t\in[0,T]$, we apply Lemma~\ref{lemma_A} with
\begin{equation*}
	\cH=\Psi_n,\ \cG=G_n,\ \cY(t,s)=Y_n(t,s),\ \cZ(t,s)=Z_n(t,s),\ \zeta(t,s)=Z_n(s,t)\1_{[t,T]}(s).
\end{equation*}
Then we obtain
\begin{align*}
	&\e\Bigl[\sup_{s\in[0,T]}\Bigl(\int^T_0\big|Y_n(t,s)\big|^2\rd t\Bigr)^{p/2}+\Bigl(\int^T_0\big|Y_n(t)\big|^2\rd t\Bigr)^{p/2}+\Bigl(\int^T_0\int^T_0\big|Z_n(t,s)\big|^2\rd s\rd t\Bigr)^{p/2}\Bigr]\\
	&\leq C_p\e\Bigl[\Bigl(\int^T_0\big|\Psi_n(t)\big|^2\rd t\Bigr)^{p/2}+\Bigl(\int^T_0\int^T_t\big|G_n(t,s,0,0,0)\big|^2\rd s\rd t\Bigr)^{p/2}\Bigr]\\
	&\leq C_p\e\Bigl[\Bigl(\int^T_0\big|\Psi(t)\big|^2\rd t\Bigr)^{p/2}+\Bigl(\int^T_0\int^T_t\big|G(t,s,0,0,0)\big|^2\rd s\rd t\Bigr)^{p/2}\Bigr].
\end{align*}
We emphasize that the constant $C_p>0$ does not depend on $n\in\n$. By the $L^2$-stability estimate for Type-\Rnum{2} BSVIEs (cf.\ Theorem~3.7 in \cite{Yo08}), we see that
\begin{equation*}
	\lim_{n\to\infty}(Y_n(\cdot),Z_n(\cdot,\cdot))=(Y(\cdot),Z(\cdot,\cdot))\ \text{in}\ L^2_\mathbb{F}(0,T;\real)\times L^2(0,T;L^2_\mathbb{F}(0,T;\real)).
\end{equation*}
Furthermore, we have
\begin{align*}
	&\e\Bigl[\sup_{s\in[0,T]}\int^T_0\big|Y_n(t,s)-Y(t,s)\big|^2\rd t\Bigr]\\
	&\leq\int^T_0\e\Bigl[\sup_{s\in[0,T]}\e_s\Bigl[\big|\Psi_n(t)-\Psi(t)\big|\\
	&\hspace{3cm}+\int^T_t\bigl|G_n(t,r,Y_n(r),Z_n(t,r),Z_n(r,t))-G(t,r,Y(r),Z(t,r),Z(r,t))\bigr|\rd r\Bigr]^2\Bigr]\rd t\\
	&\leq4\int^T_0\e\Bigl[\Bigl\{\big|\Psi_n(t)-\Psi(t)\big|\\
	&\hspace{2cm}+\int^T_t\bigl|G_n(t,r,Y_n(r),Z_n(t,r),Z_n(r,t))-G(t,r,Y(r),Z(t,r),Z(r,t))\bigr|\rd r\Bigr\}^2\Bigr]\rd t\\
	&\leq C\e\Bigl[\int^T_0\big|\Psi_n(t)-\Psi(t)\big|^2\rd t+\int^T_0\int^T_t\bigl|G_n(t,s,Y(s),Z(t,s),Z(s,t))-G(t,s,Y(s),Z(t,s),Z(s,t))\bigr|^2\rd s\rd t\\
	&\hspace{2cm}+\int^T_0\big|Y_n(t)-Y(t)\big|^2\rd t+\int^T_0\int^T_0\big|Z_n(t,s)-Z(t,s)\big|^2\rd s\rd t\Bigr],
\end{align*}
where in the second inequality we used Doob's martingale inequality. By the dominated convergence theorem, the last term in the above inequalities tends to zero as $n\to\infty$. Thus, there exists a subsequence $\{n_i\}_{i\in\n}\subset\n$ such that
\begin{align*}
	&\lim_{i\to\infty}\int^T_0\big|Y_{n_i}(t)\big|^2\rd t=\int^T_0\big|Y(t)\big|^2\rd t,\ \text{a.s.},\\
	&\lim_{i\to\infty}\int^T_0\int^T_0|Z_{n_i}(t,s)|^2\rd s\rd t=\int^T_0\int^T_0\big|Z(t,s)\big|^2\rd s\rd t,\ \text{a.s.},\\*
	&\lim_{i\to\infty}\sup_{s\in[0,T]}\int^T_0\big|Y_{n_i}(t,s)\big|^2\rd t=\sup_{s\in[0,T]}\int^T_0\big|Y(t,s)\big|^2\rd t,\ \text{a.s.}
\end{align*}
Therefore, by Fatou's lemma, we get
\begin{align*}
	&\e\Bigl[\sup_{s\in[0,T]}\Bigl(\int^T_0\big|Y(t,s)\big|^2\rd t\Bigr)^{p/2}+\Bigl(\int^T_0\big|Y(t)\big|^2\rd t\Bigr)^{p/2}+\Bigl(\int^T_0\int^T_0\big|Z(t,s)\big|^2\rd s\rd t\Bigr)^{p/2}\Bigr]\\*
	&\leq\liminf_{i\to\infty}\e\Bigl[\sup_{s\in[0,T]}\Bigl(\int^T_0\big|Y_{n_i}(t,s)\big|^2\rd t\Bigr)^{p/2}+\Bigl(\int^T_0\big|Y_{n_i}(t)\big|^2\rd t\Bigr)^{p/2}+\Bigl(\int^T_0\int^T_0\big|Z_{n_i}(t,s)\big|^2\rd s\rd t\Bigr)^{p/2}\Bigr]\\
	&\leq C_p\e\Bigl[\Bigl(\int^T_0|\Psi(t)|^2\rd t\Bigr)^{p/2}+\Bigl(\int^T_0\int^T_t\big|G(t,s,0,0,0)\big|^2\rd s\rd t\Bigr)^{p/2}\Bigr].
\end{align*}
This completes the proof of the assertion (\rnum{1}).

Next, we prove (\rnum{2}).
For each $n \in \n$, define $(\Psi_n,G_n)$ as above.
Denote by $\{(\mathscr{Y}^{\pi,\theta}_n(t_k,\cdot),\mathscr{Z}^{\pi,\theta}_n(t_k,\cdot))\}^{N-1}_{k=0}$ the solution of BSDE system \eqref{BSDE_sys_0} corresponding to $\pi=\{t_0,t_1,\ldots,t_N\}\in\Pi[0,T]$, $\theta\in[0,T]$ and the coefficients $(\Psi,G)=(\Psi_n,G_n)$. We apply Lemma~\ref{lemma_A} with
\begin{align*}
	&\cH(t)=\Psi_n(\tau(t)),~
	\cG(t,s,y,z_1,z_2)=G_n(\tau(t),s,y,z_1,z_2)\1_{[\tau^*(t),T)}(s),\\
	&\cY(t,s)=\mathscr{Y}^{\pi,\theta}_n(\tau(t),s),~
	\cZ(t,s)=\mathscr{Z}^{\pi,\theta}_n(\tau(t),s),~
	\zeta(t,s)=\mathcal{I}^{\pi,\theta}[\mathscr{Z}^{\pi,\theta}_n(\tau(s),\cdot)](\tau(t))\1_{[\tau^*(t),T)}(s).
\end{align*}
We see that the triplet $(\cY(\cdot,\cdot),\cZ(\cdot,\cdot),\zeta(\cdot,\cdot))$ satisfies the equation \eqref{A_*}.
Also, for each $\ell=1,\ldots,N-1$ and $s\in[t_\ell,t_{\ell+1})$, we have
\begin{align*}
	\int^s_0\big|\zeta(t,s)\big|^2\rd t&=\int^s_0\bigl|\mathcal{I}^{\pi,\theta}[\mathscr{Z}^{\pi,\theta}_n(\tau(s),\cdot)](\tau(t))\1_{[\tau^*(t),T)}(s)\bigr|^2\rd t=\int^{\tau(s)}_0\bigl|\mathcal{I}^{\pi,\theta}[\mathscr{Z}^{\pi,\theta}_n(\tau(s),\cdot)](\tau(t))\bigr|^2\rd t\\*
	&=\sum^{\ell-1}_{k=0}\Delta t_k\bigl|\mathcal{I}^{\pi,\theta}[\mathscr{Z}^{\pi,\theta}_n(t_\ell,\cdot)](t_k)\bigr|^2=\sum^{\ell-1}_{k=0}\Delta t_k\Bigl|\frac{1}{\Delta t_k}\int^{t_{k+1}\vee\theta}_{t_k\vee\theta}\mathscr{Z}^{\pi,\theta}_n(t_\ell,t)\rd t\Bigr|^2\\
	&\leq\sum^{\ell-1}_{k=0}\int^{t_{k+1}}_{t_k}\big|\mathscr{Z}^{\pi,\theta}_n(t_\ell,t)\big|^2\rd t=\int^{\tau(s)}_0|\mathscr{Z}^{\pi,\theta}_n(\tau(s),t)|^2\rd t\\
	&\leq\int^s_0\big|\cZ(s,t)\big|^2\rd t.
\end{align*}
Thus, the inequality~\eqref{A_**} holds. By Lemma~\ref{lemma_A}, we have
\begin{align*}
	&\e\Bigl[\sup_{s\in[0,T]}\Bigl(\int^T_0\big|\mathscr{Y}^{\pi,\theta}_n(\tau(t),s)\big|^2\rd t\Bigr)^{p/2}+\Bigl(\int^T_0\big|\mathscr{Y}^{\pi,\theta}_n(\tau(t),t)\big|^2\rd t\Bigr)^{p/2}+\Bigl(\int^T_0\int^T_0\big|\mathscr{Z}^{\pi,\theta}_n(\tau(t),s)\big|^2\rd s\rd t\Bigr)^{p/2}\Bigr]\\*
	&\leq C_p\e\Bigl[\Bigl(\int^T_0\big|\Psi_n(\tau(t))\big|^2\rd t\Bigr)^{p/2}+\Bigl(\int^T_0\int^T_{\tau^*(t)}\big|G_n(\tau(t),s,0,0,0)\big|^2\rd s\rd t\Bigr)^{p/2}\Bigr]\\
	&\leq C_p\e\Bigl[\Bigl(\int^T_0\big|\Psi(\tau(t))\big|^2\rd t\Bigr)^{p/2}+\Bigl(\int^T_0\int^T_{\tau^*(t)}\big|G(\tau(t),s,0,0,0)\big|^2\rd s\rd t\Bigr)^{p/2}\Bigr].
\end{align*}
By using the $L^2$-stability estimate for BSDE systems (see Lemma~\ref{Lem_BSDE_sys_0}), together with Fatou's lemma, we can show that
\begin{align*}
	&\e\Bigl[\sup_{s\in[0,T]}\Bigl(\int^T_0\big|\mathscr{Y}^{\pi,\theta}(\tau(t),s)\big|^2\rd t\Bigr)^{p/2}+\Bigl(\int^T_0\big|\mathscr{Y}^{\pi,\theta}(\tau(t),t)\big|^2\rd t\Bigr)^{p/2}+\Bigl(\int^T_0\int^T_0\big|\mathscr{Z}^{\pi,\theta}(\tau(t),s)\big|^2\rd s\rd t\Bigr)^{p/2}\Bigr]\\
	&\leq C_p\e\Bigl[\Bigl(\int^T_0\big|\Psi(\tau(t))\big|^2\rd t\Bigr)^{p/2}+\Bigl(\int^T_0\int^T_{\tau^*(t)}\big|G(\tau(t),s,0,0,0)\big|^2\rd s\rd t\Bigr)^{p/2}\Bigr].
\end{align*}
This completes the proof of the assertion~(\rnum{2}).
\end{proof}

\end{appendix}

\section*{Acknowledgments}
The first author was supported by JSPS KAKENHI Grant Number JP18J20973.
The second author was supported by JSPS KAKENHI Grant Number 19K14552.


\end{document}